\numberwithin{equation}{section}
\newtheorem*{theorem*}{Theorem}
\newtheorem{lemma}{Lemma}[section]
\newtheorem{proposition}[lemma]{Proposition}
\newtheorem{remark}[lemma]{Remark}
\newtheorem{example}[lemma]{Example}
\newtheorem{theorem}[lemma]{Theorem}
\newtheorem{definition}[lemma]{Definition}
\newtheorem{question}[lemma]{Question}
\newtheorem*{question*}{Question}
\newtheorem*{assumption*}{Assumption}
\newtheorem*{axiom*}{Axiom}
\newtheorem*{theorem*1}{Theorem (\ref{theta1})}
\newtheorem*{theorem*2}{Theorem (\ref{theta2})}
\newtheorem*{theorem*3}{Theorem (\ref{theta3})}
\newtheorem*{theorem*4}{Theorem (\ref{theta4})}
\newtheorem*{proposition*5}{Proposition (\ref{theta5})}
\newtheorem*{proposition*6}{Proposition (\ref{theta6})}
\sloppy \theoremstyle{plain}
\newcommand{\Hom}{\operatorname{Hom}}
\renewcommand{\Im}{\operatorname{Im}}
\newcommand{\Ind}{\operatorname{Ind}}
\newcommand{\Res}{\operatorname{Res}}
\newcommand{\Ha}{\operatorname{H}}
\newcommand{\Oa}{\operatorname{O}}
\newcommand{\sgn}{\operatorname{sgn}}
\newcommand{\C}{\mathbb C}
\newcommand{\R}{\mathbb R}
\newcommand{\Za}{\mathbb Z}
\newcommand{\Z}{\mathbb Z}
\newcommand{\Gal}{\operatorname{Gal}}
\newcommand{\GL}{\operatorname{GL}}
\newcommand{\GU}{\operatorname{GU}}
\newcommand{\GSp}{\operatorname{GSp}}
\newcommand{\GMp}{\operatorname{GMp}}
\newcommand{\PGSp}{\operatorname{PGSp}}
\newcommand{\Mp}{\operatorname{Mp}}
\newcommand{\SL}{\operatorname{SL}}
\newcommand{\SO}{\operatorname{SO}}
\newcommand{\Sp}{\operatorname{Sp}}
\newcommand{\Spin}{\operatorname{Spin}}
\newcommand{\U}{\operatorname{U}}
\newcommand{\Span}{\operatorname{Span}}
\newcommand{\diag}{\operatorname{diag}}
\newcommand{\id}{\operatorname{Id}}
\newcommand{\sign}{\operatorname{sign}}
\newcommand{\tr}{\operatorname{Tr}}
\newcommand{\Tr}{\operatorname{Tr}}
\newcommand{\Trd}{\operatorname{Trd}}
\newcommand{\Nrd}{\operatorname{Nrd}}
\newcommand{\Arg}{\operatorname{Arg}}
\begin{document}
\title{Extended  Weil representations: the real  field case}
\author{Chun-Hui Wang}
\address{School of Mathematics and Statistics\\Wuhan University \\Wuhan, 430072,
P.R. CHINA}
\email{cwang2014@whu.edu.cn}
\begin{abstract}
 Let F be the usual real field. Let W be a symplectic
vector space over F. It is  known that there are two different Weil representations of a Meteplectic covering
group $\widetilde{\Sp}(W)$. By some twisted actions, we reorganize  them  into a representation of $\widetilde{\Sp}^{\pm}(W)$,  a covering group over a subgroup $\Sp^{\pm}(W)$ of $\GSp(W)$. Based on the works of MVW, Kudla, and Howe on reductive dual pairs in $\Sp(W)$,  we  explore the analogous dual pairs in $\Sp^{\pm}(W)$ . Finally, following Lion-Vergne's classical book on Weil representations and theta series,  we  investigate  some simple theta series in $\Sp^{\pm}(W)$ where  $W$ has dimension two.  
\end{abstract}
\maketitle
\setcounter{secnumdepth}{5}
\tableofcontents{}
\section{Introduction}
 Let  $F$ be a local field. Let $(W, \langle, \rangle)$ be a symplectic vector space  of  dimension $2m$   over  $F$.  Let $\Ha(W)=W\oplus F$ denote the usual Heisenberg group. Let   $\Sp(W)$(resp. $\GSp(W)$) denote the corresponding symplectic group(resp. similitude symplectic group). Let $\psi$ be a non-trivial unitary  character of $F$.   Let $\widetilde{\Sp}(W)$ denote an $8$-fold metaplectic covering group over $\Sp(W)$. Let $\pi_{\psi}$ be a Weil representation of $\widetilde{\Sp}(W)\ltimes \Ha(W)$ associated to $\psi$, and let $\omega_{\psi}=\pi_{\psi}|_{\widetilde{\Sp}(W)}$.  One question  is how to extend the Weil representation to the similitude group.   This question has been studied systematically  by Barthel  in \cite{Ba} in the $p$-adic field case. In that paper, Barthel constructed the Metaplectic 2-cocycle over
$\GSp(W)$, defined the corresponding Metaplectic group $\GMp(W )$, and then obtained the extended Weil representation by induction. One can also see another two important papers: B.Robets  \cite{Ro1} and Gan-Tantono \cite{GaTa}, which  contain some similar results on the theta correspondences for the similitude groups.  Consequently,   C.Zhang   investigated unitary similitude dual pairs in \cite{Zh}. In the archimedean field  case,  one can see M.Cognet's two papers\cite{Co1,Co2} on $\GSp_8(F)$.
    In the global field case,  many people contribute to this subject. For examples, one can see Harris-Kudla-Sweet's paper\cite{HaKuSw}, S.Gelbart's book \cite{Ge},   B. Roberts'paper \cite{Ro2},  etc. 
  We mostly approach such question mainly   by following  \cite{Ba}, \cite{GePi},\cite{Wa1},\cite{Wa2}, \cite{LiVe}.   Through some twisted actions, we continue their works by constricting the Weil representation.  In this study, we continue our work into the real field case, moving from the p-adic field case(cf. \cite{Wa3}).
  
\subsection{}  Let $F$ be a real field. Let  $a\in F^{\times}$, and  let $\psi^a$ denote another character of $F$ defined as $t\longmapsto \psi(at)$, for $t\in F$. Let $\omega_{\psi^a}$ be the Weil representation of $\widetilde{\Sp}(W)$ associated to $\psi^a$. It is known that  $\omega_{\psi^{at^{2}}}\simeq \omega_{\psi^a}$, for any $t\in F^{\times}$. Loosely speaking, there  exist two different kind of Weil representations associated to $\psi$ and $\psi^{-1}$. One of our purpose is to reorganizer them as a representation of a bigger group than $\widetilde{\Sp}(W)$.  Let $F^{\times }_+$ be the multiplicative group of positive real numbers. Let $\PGSp^{\pm}(W)=\GSp(W)/F_+^{\times}$, which is isomorphic with the  subgroup  $\Sp^{\pm}(W)$  of $\GSp(W)$. Then there exists the following exact sequence:
\begin{align*}
1 \longrightarrow \Sp(W)\longrightarrow \Sp^{\pm}(W)\longrightarrow F^{\times}/F^{\times 2}\longrightarrow 1.
\end{align*}
Let $\widetilde{c}_{X^{\ast}}(-,-)$ denote the Perrin-Rao cocycle on $\Sp(W)$. Based on  Barthel's paper \cite{Ba}  in the $p$-adic case,  we can extend this cocycle  from $\Sp(W)$ to  $\GSp(W)$ and $\Sp^{\pm}(W)$ in  Section \ref{ext}. Let $\widetilde{\Sp}^{\pm}(W)$ denote the corresponding covering group. Then there exists an  exact sequence:
\begin{align*}
1  \longrightarrow \mu_8 \longrightarrow \widetilde{\Sp}^{\pm}(W) \longrightarrow \Sp^{\pm}(W) \longrightarrow 1,
\end{align*}
and $\widetilde{\Sp}^{\pm}(W) \simeq F_2\ltimes \widetilde{\Sp}(W)$, for $F_2=\{\begin{pmatrix}
1& 0\\
0& \pm 1
\end{pmatrix}\}$. As a consequence, we can define 
$$\Pi_{\psi}=\Ind_{\widetilde{\Sp}(W)\ltimes \Ha(W)}^{\widetilde{\Sp}^{\pm}(W)\ltimes \Ha(W)} \pi_{\psi}, \qquad \Omega_{\psi}=\Res_{\widetilde{\Sp}^{\pm}(W)}^{\widetilde{\Sp}^{\pm}(W)\ltimes \Ha(W)}\Pi_{\psi}.$$
The restriction of $\Omega_{\psi}$ on $\widetilde{\Sp}(W)$ consists of the exact two different Weil representations. We also consider  the $2$-fold Metaplectic covering group by following P.Perrin, R.Ranga.Rao and S.Kudla's works(cf. \cite{Pe},\cite{Ra},\cite{Ku}).
 
\subsection{} In the second part(cf. Section \ref{splitting}), we consider the  dual pair in $\Sp^{\pm}(W)$. Let us present two known results on irreducible reductive dual pairs in $\Sp(W)$.  Retain notations from Section \ref{splitting}. 
\begin{theorem}\label{scindagedugroupeR01}
Let $(\U(\mathcal{W}_1), \U(\mathcal{W}_2))$ be an  irreducible reductive dual pair in $\Sp(W)$.
\begin{itemize}
\item[(1)] If $(\mathcal{W}_1, \mathcal{W}_2)$  is not the exception case (1*), then the exact sequence $1 \longrightarrow T \longrightarrow \widehat{\U}(\mathcal{W}_i) \longrightarrow \U(\mathcal{W}_i)\longrightarrow 1$ splits, for $i=1, 2$.
\item[(2)]  If $(\mathcal{W}_1, \mathcal{W}_2)$  is  the exception case and $\epsilon_1=-1$, $\epsilon_2=1$, then
\begin{itemize}
 \item the exact sequence $1 \longrightarrow T \longrightarrow \widehat{\Oa}(\mathcal{W}_2) \longrightarrow \Oa(\mathcal{W}_2)\longrightarrow 1$ splits, but
  \item the exact sequence $1 \longrightarrow T \longrightarrow \widehat{\Sp}(\mathcal{W}_1) \longrightarrow \Sp(\mathcal{W}_1)\longrightarrow 1$  does not split.
  \end{itemize}
    \end{itemize}
\end{theorem}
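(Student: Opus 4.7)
The strategy follows the classical framework of Kudla and of Moeglin--Vign\'eras--Waldspurger: construct an explicit section $s_i : \U(\mathcal{W}_i) \to \widehat{\U}(\mathcal{W}_i)$ case by case along Howe's classification of dual pairs, and in the exceptional case exhibit a concrete obstruction to splitting on the symplectic side. I would begin by recalling the classification: an irreducible reductive dual pair in $\Sp(W)$ is either of type II, of the form $(\GL(X), \GL(Y))$, or of type I, where $\mathcal{W}_1, \mathcal{W}_2$ carry compatible $\epsilon_1$- and $\epsilon_2$-hermitian forms with $\epsilon_1 \epsilon_2 = -1$, and the symplectic form on $W = \mathcal{W}_1 \otimes_D \mathcal{W}_2$ is inherited from the tensor product. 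Specialising to $F = \R$, the exception case (1*) is the pair $(\Sp(\mathcal{W}_1), \Oa(\mathcal{W}_2))$ with $\dim_F \mathcal{W}_2$ odd.

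For part (1), the idea is to choose a complete polarization $W = X \oplus X^{\ast}$ compatible with the dual pair, so that one factor preserves $X$ and acts linearly (its lift is tautological), while the other factor $\U(\mathcal{W}_1)$ mixes $X$ and $X^{\ast}$ and a section is given by Kudla's recipe $s_1(g) = (1, \chi_1(g))$, with $\chi_1$ a product of a character in $\det_{\mathcal{W}_1}(g)$, a Hilbert symbol $(\det g, x(\mathcal{W}_2))_F$ for a chosen discriminant-type invariant of $\mathcal{W}_2$, and a power of the Weil index $\gamma(\psi)$. The cocycle identity for $\widetilde{c}_{X^{\ast}}$ pulled back through $s_1$ reduces to a Hilbert-symbol identity that holds precisely when the parity of $\dim \mathcal{W}_2$ falls outside the exception; type II is handled in the same way, using $\det$ on $\GL(X)$ and $\GL(Y)$ without Weil-index factors.

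For part (2), in the exception case with $\epsilon_1 = -1, \epsilon_2 = 1$, the orthogonal factor $\Oa(\mathcal{W}_2)$ still admits a section because $\dim_F \mathcal{W}_1$ is even, which cancels the bad Weil-index term in Kudla's formula, so $h \mapsto (1, (\det h, x(\mathcal{W}_1))_F)$ is a homomorphic lift. Non-splitting on the symplectic side is shown by restricting $\widehat{\Sp}(\mathcal{W}_1) \to \Sp(\mathcal{W}_1)$ to a suitable copy of $\SL_2(F)$ and identifying the pullback with a non-trivial double cover: using the explicit Perrin--Rao formula on two Weyl elements, the resulting class involves $\gamma(\psi)^{\dim \mathcal{W}_2}$, which is non-trivial when $\dim \mathcal{W}_2$ is odd and cannot be absorbed as a coboundary by any character of $\SL_2(F)$.

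The main obstacle is precisely this last verification: tracking the Weil index through the explicit two-cocycle and proving that no reparametrisation can trivialise it. The rest of the argument is a systematic bookkeeping of characters, Weil constants $\gamma(\psi)$, and Hilbert symbols $(\cdot, \cdot)_F$, closely parallel to the $p$-adic treatment in \cite{Wa3} but requiring careful attention to the archimedean Weil index.
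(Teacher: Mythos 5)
The paper does not actually supply a proof of this statement: immediately after the theorem it writes that the $p$-adic case is proved in \cite[Chapter 1]{MoViWa} and \cite{Ku1}, and that the real case ``was proved in \cite[Prop.\ 4.1]{Ku1}'' with the explicit trivialization given there. So you are being asked to reconstruct Kudla's argument rather than to match an internal proof, and your sketch does follow the general framework of \cite{Ku1} and \cite{MoViWa}: an explicit section built from a determinant character, a Hilbert symbol, and a power of the Weil index, with non-splitting on the symplectic side detected via the parity of $\dim\mathcal{W}_2$.

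That said, two points in the sketch deserve attention. First, you fold type II pairs $(\GL(X),\GL(Y))$ into the classification, but the theorem as stated in the paper (and the setup of Section~\ref{splitting}) is restricted to type I pairs: $D$ is a division algebra with involution and $\epsilon_1\epsilon_2=-1$, which excludes type II. Second, your phrase ``so that one factor preserves $X$ and acts linearly (its lift is tautological)'' implicitly assumes the other hermitian space admits a complete polarization, i.e.\ is hyperbolic. This is automatic when that space is symplectic, but for a hermitian or quaternionic (skew-)hermitian space it need not hold, so the ``tautological lift'' argument does not directly cover all cases; Kudla avoids this by computing the restriction of the Rao cocycle to $\U(\mathcal{W}_i)$ directly from his explicit formulas (and, in the archimedean case, by reducing to the maximal compact subgroup). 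Your treatment of part (2) is sound in outline: restrict to an $\SL_2 \hookrightarrow \Sp(\mathcal{W}_1)$ embedded in $\Sp(W)$ by $g\mapsto g\otimes 1_{\mathcal{W}_2}$, observe that the pulled-back cohomology class is essentially the $\dim\mathcal{W}_2$-th power of the $\SL_2$ class, hence non-trivial exactly when $\dim\mathcal{W}_2$ is odd, and check that no coboundary (i.e.\ no continuous character of $\Sp(\mathcal{W}_1)$, of which there is only the trivial one since $\Sp$ is perfect) can absorb it.
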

The $p$-adic version of this theorem  has been proved in  \cite[Chapter 1 ]{MoViWa}   and  \cite{Ku1}. For the   real field case,  it was   proved in \cite[Prop.4.1]{Ku1}. Moreover,  explicit trivialization is also given in \cite{Ku1}. Indeed, there is  an inconspicuous question on which 2-degree covering group or  $8$-degree covering group is also splitting.
\begin{theorem}\label{scindagedugroupeR41}
Let $(\U(\mathcal{W}_1), \U(\mathcal{W}_2))$ be an  irreducible reductive dual pair in $\Sp(W)$ as given above.  Then $\widehat{\U}(\mathcal{W}_1)$  commutates with  $\widehat{\U}(\mathcal{W}_2)$ in $\widehat{\Sp}(W)$.
\end{theorem}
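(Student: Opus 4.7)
The plan is to prove that $\widehat{\U}(\mathcal{W}_1)$ and $\widehat{\U}(\mathcal{W}_2)$ commute elementwise in $\widehat{\Sp}(W)$ by encoding the failure-of-commutativity in a natural bimultiplicative pairing into the central kernel and showing that this pairing must be trivial.

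First, given any preimages $\widehat{g}_i \in \widehat{\U}(\mathcal{W}_i)$ of $g_i \in \U(\mathcal{W}_i)$, the defining property of a dual pair gives $g_1 g_2 = g_2 g_1$ in $\Sp(W)$, so $[\widehat{g}_1, \widehat{g}_2]$ lies in the kernel $T \cong \mu_8$. Since $T$ is central in $\widehat{\Sp}(W)$, this commutator depends only on $(g_1, g_2)$, yielding a well-defined map
$$b: \U(\mathcal{W}_1) \times \U(\mathcal{W}_2) \longrightarrow T.$$
A routine manipulation using centrality of $T$ shows that $b$ is a homomorphism in each variable separately, and continuous for the usual topologies. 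The goal is to show $b \equiv 1$.

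Second, because $T$ is finite (hence discrete) and $b$ is continuous, $b$ is locally constant; since $b(e,\cdot)$ and $b(\cdot,e)$ are trivial, it is identically $1$ on the identity components $\U(\mathcal{W}_1)^\circ \times \U(\mathcal{W}_2)^\circ$. By bimultiplicativity, $b$ then factors through $\pi_0\bigl(\U(\mathcal{W}_1)\bigr) \times \pi_0\bigl(\U(\mathcal{W}_2)\bigr)$, modulo derived subgroups. In the non-exception cases, Theorem \ref{scindagedugroupeR01} supplies explicit splittings $s_i : \U(\mathcal{W}_i) \hookrightarrow \widehat{\U}(\mathcal{W}_i)$; it suffices to check that for representatives $h_i$ of each connected component, $s_1(h_1)$ and $s_2(h_2)$ can be chosen to commute in $\widehat{\Sp}(W)$. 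This reduces to a finite verification on carefully chosen elements, which can be carried out either by direct cocycle computation in the Perrin--Rao model or by observing that the splittings constructed in Theorem \ref{scindagedugroupeR01} are canonical enough to be respected by the centralizing relation.

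The main obstacle is the exception case $\epsilon_1 = -1$, $\epsilon_2 = 1$, where $\widehat{\Sp}(\mathcal{W}_1) \to \Sp(\mathcal{W}_1)$ does not split and so no global section is available on that factor. Here I would abandon cocycle bookkeeping and argue via the Weil representation itself: realize $\omega_\psi$ on a Schr\"odinger model where elements of $\widehat{\Oa}(\mathcal{W}_2)$ act by explicit geometric transformations (change of variables twisted by a character) and elements of $\widehat{\Sp}(\mathcal{W}_1)$ act by integral operators of metaplectic type. Since both actions come from the same representation of $\widehat{\Sp}(W)$, their commutator $b(g_1,g_2)$ acts as the scalar $b(g_1,g_2) \in T$ on every vector; evaluating on a non-vanishing test vector (e.g. a Gaussian, which is fixed up to a known scalar by both the maximal compact of $\widehat{\Sp}(\mathcal{W}_1)$ and the analogous piece of $\widehat{\Oa}(\mathcal{W}_2)$) and comparing with the geometric formula pins the scalar down to $1$. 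The delicate point throughout is keeping track of the eighth roots of unity contributed by the Perrin--Rao cocycle when one factor has no splitting; once those signs are matched against the explicit Weil-representation computation, the commutativity statement follows.
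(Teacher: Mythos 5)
The paper itself does not prove Theorem \ref{scindagedugroupeR41}; it cites Howe \cite{Ho1,Ho2} for the real field case (and \cite{MoViWa} for the $p$-adic one), so your proposal fills a gap the paper leaves to the reader, and your commutator-pairing strategy is a clean, elementary route that does go through. Two remarks: one correction, one simplification.

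The kernel of $\widehat{\Sp}(W) \to \Sp(W)$ in this paper is the full circle $T = \{e^{i\theta}\}$, not $\mu_8$; the identification ``$T \cong \mu_8$'' on which your discreteness step leans is false as stated. What is true, and what you actually need, is that the pairing $b$ takes values in the finite subgroup $\mu_8 \subset T$. Indeed the extension class is the image of the $\mu_8$-valued Perrin--Rao class $[\widetilde{c}_{X^{\ast}}]$ under $\mu_8 \hookrightarrow T$, and for commuting $g_1, g_2$ one computes $b(g_1,g_2) = \widetilde{c}_{X^{\ast}}(g_1,g_2)\,\widetilde{c}_{X^{\ast}}(g_2,g_1)^{-1} \in \mu_8$, a quantity unchanged when the cocycle is altered by a coboundary (the $\partial f$ contributions cancel precisely because $g_1g_2 = g_2g_1$). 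With that substitution your local-constancy argument is correct.

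The remainder of your argument is overbuilt. Once $b$ is bimultiplicative, continuous, and valued in a discrete group, it is annihilated by the identity component of either factor, so it descends to $\pi_0\bigl(\U(\mathcal{W}_1)\bigr) \times \pi_0\bigl(\U(\mathcal{W}_2)\bigr)$. But in every irreducible reductive dual pair over $\R$, at least one member is connected: only the real orthogonal group $\Oa(p,q)$ has nontrivial $\pi_0$, whereas $\Sp(2n,\R)$, $\U(p,q)$, and the quaternionic unitary groups (whose maximal compacts are of the form $\U(n)$ or $\Sp(p)\times\Sp(q)$) are all connected, and at most one member of a pair can be real orthogonal. Thus for the connected factor, say $\U(\mathcal{W}_1)$, the map $b(\cdot, g_2) : \U(\mathcal{W}_1) \to \mu_8$ is a continuous homomorphism from a connected group to a discrete one, hence trivial, for every $g_2 \in \U(\mathcal{W}_2)$; therefore $b \equiv 1$. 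Neither your finite component-group verification, nor the splittings of Theorem \ref{scindagedugroupeR01}, nor the Schr\"odinger-model/Gaussian computation for the exception case are needed: the exception case falls to the same one-line observation because $\Sp(\mathcal{W}_1)$ is connected.
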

The $p$-adic version of this theorem  has been proved in  \cite[Chapter 1 ]{MoViWa}. For the   real field case,  one can see \cite{Ho1,Ho2}.  We generalize the above two results to the group $\Sp^{\pm}(W)$. 
\begin{proposition}\label{splitting1}
Let  $(\mathcal{W}_1, \mathcal{W}_2)$ be a pair  in Theorem \ref{scindagedugroupeR01}.
\begin{itemize}
\item[(1)] If $(\mathcal{W}_1, \mathcal{W}_2)$  is not the exception case (1*), then the exact sequence $1 \longrightarrow T \longrightarrow \widehat{\U^{\pm}}(\mathcal{W}_i) \longrightarrow \U^{\pm}(\mathcal{W}_i)\longrightarrow 1$ splits, for $i=1, 2$.
\item[(2)]  If $(\mathcal{W}_1, \mathcal{W}_2)$  is  the exception case and $\epsilon_1=-1$, $\epsilon_2=1$, then
\begin{itemize}
 \item[(i)] the exact sequence $1 \longrightarrow T \longrightarrow \widehat{\Oa^{\pm}}(\mathcal{W}_2) \longrightarrow \Oa^{\pm}(\mathcal{W}_2)\longrightarrow 1$ splits, but
  \item[(ii)] the exact sequence $1 \longrightarrow T \longrightarrow \widehat{\Sp^{\pm}}(\mathcal{W}_1) \longrightarrow \Sp(\mathcal{W}_1)\longrightarrow 1$  does not split.
  \end{itemize}
    \end{itemize}
\end{proposition}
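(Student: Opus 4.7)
The overall plan is to reduce the proposition to Theorem~\ref{scindagedugroupeR01} by exploiting the semidirect product decomposition $\widetilde{\Sp}^{\pm}(W)\simeq F_2\ltimes \widetilde{\Sp}(W)$ recalled in the introduction. Since the quotient $F^{\times}/F^{\times 2}$ has order two in the real case, each of the groups $\U^{\pm}(\mathcal{W}_i)$ is obtained from $\U(\mathcal{W}_i)$ by adjoining one similitude $\sigma_i$ of factor $-1$, and the same dichotomy propagates to the cover: $\widehat{\U^{\pm}}(\mathcal{W}_i)$ is generated by $\widehat{\U}(\mathcal{W}_i)$ together with a single lift $\widehat{\sigma}_i$ of $\sigma_i$ supplied by the $F_2$-decomposition of $\widetilde{\Sp}^{\pm}(W)$.

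For part~(1), let $s_i\colon \U(\mathcal{W}_i)\longrightarrow \widehat{\U}(\mathcal{W}_i)$ be the splitting produced by Theorem~\ref{scindagedugroupeR01}, built from Kudla's explicit formulae in \cite{Ku1}. I would define a candidate splitting $\widetilde{s}_i\colon \U^{\pm}(\mathcal{W}_i)\to \widehat{\U^{\pm}}(\mathcal{W}_i)$ by $\widetilde{s}_i(g)=s_i(g)$ for $g\in \U(\mathcal{W}_i)$ and $\widetilde{s}_i(\sigma_i g)=\widehat{\sigma}_i\cdot s_i(g)$ on the other coset, after possibly rescaling $\widehat{\sigma}_i$ by an element of $\mu_8$. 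Verifying that $\widetilde{s}_i$ is a homomorphism amounts to two cocycle-level identities: (a) conjugation by $\widehat{\sigma}_i$ preserves the image of $s_i$ and induces the correct automorphism of $\U(\mathcal{W}_i)$, and (b) $\widehat{\sigma}_i^{\,2}=s_i(\sigma_i^{2})$. Identity (a) follows from the equivariance of Kudla's formulae under the twisting action of $\sigma_i$ on the polarization data, combined with the semidirect-product structure. Identity (b) pins down uniquely the required $\mu_8$-rescaling of $\widehat{\sigma}_i$, provided the a priori value $\widehat{\sigma}_i^{\,2}\cdot s_i(\sigma_i^{2})^{-1}$ lies in $\mu_8$, which is forced by the fact that both factors lie above the same element of $\U(\mathcal{W}_i)$. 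This cocycle check is where I expect the main work to concentrate, and it would be carried out by a direct computation with the extended Perrin--Rao cocycle $\widetilde{c}_{X^{\ast}}$ constructed in Section~\ref{ext}.

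For part~(2)(i), the argument above applies verbatim to $\Oa^{\pm}(\mathcal{W}_2)$, because the splitting $\Oa(\mathcal{W}_2)\to \widehat{\Oa}(\mathcal{W}_2)$ is already available from Theorem~\ref{scindagedugroupeR01}, and the normalization of $\widehat{\sigma}_2$ goes through since $\sigma_2^{2}\in \Oa(\mathcal{W}_2)$. For part~(2)(ii), the non-splitting is inherited from Theorem~\ref{scindagedugroupeR01}: any section $\Sp^{\pm}(\mathcal{W}_1)\to \widehat{\Sp^{\pm}}(\mathcal{W}_1)$ would restrict, on the index-two subgroup $\Sp(\mathcal{W}_1)$, to a section of $\widehat{\Sp}(\mathcal{W}_1)\to \Sp(\mathcal{W}_1)$, contradicting the second bullet of Theorem~\ref{scindagedugroupeR01}. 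Hence (ii) requires no further computation, and all the substantive difficulty is concentrated in the cocycle identity (b) of part~(1).
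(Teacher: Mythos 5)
Your overall plan — extend the known splitting $s_i$ of $\U(\mathcal{W}_i)$ across the index-two coset using a lift $\widehat{\sigma}_i$, and reduce to verifying the two compatibility conditions (a) and (b) — is a reasonable reformulation of what has to happen, and your treatment of (2)(ii) by restriction is essentially the paper's argument. However, the proposal misjudges where the difficulty lies and leaves a genuine gap in part (1).

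First, a simplification you miss: by Lemma~\ref{mu2ps}, $\U^{\pm}(\mathcal{W}_i)=\U(\mathcal{W}_i)$ unless $\mathcal{W}_i$ is hyperbolic, so in most cases there is nothing to prove. The content of part~(1) is concentrated in the hyperbolic cases. Second, and more seriously, you have the difficulty of (a) and (b) exactly backwards. Condition (b) is trivial: $\widehat{\sigma}_i^2 s_i(\sigma_i^2)^{-1}$ lies in the central $T$ (not $\mu_8$ — this is a $T$-extension), and $T$ is divisible, so one rescales $\widehat{\sigma}_i$ by a square root. Condition (a), by contrast, is \emph{not} a consequence of any a priori ``equivariance of Kudla's formulae.'' The discrepancy $\chi(g)=\widehat{\sigma}_i s_i(g)\widehat{\sigma}_i^{-1} s_i(\sigma_i g\sigma_i^{-1})^{-1}$ is a character of $\U(\mathcal{W}_i)$ valued in $T$, and what you need is that its class in $\Ha^1\bigl(\mu_2,\Hom(\U(\mathcal{W}_i),T)\bigr)$ vanishes — that is, that $\chi$ can be written as $\psi^{\sigma}\psi^{-1}$, so that a suitable twist $s_i\psi$ becomes $\sigma_i$-equivariant. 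This is precisely the image $p\bigl([\widetilde{C}_{X^{\ast}}]\bigr)$ in the Hochschild--Serre six-term sequence for $\U(\mathcal{W}_i)\trianglelefteq\U^{\pm}(\mathcal{W}_i)$, and its vanishing is what the paper actually proves by computation with the extended cocycle.

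For $D=E$ or $\mathbb{H}$, and for $D=F$ with $\mathcal{W}_1$ symplectic paired against an even-dimensional orthogonal $\mathcal{W}_2$, the computation reduces (via Lemma~\ref{comse}) to a Hilbert symbol $(-1,\det a)_F$ which is trivial because reduced norms resp.\ even powers are positive; there your plan goes through. But in the case $D=F$, $\mathcal{W}_1$ orthogonal hyperbolic, $\mathcal{W}_2$ symplectic with $4\nmid\dim\mathcal{W}_2$, the cocycle restricted to $\Oa^{\pm}(\mathcal{W}_1)$ factors through $\Oa^{\pm}(\mathcal{W}_1)/\SO(\mathcal{W}_1)\simeq\mu_2\times\mu_2$ and is \emph{not} a coboundary there, so no amount of ``direct computation'' of a twist of $s_i$ on $\U(\mathcal{W}_1)$ alone will produce your condition (a). The paper's proof in that case is forced to pass to the larger Hochschild--Serre sequence with $\SO(\mathcal{W}_1)$ normal and show that the nontrivial quotient class lies in the image of the transgression $\Hom(\SO(\mathcal{W}_1),T)^{\Oa^{\pm}(\mathcal{W}_1)}\to\Ha^2(\mu_2\times\mu_2,T)$, hence in the kernel of inflation. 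This extra step is essential, and your proposal as written does not anticipate it; if carried out as stated, it would stall exactly at the point where the quotient cocycle on $\mu_2\times\mu_2$ refuses to split.
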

\begin{proposition}\label{comm1}
Let  $(\mathcal{W}_1, \mathcal{W}_2)$ be a pair  in Theorem \ref{scindagedugroupeR01}.
\begin{itemize}
\item[(1)] If $D=E$ or $\mathbb{H}$,  $\widehat{\U^{\pm}}(\mathcal{W}_1) $ commutes with  $\widehat{\U^{\pm}}(\mathcal{W}_2)$ in $\widehat{\Sp}^{\pm}(W)$.
\item[(2)] If $D=F$,  assume $\epsilon_1=-1$, $\epsilon_2=1$.
\begin{itemize}
 \item[(a)] If $2\mid \dim \mathcal{W}_2$, and $\mathcal{W}_2$ is not  a hyperbolic space, then:
 \begin{itemize}
\item[(i)] $\Oa^{\pm}(\mathcal{W}_2)=\Oa(\mathcal{W}_2)$.
 \item[(ii)] if $4 \mid \dim \mathcal{W}_1$, then $\widehat{\Sp^{\pm}}(\mathcal{W}_1) $ commutes with  $\Oa(\mathcal{W}_2)$ in $\widehat{\Sp}^{\pm}(W)$.
  \item[(iii)] if $4\nmid \dim \mathcal{W}_1$, then $\widehat{\Sp^{\pm}}(\mathcal{W}_1) $ only commutes with  $\SO(\mathcal{W}_2)$ in $\widehat{\Sp}^{\pm}(W)$, and $\Oa(\mathcal{W}_2)$ only commutes with  $\widehat{\Sp}(\mathcal{W}_1)$ in $\widehat{\Sp}^{\pm}(W)$.
  \end{itemize}
   \item[(b)] If $2\mid \dim \mathcal{W}_2$, and $\mathcal{W}_2$ is  a hyperbolic space, then:
 \begin{itemize}
\item[(i)] $\widehat{\Oa^{\pm}}(\mathcal{W}_2)\simeq \Oa^{\pm}(\mathcal{W}_2) \times T$.
 \item[(ii)] if $4 \mid \dim \mathcal{W}_1$,  then $\widehat{\Sp^{\pm}}(\mathcal{W}_1) $ commutes with  $\widehat{\Oa^{\pm}}(\mathcal{W}_2)$ in $\widehat{\Sp}^{\pm}(W)$.
  \item[(iii)] if $4 \nmid \dim \mathcal{W}_1$ and  $4\mid \dim \mathcal{W}_2$, then $\widehat{\Sp^{\pm}}(\mathcal{W}_1) $ only commutes with  $\widehat{\SO^{\pm}}(\mathcal{W}_2)$ in $\widehat{\Sp}^{\pm}(W)$, and $\widehat{\Oa^{\pm}}(\mathcal{W}_2)$ only commutes with  $\widehat{\Sp}(\mathcal{W}_1)$ in $\widehat{\Sp}^{\pm}(W)$.
 \item[(iv)] if  $4\nmid \dim \mathcal{W}_1$ and   $4\nmid \dim \mathcal{W}_2$,  then $\widehat{\Sp^{\pm}}(\mathcal{W}_1) $ only commutes with  $\widehat{\SO}(\mathcal{W}_2)$ in $\widehat{\Sp}^{\pm}(W)$, and $\widehat{\Oa^{\pm}}(\mathcal{W}_2)$ only commutes with  $\widehat{\Sp}(\mathcal{W}_1)$ in $\widehat{\Sp}^{\pm}(W)$.
  \end{itemize}
  \item[(c)]  If $2\nmid \dim \mathcal{W}_2$, then:
\begin{itemize}
\item[(i)] $\Oa^{\pm}(\mathcal{W}_2)=\Oa(\mathcal{W}_2)$.
 \item[(ii)] if $4 \mid \dim \mathcal{W}_1$, then $\widehat{\Sp^{\pm}}(\mathcal{W}_1) $ commutes with  $\Oa(\mathcal{W}_2)$ in $\widehat{\Sp}^{\pm}(W)$.
  \item[(iii)] if $4\nmid \dim \mathcal{W}_1$, then $\widehat{\Sp^{\pm}}(\mathcal{W}_1) $ only commutes with  $\SO(\mathcal{W}_2)$, and  $\Oa(\mathcal{W}_2)$ only commutes with  $\widehat{\Sp}(\mathcal{W}_1)$ in $\widehat{\Sp}^{\pm}(W)$.
  \end{itemize}
    \end{itemize}
    \end{itemize}
\end{proposition}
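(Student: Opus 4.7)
The overall strategy is to bootstrap from Theorem \ref{scindagedugroupeR41}, which gives commutativity of $\widehat{\U}(\mathcal{W}_1)$ and $\widehat{\U}(\mathcal{W}_2)$ in $\widehat{\Sp}(W)$, and then to check what happens on the ``new'' generators arising when one passes from $\U(\mathcal{W}_i)$ to $\U^{\pm}(\mathcal{W}_i)$. Using the semidirect decomposition $\widetilde{\Sp}^{\pm}(W)\simeq F_2\ltimes \widetilde{\Sp}(W)$ recalled in the introduction, every element of $\widehat{\U^{\pm}}(\mathcal{W}_i)$ is of the form $\tau\cdot\widehat{g}$, with $\widehat{g}\in\widehat{\U}(\mathcal{W}_i)$ and $\tau$ in a lift of the quotient $\U^{\pm}(\mathcal{W}_i)/\U(\mathcal{W}_i)\subset F^{\times}/F^{\times 2}$. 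Thus the commutativity statement reduces to two pieces: (i) the commutator of two such lifts $\tau_1,\tau_2$ of generators for $i=1,2$, and (ii) the conjugation action of a lift $\tau_i$ on the full subgroup $\widehat{\U}(\mathcal{W}_{3-i})$. For (ii) one uses Proposition \ref{splitting1} to fix canonical splittings of $\widehat{\U^{\pm}}(\mathcal{W}_i)$ whenever they exist, and transports the conjugation to a computation inside $\widetilde{\Sp}(W)$ via the Perrin-Rao cocycle $\widetilde{c}_{X^{\ast}}$.

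For part (1), when $D=E$ or $\mathbb{H}$, the similitude factor of $\U^{\pm}(\mathcal{W}_i)$ takes values in $F^{\times}_{+}$ modulo $F^{\times 2}$, so typically $\U^{\pm}(\mathcal{W}_i)=\U(\mathcal{W}_i)$ and Theorem \ref{scindagedugroupeR41} applies directly; in the remaining sub-cases one checks that the new coset is represented by a scalar element whose lift acts trivially on the partner factor in the Schr\"odinger model. For part (2), which is the genuine content of the proposition, I would first explain why $\Oa^{\pm}(\mathcal{W}_2)\ne \Oa(\mathcal{W}_2)$ if and only if the form $-q$ is isometric to $q$, i.e. exactly when $\mathcal{W}_2$ is hyperbolic of even dimension: this is precisely the dichotomy between sub-case (a)/(c) and sub-case (b). Part (b)(i) is then a clean consequence of Proposition \ref{splitting1}(2)(i).

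The main work is the case analysis in (2)(a)(ii)-(iii), (2)(b)(ii)-(iv), and (2)(c)(ii)-(iii). The plan is to pick explicit representatives: for $\tau_1\in \Sp^{\pm}(\mathcal{W}_1)\setminus\Sp(\mathcal{W}_1)$ a dilation by $-1$ on a Lagrangian of $\mathcal{W}_1$, and for $\tau_2\in\Oa^{\pm}(\mathcal{W}_2)\setminus\Oa(\mathcal{W}_2)$ an isometry $\mathcal{W}_2\to -\mathcal{W}_2$ (when $\mathcal{W}_2$ is hyperbolic), together with the standard generator of $\Oa(\mathcal{W}_2)/\SO(\mathcal{W}_2)$. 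After lifting these to $\widehat{\Sp}^{\pm}(W)$, one computes the two commutators
\[
[\widehat{\tau_1},\widehat{h_2}]\in\mu_8,\qquad [\widehat{h_1},\widehat{\tau_2}]\in\mu_8
\]
for arbitrary $h_i\in\U(\mathcal{W}_i)$, and shows that each equals $1$ or $-1$ in $T$ according to a sign depending on $\dim\mathcal{W}_1\bmod 4$ and $\dim\mathcal{W}_2\bmod 4$. The dichotomy between $\Oa$ and $\SO$ in the conclusion emerges precisely because the extra generator of $\Oa/\SO$ contributes a sign of $(-1)^{\dim\mathcal{W}_1/2}$ under conjugation by $\tau_1$.

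The main obstacle will be the explicit cocycle computation. When $\tau_1$ is regarded as an element of $\Sp^{\pm}(W)$ it acts on $W=\mathcal{W}_1\otimes_D \mathcal{W}_2$ by scaling the symplectic form, and the associated conjugation on $\widetilde{\Sp}(W)$ is governed by the dependence of $\widetilde{c}_{X^{\ast}}$ on a change of Lagrangian combined with a change of symplectic form; this is a Maslov-index / Weil-constant computation. Once this is carried out in the style of \cite{LiVe}, Sections 1.4-1.7, and \cite{Ra}, the sign turns out to be an eighth root of unity that simplifies to $\pm 1$ under the parity hypotheses of the proposition, yielding the stated list of commutation properties. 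The bookkeeping (keeping track of $\Oa$ vs. $\SO$ and of hyperbolic vs. non-hyperbolic forms simultaneously) is the delicate part, and I would organise the final write-up as a single table indexed by the sub-cases (a), (b), (c) and by the residues of $\dim\mathcal{W}_i$ modulo $4$.
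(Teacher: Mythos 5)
Your overall reduction---isolating coset representatives $s_i(-1)\in\U^{\pm}(\mathcal{W}_i)\setminus\U(\mathcal{W}_i)$, invoking Theorem~\ref{scindagedugroupeR41} to handle the $\U(\mathcal{W}_i)$-parts, and checking commutators involving the new representatives---is the route the paper takes, and your prediction that the answer is a sign governed by $\dim\mathcal{W}_i\bmod 4$ and the $\Oa/\SO$ dichotomy is correct. However, your handling of Part~(1) is wrong: it is \emph{not} true that $\U^{\pm}(\mathcal{W}_i)=\U(\mathcal{W}_i)$ over $D=E,\mathbb{H}$. By Lemma~\ref{mu2ps}, $\U^{\pm}\ne\U$ precisely when $\mathcal{W}_i$ is hyperbolic, a condition entirely independent of $D$, and hyperbolic hermitian spaces exist over $E$ and $\mathbb{H}$ in every even dimension. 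The actual reason Part~(1) holds is that the commutator cocycle collapses to the Hilbert symbol $((\det g)^{n_1},-1)_F$, and over $E$ or $\mathbb{H}$ the determinant of any isometry $g\in\U(\mathcal{W}_2)$ is a (reduced) norm, hence positive, so the symbol is trivial regardless of the parities. Your ``scalar element acting trivially in the Schr\"odinger model'' heuristic is not a substitute for this norm argument.

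Second, the cocycle computation you defer is more elementary than the Maslov-index/change-of-Lagrangian calculation you sketch. Choosing the polarisation $W=(Y\otimes\mathcal{W}_2)\oplus(Y^{\ast}\otimes\mathcal{W}_2)$ adapted to a hyperbolic splitting $\mathcal{W}_1=Y\oplus Y^{\ast}$ (and similarly adapted split hyperbolic bases when both factors are hyperbolic), the representatives $s_1(\epsilon)\otimes I$, $I\otimes s_2(\epsilon)$, and the elements $I\otimes g_2$ all lie in the Siegel parabolic $P(X^{\ast})$; thus the Leray invariant vanishes, $\widetilde{c}_{X^{\ast}}$ is trivial on the relevant pairs, and formula~(\ref{yg}) reduces $\widetilde{C}_{X^{\ast}}$ to the single factor $\nu(\epsilon,g)$. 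By Example~\ref{example2} (Barthel's formula), for such $g$ with $|S|=0$ one simply has $\nu(\epsilon,g)=(\det a,\epsilon)_F$ for the $X^{\ast}$-block $a$, which produces $((\det g_2)^{n_1},-1)_F$ in the mixed case and $(-1,(-1)^{n_1 n_2})_F$ when both $\mathcal{W}_i$ are hyperbolic. Carrying out these one-line evaluations case by case, and noting that $4\mid\dim\mathcal{W}_i$ iff $2\mid n_i$, is exactly what turns your sketch into the paper's proof; there is no Maslov index in sight.
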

It seems that Prop.\ref{comm1}(2)(c) supports J.-S.Li's works \cite{Li1},\cite{Li2} and Jiang-Soudry 's work \cite{JiSo} on the theta correspondence between $\Mp_{2n}$ and $\SO_{2r+1}$. For the finer results on   Shimura-Waldspurger correspondences, one can see Gan-Ichino, Gan-Li's recent papers \cite{GaIc}, \cite{GaLi}.
  
\subsection{} In the third part(cf. Section \ref{Parenthesis}), we consider the example when $\dim W=2$ and  some theta series associated to  the group $\SL_2^{\pm}(\R)$.  We mainly follow the classical book \cite{LiVe} to deal with theta series. In an  alternative way by using the global method, one can  see \cite{We} and \cite{Ge}. Retain  notations from Section \ref{notationss}. Let $\Gamma=\breve{\Gamma}(2)^{\pm}=\langle \omega\rangle \ltimes \Gamma(2)^{\pm}$ be 
a discrete subgroup of $\SL_2^{\pm}(\R)$.  Let $\gamma=\begin{pmatrix}
                                        a & b \\
                                        c & d 
                                      \end{pmatrix}\in \Gamma $. Let us define $\sgn(z)=1$ or $-1$ according to $z\in \mathcal{H}$ or $z\in \mathcal{H}^-$. Let us define 
                                     
\begin{align*}
\theta_{1/2}(z, \epsilon)&=\sum_{n\in \Z} e^{i \epsilon  \pi n^2 z},
\end{align*}
\begin{align*}
\theta_{3/2}(z, \epsilon)&=\sum_{n\in \Z} n e^{i \epsilon  \pi n^2 z},
\end{align*}
for $z\in \mathcal{H}^{\pm}$, $\epsilon \in \mu_2$. Then: 
\begin{align}\label{theta1}
 \theta_{1/2}(\tfrac{az+b}{cz+d}, \epsilon)=\lambda^{\pm}(\gamma, \epsilon) \sqrt{\det \gamma(cz+d)}\sum_{n\in \Z} e^{i (\det \gamma)\epsilon  \pi n^2 z},
\end{align}
\begin{align}\label{theta2}
 \theta_{3/2}(\tfrac{az+b}{cz+d}, \epsilon)=\lambda^{\pm}(\gamma,  \epsilon) (\sqrt{\det \gamma(cz+d)})^3\sum_{n\in \Z} ne^{i (\det \gamma)\epsilon  \pi n^2 z},
\end{align}
for $z\in \mathcal{H}^{\pm}$, $\gamma\in \Gamma$ if $\epsilon=1$ or $\gamma \in \Gamma\setminus \{ h(-1), h(-1)h_{-1}\}$ if $\epsilon=-1$, and some constant $\lambda^{\pm}(\gamma, \epsilon)$.  The main difference between  theta series associated to  $\SL_2^{\pm}(\R)$ and $\SL_2(\R)$ is the following: these Metaplectic coverings over $\breve{\Gamma}(2)^{\pm}$ and $\SO_2^{\pm}(\R)$  are not splitting.(cf. Lemmas \ref{thetaf3}, \ref{nonsp})   The above two theta series only generalize the theorem in \cite[p.204, 2.4.13]{LiVe}.  In \cite[p.205, Section 2.4.16]{LiVe}, the two authors also deal with much  complicated theta series and  connect them  with Serre and Stark's important result in \cite{SeSt}. We expect to work out some similar results  in a separate  paper in future. Finally let us propose a question to think:
\begin{question}
If we replace the above discrete group $\Gamma$ by $\SL_2(\Z)$ or $\SL_2^{\pm}(\Z)$, how to obtain  modular forms or theta series from Weil representations in a natural way?
\end{question}
\subsection{} In the fourth part(cf. Section \ref{app}),  we  study the works \cite{LiVe}, \cite{Pe}, and \cite{Ra} in a simple case---$\dim W=2$. Let  $\widehat{\SL_2}(\Z)$ denote  the central extension group of $\SL_2(\Z)$ by $T$.    By \cite{As}, \cite{KiMe}, etc.,  it is known that $\widehat{\SL_2}(\Z)$ can be split by some trivialization map from  Dedekind sums and modular forms of weight $12$. On the other hand, in  \cite{LiVe}, the authors also gave another trivialization map for $\Gamma(2)$. In that section, we will examine their results in an elementary way, and then formulate the explicit character between these two trivialization maps for $\Gamma(2)$. Finally, by comparing two different models of Weil representations, we will demonstrate the splitting for $\widehat{\Gamma(2)}$ in a different way.  This method has already been used in \cite{LiVe} only
not pointing out.
\section{Preliminaries}
 \subsection{Notations and conventions}
 In the whole text, we will  use the following notion and conventions. Let $\R$ and $\C$ denote  the usual real numbers and complex numbers. Let $F=\R$. Let $(W, \langle, \rangle)$ be a symplectic vector space  of  dimension $2m$   over  $F$.  Let $\Ha(W)=W\oplus F$ denote the usual Heisenbeg group, with the multiplication law given by
$$(w, t)(w',t')=(w+w', t+t'+\tfrac{\langle w,w'\rangle}{2}),
\quad\quad w, w'\in W, t,t'\in F.$$
 Let   $\Sp(W)$(resp.$\GSp(W)$) denote the corresponding symplectic group(resp.similitude symplectic group). Let $\lambda$ be the similitude factor map from $\GSp(W)$ to $F^{\times}$, and for $g\in \GSp(W)$, let $\lambda_g$ denote its similitude factor.  Let $\{e_1, \cdots, e_m; e_1^{\ast}, \cdots, e_m^{\ast}\}$ be a symplectic basis of $W$ so that $\langle e_i, e_j\rangle=0=\langle e_i^{\ast}, e_j^{\ast}\rangle$, $\langle e_i, e_j^{\ast}\rangle =\delta_{ij}$. Let $X=\Span\{ e_1, \cdots, e_m\}$, $X^{\ast}=\Span\{e_1^{\ast}, \cdots, e_m^{\ast}\}$.

 In $\Sp(W)$, let  $P=P(X^{\ast})=\{ g\in \Sp(W) \mid X^{\ast}g=X^{\ast}\}=\{ \begin{pmatrix} a& b\\ 0& (a^{\ast})^{-1}\end{pmatrix}\in \Sp(W)\}$, $N=\{ \begin{pmatrix} 1& b\\ 0& 1\end{pmatrix} \in \Sp(W)\}$, $M=\{ \begin{pmatrix} a& 0\\ 0& (a^{\ast})^{-1}\end{pmatrix}\in \Sp(W)\}$, $N^{-}=\{ \begin{pmatrix} 1& 0\\ c& 1\end{pmatrix} \in \Sp(W)\}$.  For a subset $S\subseteq \{1, \cdots, m\}$, let us define an element   $\omega_S$ of  $ \Sp(W)$ as follows: $ (e_i)\omega_S=\left\{\begin{array}{lr}
-e_i^{\ast}& i\in S\\
 e_i & i\notin S
 \end{array}\right.$ and $ (e_i^{\ast})\omega_S=\left\{\begin{array}{lr}
e_i^{\ast}& i\notin S\\
 e_i & i\in S
 \end{array}\right.$.  In particular,  we will  write simply  $\omega$,  for  $S=\{1, \cdots, m\}$.

  Let $(-, -)_F$ denote the Hilbert symbol defined from $F^{\times} \times F^{\times}$ to $\{ \pm 1\}$.  More precisely,
   $$ (a, b)_F=\left\{ \begin{array}{lr} -1& a<0, b<0\\ 1 & \textrm{ otherwise} \end{array} \right..$$
   Let $((a))$ to denote the number  $0$ or $a-[a]-\tfrac{1}{2}$, depending on whether $a$ is an integer or not.   For two coprime integers $c,d$ with $c>0$, let $s(d, c)$ denote  the Dedekind sum, given by $\sum_{k=1}^c ((\tfrac{k}{c})) ((\tfrac{kd}{c}))$.

   If  $V$ is a vector space over $F$ of dimension $l$, and  $Q$ is    a non-degenerate quadratic form on $V$, defined as
     $Q(x)=\sum_{i,j} b_{ij} x_ix_j$,
     for $b_{ij}=b_{ji}$, then the determinant $\det Q$ of $Q$ is defined by $\det Q=\det(b_{ij})$. If $Q \sim a_1x_1^2+ \cdots +a_lx_l^2$,  the Hasse invariant $\epsilon(Q)$ of $Q$ is defined by  $\epsilon(Q)=\prod_{1 \leq i< j\leq l} (a_i, a_j)_F$.

If $\psi$ is a non-trivial character of $F$ and $a\in F$,   we will  write   $\psi^a$ for the character: $t\longrightarrow \psi(at)$.  Let $i=\sqrt{-1}$. We will let  $\psi_0$ denote the fixed character of $F$ defined as: $ t \longmapsto e^{2\pi it}$, for $t\in F$. Let $\mu_n=\langle e^{\tfrac{2\pi i}{n}}\rangle$,  $ e^{\tfrac{2\pi i}{n}} \in \C^{\times}$. Let $T=\{ e^{i\theta}\mid \theta \in \R\}$.

For an $F$-vector space $V$, we will let $L^p(V)$(resp. $S(V)$) denote the $L^p$-vector space(resp. Schwartz space) over $V$.
\subsection{Weil representation}
Note that the center of $\Ha(W)$ is just $F$. Let $\psi$ be a non-trivial continuous unitary character of $F$. According to the Stone-von Neumann's theorem, only one unitary irreducible complex representation of $\Ha(W)$ with central character $\psi$, up to unitary equivalence, exists.  Let us call it the Heisenberg representation, and denote it  by $\pi_{\psi}$. The symplectic group $\Sp(W)$ can then act on $\Ha(W)$ and leave the center $F$ pointwise unchanged. By Weil's famous work,  $\pi_{\psi}$ can give rise  to a projective representation of $\Sp(W)$ and then  to an actual representation of a $\C^{\times}$-covering group over $\Sp(W)$. As is customary, we refer to such group as a Metaplectic group and the actual representation as a Weil representation. As $F=\R$,  this special  central cover can descend  to an  $8$-degree or a $2$-degree cover over $\Sp(W)$.  Let us recall some explicit $2$-cocycles associated with   Metaplectic groups by following \cite{LiVe, Pe, Ra}.
\subsection{2-cocycle I}
 Let $\psi=\psi_0^{e}$ be a non-trivial character of $F$, for some $e\in F^{\times}$.   Let  $(Q,V)$ be  a non-degenerate quadratic vector space  on $F$. Then:
 $$Q(x+y)-Q(x)-Q(y)=(x, y \rho),$$
 for an isomorphism $\rho: V \longrightarrow V^{\ast}=\Hom_F(V, F)$.  Applying the character $\psi$, we obtain
 $$\psi(Q(x+y))\psi(Q(x))^{-1}\psi(Q(y))^{-1}=\psi(x, y \rho).$$
 Note that  $\psi(x, y\rho)$ is a bicharacter in $x$ and $y$. Following  Weil, we call $  \psi(Q(x))$ a character of   second degree. By \cite{We}, there exists a complex constant  associated with  $\psi(Q(x))$, called the Weil index, denoted by $\gamma(\psi(Q))$.
 \begin{example}[{\cite[Prop. A.10]{Ra}}]
 If $V=F$ and $Q(x)=x^2$, then $\gamma(\psi(Q))=\psi_0( \tfrac{\sign e}{8})$.
 \end{example}
If $(Q, V)$ is the above case,  we  will write   $\gamma(\psi)$ for   $\gamma(\psi(Q))$, and   define  $\gamma(a, \psi)= \tfrac{\gamma(\psi^a)}{\gamma(\psi)}$ by following \cite{Ku}. For the general non-degenerate  quadratic space $(Q,V)$,  we have:
\begin{lemma}[{\cite[Def.A.6]{Ra}}]
$\gamma(\psi(Q))=\epsilon(Q)\gamma(\psi)^{\dim V}\gamma(\det Q, \psi)$.
\end{lemma}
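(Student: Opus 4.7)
The plan is to diagonalize the quadratic form $Q$ and reduce the computation to the one-dimensional case that has already been isolated in the preceding example, then assemble the pieces using the bilinear cocycle relation satisfied by $a\longmapsto \gamma(a,\psi)$.

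First I would choose an orthogonal basis for $(V,Q)$ so that $Q\sim a_1 x_1^2+\cdots + a_l x_l^2$ with $l=\dim V$. Writing $(V,Q)=(V_1,Q_1)\perp\cdots\perp (V_l,Q_l)$ with $Q_i(x)=a_i x^2$, my next step is to establish the multiplicativity
\begin{equation*}
\gamma\bigl(\psi(Q_1\perp Q_2)\bigr)=\gamma\bigl(\psi(Q_1)\bigr)\,\gamma\bigl(\psi(Q_2)\bigr),
\end{equation*}
which follows from Weil's integral description of $\gamma$ (a Gaussian integral over the Schwartz space splits as a product over orthogonal factors). Iterating this,
\begin{equation*}
\gamma(\psi(Q))=\prod_{i=1}^{l}\gamma\bigl(\psi(Q_i)\bigr)=\prod_{i=1}^{l}\gamma(\psi^{a_i}).
\end{equation*}

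Second, I would rewrite each factor via the definition $\gamma(a_i,\psi)=\gamma(\psi^{a_i})/\gamma(\psi)$, so that
\begin{equation*}
\gamma(\psi(Q))=\gamma(\psi)^{l}\prod_{i=1}^{l}\gamma(a_i,\psi).
\end{equation*}
The decisive algebraic ingredient is the bilinear identity
\begin{equation*}
\gamma(a,\psi)\gamma(b,\psi)=(a,b)_F\,\gamma(ab,\psi),\qquad a,b\in F^{\times},
\end{equation*}
which is itself a consequence of the cocycle property of the Weil index on one-dimensional quadratic spaces and the formula for $\gamma$ of a rank-two form. Granting this identity and arguing by induction on $l$, one obtains
\begin{equation*}
\prod_{i=1}^{l}\gamma(a_i,\psi)=\Bigl(\prod_{1\leq i<j\leq l}(a_i,a_j)_F\Bigr)\gamma(a_1\cdots a_l,\psi)=\epsilon(Q)\,\gamma(\det Q,\psi).
\end{equation*}
Combining with the previous display yields exactly $\gamma(\psi(Q))=\epsilon(Q)\gamma(\psi)^{\dim V}\gamma(\det Q,\psi)$.

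The main obstacle is not the diagonalization or the induction, which are mechanical, but rather the justification of the two structural inputs: multiplicativity of $\gamma$ under orthogonal sums, and the Hilbert-symbol cocycle relation for $\gamma(\,\cdot\,,\psi)$. Both are classical (and can be read off from Weil's Fourier-analytic definition of $\gamma(\psi(Q))$ as a regularized Gaussian integral), but they require care: the Gaussian integrals are not absolutely convergent, so one needs a principal-value/tempered-distribution interpretation, and independence from the choice of diagonalization must be verified at the level of Weil indices rather than merely at the level of isometry classes. Once these two inputs are in place, the proof is a routine induction on $\dim V$ with the rank-one case supplied by the preceding example.
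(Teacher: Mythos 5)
The paper does not actually prove this lemma; it cites it directly from Rao's appendix (\cite[Def.~A.6]{Ra}), so there is no in-paper argument to compare against. Your reconstruction is the standard derivation from Weil's Fourier-analytic definition of $\gamma$, and it is correct: the diagonalization, the orthogonal-sum multiplicativity $\gamma(\psi(Q_1\perp Q_2))=\gamma(\psi(Q_1))\gamma(\psi(Q_2))$, the rewriting $\gamma(\psi^{a_i})=\gamma(\psi)\gamma(a_i,\psi)$, and the Hilbert-symbol cocycle identity $\gamma(a,\psi)\gamma(b,\psi)=(a,b)_F\,\gamma(ab,\psi)$ together with bilinearity of $(\,\cdot\,,\cdot\,)_F$ close the induction exactly as you outline. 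Your caveat about the conditional convergence of the Gaussian integrals and about checking independence of the chosen diagonalization at the level of Weil indices (not just isometry classes) is well placed; both points are handled in Weil's original treatment and are exactly why the statement appears in Rao as a recorded fact rather than something re-derived. In short: the proof is sound and coincides with the route the cited source takes.
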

 For $g_1, g_2 \in \Sp(W)$, let $q(g_1,g_2)=q(X^{\ast}, X^{\ast} g_2^{-1}, X^{\ast} g_1)$ be the corresponding Leray invariant as given in  \cite[Sections 2.3-2.6]{Ra}. Let us define:
\begin{align}
\widetilde{c}_{X^{\ast}}(g_1, g_2)=\gamma(\psi(q(g_1, g_2)/2)).
\end{align}
Then  $\widetilde{c}_{X^{\ast}}(-,-)$ defines a non-trivial class of order $2$ in $\Ha^2(\Sp(W), \mu_8)$. Let  $\widetilde{\Sp}(W)$  denote the associated   Metaplectic group.
\subsection{2-cocycle II}
As is known that  $\Sp(W)= \sqcup_{j=0}^m C_j$,
where $C_j= P\omega_SP$ for the above $\omega_S$ with $|S|=j$.   In \cite{Ra}, Rao defined the following functions:
\begin{align}
x: \Sp(W) \longrightarrow F^{\times}/{(F^{\times})}^2; p_1\omega_Sp_2 \longmapsto \det(p_1p_2|_{X^{\ast}}) (F^{\times })^2,
\end{align}
\begin{align}
t: \Sp(W) \times \Sp(W) \longrightarrow \Z; (g_1, g_2)\longmapsto \tfrac{1}{2}(|S_1| + |S_2| - |S_3| -l),
\end{align}
 where $ g_1=p_1\omega_{S_1} p_1', g_2= p_2 \omega_{S_2} p_2'$  and  $ g_1g_2= p_3 \omega_{S_3} p_3', l=\dim q(g_1, g_2)$.  By  \cite[p.360, Def.5.2]{Ra},  we can  define the normalizing constant by
\begin{align}\label{mx}
m_{X^{\ast}}: \Sp(W) \longrightarrow \mu_8; g \longmapsto \gamma(x(g), \psi^{\tfrac{1}{2}})^{-1} \gamma(\psi^{\tfrac{1}{2}})^{-j(g)}
\end{align}
 for $g=p_1\omega_Sp_2$, $j(g)=|S|$. Let  us define
 \begin{equation}\label{28inter}
\overline{c}_{X^{\ast}}(g_1, g_2)=m_{X^{\ast}}(g_1g_2)^{-1} m_{X^{\ast}}(g_1) m_{X^{\ast}}(g_2) \widetilde{c}_{ {X^{\ast}}}(g_1,g_2), \quad\quad g_i\in \Sp(W).
\end{equation}
 By \cite[p.360]{Ra},
 $\overline{c}_{X^{\ast}}$ defines a $2$-cocycle on $\Sp(W)$ with values in $\mu_2$.
 Let  $\overline{\Sp}(W)$  denote the associated   Metaplectic group. More precisely, by \cite[Thm.5.3]{Ra} and \cite[p.21, Remark 4.6]{Ku},we have:
\begin{align}
\overline{c}_{X^{\ast}}(g_1, g_2)= (x(g_1), x(g_2))_F(-x(g_1)x(g_2), x(g_1g_2))_F ((-1)^t, \det(2q))_F (-1, -1)_F^{\tfrac{t(t-1)}{2}}\epsilon(2q),
\end{align}
where $t=t(g_1,g_2)$, $q=q(g_1,g_2)$, for  $g_1, g_2 \in \Sp(W)$.
\begin{example}
If $\dim W=2$, $\Sp(W)\simeq \SL_2(F)$. For $g_1, g_2, g_3=g_1g_2\in \SL_2(F)$ with $g_i=\begin{pmatrix}
a_i & b_i\\
c_i& d_i\end{pmatrix}$, we have:
\begin{itemize}
\item[(1)] $x(g_1)=\left\{\begin{array}{lr} d_1F^{\times 2} & \textrm{ if } c_1=0\\
c_1F^{\times 2} & \textrm{ if } c_1\neq 0\end{array}\right.$;
\item[(2)] $\overline{c}_{X^{\ast}}(g_1, g_2)=(x(g_1), x(g_2))_F(-x(g_1)x(g_2), x(g_3))_F$.
\end{itemize}
\end{example}
\begin{proof}
See \cite[p.364]{Ra}.
\end{proof}
In particular, if we take $g_1=\begin{pmatrix}
-1 & 0\\
0& -1\end{pmatrix}$, then $\overline{c}_{X^{\ast}}(g_1, g_2)=(x(g_1), x(g_2))_F=(-1,x(g_2))_F$, which is not continuous on the variable $g_2$. For the reason, we can take a series of elements $g_2^{(n)}=\begin{pmatrix}
-1 & 0\\
\tfrac{1}{n}& -1\end{pmatrix}$. Then: $g_2^{(n)} \longrightarrow g_1$ as $n\longrightarrow +\infty$, and  $\overline{c}_{X^{\ast}}(g_1, g_2^{(n)})=1$, but $\overline{c}_{X^{\ast}}(g_1, g_1)=-1$. Note that $\overline{c}_{X^{\ast}}(g_1, g_1^{-1})\neq 1$.
\subsection{Schr\"odinger model}\label{Sch} Note that $X^{\ast}\times F$ is a closed subgroup of $\Ha(W)$. Let $\psi_{X^{\ast}}$ be a one-dimensional  unitary representation of $X^{\ast}\times F$ extended from $\psi$ by the trivial action of $X^{\ast}$.  Let
$$ \pi_{\psi}=\Ind_{X^{\ast}\times F}^{\Ha(W)}\psi_{X^{\ast}}, V_{\psi}=\Ind_{X^{\ast} \times F}^{\Ha(W)} \C.$$
According to \cite[pp.110-111]{BeHa}, $\pi_{\psi}$ defines a Heisenberg representation $\Ha(W)$ with central character $\psi$. By Weil's result, $\pi_{\psi}$ can extend to be a unitary representation of $\widetilde{\Sp}(W) \ltimes \Ha(W)$. Here, we require  the action of the center subgroup $\mu_8$ of $\widetilde{\Sp}(W)$ to be  the identity map. Since $\Sp(W)$ is a perfect group, this extension is unique.

Let $dx$ denote the self-dual Haar measure on $F$ with respect to $\psi$.  Let $dx$,  $dx^{\ast}$ be the Haar measures on $X$, $X^{\ast}$, given by the product  measures from $dx$ of  $F$. The representation $\pi_{\psi}$ can be realized on $L^2(X)$ by the following formulas:
\begin{equation}\label{representationsp11}
\pi_{\psi}[(x,0)+(x^{\ast},0)+(0,k)]f(y)=\psi(k+\langle x+ y,x^{\ast}\rangle) f(x+y),
\end{equation}
\begin{equation}\label{representationsp2}
\pi_{\psi}[ \begin{pmatrix}
  1&b\\
  0 & 1
\end{pmatrix},t]f(y)=t\psi(\tfrac{1}{2}\langle y,yb\rangle) f(y),
\end{equation}
\begin{equation}\label{representationsp3}
\pi_{\psi}[ \begin{pmatrix}
  a& 0\\
  0 &a^{\ast -1 }
\end{pmatrix},t]f(y)=t|\det(a)|^{1/2} f(ya),
\end{equation}
\begin{equation}\label{representationsp4}
\pi_{\psi}([\omega, t])f(y)=t\int_{X^{\ast}} \psi(\langle y, y^{\ast}\rangle) f(y^{\ast}\omega^{-1}) dy^{\ast},
\end{equation}
where $f(y)\in S(X)$, $x,y \in X,  x^{\ast}, y^{\ast}\in   X^{\ast}$, $k\in F$, $t\in \mu_8$, and $\begin{pmatrix}
  a& 0\\
  0 &a^{\ast -1 } \end{pmatrix}$, $\begin{pmatrix}
  1&b\\
  0 & 1
\end{pmatrix}$, $\omega\in \Sp( W)$, $e_i \omega=-e_i^{\ast}$, $e_i^{\ast}\omega=e_i$.
Moreover, under such actions, for $g_1, g_2\in \Sp( W)$,
$$\Pi_{\psi}(g_1)\Pi_{\psi}(g_2)=\widetilde{c}_{X^{\ast}}(g_1,g_2)\Pi_{\psi}(g_1g_2),$$ for the Perrin-Rao's 2-cocyle $\widetilde{c}_{X^{\ast}}(-,-)$ associated with  $\psi$ and $X^{\ast}$. This precise result can be seen from \cite{Ra} and \cite{Pe}.

For the other elements  $\omega_S $, let $X_S=\Span\{ e_i\mid i\in S\}$, $X_{S'}=\Span\{ e_i\mid i\notin S\}$ and $X_S^{\ast}= \Span\{ e^{\ast}_i\mid i\in S\}$, $X_{S'}^{\ast}= \Span\{ e^{\ast}_i\mid i\notin S\}$. By \cite[p.388, Prop.2.1.4]{Pe} or \cite[p.351, Lmm.3.2,(3.9)]{Ra}, for $x=x_s+x_{s'}\in X_S\oplus X_{S'}$,
 \begin{equation}
  \begin{split}
 [\Pi_{\psi}(\omega_S)f](x_s+x_{s'})&=\int_{X_S^{\ast}} \psi(\tfrac{1}{2}\langle x_{s'} \omega_S,x_s  \omega_S\rangle+ \langle z_s^{\ast} \omega_S,  x_s\omega_S\rangle) f(x_{s'}\omega_S+z_s^{\ast}\omega_S)dz_s^{\ast}\\
  &=\int_{X_S^{\ast}} \psi(\langle x_s,  z_s^{\ast}\rangle ) f(x_{s'}\omega_S+z_s^{\ast}\omega^{-1}_S)dz_s^{\ast}.
    \end{split}
  \end{equation}
\subsection{Lattice model}\label{latticemodel}
To state the lattice model, let us define another type of Heisenberg group $\Ha_{\psi}(W)$, which consists of the set $W\oplus T$, with the multiplication law given by
$$(w, t)(w',t')=(w+w', tt'\psi(\langle w,w'\rangle/2)),
\quad\quad w, w'\in W, t,t'\in T.$$
Then there exists a group homomorphism:
$$h_{\psi}: \Ha(W) \longrightarrow \Ha_{\psi}(W); (w, t)\longmapsto (w, \psi(t)).$$
Let $L$ be a lattice in $W$(cf. \cite[p.138]{LiVe}). Let us define
$$L^{\bot}=\{ w\in W\mid \psi(\langle w, l\rangle)=1, \textrm{ for all } l\in L\}.$$
If $L=L^{\bot}$, $L$ is called a self-dual lattice with respect to $\psi$. Let $\Ha(L)$ be the subgroup of $\Ha(W)$ with the underlying set $L\times F$. Let $\Ha_{\psi}(L)$ be the image of $\Ha(L)$ in $\Ha_{\psi}(W)$ via the map $h_{\psi}$. Then $\Ha_{\psi}(L)$ is a  normal  maximal abelian subgroup of $\Ha_{\psi}(W)$. Let us define
$$\rho=\Ind_{\Ha_{\psi}(L)}^{\Ha_{\psi}(W)} \psi_T,$$
where $\psi_T$ is a  unitary character of $\Ha_{\psi}(L)$ which extends  the identity action on $T$. By \cite[Lemma 3.B.3]{BeHa} and \cite[Theorem 4.3.10]{KaTa}, $\rho$ is an irreducible representation of $\Ha_{\psi}(W)$.

Let us go back to $\Ha(W)$. Let $\psi_{L}=\psi_T\circ h_{\psi}$, which is a one-dimensional unitary character of $\Ha(L)$ extended from $\psi$ of $F$. Let us define:
$$\pi'_{\psi}=\Ind_{\Ha(L)}^{\Ha(W)} \psi_L,$$
It can be seen that $\pi'_{\psi}\simeq \rho \circ h_{\psi}$. Hence  $\pi_{\psi}'$ defines a Heisenberg representation of $\Ha(W)$ with central character $\psi$.
\begin{example}\label{laex}
If  $\psi=\psi_0$, $L=\Za e_1 \oplus  \cdots \oplus \Za e_m \oplus \Za e_1^{\ast} \oplus \cdots \oplus \Za e_m^{\ast}$, we let $\mathcal{H}_{\psi}(L)$ be the set of measurable functions $f: W \longrightarrow \C$ such that
\begin{itemize}
\item[(i)] $f(l+w)=\psi(-\tfrac{\langle x_1, x_l^{\ast}\rangle }{2}-\tfrac{\langle l, w\rangle}{2}) f(w)$, for  $l=x_l+x_{l}^{\ast}\in L=(L\cap X)\oplus (L\cap X^{\ast})$, $w\in W$;
\item[(ii)] $\int_{L\setminus W} ||f(w)||^2 dw<+\infty$.
\end{itemize}
Here, we choose a $W$-right invariant measure on $L\setminus W$. Then $\pi_{\psi}'$ can be realized on $\mathcal{H}_{\psi}(L)$ by the following formulas:
$$\pi_{\psi}'([w',t])f(w)=\psi(t+\tfrac{\langle w, w'\rangle}{2})f(w+w')$$
for $w,w'\in W$, $t\in F$.
\end{example}
\section{Extension to $\GSp(W)$}\label{ext}
Our main purpose is to extend  Barthel's results in \cite{Ba}   from the cases of the $p$-adic field  to the real field.
 \subsection{$2$-cocycle I} Let us first consider the $8$-degree covering case. Note that $[\widetilde{c}_{X^{\ast}}]$ is the unique non-trivial class of order $2$ in $\Ha^2(\Sp(W),\mu_8)$, and $\Sp(W)$ is a perfect group.  Let $\alpha$  now be a continuous automorphism of  $\Sp(W)$.  By Moore's cohomology theory, there exists a unique automorphism $\alpha^{\ast}$ of   $\widetilde{\Sp}(W)$ such that the following diagram
\[
\begin{CD}
1 @>>> \mu_8    @>>>  \widetilde{\Sp}(W)@>>>\Sp(W) @>>> 1\\
@.     @|    @VV{\alpha^{\ast}}V  @VV{\alpha}V \\
1 @>>> \mu_8    @>>>  \widetilde{\Sp}(W)@>>>\Sp(W) @>>> 1
\end{CD}
\]
is commutative. Moreover, there exists a unique function $\nu(\alpha,-): \Sp(W) \longrightarrow \mu_8$, such that the following equality holds:
\begin{align}\label{equ}
\widetilde{c}_{X^{\ast}}(g_1^{\alpha}, g_2^{\alpha})=\widetilde{c}_{X^{\ast}}(g_1, g_2)\nu(\alpha,g_1)^{-1}\nu(\alpha,g_2)^{-1}\nu(\alpha,g_1g_2).
\end{align}
The map  $\alpha^{\ast}$ is given as:
$$\alpha^{\ast}:  \widetilde{\Sp}(W) \longrightarrow \widetilde{\Sp}(W); [g, \epsilon] \longmapsto [g, \nu(\alpha, g) \epsilon].$$
\begin{lemma}\label{twoau}
For two automorphisms $\alpha_1, \alpha_2$,  $\nu(\alpha_1\alpha_2, g)=\nu(\alpha_1,g)\nu(\alpha_2, g^{\alpha_1})$.
\end{lemma}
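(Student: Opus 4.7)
The natural approach is a ``double application'' of the defining equation \eqref{equ}, followed by a perfectness argument. I would first write down \eqref{equ} for the automorphism $\alpha_2$ applied to the pair $(g_1^{\alpha_1}, g_2^{\alpha_1})$:
\begin{align*}
\widetilde{c}_{X^{\ast}}(g_1^{\alpha_1\alpha_2}, g_2^{\alpha_1\alpha_2}) = \widetilde{c}_{X^{\ast}}(g_1^{\alpha_1}, g_2^{\alpha_1})\, \nu(\alpha_2,g_1^{\alpha_1})^{-1}\nu(\alpha_2,g_2^{\alpha_1})^{-1}\nu(\alpha_2,(g_1g_2)^{\alpha_1}),
\end{align*}
using that $\alpha_1$ is a group homomorphism so $(g_1g_2)^{\alpha_1}=g_1^{\alpha_1}g_2^{\alpha_1}$. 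Then I would substitute \eqref{equ} for $\alpha_1$ on the pair $(g_1,g_2)$ into the right-hand side, producing one long expression in $\widetilde{c}_{X^{\ast}}(g_1,g_2)$, the $\nu(\alpha_1,\cdot)$'s, and the $\nu(\alpha_2,\cdot^{\alpha_1})$'s.

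Next I would compare that expression with a direct application of \eqref{equ} to the composition $\alpha_1\alpha_2$ on $(g_1,g_2)$. Canceling the common factor $\widetilde{c}_{X^{\ast}}(g_1,g_2)$ and rearranging, the equality reduces to the assertion that the map
\[
h: \Sp(W) \longrightarrow \mu_8,\qquad g \longmapsto \nu(\alpha_1\alpha_2, g)\,\nu(\alpha_1,g)^{-1}\nu(\alpha_2,g^{\alpha_1})^{-1},
\]
satisfies $h(g_1g_2)=h(g_1)h(g_2)$, i.e.\ is a continuous group homomorphism into the abelian group $\mu_8$.

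To conclude, I invoke the fact already used in the paragraph preceding \eqref{equ}: $\Sp(W)$ is perfect, so any homomorphism from $\Sp(W)$ into an abelian group is trivial. Hence $h\equiv 1$, which is exactly the claimed identity $\nu(\alpha_1\alpha_2,g)=\nu(\alpha_1,g)\nu(\alpha_2,g^{\alpha_1})$. (Incidentally, this same perfectness argument is what guarantees the uniqueness of $\nu(\alpha,-)$ asserted before \eqref{equ}.)

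I do not anticipate a genuine obstacle here: once one is careful with the order of composition (the formula forces $g^{\alpha_1\alpha_2}=(g^{\alpha_1})^{\alpha_2}$) and with tracking the six $\nu$-factors through the substitution, the statement drops out mechanically. The only conceptual point is recognizing the combined identity as a cocycle-type relation whose discrepancy is a character of $\Sp(W)$, which must then vanish.
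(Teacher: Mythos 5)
Your proof is correct, but it follows a genuinely different route from the paper's one-line argument. The paper proves the lemma at the level of the covering group: since, by Moore's theory together with perfectness of $\Sp(W)$, each automorphism $\alpha$ has a \emph{unique} lift $\alpha^{\ast}$ to $\widetilde{\Sp}(W)$ fixing the center, the lift of a composition must be the composition of the lifts, $(\alpha_1\alpha_2)^{\ast}=\alpha_2^{\ast}\circ\alpha_1^{\ast}$, and reading off the second coordinate of $(g,\epsilon)^{\alpha_1\alpha_2}=(g^{\alpha_1},\nu(\alpha_1,g)\epsilon)^{\alpha_2}=(g^{\alpha_1\alpha_2},\nu(\alpha_1,g)\nu(\alpha_2,g^{\alpha_1})\epsilon)$ gives the identity at once. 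Your argument stays entirely at the cocycle level: composing the defining relation (\ref{equ}) for $\alpha_1$ and $\alpha_2$, comparing with the relation for $\alpha_1\alpha_2$, and observing that the discrepancy $h(g)=\nu(\alpha_1\alpha_2,g)\,\nu(\alpha_1,g)^{-1}\nu(\alpha_2,g^{\alpha_1})^{-1}$ satisfies $h(g_1g_2)=h(g_1)h(g_2)$, so that perfectness of $\Sp(W)$ forces $h\equiv 1$. Both arguments rest on the same essential input---perfectness killing characters of $\Sp(W)$---but the paper packages it once, as uniqueness (hence functoriality) of $\alpha\mapsto\alpha^{\ast}$, and then invokes it for free, whereas you unfold it through the cocycle identity. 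Your version is a little longer and more computational, but it is self-contained and does not require having already accepted that the assignment $\alpha\mapsto\alpha^{\ast}$ is multiplicative; the paper's is quicker once one has.
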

\begin{proof}
Because  $(g, \epsilon)^{\alpha_1\alpha_2}=(  g^{\alpha_1}, \nu( \alpha_1,g) \epsilon)^{\alpha_2}=( g^{\alpha_1\alpha_2}, \nu(\alpha_1,g) \nu(\alpha_2,g^{\alpha_1})\epsilon)$.
\end{proof}
\begin{example}\label{example1}
If $\alpha=h\in  \Sp(W)$, and  $h$ acts on $\Sp(W)$ by  conjugation, i.e., $g^h=h^{-1}gh$, for $g\in \Sp(W)$,  then $\nu(\alpha,g)=\widetilde{c}_{X^{\ast}}(h^{-1}, g h)\widetilde{c}_{X^{\ast}}(g,h)$.
\end{example}
\begin{proof}
By the properties of the cocycles, we obtain:
\begin{equation}
\tfrac{\widetilde{c}_{X^{\ast}}(g_1^h, g_2^h)}{\widetilde{c}_{X^{\ast}}(g_1,g_2)}
=[\widetilde{c}_{X^{\ast}}(h^{-1}, g_1g_2h)\widetilde{c}_{X^{\ast}}(g_1g_2,h)][\widetilde{c}_{X^{\ast}}(h^{-1}, g_1h)\widetilde{c}_{X^{\ast}}(g_1,h)]^{-1} [\widetilde{c}_{X^{\ast}}(h^{-1}, g_2h)\widetilde{c}_{X^{\ast}}(g_2,h)]^{-1}.
\end{equation}
So $\nu(\alpha,g)=\widetilde{c}_{X^{\ast}}(h^{-1}, g h)\widetilde{c}_{X^{\ast}}(g,h)$ by the uniqueness.
\end{proof}
\begin{example}[Barthel]\label{example2}
If $\alpha=\begin{pmatrix}
1& 0\\
0& y
\end{pmatrix}$, for some $y\in F^{\times}$,  then
\begin{align}\label{alpp}
\nu(\alpha,g)=(\det a_1a_2, y)_F \gamma(y, \psi^{\tfrac{1}{2}})^{-|S|},
\end{align} for $g=p_1\omega_S p_2$, $p_i=\begin{pmatrix}
a_i& b_i\\
0& d_i
\end{pmatrix}$, $i=1,2$.
\end{example}
\begin{proof}
We shall follow the proof of  \cite[p.212, Prop.1.2.A]{Ba} in the $p$-adic case.
By (\ref{equ}), there exists a character $\chi_y$ of $F^{\times}$ and a non-zero complex number $\beta_y$ such that
\begin{itemize}
\item $\nu(\alpha,g)= \nu(\alpha,p_1 ) \nu(\alpha, \omega_S ) \nu(\alpha, p_2)$,
\item $   \nu(y, \begin{pmatrix}
a& b\\
0& (a^{\ast})^{-1}
\end{pmatrix})= \chi_y(\det a)$,  and
\item   $\nu(y, \omega_S)=\beta_y^{|S|}$.
\end{itemize}
By the  embedding  $\SL_2\longrightarrow \Sp$, it reduces to discussing the simple case that  $\dim W=2$.  Following \cite[p.212]{Ba}, let $u_-(x)= \begin{pmatrix}
1& 0\\
x& 1
\end{pmatrix}\in \SL_2$. Then:
$$ \begin{pmatrix}
1& 0\\
x& 1
\end{pmatrix}= \begin{pmatrix}
1& x^{-1}\\
0& 1
\end{pmatrix} \begin{pmatrix}
0& -1\\
1& 0
\end{pmatrix} \begin{pmatrix}
x& 1\\
0& x^{-1}
\end{pmatrix} \quad\textrm{ implies }\quad  \nu(y, u_-(x))=\beta_y \chi_y(x).$$  For $s, t, s+t \in F^{\times}$,
\begin{equation}
\begin{split}
\beta_y \chi_y(s+t)=\nu(y, u_-(s+t))&=\nu(y, u_-(s))\nu(y, u_-(t))\tfrac{\widetilde{c}_{X^{\ast}}(u_-(s)^{y},u_-(t)^{y})}{\widetilde{c}_{X^{\ast}}(u_-(s),u_-(t))}\\
& =\nu(y, u_-(s))\nu(y, u_-(t))\tfrac{\widetilde{c}_{X^{\ast}}(u_-(sy^{-1}),u_-(ty^{-1}))}{\widetilde{c}_{X^{\ast}}(u_-(s),u_-(t))}\\
&=\beta_y \chi_y(s)\beta_y \chi_y(t)\tfrac{\gamma(\psi^{\tfrac{1}{2} st(s+t)y^{-3}})}{\gamma(\psi^{\tfrac{1}{2} st(s+t)})}\\
&=\beta_y^2 \chi_y(st)(st(s+t), y)_F\gamma(y, \psi^{\tfrac{1}{2}}).
\end{split}
\end{equation}
Hence:
\begin{align}
\beta_y^{-1} \chi_y(\tfrac{1}{s}+\tfrac{1}{t})=(\tfrac{1}{s}+\tfrac{1}{t}, y)_F  \gamma(y, \psi^{\tfrac{1}{2}}).
\end{align}
Therefore, $\chi_y(s)=(s,y)_F$,  $\beta_y^{-1}=\gamma(y, \psi^{\tfrac{1}{2}})$.
\end{proof}
\subsection{Subgroups and quotient groups of $\GSp(W)$}\label{subquo}
Let $F^{\times }_+$ be the multiplicative subgroup of positive real numbers. Let $\GSp^+(W)=F^{\times}_+\Sp(W)$, $\PGSp(W)=\GSp(W)/F^{\times}$, $\PGSp^{\pm}(W)=\GSp(W)/F_+^{\times}$.
Then:
\begin{itemize}
\item $F_+^{\times}=F^{\times 2}$ and $F^{\times}/F^{\times 2} \simeq \mu_2$.
\item  There exists an exact sequence: $1 \longrightarrow \Sp(W) \longrightarrow \GSp(W) \stackrel{\lambda}{\longrightarrow} F^{\times}\longrightarrow 1.$
\item  There exists an exact sequence: $1\longrightarrow \Sp(W) \longrightarrow \PGSp^{\pm}(W)\stackrel{\dot{\lambda}}{\longrightarrow}   F^{\times}/F^{\times 2} \longrightarrow 1.$
\end{itemize}
Let us choose a section map
\begin{align}\label{ss}
s: F^{\times} \longrightarrow  \GSp(W); y \longmapsto \begin{pmatrix}
1& 0\\
0& y
\end{pmatrix}.
\end{align}
Let $F_2$ denote the image of $\mu_2$. Then: $$\GSp(W) \simeq  F^{\times}\ltimes \Sp(W), \quad \quad \PGSp^{\pm}(W)
\simeq  F_2\ltimes \Sp(W).$$  Let us define $$\Sp^{\pm}(W)=\{ h\in \GSp(W) \mid \lambda_h=\pm 1\}.$$
Then $\Sp^{\pm}(W)\simeq F_2\ltimes \Sp(W)$, which is a  subgroup of $F^{\times} \ltimes \Sp(W)$. By lifting the action of $F^{\times}$ from $\Sp(W)$ to $\widetilde{\Sp}(W)$,  we   obtain a group $F^{\times} \ltimes \widetilde{\Sp}(W)$; let us denote it by $\widetilde{\GSp}(W)$. Then there exists  an exact sequence:
$$ 1\longrightarrow \mu_8 \longrightarrow \widetilde{\GSp}(W) \stackrel{ }{\longrightarrow} \GSp(W) \longrightarrow 1.$$
 Let $\widetilde{C}_{X^{\ast}}$ denote  the $2$-cocycle  associated to this exact sequence from $\widetilde{c}_{X^{\ast}}$. For  $(y_1, g_1, \epsilon_1), (y_2, g_2, \epsilon_2)\in F^{\times} \ltimes \widetilde{\Sp}(W)$,
\begin{equation}
\begin{split}
(y_1, g_1, \epsilon_1)\cdot (y_2, g_2, \epsilon_2)&=(y_1y_2, [g_1, \epsilon_1]^{y_2}[g_2, \epsilon_2])\\
& =(y_1y_2, [g_1^{y_2}, \nu(y_2, g_1)\epsilon_1][g_2, \epsilon_2])\\
&=(y_1y_2, [g_1^{y_2}g_2, \nu( y_2,g_1)\widetilde{c}_{X^{\ast}}(g_1^{y_2}, g_2)\epsilon_1\epsilon_2]).
\end{split}
\end{equation}
Hence:
\begin{align}\label{yg}
\widetilde{C}_{X^{\ast}}([y_1,g_1], [y_2, g_2]) =\nu( y_2,g_1)\widetilde{c}_{X^{\ast}}(g_1^{y_2}, g_2),   \quad\quad [y_i, g_i]\in F^{\times} \ltimes \Sp(W).
\end{align}
By the restriction on $F_2$, we obtain a subgroup $F_2\ltimes \widetilde{\Sp}(W)$; let us denote it $\widetilde{\Sp}^{\pm}(W)$. Then there exists an exact sequence:
\begin{align}
1  \longrightarrow \mu_8 \longrightarrow \widetilde{\Sp}^{\pm}(W) \longrightarrow \Sp^{\pm}(W) \longrightarrow 1.
\end{align}
\begin{lemma}
$\widetilde{C}_{X^{\ast}}(h_1,h_2)=1$, for $ h_i\in F_2 \subseteq \Sp^{\pm}(W)$.
\end{lemma}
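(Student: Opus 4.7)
The plan is to verify the lemma by a direct calculation using formula (\ref{yg}), since $F_2$ sits inside $\widetilde{\Sp}^{\pm}(W)=F_2\ltimes\widetilde{\Sp}(W)$ as elements of the form $[y,1,1]$ with $y\in\mu_2$. Concretely, writing $h_i=[y_i,1]\in F_2\ltimes\Sp(W)$ (viewed via the section $s$ of (\ref{ss})), formula (\ref{yg}) gives
\begin{align*}
\widetilde{C}_{X^{\ast}}(h_1,h_2)=\nu(y_2,1)\,\widetilde{c}_{X^{\ast}}(1^{y_2},1)=\nu(y_2,1)\,\widetilde{c}_{X^{\ast}}(1,1),
\end{align*}
so it suffices to establish two normalizations: $\widetilde{c}_{X^{\ast}}(1,1)=1$ and $\nu(y,1)=1$ for all $y\in F^{\times}$.

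For the first, I would use the definition $\widetilde{c}_{X^{\ast}}(g_1,g_2)=\gamma(\psi(q(g_1,g_2)/2))$ with the Leray invariant $q(g_1,g_2)=q(X^{\ast},X^{\ast}g_2^{-1},X^{\ast}g_1)$. When $g_1=g_2=1$ the three Lagrangians all coincide with $X^{\ast}$, so $q(1,1)$ is the zero quadratic form on the trivial space; its Weil index equals $1$. For the second, I would invoke Example \ref{example2}: with $g=1=p_1\omega_S p_2$ one has $S=\emptyset$, $|S|=0$, and $a_1=a_2=1$, so (\ref{alpp}) reduces to $(1,y)_F\,\gamma(y,\psi^{1/2})^0=1$ by the explicit formula for the Hilbert symbol recalled in the preliminaries.

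Putting these two facts together yields $\widetilde{C}_{X^{\ast}}(h_1,h_2)=1$, as required. I do not expect any real obstacle: the statement is a normalization lemma saying that the restriction of the extended cocycle to the finite subgroup $F_2$ is trivial, and both ingredients—normalization of the Perrin–Rao cocycle at the identity and the fact that $\nu(y,-)$ already has value $1$ at the identity—are encoded in results the paper has just stated. The only small care needed is to confirm that $F_2\hookrightarrow\Sp^{\pm}(W)$ indeed corresponds to the subset $\{[y,1]:y\in\mu_2\}$ of $F^{\times}\ltimes\Sp(W)$, which is exactly how the section $s$ in (\ref{ss}) was constructed.
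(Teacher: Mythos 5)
Your proposal is correct and follows essentially the same route as the paper: the paper's proof is a one-line appeal to formula (\ref{yg}), and you have simply filled in the two normalizations it implicitly uses, namely $\widetilde{c}_{X^{\ast}}(1,1)=1$ (the Leray invariant of three coincident Lagrangians is the zero form on the zero space) and $\nu(y,1)=1$ (from Example \ref{example2} with $S=\emptyset$ and $a_1=a_2=1$).
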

\begin{proof}
It follows from the above formula (\ref{yg}).
\end{proof}
\begin{lemma}
$\widetilde{C}_{X^{\ast}}(yI_{2m}, h)=1=\widetilde{C}_{X^{\ast}}(h, yI_{2m})$, for $ y\in F^{\times}_+$, $h\in \GSp(W)$.
\end{lemma}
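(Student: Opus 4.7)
The plan is to decompose $yI_{2m}$ in the semidirect product $\GSp(W) \simeq F^{\times} \ltimes \Sp(W)$ induced by the section (\ref{ss}), then apply formula (\ref{yg}) and exploit the two key facts that $y \in F^{\times}_+ = F^{\times 2}$ (from Section \ref{subquo}) and that the relevant symplectic factor lies in the Siegel parabolic $P = P(X^{\ast})$.

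Concretely, I would write
\[
yI_{2m} = s(y^2) \cdot g_y, \qquad g_y = \begin{pmatrix} yI_m & 0 \\ 0 & y^{-1}I_m \end{pmatrix} \in M \subseteq P,
\]
so $yI_{2m}$ corresponds to $[y^2, g_y] \in F^{\times} \ltimes \Sp(W)$. For arbitrary $h = [y_2, g_2]$, formula (\ref{yg}) reduces the problem to computing
\[
\nu(y_2, g_y)\, \widetilde{c}_{X^{\ast}}(g_y^{y_2}, g_2) \quad \text{and} \quad \nu(y^2, g_2)\, \widetilde{c}_{X^{\ast}}(g_2^{y^2}, g_y).
\]
Applying Example \ref{example2} to $g_y \in P$ (so $|S|=0$, $a_1 = yI_m$, $a_2 = I_m$) yields $\nu(y_2, g_y) = (y^m, y_2)_F = 1$ since $y \in F^{\times 2}$. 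Writing $g_2 = p_1 \omega_S p_2$, the same example gives $\nu(y^2, g_2) = (\det(a_1 a_2), y^2)_F \, \gamma(y^2, \psi^{1/2})^{-|S|}$; the Hilbert symbol is $1$ because we pair with a square, and $\gamma(y^2, \psi^{1/2}) = 1$ because $\gamma(a, \psi)$ depends only on the class of $a$ in $F^{\times}/F^{\times 2}$.

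For the two cocycle factors, note first that $s(y_2)$ and $g_y$ are both block-diagonal and hence commute, so $g_y^{y_2} = g_y$. Since $g_y \in P$ stabilizes $X^{\ast}$, both Leray invariants $q(X^{\ast}, X^{\ast} g_2^{-1}, X^{\ast} g_y)$ and $q(X^{\ast}, X^{\ast} g_y^{-1}, X^{\ast} g_2^{y^2})$ involve a repeated Lagrangian; the invariant is then the zero quadratic form on $\{0\}$ and the corresponding Weil index is $\gamma(\psi(0)) = 1$. Combining this with the $\nu$-computation above yields both claimed equalities. The main (minor) obstacle is careful bookkeeping of all factors together with the single conceptual point that the Weil index of a degenerate Lagrangian triple is trivial --- which is immediate since the defining space of $q$ has dimension zero whenever two of the three Lagrangians agree.
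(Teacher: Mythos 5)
Your proposal is correct and follows essentially the same route as the paper's proof: decompose $yI_{2m}=s(y^2)g_y$, invoke formula (\ref{yg}) and Example \ref{example2}, and observe that both $\nu$-factors trivialize because $y$ is a square while the $\widetilde{c}_{X^{\ast}}$-factors trivialize because $g_y\in P$ makes the Leray invariant degenerate. The only difference is that you spell out the degenerate-Leray-invariant reasoning that the paper leaves implicit.
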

\begin{proof}
Let us write $yI_{2m}$ and $h$ in the forms of $s(y^2)g_y$ and $s(h) g_h$ by using the section map $s$ as given  in (\ref{ss}), for $g_y=\begin{pmatrix}
y& 0\\
0& y^{-1}
\end{pmatrix}$, $s(h)=\begin{pmatrix}
1& 0\\
0& \lambda_h
\end{pmatrix}$, $g_h=p_1\omega_Sp_2\in \Sp(W)$, $p_i=\begin{pmatrix}
a_i& b_i\\
0& d_i
\end{pmatrix}$, $i=1,2$.  Then:
$$\widetilde{C}_{X^{\ast}}(yI_{2m}, h)=\widetilde{C}_{X^{\ast}}([y^2, g_y], [\lambda_h, g_h])=\nu( \lambda_h, g_y)\widetilde{c}_{X^{\ast}}(g_y^{\lambda_h}, g_h)=(y^{m}, \lambda_h)_F=1,$$
$$\widetilde{C}_{X^{\ast}}(h,yI_{2m})=\widetilde{C}_{X^{\ast}}([\lambda_h, g_h],[y^2, g_y] )=\nu(y^2, g_h)\widetilde{c}_{X^{\ast}}(g_h^{y^2}, g_y)=\nu(y^2, g_h)=(\det a_1a_2, y^2)_F \gamma(y^2, \psi^{\tfrac{1}{2}})^{-|S|}=1.$$
\end{proof}
As a consequence, we can see that $F_+^{\times}$ lies in the center of $\widetilde{\GSp}(W)$ and $\widetilde{\GSp}(W)/F^{\times}_+\simeq \widetilde{\Sp}^{\pm}(W)$.
\begin{lemma}
If $2\nmid m$, $\widetilde{C}_{X^{\ast}}(\begin{pmatrix}
y_1& 0\\
0& 1
\end{pmatrix}, \begin{pmatrix}
y_2& 0\\
0& 1
\end{pmatrix})=(y_1, y_2)_F$, for $y_i\in F^{\times}$.
\end{lemma}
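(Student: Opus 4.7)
The plan is to put each of the two elements into the semidirect product presentation $\GSp(W)\simeq F^{\times}\ltimes\Sp(W)$ coming from the section $s$ of (\ref{ss}), and then to compute the two factors in the formula (\ref{yg}) for $\widetilde{C}_{X^{\ast}}$ by hand. Set $h_i=\begin{pmatrix}y_i & 0\\ 0 & 1\end{pmatrix}$. With $s(y)=\begin{pmatrix}I_m & 0\\ 0 & yI_m\end{pmatrix}$, one has $h_i=s(y_i)g_i$ where $g_i:=\begin{pmatrix}y_iI_m & 0\\ 0 & y_i^{-1}I_m\end{pmatrix}\in M\subseteq\Sp(W)$, so that in the semidirect product $h_i$ corresponds to $[y_i,g_i]$. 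Formula (\ref{yg}) then gives
\begin{equation*}
\widetilde{C}_{X^{\ast}}(h_1,h_2)=\nu(y_2,g_1)\,\widetilde{c}_{X^{\ast}}(g_1^{y_2},g_2),
\end{equation*}
so it suffices to identify each factor.

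For the second factor I would first observe that $g_1$ is block-diagonal with scalar blocks and hence commutes with $s(y_2)$, so $g_1^{y_2}=g_1$. Next, since $g_1,g_2\in M\subseteq P$, the three Lagrangians $X^{\ast}$, $X^{\ast}g_2^{-1}$ and $X^{\ast}g_1$ all coincide with $X^{\ast}$, so the Leray invariant $q(g_1,g_2)$ is the zero form on the zero space and $\widetilde{c}_{X^{\ast}}(g_1,g_2)=\gamma(\psi(0))=1$. (The same fact is transparent from (\ref{representationsp3}), which shows that the Schr\"odinger action is a strict homomorphism on $M$.) For the first factor, apply Example \ref{example2} to the trivial Bruhat decomposition $g_1=p_1\omega_{\emptyset}p_2$ with $p_1=g_1$, $p_2=I_{2m}$, $|S|=0$; the block $a_1$ is $y_1I_m$ and $a_2=I_m$, so that $\det(a_1a_2)=y_1^m$ and
\begin{equation*}
\nu(y_2,g_1)=(y_1^m,y_2)_F\,\gamma(y_2,\psi^{\tfrac{1}{2}})^{0}=(y_1^m,y_2)_F.
\end{equation*}

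Finally, because the Hilbert symbol takes values in $\{\pm 1\}$, the hypothesis $2\nmid m$ gives $(y_1^m,y_2)_F=(y_1,y_2)_F^m=(y_1,y_2)_F$, and multiplying the two factors yields the desired equality. There is no genuine obstacle here; the proof is essentially a bookkeeping exercise, the only delicate point being the use of Example \ref{example2} in the Levi case (with empty Weyl element), and the role of the hypothesis $2\nmid m$ being precisely to eliminate the $m$-th power appearing in the Hilbert symbol.
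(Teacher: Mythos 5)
Your proof is correct and follows the same route as the paper's: decompose each $h_i$ via the section $s$ as $[y_i,g_i]$ with $g_i=\begin{pmatrix}y_iI_m&0\\0&y_i^{-1}I_m\end{pmatrix}$, apply (\ref{yg}), and observe $(y_1^m,y_2)_F=(y_1,y_2)_F$ for odd $m$. The paper states the intermediate equality $\nu(y_2,g_1)\widetilde c_{X^\ast}(g_1^{y_2},g_2)=(y_1^m,y_2)_F$ without detail; you supply exactly the right justifications, namely $g_1^{y_2}=g_1$ (both block-diagonal), $\widetilde c_{X^\ast}(g_1,g_2)=1$ since $g_1,g_2\in M\subseteq P$ force a degenerate Leray triple, and $\nu(y_2,g_1)=(y_1^m,y_2)_F$ by Example \ref{example2} with $|S|=0$.
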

\begin{proof}
Write $\begin{pmatrix}
y_i& 0\\
0& 1
\end{pmatrix}=s(y_i) g_i$,
for $g_i=\begin{pmatrix}
y_i& 0\\
0& y_i^{-1}
\end{pmatrix}.$  Then
\begin{align}
\widetilde{C}_{X^{\ast}}(\begin{pmatrix}
y_1& 0\\
0& 1
\end{pmatrix}, \begin{pmatrix}
y_2& 0\\
0& 1
\end{pmatrix})&=\widetilde{C}_{X^{\ast}}([y_1,g_1], [y_2,g_2])\\
&=\nu( y_2, g_1)\widetilde{c}_{X^{\ast}}(g_1^{y_2}, g_2)\\
&=(y_1^{m}, y_2)_F=(y_1,y_2)_F.
\end{align}
\end{proof}
Notice that $\widetilde{C}_{X^{\ast}}(s(y_1), s(y_2))=1$, for $y_i\in F^{\times}$.
 \begin{remark}\label{splittingc}
 If define $f: F^{\times} \longrightarrow \mu_8; y \longmapsto e^{\tfrac{[1-\sgn(y)] \pi i}{4}}$, then $(y_1, y_2)_F=f(y_1)^{-1}f(y_2)^{-1}f(y_1y_2)$.
 \end{remark}
\subsection{$2$-cocycle II}
Instead of the $8$-degree cover,  let us consider the $2$-degree covering case. For an automorphism $\alpha$ of $\Sp(W)$, let us define $\nu_2: \alpha \times \Sp(W)  \longrightarrow \mu_2$ such that
$$(g, \epsilon)^{\alpha}=(g^{\alpha}, \nu_2( \alpha,g) \epsilon), \qquad  \qquad (g, \epsilon) \in \overline{\Sp}(W).$$
Similarly, $\nu_2$ determines an automorphism of $\overline{\Sp}(W)$ iff the following equality holds:
\begin{equation}\label{crao2}
\overline{c}_{X^{\ast}}(g, g')=\overline{c}_{X^{\ast}}(g^{\alpha},g^{'\alpha})\nu_2(\alpha,g) \nu_2(\alpha,g') \nu_2(\alpha, gg')^{-1}.
\end{equation}
\begin{lemma}\label{nu23}
$\nu_2( \alpha,g )=\nu(\alpha,g) \tfrac{m_{X^{\ast}}(g)}{m_{X^{\ast}}(g^{\alpha})}$.
\end{lemma}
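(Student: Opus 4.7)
The plan is to derive the formula by combining the two defining relations, using (\ref{28inter}) as the bridge between the $\mu_8$-valued cocycle $\widetilde{c}_{X^{\ast}}$ and the $\mu_2$-valued cocycle $\overline{c}_{X^{\ast}}$. Uniqueness of the function $\nu_2(\alpha,-)$ will follow from the fact that $\Sp(W)$ is a perfect group, so any two solutions of (\ref{crao2}) differ by a homomorphism $\Sp(W) \to \mu_2$, which must be trivial.

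First, I would rewrite (\ref{28inter}) in the form
\begin{equation*}
\widetilde{c}_{X^{\ast}}(g_1,g_2) = m_{X^{\ast}}(g_1g_2)\, m_{X^{\ast}}(g_1)^{-1}\, m_{X^{\ast}}(g_2)^{-1}\, \overline{c}_{X^{\ast}}(g_1,g_2),
\end{equation*}
and apply the same identity to the pair $(g_1^{\alpha}, g_2^{\alpha})$, using that $\alpha$ is a homomorphism so $(g_1g_2)^{\alpha}=g_1^{\alpha}g_2^{\alpha}$.

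Next, substituting these two expressions into the defining relation (\ref{equ}) for $\nu(\alpha,-)$, a direct manipulation gives
\begin{equation*}
\frac{\overline{c}_{X^{\ast}}(g_1^{\alpha}, g_2^{\alpha})}{\overline{c}_{X^{\ast}}(g_1, g_2)} = \left[\prod_{i=1}^{2} \nu(\alpha,g_i)^{-1}\,\frac{m_{X^{\ast}}(g_i^{\alpha})}{m_{X^{\ast}}(g_i)}\right] \cdot \nu(\alpha,g_1g_2)\, \frac{m_{X^{\ast}}(g_1g_2)}{m_{X^{\ast}}((g_1g_2)^{\alpha})}.
\end{equation*}
Comparing this with (\ref{crao2}), which expresses the same ratio as a product of the $\nu_2$-values at $g_1$, $g_2$, $g_1g_2$, I would read off that the function
\begin{equation*}
g \longmapsto \nu(\alpha,g)\,\frac{m_{X^{\ast}}(g)}{m_{X^{\ast}}(g^{\alpha})}
\end{equation*}
satisfies exactly the cocycle identity (\ref{crao2}) characterizing $\nu_2(\alpha,-)$.

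Finally, to conclude that this function equals $\nu_2(\alpha,-)$ on the nose, I would invoke uniqueness: any other solution of (\ref{crao2}) differs from this one by a group homomorphism $\Sp(W)\to \mu_2$, and since $\Sp(W)$ is perfect its abelianization is trivial, forcing equality. The main obstacle in the plan is essentially bookkeeping in the sign conventions between (\ref{equ}) and (\ref{crao2})---the two relations are written with opposite placements of $\nu^{\pm 1}$, so care is needed to get the ratios and inversions right in the comparison step; once these are aligned the rest is a symbolic identity between (\ref{28inter}) and its $\alpha$-transform.
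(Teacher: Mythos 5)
Your proposal is correct and follows essentially the same route as the paper: substitute the relation (\ref{28inter}) between $\widetilde{c}_{X^{\ast}}$ and $\overline{c}_{X^{\ast}}$ into the defining identity (\ref{equ}) for $\nu(\alpha,-)$, compare the resulting expression for $\overline{c}_{X^{\ast}}(g_1^{\alpha},g_2^{\alpha})/\overline{c}_{X^{\ast}}(g_1,g_2)$ against (\ref{crao2}), and read off $\nu_2(\alpha,g)=\nu(\alpha,g)\,m_{X^{\ast}}(g)/m_{X^{\ast}}(g^{\alpha})$. The one place where you are a bit more careful than the paper is in explicitly invoking perfectness of $\Sp(W)$ to rule out a discrepancy by a homomorphism into $\mu_2$; the paper reads the formula off directly without remarking that this uniqueness step is what justifies doing so.
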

\begin{proof}
By (\ref{28inter}),(\ref{equ}),(\ref{crao2}),
\begin{equation*}
\begin{split}
&\overline{c}_{X^{\ast}}(g, g')m_{X^{\ast}}(gg') m_{X^{\ast}}(g)^{-1}m_{X^{\ast}}(g')^{-1}\\
&=  \widetilde{c}_{X^{\ast}}(g,g')\\
&=\widetilde{c}_{X^{\ast}}(g^{\alpha},g^{'\alpha})\nu(\alpha,g) \nu(\alpha,g') \nu(\alpha,gg')^{-1}\\
&=\overline{c}_{X^{\ast}}(g^{\alpha},g^{'\alpha}) m_{X^{\ast}}(g^{\alpha}g^{'\alpha}) m_{X^{\ast}}(g^{\alpha})^{-1}m_{X^{\ast}}(g^{'\alpha})^{-1}\nu(\alpha,g) \nu(\alpha,g') \nu(\alpha,gg')^{-1}.
\end{split}
\end{equation*}
Hence:
$$\tfrac{\overline{c}_{X^{\ast}}(g, g')}{\overline{c}_{X^{\ast}}(g^{\alpha},g^{'\alpha})}= [\tfrac{m_{X^{\ast}}(gg')}{m_{X^{\ast}}(g^{\alpha}g^{'\alpha})}]^{-1} \tfrac{m_{X^{\ast}}(g)}{ m_{X^{\ast}}(g^{\alpha})}
\tfrac{m_{X^{\ast}}(g')}{ m_{X^{\ast}}(g^{'\alpha})}\nu(\alpha,g) \nu(\alpha,g') \nu(\alpha,gg')^{-1}.$$
So $\nu_2( \alpha,g)=\nu(\alpha,g) \tfrac{m_{X^{\ast}}(g)}{m_{X^{\ast}}(g^{\alpha})}$.
\end{proof}
By lifting the action of $F^{\times}$ from $\Sp(W)$ to $\overline{\Sp}(W)$, we   can obtain a group $F^{\times} \ltimes \overline{\Sp}(W)$ and an exact sequence:
$$ 1\longrightarrow \mu_2 \longrightarrow F^{\times} \ltimes \overline{\Sp}(W) \stackrel{ }{\longrightarrow}F^{\times} \ltimes \Sp(W) \longrightarrow 1.$$
Let us write  $\overline{\GSp}(W)$ for $F^{\times} \ltimes \overline{\Sp}(W)$ and  denote   the corresponding  $2$-cocycle by $\overline{C}_{X^{\ast}}$. Then:
$$ \overline{C}_{X^{\ast}}([y_1,g_1], [y_2, g_2]) =\nu_2(y_2,g_1)\overline{c}_{X^{\ast}}(g_1^{y_2}, g_2),   \quad\quad [y_i, g_i]\in F^{\times} \ltimes \Sp(W).$$
Consequently, substituting the equality in Lemma  \ref{nu23}, we obtain:
\begin{equation}\label{292inter}
\overline{C}_{X^{\ast}}([y_1,g_1], [y_2, g_2])=m_{X^{\ast}}(g_1^{y_2}g_2)^{-1}m_{X^{\ast}}(g_1)m_{X^{\ast}}(g_2)\widetilde{C}_{X^{\ast}}([y_1,g_1], [y_2, g_2]).
\end{equation}
By abuse of notations, let us also write $m_{X^{\ast}}[y_1,g_1]=m_{X^{\ast}}(g_1)$. By the restriction on $F_2$, we obtain a subgroup $F_2\ltimes \overline{\Sp}(W)$; let us denote it $\overline{\Sp}^{\pm}(W)$. Then there exists an exact sequence:
\begin{align}
1  \longrightarrow \mu_2 \longrightarrow \overline{\Sp}^{\pm}(W) \longrightarrow \Sp^{\pm}(W) \longrightarrow 1.
\end{align}
\begin{lemma}
$\overline{C}_{X^{\ast}}(yI_{2m}, h)=1=\overline{C}_{X^{\ast}}(h, yI_{2m})$, for $ y\in F^{\times}_+$, $h\in \GSp(W)$.
\end{lemma}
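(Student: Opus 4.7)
The plan is to deduce this from the analogous result for $\widetilde{C}_{X^{\ast}}$ established just above, via the comparison identity (\ref{292inter}). Since $\widetilde{C}_{X^{\ast}}(yI_{2m},h)=1=\widetilde{C}_{X^{\ast}}(h,yI_{2m})$ is already known, what remains is to check that the ratio of $m_{X^{\ast}}$-factors appearing in (\ref{292inter}) is also trivial in each case.

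First I would introduce the standard normal-form decompositions: write $yI_{2m}=s(y^2)g_y$ with $g_y=\operatorname{diag}(yI_m,y^{-1}I_m)\in M$, and $h=s(\lambda_h)g_h$ with a Bruhat expression $g_h=p_1\omega_Sp_2$. A direct computation gives $x(g_y)=y^{-m}F^{\times 2}=F^{\times 2}$ (using $y\in F^{\times}_+=F^{\times 2}$) and $j(g_y)=0$, so from (\ref{mx}) one has $m_{X^{\ast}}(g_y)=\gamma(1,\psi^{1/2})^{-1}=1$.

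Next I would handle the left-hand side $\overline{C}_{X^{\ast}}(yI_{2m},h)$. Because $g_y$ and $s(\lambda_h)$ are both block-diagonal of the same shape, they commute, so $g_y^{\lambda_h}=g_y$ and $g_y^{\lambda_h}g_h=g_yg_h=(g_yp_1)\omega_Sp_2$ is again a Bruhat decomposition with the same $\omega_S$. Hence $j(g_yg_h)=|S|=j(g_h)$ and $x(g_yg_h)=y^{-m}x(g_h)=x(g_h)$ modulo squares. Therefore $m_{X^{\ast}}(g_yg_h)=m_{X^{\ast}}(g_h)$. Substituting into (\ref{292inter}) gives
\begin{equation*}
\overline{C}_{X^{\ast}}(yI_{2m},h)=m_{X^{\ast}}(g_yg_h)^{-1}m_{X^{\ast}}(g_y)m_{X^{\ast}}(g_h)\cdot 1=m_{X^{\ast}}(g_h)^{-1}\cdot 1\cdot m_{X^{\ast}}(g_h)=1.
\end{equation*}

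For the right-hand side $\overline{C}_{X^{\ast}}(h,yI_{2m})$ the key trick is that $yI_{2m}$ is central in $\GSp(W)$, which lets us rewrite
\begin{equation*}
g_h^{y^2}g_y=s(y^2)^{-1}g_hs(y^2)g_y=s(y^2)^{-1}g_h\cdot(yI_{2m})=s(y^2)^{-1}(yI_{2m})g_h=g_yg_h.
\end{equation*}
So again $m_{X^{\ast}}(g_h^{y^2}g_y)=m_{X^{\ast}}(g_yg_h)=m_{X^{\ast}}(g_h)$, and plugging this and $m_{X^{\ast}}(g_y)=1$ into (\ref{292inter}) yields $\overline{C}_{X^{\ast}}(h,yI_{2m})=1$. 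The only subtle step is the bookkeeping that multiplication by the block-diagonal $g_y$ does not disturb the Bruhat cell or the square class entering $m_{X^{\ast}}$; once that is in hand, the argument is a direct substitution.
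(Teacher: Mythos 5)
Your proof is correct and follows essentially the same route as the paper: write $yI_{2m}=s(y^{2})g_{y}$ and $h=s(\lambda_{h})g_{h}$, apply (\ref{292inter}) to reduce to the already-proved triviality of $\widetilde{C}_{X^{\ast}}$, and exploit $y\in F^{\times}_{+}=F^{\times 2}$ to eliminate the residual $m_{X^{\ast}}$-factors. The only variations are cosmetic: the paper collapses the $m_{X^{\ast}}$-ratio in one step to the Hilbert symbol $(x(g_{y}),x(g_{h}))_{F}$ via the relation between $\overline{c}_{X^{\ast}}$ and $\widetilde{c}_{X^{\ast}}$, whereas you compute $m_{X^{\ast}}(g_{y})=1$ and $m_{X^{\ast}}(g_{y}g_{h})=m_{X^{\ast}}(g_{h})$ directly from the Bruhat normal form, and you shorten the second assertion by the centrality rewrite $g_{h}^{y^{2}}g_{y}=g_{y}g_{h}$ rather than repeating the parallel computation.
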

\begin{proof}
Let  $yI_{2m}=s(y)g_y$ and $h=s(h) g_h$ for $g_y=\begin{pmatrix}
y& 0\\
0& y^{-1}
\end{pmatrix}$, $s(h)=\begin{pmatrix}
1& 0\\
0& \lambda_h
\end{pmatrix}$, $g_h=p_1\omega_Sp_2\in \Sp(W)$, $p_i=\begin{pmatrix}
a_i& b_i\\
0& d_i
\end{pmatrix}$, $i=1,2$.  Then:
\begin{align}
\overline{C}_{X^{\ast}}(yI_{2m}, h)&=\overline{C}_{X^{\ast}}([y, g_y], [\lambda_h, g_h])\\
&=m_{X^{\ast}}(g_y^{\lambda_h}g_h)^{-1}m_{X^{\ast}}(g_y)m_{X^{\ast}}(g_h)\widetilde{C}_{X^{\ast}}([y, g_y], [\lambda_h, g_h])\\
&=m_{X^{\ast}}(g_y^{\lambda_h}g_h)^{-1}m_{X^{\ast}}(g_y)m_{X^{\ast}}(g_h)\\
&=m_{X^{\ast}}(g_yg_h)^{-1}m_{X^{\ast}}(g_y)m_{X^{\ast}}(g_h)\\
&=(x(g_y), x(g_h))_F=(y^{m}, x(g_h))_F=1;
\end{align}
\begin{align}
\overline{C}_{X^{\ast}}(h,yI_{2m})&=\overline{C}_{X^{\ast}}([\lambda_h, g_h],[y, g_y])\\
&=m_{X^{\ast}}(g_h^{y}g_y)^{-1}m_{X^{\ast}}(g_h)m_{X^{\ast}}(g_y)\widetilde{C}_{X^{\ast}}([\lambda_h, g_h], [y, g_y])\\
&=m_{X^{\ast}}(g_h^{y}g_y)^{-1}m_{X^{\ast}}(g_h)m_{X^{\ast}}(g_y)\\
&=(x(g_h^y), x(g_y))_F=(x(g_h^y),y^{m})_F=1.
\end{align}
\end{proof}
As a consequence, $F_+^{\times}$ lies in the center of $\overline{\GSp}(W)$ and $\overline{\GSp}(W)/F^{\times}_+\simeq \overline{\Sp}^{\pm}(W)$.
\subsubsection{Example} Let us consider the case that  $\dim W=2$, $\GSp(W)\simeq \GL_2(F)$.    Let  $h_i=\begin{pmatrix}
a_i& b_i\\
c_i& d_i
\end{pmatrix}\in \GL_2(F)$,  $i=1, 2, 3$, with $h_3=h_1h_2$.    Let us  write $h_i=\begin{pmatrix}
1& 0\\
0& y_i
\end{pmatrix} g_i$, for $y_i=\det h_i$, $g_i=\begin{pmatrix}
a_i& b_i\\
y_i^{-1}c_i& y_i^{-1}d_i
\end{pmatrix}$.   Then:
  $$\overline{C}_{X^{\ast}}(h_1, h_2) =\nu_2(y_2,g_1)\overline{c}_{X^{\ast}}(g_1^{y_2}, g_2)=\nu_2(\det h_2,g_1)\overline{c}_{X^{\ast}}(g_1^{\det h_2}, g_2).$$
\begin{itemize}
\item[(1)] If $c_1=0$, $m_{X^{\ast}}(g_1)=m_{X^{\ast}}(g_1^{y_2})$, so $\nu_2(y_2, g_1)=\nu(y_2, g_1)=(y_2,  a_1)_F$.
\item[(2)]  If $c_1\neq 0$, then:
\begin{itemize}
\item[] $m_{X^{\ast}}(g_1)=\gamma(c_1 y_1^{-1}, \psi^{\tfrac{1}{2}})^{-1} \gamma(\psi^{\tfrac{1}{2}})^{-1}=\gamma({\psi}^{\tfrac{1}{2} c_1 y_1^{-1}})^{-1}$,
 \item[] $m_{X^{\ast}}(g_1^{y_2})=\gamma({\psi}^{\tfrac{1}{2} c_1 y_1^{-1}y_2^{-1}})^{-1}$,
\item[] $\tfrac{m_{X^{\ast}}(g_1)}{  m_{X^{\ast}}(g_1^{y_2})}=\gamma(y^{-1}_2, \psi^{\tfrac{1}{2}}) (y_1c_1,y_2)_F$,
 \item[] $\nu_2(y_2, g_1)=\nu(y_2, g_1)\tfrac{m_{X^{\ast}}(g_1)}{  m_{X^{\ast}}(g_1^{y_2})}=(y_1 c_1, y_2)_F\gamma(y_2, \psi^{\tfrac{1}{2}})^{-1}\gamma(y^{-1}_2, \psi^{\tfrac{1}{2}}) (y_1c_1,y_2)_F=1$.
      \end{itemize}
  \end{itemize}
\section{Extended Weil representations}\label{exWe}
\subsection{}\label{EWR}
Recall that $\pi_{\psi}$ is a Weil representation of $\widetilde{\Sp}(W)\ltimes \Ha(W)$ associated to $\psi$, and $\omega_{\psi}=\pi_{\psi}|_{\widetilde{\Sp}(W)}$. Let $\widetilde{\GSp}(W)$ act on $\Ha(W)$ via $\GSp(W)$.
\begin{lemma}
$\omega_{\psi} \simeq \omega_{\psi^{h^2}}$, for any $h\in F^{\times}$, and $\omega_{\psi} \ncong \omega_{\psi^{-1}}$.
\end{lemma}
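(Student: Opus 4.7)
The plan is to establish the two assertions separately: the equivalence by constructing an explicit scaling intertwiner in the Schr\"odinger model, and the non-equivalence by comparing an invariant that genuinely distinguishes $\psi$ from $\psi^{-1}$.

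For $\omega_\psi \simeq \omega_{\psi^{h^2}}$, I would introduce the unitary operator $T_h\colon L^2(X) \to L^2(X)$, $(T_h f)(y) = |h|^{m/2} f(hy)$, and verify $T_h\,\omega_\psi(g)\,T_h^{-1} = \omega_{\psi^{h^2}}(g)$ on the standard generating subgroups of $\widetilde{\Sp}(W)$. For $g = n(b) \in N$, this is immediate from $\psi(\tfrac{h^2}{2}\langle y, yb\rangle) = \psi(\tfrac{1}{2}\langle hy, hyb\rangle)$. For $g = m(a) \in M$, the scaling commutes with the right-action of $a$, so both sides equal $|\det a|^{1/2} f(ya)$. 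For a Weyl element $g = \omega_S$, one tracks the rescaling of the self-dual Haar measure on $X_S^\ast$ in passing from $\psi$ to $\psi^{h^2}$: the Jacobian $|h|^{|S|}$ produced by the substitution $y^\ast = h z^\ast$ matches the ratio between $dz^\ast_\psi$ and $dz^\ast_{\psi^{h^2}}$ recorded in Section~\ref{Sch}. Since $N$, $M$ and the $\omega_S$ generate $\Sp(W)$ and the cocycle $\widetilde{c}_{X^\ast}$ is rigid under the conjugation, the intertwining propagates to all of $\widetilde{\Sp}(W)$.

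For $\omega_\psi \not\simeq \omega_{\psi^{-1}}$, I would compare minimal $\widetilde{K}$-types, where $\widetilde{K}$ is the preimage in $\widetilde{\Sp}(W)$ of the maximal compact $\U(m) \subset \Sp(W)$. The Gaussian $e^{-\pi|y|^2} \in L^2(X)$ is a joint $\widetilde{K}$-eigenvector, and it carries weight $(\tfrac{1}{2},\ldots,\tfrac{1}{2})$ under $\omega_\psi$ but weight $(-\tfrac{1}{2},\ldots,-\tfrac{1}{2})$ under $\omega_{\psi^{-1}}$. A linear intertwiner would be forced to match these incompatible lowest weights, which is impossible for $m \geq 1$. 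Equivalently, one can compute the distribution character at a regular elliptic element of $\widetilde{\Sp}(W)$: the answer involves the Weil index $\gamma(\psi)$, a non-real fourth root of unity different from $\gamma(\psi^{-1}) = \overline{\gamma(\psi)}$, hence the two characters do not coincide.

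The first part is a direct, if slightly tedious, verification on generators; the substantive obstacle lies in the non-equivalence, which ultimately reflects the arithmetic fact $-1 \notin \R^{\times 2}$. Over $\C$, where every additive character is of the form $\psi^{a^2}$, both Weil representations coincide by the first assertion, so the genuine distinction here arises only because $\R^\times/\R^{\times 2}$ is nontrivial; this is precisely what forces the introduction of $\Sp^{\pm}(W)$ and the extended Weil representation in the sequel.
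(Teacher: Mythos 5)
Your proof is correct but follows a genuinely different route from the paper's. For the equivalence $\omega_\psi \simeq \omega_{\psi^{h^2}}$, the paper follows MVW (p.~36): it conjugates inside $\widetilde{\GSp}(W)$ by the central element $\widetilde{h}$, observes that this moves the central character of the Heisenberg representation from $\psi$ to $\psi^{\lambda_h}$, and then restricts $\lambda_h$ to a square so that the conjugation acts trivially on $\widetilde{\Sp}(W)$ itself. You instead write down the dilation intertwiner $T_h f(y)=|h|^{m/2}f(hy)$ directly in the Schr\"odinger model and verify it on $N$, $M$, and the partial Weyl elements. Both are valid; the paper's version is essentially the uniqueness-of-Heisenberg-representation argument packaged through $\widetilde{\GSp}(W)$ (which is thematic for the paper, as the whole point is the extension to similitude groups), whereas yours is more hands-on and has the advantage of producing the intertwiner explicitly. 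One place you should tighten up: the remark that ``the cocycle $\widetilde{c}_{X^\ast}$ is rigid under the conjugation'' is what actually needs to be said, namely that $\widetilde{c}_{X^\ast,\psi}$ and $\widetilde{c}_{X^\ast,\psi^{h^2}}$ are \emph{equal} (not merely cohomologous), which follows from $\gamma(\psi^{h^2}(Q))=\gamma(\psi(Q))$ because $h^2>0$; without this, verifying the intertwining on generators of $\Sp(W)$ would not propagate to a well-defined intertwiner of $\widetilde{\Sp}(W)$-representations. For the non-equivalence, the paper simply cites the character computation of Thomas (Thm.~1C), while you give an alternative argument via minimal $\widetilde{K}$-types: the Gaussian is a lowest-weight vector of weight $+1/2$ in each coordinate for $\omega_\psi$ but a highest-weight vector of weight $-1/2$ for $\omega_{\psi^{-1}}$, so even the $\widetilde{K}$-spectra of the even constituents are disjoint. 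That is a self-contained and more elementary argument than the character citation, and it correctly isolates the source of the phenomenon, namely $-1 \notin \R^{\times 2}$, which you also note is exactly what makes $\Sp^\pm(W)$ worth introducing.
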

\begin{proof}
We will follow the proof in \cite[p.36]{MoViWa} in the $p$-adic case.  Let $\widetilde{h}=[h,1]\in \widetilde{\GSp}(W)$, and $\widetilde{g}=[g, \epsilon]\in \widetilde{\Sp}(W)$,  $w_t=(w, t)\in \Ha(W)$. Then: $$\pi_{\psi}(w_t^{\widetilde{h}})=\pi_{\psi} (wh, t\lambda_h) \simeq \pi_{\psi^{ \lambda_h}}( w_t);$$
it implies that
$$\pi_{\psi}([\widetilde{g},w_t]^{\widetilde{h}})=\pi_{\psi} ([\widetilde{g}^{\widetilde{h}}, (wh, t\lambda_h)]) \simeq \pi_{\psi^{ \lambda_h}}([\widetilde{g}, w_t]).$$
In particular, by choosing  $h\in F_+^{\times}$, we obtain:
$$\widetilde{g}^{\widetilde{h}}=\widetilde{g},$$
$$\omega_{\psi} (\widetilde{g}) =\pi_{\psi} (\widetilde{g}) \simeq \pi_{\psi^{ h^2}}(\widetilde{g})=\omega_{\psi^{h^2}} (\widetilde{g}).$$
The second statement can derive  from the character calculations of $\omega_{\psi}$ and $\omega_{\psi^{-1}}$ in \cite[Thm.1C]{Th}.
\end{proof}
Let us define:
$$\Pi_{\psi}=\Ind_{\widetilde{\Sp}(W)\ltimes \Ha(W)}^{\widetilde{\Sp}^{\pm}(W)\ltimes \Ha(W)} \pi_{\psi}, \qquad \Omega_{\psi}=\Res_{\widetilde{\Sp}^{\pm}(W)}^{\widetilde{\Sp}^{\pm}(W)\ltimes \Ha(W)}\Pi_{\psi}.$$
By Clifford-Mackey theory, $\Omega_{\psi}\simeq \Ind_{\widetilde{\Sp}(W)}^{\widetilde{\Sp}^{\pm}(W)} \omega_{\psi}$; its restriction on $\widetilde{\Sp}(W)$ consists of the two Weil representations $\omega_{\psi}$ and $\omega_{\psi^{-1}}$. Therefore, $\Omega_{\psi}$ is independent of the choice of $\psi$. Let us  denote it by $\Omega$, and call it the Weil representation of $\widetilde{\Sp}^{\pm}(W)$.

Note that as a representation of $\Ha(W)$, $\Pi_{\psi}\simeq  \pi_{\psi}\oplus  \pi_{\psi^{-1}}$, which is not irreducible.  Let
$\Ha^{\pm}(W)=F_2\ltimes \Ha(W)$. Then $\Pi_{\psi}$ is an irreducible representation of $\Ha^{\pm}(W)$.
\subsection{Schr\"odinger model}\label{sch}
Retain notations from Section \ref{Sch}. Then:
$$ \Pi_{\psi}\simeq \Ind_{X^{\ast}\times F}^{\Ha^{\pm}(W)}\psi_{X^{\ast}}.$$
So $\Pi_{\psi}$ can be  realized on $L^2( \mu_2 \times X )$ by the following formulas:
\begin{equation}\label{representationsp21}
\Pi_{\psi}[(x,0)+(x^{\ast},0)+(0,k)]f([\epsilon,y])=\psi(\epsilon k+\epsilon \langle x+ y,x^{\ast}\rangle) f([\epsilon, x+y]),
\end{equation}
\begin{equation}\label{representationsp22}
\Pi_{\psi}(s(-1))f([\epsilon,y])=f([-\epsilon, y]),
\end{equation}
for $x, y\in X$, $x^{\ast}\in X^{\ast}$, $k\in F$, $\epsilon\in \mu_2$.\\
Let us prove the results:
$$(x,0)+(x^{\ast},0)+(0,k)=(x, x^{\ast}; k+\tfrac{\langle x, x^{\ast}\rangle}{2})\in \Ha(W);$$
\begin{align*}
\Pi_{\psi}[(x,0)+(x^{\ast},0)+(0,k)]f(\epsilon, y)&=f([\epsilon, y]\cdot[1, (x, x^{\ast}; k+\tfrac{\langle x, x^{\ast}\rangle}{2})])\\
     &=f([\epsilon, (x+y, x^{\ast}; k+\tfrac{\langle x+y, x^{\ast}\rangle}{2})]\\
     &=f([1,(\epsilon x^{\ast}, \epsilon k+\epsilon \langle x+y, x^{\ast}\rangle)][\epsilon, x+y])\\
     &=\psi(\epsilon k+\epsilon \langle x+ y,x^{\ast}\rangle) f([\epsilon, x+y]).
     \end{align*}
Following \cite[p.214]{Ba}, let us extend the above formulas (\ref{representationsp21}),(\ref{representationsp22}) to $\widetilde{\Sp}^{\pm}(W)$. Let us identity $L^2(\mu_2 \times X)$ with $\Pi_{\psi}(F_2)L^2(X)$. For any $f\in L^2(\mu_2\times X)$, let us write $f([\epsilon, x])=\left\{ \begin{array}{lr} f_1(x) & \textrm{ if }\epsilon=1,\\
\Pi_{\psi}(s(-1))f_2(x) &  \textrm{ if } \epsilon=-1,\end{array}\right.$ where   $s: \mu_2 \longrightarrow F_2$, given  from (\ref{ss}). For $(1, g)\in \Sp^{\pm}(W)$, we have:
\begin{equation}\label{representationsp22221}
\begin{aligned}
([1, g],1) ([\epsilon, 1], 1)&=([\epsilon, g^{s(\epsilon)}], \widetilde{C}_{X^{\ast}}([1, g], [\epsilon, 1]))\\
&=([\epsilon, g^{s(\epsilon)}], \nu( \epsilon,g)\widetilde{c}_{X^{\ast}}(g^{s(\epsilon)}, 1))\\
&=([\epsilon, g^{s(\epsilon)}], \nu( \epsilon,g))\\
&=([\epsilon, 1], 1)([1, g^{s(\epsilon)}],\nu( \epsilon,g)).
\end{aligned}
\end{equation}
1) If $g= \begin{pmatrix}
  1&b\\
  0 & 1
\end{pmatrix}$, then $\nu( 1,g)=1=\nu( -1,g)$.
\begin{align*}\label{representationsp2221}
\Pi_{\psi}(g)f([1, y])&=\pi_{\psi}( g)f_1(y)=\psi(\tfrac{1}{2}\langle y,yb\rangle) f_1(y)\\
&=\psi(\tfrac{1}{2}\langle y,yb\rangle) f([1,y]),
\end{align*}
\begin{equation*}\label{representationsp2222}
\begin{aligned}
\Pi_{\psi}(g)f([-1, y])& =\Pi_{\psi}([-1, 1])\pi_{\psi}( g^{s(-1)})f_2(y)\\
 &=\psi(\tfrac{1}{2}\langle y,-yb\rangle)\Pi_{\psi}([-1, 1], 1)f_2(y)\\&=\psi(\tfrac{1}{2}\langle y,-yb\rangle) f([-1,y]).
\end{aligned}
\end{equation*}
Hence:
\begin{equation}\label{representationsp23}
\Pi_{\psi}[\begin{pmatrix}
  1&b\\
  0 & 1
\end{pmatrix}]f([\epsilon, y])=\psi(\tfrac{1}{2}\langle y,\epsilon yb\rangle) f([\epsilon,y]).
\end{equation}
2) If $g= \begin{pmatrix}
  a& 0\\
  0 &a^{\ast -1 }
\end{pmatrix}$,then $\nu( 1,g)=1$, $\nu( -1,g)=(\det a, -1)_F$.
\begin{align*}\label{representationsp2231}
\Pi_{\psi}( g)f([1, y])=\pi_{\psi}(g)f_1(y)=|\det(a)|^{1/2} f_1(ya)=|\det(a)|^{1/2} f([1,ya]).
\end{align*}
\begin{equation*}\label{representationsp2232}
\begin{aligned}
\Pi_{\psi}(g)f([-1, y])&=\Pi_{\psi}([-1, 1])\pi_{\psi}[ g^{s(-1)}, \nu( -1,g)]f_2(y)\\
&=|\det(a)|^{1/2}(\det a, -1)_F\Pi_{\psi}([-1, 1], 1) f_2(ya)\\&=|\det(a)|^{1/2}(\det a, -1)_F f([-1,ya]).
\end{aligned}
\end{equation*}
Hence:
\begin{equation}\label{representationsp24}
\Pi_{\psi}[ \begin{pmatrix}
  a& 0\\
  0 &a^{\ast -1 }
\end{pmatrix}]f([\epsilon, y])=|\det(a)|^{1/2} (\det a,\epsilon)_Ff([\epsilon,ya]).
\end{equation}
3) If $g= \omega$,  then $\nu( 1,g)=1$, $\nu( -1,g)=\gamma(-1, \psi^{\tfrac{1}{2}})^{-m}$. Let $f_1, f_2\in S(X)$.
\begin{align*}\label{representationsp2231}
\Pi_{\psi}( g)f([1, y])=\pi_{\psi}(g)f_1(y)=\int_{X^{\ast}} \psi(\langle y, y^{\ast}\rangle) f_1(y^{\ast}\omega^{-1}) dy^{\ast}.
\end{align*}
\begin{equation*}\label{representationsp22321}
\begin{aligned}
\Pi_{\psi}(g)f([-1, y])&=\Pi_{\psi}([-1, 1])\pi_{\psi}[ -g, \nu( -1,g)]f_2(y)\\
&=\Pi_{\psi}([-1, 1]) \nu( -1,g)\int_{X^{\ast}} \psi(\langle -y, y^{\ast}\rangle) f_2(y^{\ast}\omega^{-1}) dy^{\ast}\\
&=\nu( -1,g)\int_{X^{\ast}} \psi(\langle -y, y^{\ast}\rangle) f([-1,y^{\ast}\omega^{-1}]) dy^{\ast}.
\end{aligned}
\end{equation*}
Hence:
\begin{equation}\label{representationsp25}
\Pi_{\psi}(\omega)f([\epsilon, y])=\nu(\epsilon,g)\int_{X^{\ast}} \psi(\langle \epsilon y, y^{\ast}\rangle) f([\epsilon,y^{\ast}\omega^{-1}]) dy^{\ast}.
\end{equation}
By \cite{We}, the above actions determine this representation.
\subsection{The cocycle}  Let  $h_i=[\epsilon_i, g_i]\in \Sp^{\pm}( W)$, $f\in L^2(\mu_2 \times X)$ with $f|_{X}=f_1$, $f|_{[-1\times X]}=\Pi_{\psi}([-1, 1])f_2$, for  $f_1, f_2\in L^2(X)$. Then:
\begin{equation}\label{h1}
\begin{aligned}
\Pi_{\psi}(h_1)\Pi_{\psi}(h_2)f_1&=\Pi_{\psi}([\epsilon_1, 1])\Pi_{\psi}([1,g_1])\Pi_{\psi}([\epsilon_2, 1])\Pi_{\psi}([1,g_2])f_1\\
&=\Pi_{\psi}([\epsilon_1, 1])\Pi_{\psi}([1,g_1])\Pi_{\psi}([\epsilon_2, 1])\pi_{\psi}(g_2)f_1\\
&=\Pi_{\psi}([\epsilon_1, 1])\Pi_{\psi}([\epsilon_2,g_1^{s(\epsilon_2)}]) \nu(\epsilon_2, g_1)\pi_{\psi}(g_2)f_1\\
&=\Pi_{\psi}([\epsilon_1, 1])\Pi_{\psi}([\epsilon_2, 1])\Pi_{\psi}([1, g_1^{s(\epsilon_2)}]) \nu(\epsilon_2, g_1)\pi_{\psi}(g_2)f_1\\
&=\Pi_{\psi}([\epsilon_1, 1])\Pi_{\psi}([\epsilon_2, 1])\pi_{\psi}( g_1^{s(\epsilon_2)}) \nu(\epsilon_2, g_1)\pi_{\psi}(g_2)f_1\\
&=\Pi_{\psi}([\epsilon_1\epsilon_2, 1])\nu(\epsilon_2, g_1) \widetilde{c}_{X^{\ast}}( g_1^{s(\epsilon_2)}, g_2) \pi_{\psi}( g_1^{s(\epsilon_2)}g_2)f_1\\
&=\widetilde{C}_{X^{\ast}}( [\epsilon_1,g_1], [\epsilon_2,g_2]) \Pi_{\psi}([\epsilon_1\epsilon_2,  g_1^{s(\epsilon_2)}g_2]) f_1\\
&=\widetilde{C}_{X^{\ast}}( h_1,h_2)\Pi_{\psi}(h_1h_2) f_1.
\end{aligned}
\end{equation}
\begin{equation}\label{h2}
\begin{aligned}
&\quad \Pi_{\psi}(h_1)\Pi_{\psi}(h_2)\Pi_{\psi}([-1, 1])f_2\\
&=\Pi_{\psi}([\epsilon_1, 1])\Pi_{\psi}([1,g_1])\Pi_{\psi}([\epsilon_2, 1])\Pi_{\psi}([1,g_2])\Pi_{\psi}([-1, 1])f_2\\
&=\Pi_{\psi}([\epsilon_1, 1])\Pi_{\psi}([1,g_1])\Pi_{\psi}([\epsilon_2, 1])\Pi_{\psi}([-1,g_2^{s(-1)}])\nu(-1, g_2)f_2\\
&=\Pi_{\psi}([\epsilon_1, 1])\Pi_{\psi}([1,g_1])\Pi_{\psi}([\epsilon_2, 1])\Pi_{\psi}([-1,1])\pi_{\psi}(g_2^{s(-1)})\nu(-1, g_2)f_2\\
&=\Pi_{\psi}([\epsilon_1, 1])\Pi_{\psi}([\epsilon_2,g_1^{s(\epsilon_2)}]) \nu(\epsilon_2, g_1) \Pi_{\psi}([-1,1]) \nu(-1, g_2)\pi_{\psi}(g_2^{s(-1)})f_2\\
&=\Pi_{\psi}([\epsilon_1, 1])\Pi_{\psi}([\epsilon_2, 1]) \nu(\epsilon_2, g_1) \Pi_{\psi}([1, g_1^{s(\epsilon_2)}]) \Pi_{\psi}([-1,1]) \nu(-1, g_2)\pi_{\psi}(g_2^{s(-1)})f_2\\
&=\Pi_{\psi}([\epsilon_1\epsilon_2, 1])  \nu(\epsilon_2, g_1) \Pi_{\psi}([-1, g_1^{s(-\epsilon_2)}]) \nu(-1, g_1^{s(\epsilon_2)}) \nu(-1, g_2)\pi_{\psi}(g_2^{s(-1)})f_2\\
&=\Pi_{\psi}([\epsilon_1\epsilon_2, 1])  \nu(\epsilon_2, g_1) \Pi_{\psi}([-1, 1]) \pi_{\psi}(g_1^{s(-\epsilon_2)}) \nu(-1, g_1^{s(\epsilon_2)}) \nu(-1, g_2)\pi_{\psi}(g_2^{s(-1)})f_2\\
&=\Pi_{\psi}([\epsilon_1\epsilon_2, 1])\nu(\epsilon_2, g_1)\nu(-1, g_1^{s(\epsilon_2)}) \nu(-1, g_2) \widetilde{c}_{X^{\ast}}( g_1^{s(-\epsilon_2)}, g_2^{s(-1)})\Pi_{\psi}([-1, 1])  \pi_{\psi}( g_1^{s(-\epsilon_2)}g_2^{s(-1)})f_2\\
&=\Pi_{\psi}([\epsilon_1\epsilon_2, 1])\nu(\epsilon_2, g_1)\nu(-1, g_1^{s(\epsilon_2)}) \nu(-1, g_2) \widetilde{c}_{X^{\ast}}( g_1^{s(-\epsilon_2)}, g_2^{s(-1)})\nu(-1,g_1^{s(\epsilon_2)}g_2)^{-1}  \pi_{\psi}( g_1^{s(\epsilon_2)}g_2)\Pi_{\psi}([-1, 1]) f_2\\
&=\Pi_{\psi}([\epsilon_1\epsilon_2, 1])\nu(\epsilon_2, g_1)\widetilde{c}_{X^{\ast}}( g_1^{s(\epsilon_2)}, g_2) \pi_{\psi}( g_1^{s(\epsilon_2)}g_2)\Pi_{\psi}([-1, 1]) f_2\\
&=\widetilde{C}_{X^{\ast}}( [\epsilon_1,g_1], [\epsilon_2,g_2]) \Pi_{\psi}([\epsilon_1\epsilon_2,  g_1^{s(\epsilon_2)}g_2])\Pi_{\psi}([-1, 1]) f_2 \\
&=\widetilde{C}_{X^{\ast}}( h_1,h_2)\Pi_{\psi}(h_1h_2) \Pi_{\psi}([-1, 1]) f_2.
\end{aligned}
\end{equation}
By (\ref{h1}), (\ref{h2}), for $f\in L^2(\mu_2\times X)$,  we have:
\begin{equation}\label{Cdou}
 \Pi_{\psi}(h_1)\Pi_{\psi}(h_2) f=\widetilde{C}_{X^{\ast}}( h_1,h_2)\Pi_{\psi}(h_1h_2) f.
 \end{equation}
\subsection{Lattice model}\label{la}
Retain notations from Section \ref{latticemodel}. Then $\Pi_{\psi}=\Ind_{\Ha(L)}^{\Ha^{\pm}(W)} \psi_L$.
For example, if  $\psi=\psi_0$, we take $L=\Za e_1 \oplus  \cdots \oplus \Za e_m \oplus \Za e_1^{\ast} \oplus \cdots \oplus \Za e_m^{\ast}$.  Let $\mathcal{H}_{\psi}^{\pm}(L)$ be the set of  measurable functions $f: \mu_2  \times W \longrightarrow \C$ such that
\begin{itemize}
\item[(i)] $f(\epsilon, l+w)=\psi(-\tfrac{\langle x_{l}, x^{\ast}_l\rangle}{2}-\epsilon\tfrac{\langle l, w\rangle}{2}) f(\epsilon, w)$, for  $l=x_l+x_{l}^{\ast}\in L=(X\cap L) \oplus (X^{\ast}\cap L)$, $w\in W$,
\item[(ii)] $\int_{L\setminus W} ||f(\epsilon, w)||^2 dw<+\infty$,
\end{itemize}
where $w\in W$, $\epsilon \in \mu_2$, and $dw$ is a $W$-right invariant measure on $L\setminus W$.  Then $\Pi_{\psi}$ can be realized on $\mathcal{H}_{\psi}^{\pm}(L)$ by the following formulas:
\begin{equation}\label{www1}
\Pi_{\psi}([w',t])f( \epsilon, w)=\psi(t+\epsilon\tfrac{\langle w, w'\rangle}{2})f( \epsilon, w+w'),
 \end{equation}
\begin{equation}\label{www2}
\Pi_{\psi}[-1,0]f( \epsilon, x+x^{\ast})=f(- \epsilon, x-x^{\ast}),
 \end{equation}
for $w=x+x^{\ast},w'=x'+x^{'\ast}\in W$, $\epsilon \in \mu_2$, $t\in F$.
\section{Splitting of dual pairs  in $\Sp^{\pm}(W)$}\label{splitting}
\subsection{Notations} Following \cite{MoViWa} and \cite{Ku1}, let $F=\R$, $E=\C$. Let $\mathbb{H}$ denote the unique(up to isometry) quaternion algebra over $F$, with a basis $\{1, i,j,k\}$ such that $ij=-ji=k$, $i^2=j^2=k^2=-1$. Assume $E=F(i)$. Let $D$ be a division algebra over $F$ with an involution $\tau$ such that  $F$ consists of all $\tau$-fixed points of $D$. According to \cite{MoViWa} and \cite{Ku1}, it has  one of the following forms,  up to isometry:
\begin{itemize}
\item  $D=F, \tau=\id$;
\item $D=E$,  $\tau$=the complex conjugation;
\item $D= \mathbb{H}$, $\tau=$ the quaternion conjugation.
\end{itemize}
Assume that  $(W, \langle, \rangle)$ has the following  tensor product decomposition:
$$W=\mathcal{W}_1 \otimes_D \mathcal{W}_2, \qquad \langle, \rangle=\kappa_D\Trd_{D/F}\big( \langle, \rangle_1 \otimes \tau(\langle, \rangle_2)\big)$$
for a right $\epsilon_1$-hermitian space $(\mathcal{W}_1, \langle, \rangle_1)$ over $D$, a left $\epsilon_2$-hermitian space $(\mathcal{W}_2, \langle, \rangle_2)$ over $D$ with  $\epsilon_1 \epsilon_2=-1$, and  $\kappa_D=\left\{\begin{array}{cc} 1& \textrm{ if } D=F,\\
\tfrac{1}{2} & \textrm{ if } D=E,\mathbb{H}.
 \end{array}\right.$ It has the following different cases that correspond to  the above forms:
\begin{itemize}
\item[(1)][Symplectic-orthogonal type]  $D=F$, $\mathcal{W}_1$ (resp. $\mathcal{W}_2$) is a  symplectic(resp. a symmetric)  vector space over $F$ and vice versa.
\item[(2)][Unitary type] $D=E$,  $\mathcal{W}_1$ (resp. $\mathcal{W}_2$) is a  hermitian (resp.  skew hermitian )  vector space over $E$ and  vice versa.
\item[(3)][Quaternion unitary type] $D= \mathbb{H}$, $\mathcal{W}_1$ (resp. $\mathcal{W}_2$) is a  hermitian (resp. skew hermitian )  vector space over $D$ and  vice versa.
\end{itemize}
 Let $\U(\mathcal{W}_1)$(resp.$\U(\mathcal{W}_2)$) denote  the group of isometries of $(\mathcal{W}_1, \langle, \rangle)$(resp.$(\mathcal{W}_2, \langle, \rangle)$), and $\GU(\mathcal{W}_1)$ (resp.$\GU(\mathcal{W}_2)$) the group of   similitude isometries  of $(\mathcal{W}_1, \langle, \rangle)$(resp.$(\mathcal{W}_2, \langle, \rangle)$). By \cite[p.15]{MoViWa},   except one case in the above   (3) where the skew hermitain vector space has dimension $1$,   the pair $\big(\U(\mathcal{W}_1), \U(\mathcal{W}_2)\big)$ is  an irreducible \emph{dual reductive pair}  of type I in the sense of Howe.
   Let $$1 \longrightarrow T\longrightarrow \widehat{\Sp}(W) \longrightarrow \Sp(W) \longrightarrow 1$$
     be a Metaplectic central topological extension of $\Sp(W)$ by $T$ associated to $\widetilde{c}_{X^{\ast}} $.  We now let  $\widehat{\U}(\mathcal{W}_1)$, $\widehat{\U}(\mathcal{W}_2)$ be the  preimages of $\U(\mathcal{W}_1)$,  $\U(\mathcal{W}_2)$ in $\widehat{\Sp}(W)$ respectively. To state a result  about the irreducible dual reductive pair from \cite{MoViWa}, let us list  separately a  case:
\begin{itemize}
\item[(1*)] [Exception case] In the above symplectic-orthogonal type, the  orthogonal vector space has odd dimension.
\end{itemize}
\begin{theorem}\label{scindagedugroupeR0}
Let $(\U(\mathcal{W}_1), \U(\mathcal{W}_2))$ be an  irreducible reductive dual pair in $\Sp(W)$ as given above.
\begin{itemize}
\item[(1)] If $(\mathcal{W}_1, \mathcal{W}_2)$  is not the exception case (1*), then the exact sequence $1 \longrightarrow T \longrightarrow \widehat{\U}(\mathcal{W}_i) \longrightarrow \U(\mathcal{W}_i)\longrightarrow 1$ splits, for $i=1, 2$.
\item[(2)]  If $(\mathcal{W}_1, \mathcal{W}_2)$  is  the exception case and $\epsilon_1=-1$, $\epsilon_2=1$, then
\begin{itemize}
 \item the exact sequence $1 \longrightarrow T \longrightarrow \widehat{\Oa}(\mathcal{W}_2) \longrightarrow \Oa(\mathcal{W}_2)\longrightarrow 1$ splits, but
  \item the exact sequence $1 \longrightarrow T \longrightarrow \widehat{\Sp}(\mathcal{W}_1) \longrightarrow \Sp(\mathcal{W}_1)\longrightarrow 1$  does not split.
  \end{itemize}
    \end{itemize}
\end{theorem}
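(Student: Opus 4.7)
The plan is, case by case, either to construct an explicit continuous splitting $\beta_i \colon \U(\mathcal{W}_i) \to \widehat{\Sp}(W)$ of the central extension, or to exhibit a genuine cohomological obstruction to the existence of any such splitting. In both directions the main tool is the explicit $2$-cocycle $\widetilde{c}_{X^{\ast}}$ and its normalized variant $\overline{c}_{X^{\ast}}$ recalled in Section~2, applied with a Lagrangian $X \subseteq W$ chosen compatibly with the tensor decomposition $W = \mathcal{W}_1 \otimes_D \mathcal{W}_2$.

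First I would fix a complete $D$-polarization of one factor, say $\mathcal{W}_1 = Y_1 \oplus Y_1^{\ast}$, and take $X = Y_1 \otimes_D \mathcal{W}_2$ as the Lagrangian in $W$. With this choice the restriction of $\widetilde{c}_{X^{\ast}}$ to $\U(\mathcal{W}_2)$ becomes tractable: the Leray invariant $q(g_1, g_2)$ for $g_i \in \U(\mathcal{W}_2)$ inherits a natural quadratic form coming from $\mathcal{W}_1$, and by the product formula $\gamma(\psi(Q)) = \epsilon(Q)\gamma(\psi)^{\dim V}\gamma(\det Q, \psi)$ of Section~2 one sees that an even-dimensional $\mathcal{W}_1$ forces the Weil-index contribution to collapse into a Hilbert-symbol character of $\det g$. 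This produces a candidate splitting $\beta(g) = \chi(\det g)$ with $\chi$ a suitable character of the center of $D$, whose coboundary one then checks reproduces the restricted cocycle.

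For the non-exceptional cases in (1) I would carry out this construction uniformly. In the unitary type one sets $\beta(g) = \chi(\det_E g)$ for a character $\chi$ of $E^{\times}$ whose restriction to $F^{\times}$ is the quadratic character attached to $E/F$ raised to the power $\dim_E \mathcal{W}_j$; in the quaternion unitary type the analogous formula uses $\Nrd_{\mathbb{H}/F}$, and is automatic because $\U(\mathcal{W}_i)$ coincides with its own commutator subgroup in the relevant range, making the central extension by $T$ split abstractly and the splitting continuous. For the symplectic-orthogonal type with $\dim \mathcal{W}_2$ even (outside case (1*)), both $\widehat{\Oa}(\mathcal{W}_2)$ and $\widehat{\Sp}(\mathcal{W}_1)$ split: the former via a character of the form $g \mapsto \chi_{\det Q}(\det g)$ corrected by a Hasse-invariant factor, and the latter because the evenness of $\dim \mathcal{W}_2$ kills the odd-index contribution $\gamma(\psi)^{\dim \mathcal{W}_2}$ in Rao's formula. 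The splitting of $\widehat{\Oa}(\mathcal{W}_2)$ in case (2), where $\dim \mathcal{W}_2$ is odd, still goes through because the residual $\gamma(\psi)$ factor can be absorbed into the Hasse invariant and the determinant character of the pulled-back quadratic form.

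The main obstacle is the non-splitting half of (2), for $\widehat{\Sp}(\mathcal{W}_1)$. Since $\Sp(\mathcal{W}_1)$ is perfect, any splitting of $\widehat{\Sp}(\mathcal{W}_1) \to \Sp(\mathcal{W}_1)$ would be unique, and it is classical that $\Ha^2(\Sp(\mathcal{W}_1), T) \simeq \Za/2$ is generated by the metaplectic class. Thus the strategy is to pull the restricted cocycle back to a copy of $\SL_2(F) \subset \Sp(\mathcal{W}_1)$ arising from a hyperbolic plane in $\mathcal{W}_1$, and to read the cohomology class off directly from Rao's $\SL_2$-formula in the Example of Section~2. Because $\dim \mathcal{W}_2$ is odd, the factor $\gamma(\psi)^{\dim \mathcal{W}_2}$ in that formula survives with odd multiplicity, and the restricted cocycle becomes cohomologous to the Hilbert-symbol cocycle $(x(g_1), x(g_2))_F$ on $\SL_2(F)$. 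The continuity argument given immediately after that Example shows that this cocycle is not a coboundary, so the restriction yields the non-trivial metaplectic class on $\SL_2(F)$, and hence the non-trivial class on $\Sp(\mathcal{W}_1)$ itself, ruling out any splitting. The delicate part of this step is tracking how the Weil-index factors recombine when the form $Q$ on $\mathcal{W}_2$ is non-split, and I would handle that by reducing to the anisotropic-kernel-plus-hyperbolic decomposition of $\mathcal{W}_2$ so that only the one-dimensional anisotropic factor contributes to the parity obstruction.
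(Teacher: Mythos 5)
The paper does not prove this theorem itself; it cites Kudla \cite{Ku1}, Prop.~4.1, for the real-field case, and your sketch follows essentially Kudla's route there: fix a Lagrangian $X = Y_1 \otimes_D \mathcal{W}_2$ compatible with the tensor factorization, note that the Leray invariant restricted to $\U(\mathcal{W}_2)$ acquires a quadratic form inherited from $\mathcal{W}_1$, expand the Weil index through $\gamma(\psi(Q)) = \epsilon(Q)\gamma(\psi)^{\dim V}\gamma(\det Q,\psi)$, extract a determinant-type trivialization corrected by Hasse and Hilbert-symbol factors, and detect the obstruction in case (2) on an embedded $\SL_2$ where Rao's formula displays the Hilbert-symbol cocycle.

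There is, however, a genuine gap in your reasoning for the quaternion unitary type. You assert that because $\U(\mathcal{W}_i)$ equals its own commutator subgroup, the central extension by $T$ ``splits abstractly.'' Perfectness kills $\Hom(G,T)$ but says nothing whatever about $\Ha^2(G,T)$; the group $\Sp(W)$ itself is perfect, yet $\widehat{\Sp}(W)\to\Sp(W)$ does not split---that is precisely the metaplectic phenomenon. For $\Sp(p,q)$ the splitting does hold for a \emph{topological} reason: its maximal compact $\Sp(p)\times\Sp(q)$ is simply connected, hence $\pi_1(\Sp(p,q))=1$ and every continuous $T$-central extension splits. But for $\SO^{\ast}(2n)$ (the group attached to a skew-hermitian quaternionic $\mathcal{W}_2$) one has $\pi_1 \cong \Za$, so nontrivial $T$-central extensions do exist, and showing that the \emph{particular} metaplectic restriction is trivial is a genuine computation with the cocycle, not a formal consequence of group theory. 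Finally, your appeal to ``the continuity argument'' after the $\SL_2$ Example is too quick for the non-splitting half of (2): that remark only shows the specific cochain $\overline{c}_{X^{\ast}}$ fails to be continuous, which does not by itself exclude a (Borel) coboundary; the non-splitting should instead be traced to the nontriviality of the metaplectic class itself, e.g.\ via $\pi_1(\Sp(\mathcal{W}_1))\cong\Za$, or by quoting that $[\widetilde{c}_{X^{\ast}}]$ restricted to $\Sp(\mathcal{W}_1)$ is again the generator of the order-two subgroup of $\Ha^2$.
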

The $p$-adic version of this theorem  has been proved in  \cite[Chapter 1 ]{MoViWa}   and  \cite{Ku1}. For the   real field case,  it was   proved in \cite[Prop.4.1]{Ku1}. Moreover,  explicit trivialization is also given in \cite{Ku1}. 
\begin{theorem}\label{scindagedugroupeR4}
Let $(\U(\mathcal{W}_1), \U(\mathcal{W}_2))$ be an  irreducible reductive dual pair in $\Sp(W)$ as given above.  Then $\widehat{\U}(\mathcal{W}_1)$  commutates with  $\widehat{\U}(\mathcal{W}_2)$ in $\widehat{\Sp}(W)$.
\end{theorem}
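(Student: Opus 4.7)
The plan is to reduce the commutation statement to the triviality of a continuous bicharacter. Since $\U(\mathcal{W}_1)$ and $\U(\mathcal{W}_2)$ commute pointwise inside $\Sp(W)$, for any preimages $\hat{g}_i$ of $g_i\in\U(\mathcal{W}_i)$ the commutator $[\hat{g}_1,\hat{g}_2]$ lies in the central kernel $T$ of $\widehat{\Sp}(W)\to\Sp(W)$. Centrality of $T$ makes $[\hat{g}_1,\hat{g}_2]$ independent of the choice of preimages, and the cocycle identity for $\widetilde{c}_{X^{\ast}}$ shows that the resulting map $\chi:\U(\mathcal{W}_1)\times\U(\mathcal{W}_2)\longrightarrow T$ is continuous and bi-multiplicative, hence factors through the abelianizations $\U(\mathcal{W}_i)^{\mathrm{ab}}$.

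First I would dispose of the cases in which both factors $\U(\mathcal{W}_i)$ are semisimple, connected, and (almost) perfect: then the continuous bi-multiplicative map $\chi$ from a compact-by-perfect group to the divisible group $T$ must vanish identically. This already covers generic ranges in each of the three types (symplectic-orthogonal, unitary, quaternionic). For the remaining boundary cases, where one factor carries a nontrivial continuous character, such as $\det$ on a unitary group or the sign character on an orthogonal group, I would invoke Theorem~\ref{scindagedugroupeR0} to fix explicit splittings $s_i:\U(\mathcal{W}_i)\to\widehat{\U}(\mathcal{W}_i)$ wherever they are available. With these, $\chi(g_1,g_2)=s_1(g_1)s_2(g_2)s_1(g_1)^{-1}s_2(g_2)^{-1}$ becomes an explicit $\mu_8$-valued bi-character that can be tested on a small generating set.

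The concrete computation reduces, via a Witt decomposition of $\mathcal{W}_1$ or $\mathcal{W}_2$ into hyperbolic planes plus an anisotropic kernel, to a low-rank check in the Schr\"odinger model of Section~\ref{Sch}. In the exception case $\big(\Sp(\mathcal{W}_1),\Oa(\mathcal{W}_2)\big)$, where only $\widehat{\Oa}(\mathcal{W}_2)$ splits, the critical tests are $\chi(g_1,-1_{\mathcal{W}_2})$ for unipotent and Weyl-type generators $g_1$ of $\Sp(\mathcal{W}_1)$, carried out using (\ref{representationsp11})--(\ref{representationsp4}) together with the Perrin--Rao formulas. The main obstacle is precisely this archimedean bookkeeping: the Hilbert symbols, Weil indices $\gamma(\psi(Q))$, and the signs $(-1)^{t(t-1)/2}$ appearing in $\overline{c}_{X^{\ast}}$ must cancel exactly on every commuting pair arising from the tensor decomposition $W=\mathcal{W}_1\otimes_D\mathcal{W}_2$. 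I would organize the test elements, following \cite{Ku1} and \cite{MoViWa}, so that $g_1$ and $g_2$ simultaneously lie inside the Siegel parabolic or a common Weyl translate; the required cancellation is then forced by the symmetric role of $\mathcal{W}_1$ and $\mathcal{W}_2$ in the construction of the symplectic form on $W$, together with the fact that each $\U(\mathcal{W}_i)$ preserves the relevant polarization setup up to an inner twist that is absorbed by the Weil-index formula of Rao.
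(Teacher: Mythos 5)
The paper itself does not prove Theorem~\ref{scindagedugroupeR4}: it cites the real case to Howe \cite{Ho1,Ho2} and the $p$-adic analogue to \cite[Chapter 1]{MoViWa}. So there is no in-paper proof to compare against, and your proposal is filling in what is presented as a known result.

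Your reduction to a continuous bicharacter $\chi\colon \U(\mathcal{W}_1)\times\U(\mathcal{W}_2)\to T$ is the right move, and in fact it settles the theorem outright once pushed slightly harder; the "boundary cases" you plan to handle by explicit Schr\"odinger-model computation do not actually require any computation. Two observations close every case. First, for a bicharacter it suffices that \emph{one} factor be perfect: if $\U(\mathcal{W}_1)$ is connected and equals its own commutator subgroup (e.g.\ $\Sp_{2n}(\R)$, or the quaternionic groups $\Sp(p,q)$ and $\SO^{\ast}(2n)$), then $\chi(\cdot,g_2)$ is a character of a perfect group and hence trivial for every $g_2$. This disposes of the entire symplectic-orthogonal type as well as the quaternionic type, including the exception case with odd-dimensional $\Oa(\mathcal{W}_2)$, without invoking Theorem~\ref{scindagedugroupeR0} or any Weil-index bookkeeping. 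Second, in the unitary type neither factor is perfect, but $\U(p,q)^{\mathrm{ab}}\simeq T$ via $\det$, so $\chi$ factors through a bicharacter $T\times T\to T$; writing $\chi(z,w)=w^{n(z)}$ for each fixed $z$, divisibility of $T$ forces $k\mid n(z)$ for every $k\geq 1$, hence $n\equiv 0$ and $\chi\equiv 1$. Thus the abstract argument already covers all three types, and the place where you declared both factors should be "semisimple, connected, and (almost) perfect" is both too strong a hypothesis (one perfect factor is enough) and, for the unitary case, never satisfied. As written, the proposal therefore leans on an unfinished concrete verification ("the required cancellation is forced by the symmetric role...") precisely in the cases where it is unnecessary; that sketch is the only genuine gap, and it evaporates once the bicharacter reduction is exploited fully.
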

The $p$-adic version of this theorem  has been proved in  \cite[Chapter 1 ]{MoViWa}. For the   real field case,  one can see \cite{Ho1,Ho2}.  Recall
$$\iota: \U(\mathcal{W}_1) \times \U(\mathcal{W}_2) \longrightarrow \Sp(W).$$
The kernel of this map is trivial or $\langle \pm I\rangle$.  By Theorem \ref{scindagedugroupeR4}, there exists a group homomorphism:
\begin{align}
\widehat{\iota}:& \widehat{\U}(\mathcal{W}_1) \times \widehat{\U}(\mathcal{W}_2) \longrightarrow \widehat{\Sp}(W)\\
& (\widehat{u}_1, \widehat{u}_2) \longmapsto \widehat{u}_1\widehat{u}_2.
\end{align}
The image of this map  belongs to $\widehat{\iota(\U(\mathcal{W}_1)\times \U(\mathcal{W}_2))}$. If $(\mathcal{W}_1, \mathcal{W}_2)$  is a pair in the above theorem \ref{scindagedugroupeR0}(1),  then  there exists a composite  group homomorphism:
$$ \U(\mathcal{W}_1) \times U(\mathcal{W}_2) \longrightarrow \widehat{\U}(\mathcal{W}_1) \times \widehat{\U}(\mathcal{W}_2) \longrightarrow \widehat{\Sp}(W).$$
Since the restriction of $\widetilde{c}_{X^{\ast}}  $ on $\ker(\iota)\times \Sp(W)$ or $\Sp(W)\times \ker(\iota)$ is trivial,  the composite map  is trivial on $\ker(\iota)$. Hence $\widehat{\iota(\U(\mathcal{W}_1)\times \U(\mathcal{W}_2))}$ is a splitting group. For simplicity, we can say that the exact sequence
$$1 \longrightarrow T \longrightarrow \widehat{\U(\mathcal{W}_1)\times \U(\mathcal{W}_2)} \longrightarrow \U(\mathcal{W}_1)\times \U(\mathcal{W}_2) \longrightarrow 1$$
is splitting.
In the following, we will  extend these results to the dual pairs in $\Sp^{\pm}(W)$.
\subsubsection{The group $\U^{\pm}(\mathcal{W}_i)$}
Let $\U^{\pm}(\mathcal{W}_i)=\{ g\in \GU(\mathcal{W}_i)\mid \lambda_g=\pm 1\}$.
\begin{lemma}
If $\mathcal{W}_i$ is a hyperbolic space over $D$ of even dimension, then there exists an exact sequence:
$1\longrightarrow \U(\mathcal{W}_i) \longrightarrow \U^{\pm}(\mathcal{W}_i) \longrightarrow \mu_2 \stackrel{\lambda}{\longrightarrow} 1$.
\end{lemma}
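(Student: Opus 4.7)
The plan is to verify exactness at each of the three non-trivial spots in the sequence and identify where the real content lies. Exactness at $\U(\mathcal{W}_i)$ (injectivity of the inclusion) is clear. Exactness at $\U^{\pm}(\mathcal{W}_i)$ is essentially a tautology: by the very definitions $\U^{\pm}(\mathcal{W}_i)=\lambda^{-1}(\{\pm 1\})$ and $\U(\mathcal{W}_i)=\lambda^{-1}(\{1\})$, so the kernel of $\lambda$ restricted to $\U^{\pm}(\mathcal{W}_i)$ is precisely $\U(\mathcal{W}_i)$. Thus the only substantive step is exhibiting an element of $\U^{\pm}(\mathcal{W}_i)$ of similitude factor $-1$, i.e.\ proving that $\lambda$ lands surjectively onto $\mu_2$.

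The plan for surjectivity is to use the hyperbolic structure to construct an explicit similitude. Since $\mathcal{W}_i$ is a hyperbolic $\epsilon_i$-hermitian space over $D$ of even dimension, one can choose a polarization $\mathcal{W}_i=X_i\oplus X_i^{\ast}$ with $X_i$ and $X_i^{\ast}$ maximal totally isotropic $D$-submodules dually paired by $\langle,\rangle_i$. Define the $D$-linear involution
\[
\phi:\mathcal{W}_i\longrightarrow \mathcal{W}_i,\qquad x+x^{\ast}\longmapsto x-x^{\ast},\quad x\in X_i,\ x^{\ast}\in X_i^{\ast}.
\]
Using that $X_i$ and $X_i^{\ast}$ are isotropic, a direct three-line calculation gives
\[
\langle \phi(x_1+x_1^{\ast}),\phi(x_2+x_2^{\ast})\rangle_i=-\langle x_1,x_2^{\ast}\rangle_i-\langle x_1^{\ast},x_2\rangle_i=-\langle x_1+x_1^{\ast},x_2+x_2^{\ast}\rangle_i,
\]
so $\phi\in \GU(\mathcal{W}_i)$ with $\lambda_\phi=-1\in F$. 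Consequently $\phi\in \U^{\pm}(\mathcal{W}_i)$ provides a preimage of the non-trivial element of $\mu_2$, completing the surjectivity.

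There is no genuine obstacle here. The only points requiring a moment of care are that $\phi$ is $D$-linear (automatic because $X_i$ and $X_i^{\ast}$ are $D$-submodules and we scale by $\pm 1\in F\subseteq D$) and that the similitude factor indeed lies in $F$; both are built into the construction. The assumption of even dimension enters precisely to guarantee the existence of the polarization $\mathcal{W}_i=X_i\oplus X_i^{\ast}$, which in turn is what makes $\phi$ available.
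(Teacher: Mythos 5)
Your proof is correct and essentially the same as the paper's: your involution $\phi(x+x^{\ast})=x-x^{\ast}$ is exactly the element $s(-1)=h_{-1}\times\cdots\times h_{-1}$ that the paper uses, acting by $+1$ on the isotropic piece and $-1$ on the dual isotropic piece. The only (inessential) difference is that the paper packages this as a full section map $s:\mu_2\to\U^{\pm}(\mathcal{W}_i)$, whereas you exhibit a single preimage of $-1$, which suffices for surjectivity (and since $\phi^2=\mathrm{Id}$ it is in fact the same section).
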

\begin{proof}
Assume $i=2$. Assume that  $\mathcal{W}_2 \simeq n_2H$, for  a hyperbolic plane $H$ over $D$. Let  $H= Y\oplus Y^{\ast}$ be a complete polarisation. Then  we can  define a section map
 $$s: \mu_2 \longrightarrow \U(\mathcal{W}_2); a \longmapsto   \underbrace{h_a \times \cdots \times h_a}_{n_2 }$$
 where $h_a=\left(\begin{array}{ccccccc}
1& 0 \\
0 & a \end{array}\right)\in  \GU(H)$.
\end{proof}
Let $\Lambda_{\U^{\pm}(\mathcal{W}_i) }$ be the image of $\U^{\pm}(\mathcal{W}_i)$ in $\mu_2$ under the map $\lambda$.  Assume that $\mathcal{W}_i=\mathcal{W}_i^0\oplus \mathcal{W}_i^1$ and $\mathcal{W}_i^0 \bot \mathcal{W}_i^1$, for an anisotropic vector space $\mathcal{W}_i^0$ and a hyperbolic subspace  $\mathcal{W}_i^1 \simeq n_i H$.
\begin{lemma}\label{sp}
$\Lambda_{\U^{\pm}(\mathcal{W}_i) }=\Lambda_{\U^{\pm}(\mathcal{W}^0_i) }$.
\end{lemma}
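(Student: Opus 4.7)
The plan is to prove the equality by checking both inclusions in $\mu_2$. Both sides trivially contain $+1$ via the identity element, so the only content is whether $-1$ is achieved.

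For the inclusion $\Lambda_{\U^{\pm}(\mathcal{W}_i^0)} \subseteq \Lambda_{\U^{\pm}(\mathcal{W}_i)}$, I would reuse the explicit construction appearing in the proof of the preceding lemma. Given $g_0 \in \U^{\pm}(\mathcal{W}_i^0)$ with similitude factor $a \in \mu_2$, take the element $s(a) = h_a \times \cdots \times h_a$ constructed there acting on $\mathcal{W}_i^1 \simeq n_i H$, which has similitude factor $a$ (well defined because $a \in F$ is $\tau$-fixed, so scaling the $Y^{\ast}$-part of each hyperbolic plane by $a$ multiplies the form by $a$). Then $g_0 \oplus s(a)$ lies in $\U^{\pm}(\mathcal{W}_i)$ and has similitude factor $a$.

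For the reverse inclusion $\Lambda_{\U^{\pm}(\mathcal{W}_i)} \subseteq \Lambda_{\U^{\pm}(\mathcal{W}_i^0)}$, the key idea is to view a similitude as an isometry between two forms. If $g \in \U^{\pm}(\mathcal{W}_i)$ has similitude factor $a \in \mu_2$, then $g$ is an isometry of $\epsilon$-hermitian spaces $(\mathcal{W}_i, \langle,\rangle_i) \simeq (\mathcal{W}_i, a\langle,\rangle_i)$. Scaling an $\epsilon$-hermitian form by $a \in F^{\times}$ preserves both anisotropy and hyperbolicity (the sets of isotropic vectors and totally isotropic subspaces are unchanged), so the anisotropic kernel of the scaled space is $(\mathcal{W}_i^0, a\langle,\rangle_i|_{\mathcal{W}_i^0})$. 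The uniqueness up to isomorphism of the anisotropic kernel in Witt's decomposition for $\epsilon$-hermitian forms over $D$ then yields an isometry $(\mathcal{W}_i^0, \langle,\rangle_i|_{\mathcal{W}_i^0}) \simeq (\mathcal{W}_i^0, a\langle,\rangle_i|_{\mathcal{W}_i^0})$, which is precisely an element of $\U^{\pm}(\mathcal{W}_i^0)$ with similitude factor $a$.

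The main obstacle is the reverse inclusion, which relies on the uniqueness of the Witt decomposition for $\epsilon$-hermitian forms over each of the three division algebras $D \in \{F, E, \mathbb{H}\}$. This is classical (see, e.g., Scharlau or Knus), but one has to apply it uniformly across the three types (symplectic-orthogonal, unitary, quaternionic unitary) listed in the setup, and check that the notion of anisotropic kernel behaves well under the scalar action in each case. The forward inclusion, by contrast, is a direct computation using the explicit section built in the preceding lemma.
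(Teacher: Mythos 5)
Your proof is correct and follows essentially the same route as the paper. The forward inclusion is argued identically, by padding an element of $\U^{\pm}(\mathcal{W}_i^0)$ with the explicit section over $\mathcal{W}_i^1\simeq n_iH$ built in the preceding lemma; and the reverse inclusion in both cases comes down to Witt's theorem: the paper compares the two Witt decompositions $\mathcal{W}_i^0\oplus\mathcal{W}_i^1$ and $g\mathcal{W}_i^0\oplus g\mathcal{W}_i^1$ to produce $g_0\in\U(\mathcal{W}_i)$ with $g_0\mathcal{W}_i^0=g\mathcal{W}_i^0$ and then restricts $g_0^{-1}g$ to the anisotropic kernel, which is exactly your appeal to uniqueness of the anisotropic kernel of the scaled form $a\langle\cdot,\cdot\rangle$ in a slightly different phrasing.
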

\begin{proof}
Assume $\mathcal{W}_i$ is a right vector space over $D$. For  $g\in \U^{\pm}(\mathcal{W}_i)$, the action of  $g$ on  $\mathcal{W}_i$  will yield another Witt decomposition:  $\mathcal{W}_i=g \mathcal{W}_i^0\oplus g\mathcal{W}_i^1$. By Witt's Theorem,  $g \cdot \mathcal{W}_i^0   = g_0  \cdot \mathcal{W}_i^0$ for some suitable  $g_0 \in \U(\mathcal{W}_i)$. Moreover,  $  g_0^{-1}g\cdot (\mathcal{W}_i^0) =\mathcal{W}_i^0$. So  $ g_0^{-1}g \in \GU(\mathcal{W}_i^0)$, and  $\lambda(g_0^{-1}g)=\lambda(g)$. This shows that $\Lambda_{\U^{\pm}(\mathcal{W}_i)} \subseteq \Lambda_{\U^{\pm}(\mathcal{W}^0_i)}$. On the other hand,  by the above lemma, $\Lambda_{\U^{\pm}(\mathcal{W}_i^1)} =\{\pm1\}$.
  Hence,   for $h_0 \in \U^{\pm}(\mathcal{W}^0_i)$ with $\lambda=\lambda(h_0) \in F^{\times}$,  we can find an element   $g_H \in \U^{\pm}(\mathcal{W}_i^1)$ satisfying  $\lambda(g_H)=\lambda$. Then   $g:=h_0 \times g_H $,  viewed as an element of $\U^{\pm}(\mathcal{W}_i)$, satisfies  $\lambda(g)=\lambda(h_0)$. Hence $\Lambda_{\U^{\pm}(\mathcal{W}_i^0)} \subseteq \Lambda_{\U^{\pm}(\mathcal{W}_i)}$.
\end{proof}
By this lemma,  we can determine the image of $\lambda$ in $\mu_2$ by means of  the characteristic of  the anisotropic subspace of $\mathcal{W}_i$.  The following result is from \cite[p.7]{MoViWa}.
\begin{lemma}\label{vigneraslemma1}
  Up to isometry:
\begin{itemize}
\item[-]  an anisotropic quadratic  vector space over $F$ has the following form: $nF(1)$, and $-nF(1)$,   for $ 1\leq n\leq 4$, where $F(1)$ is a quadratic vector space over $F$ of dimension $1$ with the form $x\longrightarrow x^2$;
\item[-]  an anisotropic hermitian  vector space over $E$ has the following form:  $nE$, and $-n E$, for $1\leq n\leq 2$, where  the hermitian form on $E$ is given by $(x ,y)\longmapsto \tau(x)y$,  for $x,y\in E$;
\item[-] an anisotropic right hermitian  vector space over $\mathbb{H}$ has the following form: $\mathbb{H}$, with the form $(x,y) \longmapsto \tau(x)y$.
\end{itemize}
\end{lemma}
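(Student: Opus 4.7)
The plan is a case-by-case diagonalization argument indexed by $D \in \{F, E, \mathbb{H}\}$. In each case a Sylvester-type law of inertia brings the non-degenerate $\tau$-hermitian form to a diagonal $\mathrm{diag}(a_1, \ldots, a_n)$ with $a_i \in F^{\times}$ (forced by the identity $\tau(a_i) = a_i$ for diagonal entries), and then the reduced norm $\Nrd_{D/F}(D^{\times}) = F^{\times}_+$ allows one to rescale each $a_i$ to $\pm 1$. The remaining work is to read off when the resulting form is anisotropic from the sign pattern of its diagonal entries.

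For $D = F$ with $\tau = \mathrm{id}$, the classical law of inertia produces a signature $(p,q)$ decomposition; if both $p \geq 1$ and $q \geq 1$, a sum of a $+1$-norm unit vector with a $-1$-norm unit vector of equal length is isotropic, so anisotropy forces $p = 0$ or $q = 0$, giving the two families $nF(1)$ and $-nF(1)$. For $D = E$ the mixed-sign construction transplants verbatim and yields $nE$ and $-nE$. For $D = \mathbb{H}$ a 1-dimensional summand $(x, y) \mapsto \tau(x) a y$ has associated quadratic form $a \cdot N$, where $N = \Nrd_{\mathbb{H}/F}$ is a positive definite form of rank $4$ over $F$; since $N(\mathbb{H}^{\times}) = F^{\times}_+$, the sign of $a$ is once again the sole isometry invariant of a one-dimensional summand, which accounts for the relevant anisotropic representative appearing in the lemma.

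The principal obstacle is not the diagonalization, which is entirely classical, but rather the upper bound on the rank $n$ recorded in each of the three bullets. Pure anisotropy places no such restriction: positive-definite hermitian forms exist in arbitrary rank over all three division algebras, so the bounds $n \leq 4$, $n \leq 2$, and $n = 1$ cannot be deduced from the diagonalization alone. They instead record exactly which anisotropic kernels can occur as the factor $\mathcal{W}_i^0$ of the Witt decomposition $\mathcal{W}_i = \mathcal{W}_i^0 \oplus \mathcal{W}_i^1$ used in Section \ref{splitting}. I would therefore close the argument by importing the enumeration directly from \cite[p.7]{MoViWa} and verifying that, under the conventions fixed at the start of this section and the constraints imposed by the irreducibility of the dual reductive pair $(\U(\mathcal{W}_1), \U(\mathcal{W}_2))$, the three ranges of $n$ listed above account for every anisotropic kernel that can appear.
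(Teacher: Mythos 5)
Your diagonalization via the law of inertia, the reduced-norm rescaling to $\pm 1$ entries, and the identification of anisotropy with definiteness over $\R$ are all correct. Your suspicion about the rank bounds is also well founded: anisotropy alone cannot give $n\leq 4$, $n\leq 2$ or $n=1$, since definite forms exist in every rank over all three of $F$, $E$, $\mathbb{H}$. The gap is in your proposed close-out. You conjecture that the bounds ``record exactly which anisotropic kernels can occur as $\mathcal{W}_i^0$'' and plan to extract them from \cite[p.7]{MoViWa} under the dual-pair constraints. Neither half works. Irreducibility of $(\U(\mathcal{W}_1),\U(\mathcal{W}_2))$ places no upper bound on $\dim_D\mathcal{W}_i$ --- it only ties the product of the two dimensions to $\dim_F W$, which itself is arbitrary --- so a definite $\mathcal{W}_i^0$ of any rank can in fact occur. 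And the enumeration at \cite[p.7]{MoViWa} is the non-archimedean classification: over a $p$-adic field the dimension of an anisotropic kernel is genuinely bounded, but over $\R$ it is not, so no amount of ``importing'' will produce the stated bounds.

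For comparison, the paper offers no proof of this lemma at all, only the citation to \cite{MoViWa}; there is no archimedean argument in the text. The stated bounds appear to be a verbatim carry-over of the $p$-adic classification and, read literally, are false over $\R$. Fortunately nothing downstream depends on them: the only use of the lemma is in Lemma~\ref{mu2ps}, which needs only that a non-zero anisotropic real form is definite, whence every similitude factor in $\GU(\mathcal{W}_i^0)$ is positive and $\Lambda_{\U^{\pm}(\mathcal{W}_i^0)}=1$. The correct statement over $\R$ simply drops the upper bounds on $n$, and your diagonalization argument --- with the final ``import'' step deleted --- proves that corrected statement in full.
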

Suppose that   $(V, \langle, \rangle)$ is  an anisotropic right hermitian space over $E$. Then we can  take an element $i$ of $E^{\times}$ such that $\overline{i}/{i}=-1$. As it is known that  multiplication of $\langle, \rangle$ by $i$
  will give a skew right hermitian form $i \langle, \rangle$ on $V$. In analogy with  Lemma \ref{vigneraslemma1}, we have:
\begin{lemma}\label{vigneraslemma2}
  Up to isometry:
\begin{itemize}
\item[-]  an anisotropic right skew hermitian  vector space over $E$ has the following form:  $nE$, and $-n E$, for $1\leq n\leq 2$, where  the  skew hermitian form on $E$ is given by $(x ,y)\longmapsto \tau(x)iy$,  for $x,y\in E$;
\item[-] an anisotropic right skew hermitian  vector space over $\mathbb{H}$ has the following form: $\mathbb{H}$, with the form $(x,y) \longmapsto \tau(x)iy$.
\end{itemize}
\end{lemma}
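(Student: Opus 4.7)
The plan is to derive Lemma \ref{vigneraslemma2} from Lemma \ref{vigneraslemma1} by exhibiting a natural, anisotropy-preserving bijection between right hermitian and right skew hermitian forms, realized by ``twisting by $i$''. Concretely, in the $E$-case I would send a hermitian form $h$ to $\beta:=i\cdot h$, and in the $\mathbb{H}$-case I would send $h(x,y)=\tau(x)y$ to $\beta(x,y)=\tau(x)\,i\,y$, and more generally twist an abstract right hermitian form by inserting $i$ between the two slots.

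First, I would verify the sesquilinearity and the skew condition. For $D=E$, using that $E$ is commutative and $\tau(i)=-i$, one computes $\tau(ih(x,y))=\tau(i)\tau(h(x,y))=(-i)h(y,x)=-ih(y,x)$, so $\beta$ is skew hermitian; the inverse map $\beta\mapsto -i\beta$ shows this is a bijection. For $D=\mathbb{H}$, the computation $\tau(\tau(x)iy)=\tau(y)\tau(i)x=-\tau(y)ix$ shows $\beta(x,y)=\tau(x)iy$ is skew hermitian, and more generally for an abstract $h$ one checks $\beta(x,y):=h(x,iy)$ (using that $i$ commutes past the right $\mathbb{H}$-scalars on the appropriate side) gives the required bijection. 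In both cases anisotropy transfers trivially, since $\beta(x,x)=0$ iff $h(x,x)=0$.

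Next I would invoke Lemma \ref{vigneraslemma1}: anisotropic right hermitian spaces over $E$ are $\pm nE$ for $1\le n\le 2$, and over $\mathbb{H}$ are $\mathbb{H}$. Translating through the twist, in the $E$-case I obtain the claimed classification $\pm nE$ with form $(x,y)\mapsto\tau(x)iy$. In the $\mathbb{H}$-case, I obtain at least the form $(x,y)\mapsto \tau(x)iy$ on $\mathbb{H}$.

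The main obstacle is justifying that the $\mathbb{H}$-case lists only \emph{one} form, not a $\pm$ pair as in the other cases. For this I would argue directly on a rank-one right skew hermitian space $(\mathbb{H},\beta)$: any such anisotropic form is $\beta(x,y)=\tau(x)\alpha y$ for some nonzero pure imaginary $\alpha\in\mathbb{H}$ (the condition $\tau(\alpha)=-\alpha$). A right $\mathbb{H}$-linear isometry is left multiplication by some $c\in\mathbb{H}^{\times}$, converting $\alpha$ to $\tau(c)\alpha c=\Nrd(c)\cdot(c^{-1}\alpha c)$. Since $\Nrd$ takes all positive real values on $\mathbb{H}^{\times}$ and conjugation acts transitively on the unit sphere of pure imaginary quaternions, any two nonzero pure imaginary $\alpha$'s lie in the same orbit, so all such forms are isometric to $\tau(x)iy$. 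This absorbs the sign ambiguity and yields the asserted unique model.
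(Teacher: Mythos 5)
Your proposal is correct and follows the same ``twist by $i$'' idea that the paper hints at when it says the lemma is obtained ``in analogy with'' Lemma \ref{vigneraslemma1}; the paper gives no further detail, so it is worth commenting on what you add. For $D=E$ your argument is clean: commutativity of $E$ makes $h\mapsto ih$ an anisotropy-preserving bijection between hermitian and skew-hermitian forms, which transports the classification $\pm nE$ ($1\le n\le 2$) directly. For $D=\mathbb{H}$, however, the parenthetical claim that $\beta(x,y):=h(x,iy)$ gives a bijection ``for an abstract $h$'' does not typecheck: on a right $\mathbb{H}$-module there is no canonical left multiplication by $i$, and the right scalar gives $h(x,yi)=h(x,y)i$, which is not skew-hermitian because $i$ fails to commute with $h(y,x)$. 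Fortunately, only the rank-one case of $\mathbb{H}$ is needed, and your direct argument there is what actually carries the load: writing $\beta(x,y)=\tau(x)\alpha y$ with $\alpha$ pure imaginary, a change of basis by $c\in\mathbb{H}^\times$ replaces $\alpha$ by $\tau(c)\alpha c=\Nrd(c)\,(c^{-1}\alpha c)$, and since conjugation is transitive on the unit pure-imaginary sphere while $\Nrd$ hits all of $\R_{>0}$, all nonzero pure-imaginary $\alpha$ are in one orbit. This correctly explains the lack of a $\pm$ sign in the $\mathbb{H}$-entry, a point the paper leaves implicit. I would suggest simply deleting the ``$\beta(x,y):=h(x,iy)$'' remark for $\mathbb{H}$ and letting the explicit rank-one computation stand alone.
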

\begin{lemma}\label{mu2ps}
$\Lambda_{\U^{\pm}(\mathcal{W}_i)}=\left\{\begin{array}{cc}
\mu_2 & \textrm{ if } \mathcal{W}_i \textrm{ is a hyperbolic space},\\
1 & \textrm{ otherwise}.
\end{array}\right.$
\end{lemma}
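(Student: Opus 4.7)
The plan is to separate into two cases based on whether $\mathcal{W}_i$ is hyperbolic. In the hyperbolic case $\mathcal{W}_i \simeq n_i H$, the first lemma of this subsection already produces a section $s: \mu_2 \to \U^{\pm}(\mathcal{W}_i)$ whose image at $-1$ has similitude factor $-1$, so $\Lambda_{\U^{\pm}(\mathcal{W}_i)} = \mu_2$ is immediate. In the non-hyperbolic case, the anisotropic kernel $\mathcal{W}_i^0$ in the Witt decomposition is non-zero, and Lemma \ref{sp} reduces the task to showing $\Lambda_{\U^{\pm}(\mathcal{W}_i^0)} = \{1\}$, i.e., that no anisotropic form of the admissible types admits a similitude with $\lambda = -1$.

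Next, I would enumerate the possibilities for $\mathcal{W}_i^0$ using Lemmas \ref{vigneraslemma1} and \ref{vigneraslemma2}: $\pm nF(1)$ with $1 \leq n \leq 4$ in the quadratic case; $\pm nE$ with $1 \leq n \leq 2$ in the hermitian and skew hermitian cases over $E$; and $\mathbb{H}$ in the hermitian and skew hermitian cases over $\mathbb{H}$. The key observation is that a similitude $g \in \GU(\mathcal{W}_i^0)$ with $\lambda(g) = -1$ is exactly an isometry from $(\mathcal{W}_i^0, \langle \cdot,\cdot\rangle)$ onto $(\mathcal{W}_i^0, -\langle \cdot,\cdot\rangle)$. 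For the definite quadratic, hermitian, and quaternion hermitian cases, the form and its negative have opposite signature, so no such isometry exists. For skew hermitian over $E$, writing $T$ as a complex matrix, the condition $T^{\ast} i T = -i \cdot I$ collapses (since $i$ is central among scalars) to $T^{\ast} T = -I$, which is impossible since $T^{\ast} T$ is positive semidefinite.

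The main subtle point to watch is the one-dimensional anisotropic skew hermitian space over $\mathbb{H}$: there, left multiplication by $j$ satisfies $\bar{j}\, i\, j = -i$, so this case genuinely admits a similitude with $\lambda = -1$. However, this is precisely the exception excluded from the notion of an irreducible reductive dual pair in the setup of Section \ref{splitting} (the sentence after the decomposition of $W$, following the classification of types), so it falls outside the intended scope of the lemma. With that exception noted, the case analysis is routine; the real work is simply keeping track of the handful of cases supplied by the classification lemmas, with no single case presenting a genuine obstacle.
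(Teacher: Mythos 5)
Your plan is the same one the paper uses: reduce to the anisotropic kernel $\mathcal{W}_i^0$ via Lemma~\ref{sp}, then run through the short list of anisotropic cases from Lemmas~\ref{vigneraslemma1} and~\ref{vigneraslemma2}. Your explicit argument that a similitude with $\lambda=-1$ is an isometry onto the negated form, and that this is impossible in the definite quadratic/hermitian cases and (via $T^*T=-I$) in the skew-hermitian case over $E$, is a genuine improvement in clarity over the paper's ``case by case'' with no detail.

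However, your treatment of the one-dimensional anisotropic skew hermitian space over $\mathbb{H}$ does not close the gap you yourself exposed. The exception on p.15 of MVW excludes the dual pair in which $\mathcal{W}_i$ \emph{itself} is the one-dimensional skew hermitian quaternion space; it does not exclude quaternion skew hermitian spaces of higher odd dimension, and those are legitimate members of the dual pair classification. By Lemma~\ref{vigneraslemma2}, the only nonzero anisotropic skew hermitian quaternion space is $\mathbb{H}$ itself, so any $\mathcal{W}_i$ that is skew hermitian over $\mathbb{H}$ of odd dimension $2m+1\ge 3$ has anisotropic kernel $\mathcal{W}_i^0\simeq\mathbb{H}$. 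Your own computation ($\bar{j}\,i\,j=-i$) gives an element of $\GU(\mathcal{W}_i^0)$ with $\lambda=-1$, and Lemma~\ref{sp} then forces $\Lambda_{\U^{\pm}(\mathcal{W}_i)}=\Lambda_{\U^{\pm}(\mathcal{W}_i^0)}=\mu_2$, even though such a $\mathcal{W}_i$ is not hyperbolic. (Concretely, $\mathcal{W}_i=\mathbb{H}\perp H$ admits the similitude $j\times h_{-1}$ with factor $-1$.) So the statement $\Lambda_{\U^{\pm}(\mathcal{W}_i)}=1$ for non-hyperbolic $\mathcal{W}_i$ fails in exactly the case you noticed, and appealing to the excluded dual pair does not repair it. You should flag this as a needed additional hypothesis on $\mathcal{W}_i$ (excluding odd-dimensional skew hermitian quaternion spaces), or as a defect in the lemma; the paper's own sketch (``case by case, we can obtain that $\Lambda_{\U^{\pm}(\mathcal{W}_i^0)}=1$'') has the same unaddressed problem, so you have in fact identified something worth reporting rather than waving away.
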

\begin{proof}
By Lemma \ref{sp}, it reduces to an  anisotropic subspace $\mathcal{W}_i^0$ of $\mathcal{W}_i$.  If $\mathcal{W}_i^0 \neq 0$, by Lemmas \ref{vigneraslemma1}, \ref{vigneraslemma2}, case by case, we can obtain  that $\Lambda_{\U^{\pm}(\mathcal{W}_i^0)}=1$.
\end{proof}
Let $\mathcal{W}_i \simeq nH$ be  a hyperbolic  group. Let $\{f_1, \cdots, f_n; f_1^{\ast}, \cdots, f_n^{\ast}\}$ be a split hyperbolic basis of $\mathcal{W}_i$ so that $\langle f_k, f_l\rangle=0=\langle f_k^{\ast}, f_l^{\ast}\rangle$ and $\langle f_k, f_l^{\ast}\rangle=\delta_{kl}$.   Then we can embed $H \hookrightarrow \mathcal{W}_i$ by the subbase $\{e_n, e_n^{\ast}\}$. Let  $\mathfrak{A}=\{ \begin{pmatrix}
a&0 \\
 0& \overline{a}^{-1} \end{pmatrix}\mid  a\in D^{\times}\}$.

\begin{lemma}\label{comse}
There exists a   group homomorphism:
$$\tfrac{\U(H)}{[U(H), U(H)]} \longrightarrow \tfrac{\U(nH)}{[U(nH), U(nH)]}.$$
\end{lemma}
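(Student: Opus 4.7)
The plan is to build the map from the natural ``block-diagonal'' embedding of unitary groups induced by the decomposition $nH = H_n \perp H_n^{\perp}$, then invoke the universal property of abelianization. Using the split hyperbolic basis $\{f_1,\dots,f_n;f_1^{\ast},\dots,f_n^{\ast}\}$, set $H_n := Df_n \oplus Df_n^{\ast}$, which is an orthogonal copy of $H$ inside $nH$; its orthogonal complement is $H_n^{\perp} = \mathrm{Span}_D\{f_1,\dots,f_{n-1};f_1^{\ast},\dots,f_{n-1}^{\ast}\}$, which is itself a hyperbolic space $(n{-}1)H$.

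First, I would define $\iota: \U(H) \longrightarrow \U(nH)$ by sending $g \in \U(H) = \U(H_n)$ to the isometry $\iota(g)$ that acts as $g$ on $H_n$ and as the identity on $H_n^{\perp}$. Since the decomposition is an orthogonal one and the form on $nH$ is the perpendicular sum of the two forms, $\iota(g)$ is an isometry of $nH$; composition on $H_n$ is preserved, so $\iota$ is a group homomorphism.

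Next, compose $\iota$ with the abelianization projection
\[
\pi: \U(nH) \longrightarrow \U(nH)/[\U(nH),\U(nH)].
\]
The composite $\pi \circ \iota$ is a homomorphism from $\U(H)$ into an abelian group, so it kills the commutator subgroup $[\U(H),\U(H)]$. By the universal property of abelianization, it factors uniquely through $\U(H)/[\U(H),\U(H)]$, giving the desired homomorphism.

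There is no real obstacle here: the whole content of the statement is the existence of a compatible embedding $\U(H) \hookrightarrow \U(nH)$, and extension by the identity on the orthogonal complement does the job uniformly in all three division algebra cases $D = F, E, \mathbb{H}$. The only point to note for later applications is that the image of the torus $\mathfrak{A} \subseteq \U(H)$ lands in the image of the analogous diagonal torus of $\U(nH)$, which will be useful when this map is composed with explicit characters in subsequent sections.
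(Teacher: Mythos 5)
Your proof is formally valid for the lemma exactly as worded, and you correctly observe that the literal statement is essentially content-free: any orthogonal embedding $\U(H)\hookrightarrow \U(nH)$ composed with the abelianization projection immediately produces such a homomorphism by the universal property. However, this misses what the paper's proof is actually establishing and what the lemma is used for. The paper proves more: it invokes the elementary subgroup $E_{nH}(D)\subseteq \U(nH)$, which is perfect and satisfies $\U(nH)=\U(H)\,E_{nH}(D)$ (citing Hahn--O'Meara), and concludes that $[\U(nH),\U(nH)]\supseteq [\U(H),\U(H)]\,E_{nH}(D)$. Combined with the decomposition $\U(nH)=\U(H)\,E_{nH}(D)$, this shows the induced map on abelianizations is \emph{surjective}. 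That surjectivity is the real content and is what gets used downstream: in the proof of Proposition \ref{splitting}(1), characters of $\U(\mathcal{W}_1)=\U(n_1H)$ factoring through the abelianization are claimed to be determined by their values on $D^{\times}$, which requires that $\U(H)$ (and hence $D^{\times}$ via the subsequent lemma) surjects onto the abelianization of $\U(nH)$. Your block-diagonal embedding argument gives no control over the cokernel of the induced map, so it does not deliver what the lemma is really invoked to provide. You should either strengthen the conclusion to surjectivity by the elementary-subgroup argument, or explicitly note that your proof establishes existence only and flag that the later application needs the stronger fact.
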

\begin{proof}
Let $E_{nH}(D)$ denote  the elementary linear subgroup of $\U(nH)$. By \cite[p.28, p.230 ]{HaMeTi}, we have:
\begin{itemize}
\item $E_{nH}(D)=[E_{nH}(D),E_{nH}(D)]$;
\item $\U(nH)=\U(H) E_{nH}(D)$.
\end{itemize}
Hence $[\U(nH), \U(nH)]=[\U(H),\U(H)][\U(nH), E_{nH}(D)] \supseteq [\U(H), \U(H)] E_{nH}(D)$. So the result holds.
\end{proof}
\begin{lemma}
 If   $D=E$ or $\mathbb{H}$, or $D=F$ and $\epsilon=-1$,  there exists a surjective homomorphism:
$$\tfrac{D^{\times}}{[D^{\times}, D^{\times}]} \longrightarrow \tfrac{\U(H)}{[U(H), U(H)]}.$$
\end{lemma}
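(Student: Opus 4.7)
The plan is to construct the map via the torus inclusion and then exploit a Bruhat-style decomposition of $\U(H)$ to obtain surjectivity onto the abelianization.

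First, I would define
\[
\phi : D^{\times} \longrightarrow \U(H), \qquad a \longmapsto t(a) := \begin{pmatrix} a & 0 \\ 0 & \bar{a}^{-1} \end{pmatrix} \in \mathfrak{A},
\]
and check directly that $\phi$ is a group homomorphism using $\overline{ab} = \bar{b}\bar{a}$ and the identity $\overline{ab}^{-1} = \bar{b}^{-1}\bar{a}^{-1}$. Composing with the abelianization projection $\U(H) \twoheadrightarrow \U(H)/[\U(H),\U(H)]$ yields a homomorphism into an abelian group, which therefore factors through the maximal abelian quotient $D^{\times}/[D^{\times}, D^{\times}]$ and produces the candidate map.

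The crux is surjectivity. I would invoke the Bruhat decomposition $\U(H) = P \sqcup PwP$, where $P = \mathfrak{A} \ltimes N$ is the stabilizer of the isotropic line $Df$, the unipotent radical $N$ consists of matrices $u(b) = \begin{pmatrix} 1 & b \\ 0 & 1 \end{pmatrix}$ with $b$ ranging over the real subspace $\{ b \in D : \bar{b} = -\epsilon b \}$, and $w$ is a Weyl element interchanging the two isotropic lines. Since every element of $\U(H)$ is a product of elements from $\mathfrak{A}$, $N$, $N^-$ and $w$, it suffices to verify that $N$, $N^-$, and $w$ all lie in the commutator subgroup $[\U(H),\U(H)]$; then modulo commutators, every class is represented by an element of $\mathfrak{A} = \phi(D^{\times})$, giving surjectivity.

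The calculation $t(a) u(b) t(a)^{-1} = u(a b \bar{a})$ gives $[t(a), u(b)] = u(a b \bar{a} - b)$. Choosing $a \in F^{\times}$ real with $a^2 \neq 1$ (say $a = 2$) yields $u(b(a^2 - 1))$, and since $a^2 - 1$ is a nonzero real scalar, the image $b \mapsto b(a^2-1)$ exhausts the $N$-subspace as $b$ does, so $N \subseteq [\U(H),\U(H)]$. An analogous argument with $t(a)$ acting on the opposite unipotent handles $N^-$. Finally, the classical $\SL_2$-identity $w = u(\beta)\, u^-(\gamma)\, u(\beta)$ (with $\beta, \gamma$ adjusted for $\epsilon$) expresses $w$ as a product of unipotents, so $w \in [\U(H),\U(H)]$ as well. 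The main potential obstacle is verifying that the $N$-subspace is nonzero in every case covered: for $D = F, \epsilon = -1$ it is all of $F$; for $D = E$ or $\mathbb{H}$ with $\epsilon = \pm 1$ it contains a nonzero imaginary unit or $F$ itself. The one case where this subspace would collapse, $D = F$ with $\epsilon = 1$, is precisely the hypothesis excluded from the lemma, so the argument goes through uniformly in all asserted cases.
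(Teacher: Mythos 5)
Your proposal is correct and supplies the details behind the paper's terse ``Straightforward.'' Since the paper gives no argument, let me verify the mechanism. The map $\phi : D^{\times} \to \U(H)$, $a \mapsto t(a) = \diag(a, \bar{a}^{-1})$, is indeed a homomorphism with image $\mathfrak{A}$ (using $\overline{ab}^{-1} = \bar{a}^{-1}\bar{b}^{-1}$), so it suffices to show every element of $\U(H)$ is congruent modulo $[\U(H),\U(H)]$ to an element of $\mathfrak{A}$. Your commutator identity $[t(a), u(b)] = u(ab\bar{a} - b)$, specialized to central $a \in F^{\times}$ with $a^2 \neq 1$, gives $u((a^2-1)b)$ and hence $N \subseteq [\U(H),\U(H)]$; the same works for $N^-$; and the $\SL_2$-type identity $w = u(\beta)u^-(\gamma)u(\beta)$ (with $\beta,\gamma$ in the appropriate form space, e.g.\ $\beta = \gamma = i$ when $D=E$, $\epsilon=1$, or $\beta=1$, $\gamma=-1$ when $\epsilon=-1$) puts the Weyl element in $[\U(H),\U(H)]$ as well. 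Combined with the rank-one Bruhat decomposition $\U(H) = P \cup PwP$, $P = \mathfrak{A}\ltimes N$, this gives surjectivity.

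Two points worth making explicit. First, the Bruhat decomposition with $\mathfrak{A}$ as Levi is valid here because in each admissible case $\U(H)$ is a \emph{connected} real rank-one group ($\SL_2(\R)$, $\U(1,1)$, $\Sp(1,1)$, or $\SO^*(4)$ depending on $(D,\epsilon)$) and $\mathfrak{A}$ is the centralizer of its maximal split torus; this is precisely where the excluded case $(D,\epsilon)=(F,1)$ breaks, since $\Oa(1,1)$ is disconnected and $N=0$, so the unipotent-killing step has nothing to kill. Second, an equivalent route, closer in spirit to the immediately preceding Lemma in the paper and to the Hahn--O'Meara reference invoked there, is the Gaussian decomposition $\U(H)=\mathfrak{A}\cdot E_H(D)$ with $E_H(D)=\langle N,N^-\rangle$, together with the same commutator computation showing $E_H(D)\subseteq[\U(H),\U(H)]$. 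Both routes are the same idea in different dress; your version is self-contained and uniform across cases.
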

\begin{proof}
Straightforward.
\end{proof}
\begin{lemma}\label{oth}
If    $D=F$ and $\epsilon=1$, $\U(nH)=\Oa(nH)$. Then we have:
\begin{itemize}
\item  There exists a   surjective group homomorphism:
$$F^{\times}\simeq \tfrac{\SO(H)}{[\SO(H), \SO(H)]} \longrightarrow \tfrac{\SO(nH)}{[\SO(nH), \SO(nH)]}.$$
\item $[\Oa(H), \Oa(H)]=\{ \begin{pmatrix} a^2 & 0 \\ 0 & a^{-2}\end{pmatrix}\mid a \in F^{\times}\}$.
\item $\tfrac{\Oa(H)}{[\Oa(H), \Oa(H)]} \simeq (F^{\times}/F^{\times 2}) \times \langle \begin{pmatrix} 0& 1\\ 1 &0 \end{pmatrix}\rangle$.
\end{itemize}
\end{lemma}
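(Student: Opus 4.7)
The plan is to verify the three bullets by exploiting the explicit structure of $\Oa(H)$ for a hyperbolic plane $H=Fe\oplus Fe^{\ast}$ over $F$, and then to reduce the $\SO(nH)$ claim to the $\SO(H)$ case along the lines of Lemma \ref{comse}. I would begin by parametrising $\Oa(H)$: writing the symmetric form with Gram matrix $\begin{pmatrix}0&1\\1&0\end{pmatrix}$, the orthogonality condition on $g=\begin{pmatrix}a&b\\c&d\end{pmatrix}$ forces $ac=0$, $bd=0$ and $ad+bc=1$, so $\Oa(H)$ is the disjoint union of the diagonal subgroup $\SO(H)=\{t(a):=\diag(a,a^{-1})\mid a\in F^{\times}\}$ and its coset $\SO(H)w_0$ with $w_0=\begin{pmatrix}0&1\\1&0\end{pmatrix}$. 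In particular $\SO(H)\simeq F^{\times}$ is abelian, which gives its abelianization for free.

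For the first bullet I would argue exactly as in Lemma \ref{comse}: by the cited results of \cite{HaMeTi}, $\SO(nH)=\SO(H)\cdot E_{nH}(F)$ with $E_{nH}(F)$ perfect, so $E_{nH}(F)\subseteq[\SO(nH),\SO(nH)]$. Consequently the embedding $\SO(H)\hookrightarrow\SO(nH)$ furnished by the chosen subbasis $\{f_n,f_n^{\ast}\}$ induces a surjection on abelianizations, which is the desired map $F^{\times}\simeq\SO(H)\twoheadrightarrow\SO(nH)/[\SO(nH),\SO(nH)]$.

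For the second bullet, since $\Oa(H)/\SO(H)\simeq\mu_2$ is abelian one has $[\Oa(H),\Oa(H)]\subseteq\SO(H)$, and since $\SO(H)$ itself is abelian, only commutators mixing the two cosets can contribute nontrivially. Using the identities $w_0 t(a)w_0^{-1}=t(a^{-1})$ and $w_0^2=I$, a direct $2\times 2$ calculation yields
\begin{align*}
[t(a),\,t(b)w_0]=t(a)^2,\qquad [t(a)w_0,\,t(b)w_0]=t(ab^{-1})^2,
\end{align*}
so every commutator lies in $\{t(a)^2\mid a\in F^{\times}\}=\{\diag(a^2,a^{-2})\}$ and the first identity realises every such element as a commutator, establishing equality. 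The third bullet then follows since $w_0^2=I$, and in the abelianization $w_0$ commutes with every diagonal class, producing the direct product $(F^{\times}/F^{\times 2})\times\langle w_0\rangle$.

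The only input beyond routine $2\times 2$ bookkeeping is the perfectness of $E_{nH}(F)$ together with the decomposition $\SO(nH)=\SO(H)\cdot E_{nH}(F)$, both already cited in Lemma \ref{comse}; no serious obstacle should arise, the main care being to keep the two parametrisations of cosets of $\SO(H)$ consistent.
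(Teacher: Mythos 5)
Your proposal is correct and follows essentially the same route as the paper: the paper also invokes Lemma \ref{comse} to obtain the surjection $\SO(H)\twoheadrightarrow \SO(nH)/[\SO(nH),\SO(nH)]$, records the same explicit parametrisation $\Oa(H)=\SO(H)\sqcup\SO(H)w_0$ with $\SO(H)=\{\diag(a,a^{-1})\}$, and then disposes of the last two bullets by ``direct calculation''. Your commutator identities $[t(a),t(b)w_0]=t(a)^2$ and $[t(a)w_0,t(b)w_0]=t(ab^{-1})^2$ are exactly the bookkeeping that the paper leaves implicit, and your determinant observation that $w_0\notin\SO(H)\cdot[\Oa(H),\Oa(H)]$ correctly justifies the direct-product decomposition in the third bullet.
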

\begin{proof}
By Lemma \ref{comse}, $\SO(nH)=[\SO(nH)\cap \Oa(H)]E_{nH}(F)$. Hence there exists a group homomorphism:
$$\tfrac{\SO(H)}{[\SO(H), \SO(H)]} \longrightarrow \tfrac{\SO(nH)}{[\SO(nH), \SO(nH)]}.$$
Note:
 $$\Oa(H)=\{\begin{pmatrix} a & 0 \\ 0 & a^{-1}\end{pmatrix}, \begin{pmatrix} 0 &a \\ a^{-1} & 0\end{pmatrix}\mid a\in F^{\times}\}, \quad\quad\SO(H)=\{\begin{pmatrix} a & 0 \\ 0 & a^{-1}\end{pmatrix}\mid a\in F^{\times}\} .$$ By direct calculation, we can see that the results are all correct.
\end{proof}
\subsection{The splitting of $\widehat{\U^{\pm}}(\mathcal{W}_i)$}
\begin{proposition}\label{splitting}
Let  $(\mathcal{W}_1, \mathcal{W}_2)$ be a pair  in Theorem \ref{scindagedugroupeR0}.
\begin{itemize}
\item[(1)] If $(\mathcal{W}_1, \mathcal{W}_2)$  is not the exception case (1*), then the exact sequence $1 \longrightarrow T \longrightarrow \widehat{\U^{\pm}}(\mathcal{W}_i) \longrightarrow \U^{\pm}(\mathcal{W}_i)\longrightarrow 1$ splits, for $i=1, 2$.
\item[(2)]  If $(\mathcal{W}_1, \mathcal{W}_2)$  is  the exception case and $\epsilon_1=-1$, $\epsilon_2=1$, then
\begin{itemize}
 \item[(i)] the exact sequence $1 \longrightarrow T \longrightarrow \widehat{\Oa^{\pm}}(\mathcal{W}_2) \longrightarrow \Oa^{\pm}(\mathcal{W}_2)\longrightarrow 1$ splits, but
  \item[(ii)] the exact sequence $1 \longrightarrow T \longrightarrow \widehat{\Sp^{\pm}}(\mathcal{W}_1) \longrightarrow \Sp(\mathcal{W}_1)\longrightarrow 1$  does not split.
  \end{itemize}
    \end{itemize}
\end{proposition}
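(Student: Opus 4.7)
The plan is to reduce the various parts to Theorem \ref{scindagedugroupeR0}, dealing separately with the hyperbolic case of part (1) where a new construction is required.

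First I would invoke Lemma \ref{mu2ps}. If $\mathcal{W}_i$ is not hyperbolic, then $\U^{\pm}(\mathcal{W}_i)=\U(\mathcal{W}_i)$ and part (1) follows at once from Theorem \ref{scindagedugroupeR0}(1). Part (2)(i) is of this form: in the exceptional case $\mathcal{W}_2$ has odd dimension, so it cannot be a hyperbolic space (hyperbolic planes have dimension two), whence $\Oa^{\pm}(\mathcal{W}_2)=\Oa(\mathcal{W}_2)$ and the splitting follows from Theorem \ref{scindagedugroupeR0}(2). Part (2)(ii) reduces by a restriction argument: any hypothetical splitting of $\widehat{\Sp^{\pm}}(\mathcal{W}_1)\to \Sp^{\pm}(\mathcal{W}_1)$ would restrict to a splitting of $\widehat{\Sp}(\mathcal{W}_1)\to \Sp(\mathcal{W}_1)$, contradicting Theorem \ref{scindagedugroupeR0}(2).

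This leaves the essential case of part (1): $\mathcal{W}_i$ hyperbolic. Here the section $s:\mu_2\to \U^{\pm}(\mathcal{W}_i)$ constructed in the paragraph preceding Lemma \ref{sp} yields a semidirect product $\U^{\pm}(\mathcal{W}_i)\simeq F_2\ltimes \U(\mathcal{W}_i)$. Write $\sigma:\U(\mathcal{W}_i)\to \widehat{\U}(\mathcal{W}_i)$ for the splitting granted by Theorem \ref{scindagedugroupeR0}(1). I would lift $s$ to a homomorphism $\tilde{s}:F_2\to \widehat{\U^{\pm}}(\mathcal{W}_i)$ by choosing any preimage of $s(-1)$, noting that its square lies in the divisible group $T$, and rescaling so that the square becomes $1$. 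The desired splitting of $\U^{\pm}(\mathcal{W}_i)$ would then be $g\cdot s(\epsilon)\mapsto \sigma(g)\tilde{s}(\epsilon)$, provided the compatibility $\tilde{s}(-1)\sigma(g)\tilde{s}(-1)^{-1}=\sigma(s(-1)gs(-1)^{-1})$ holds for all $g\in\U(\mathcal{W}_i)$.

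The main obstacle will be verifying this last compatibility. Both $\sigma$ and $g\mapsto \tilde{s}(-1)\sigma(s(-1)gs(-1)^{-1})\tilde{s}(-1)^{-1}$ are continuous splittings of $\widehat{\U}(\mathcal{W}_i)\to \U(\mathcal{W}_i)$, so they differ by a continuous character $\chi:\U(\mathcal{W}_i)\to T$. I would use Lemmas \ref{comse} and \ref{oth} (together with perfectness of $\Sp(\mathcal{W}_1)$ in the symplectic case) to control the abelianization of $\U(\mathcal{W}_i)$, and then either show that $\chi$ must already be trivial or absorb it by twisting $\sigma$ by an appropriate character, while simultaneously adjusting $\tilde{s}(-1)$ by a square root in $T$ so as to preserve $\tilde{s}(-1)^2=1$. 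The heart of the verification will likely invoke the explicit Ranga Rao--Kudla trivializations of \cite{Ku1} applied to the hyperbolic element $s(-1)$, together with the cocycle formulas for $\widetilde{C}_{X^{\ast}}$ computed in Section \ref{subquo}.
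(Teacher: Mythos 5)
Your reductions for the non-hyperbolic cases of (1) and for both halves of (2) are correct and match the paper: when $\mathcal{W}_i$ is not hyperbolic Lemma~\ref{mu2ps} gives $\U^{\pm}(\mathcal{W}_i)=\U(\mathcal{W}_i)$, the odd-dimensional $\mathcal{W}_2$ in the exceptional case cannot be hyperbolic, and a splitting of $\widehat{\Sp^{\pm}}(\mathcal{W}_1)$ would restrict to one of $\widehat{\Sp}(\mathcal{W}_1)$.

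For the hyperbolic case of (1), your framework is essentially the Hochschild--Serre spectral sequence for $1\to \U(\mathcal{W}_1)\to \U^{\pm}(\mathcal{W}_1)\to\mu_2\to 1$, re-expressed hands-on: since $\Ha^2(\mu_2,T)=0$, the obstruction to splitting is exactly the class of your character $\chi$ in $\Ha^1\bigl(\mu_2,\Hom(\U(\mathcal{W}_1),T)\bigr)$, i.e.\ whether $\chi=\mu\cdot(\mu^{s(-1)})^{-1}$ for some character $\mu$ (with the compatible adjustment of $\widetilde{s}(-1)$ by a square root). That reformulation is correct, and it agrees with the map $p$ in the paper's six-term sequence.

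The genuine gap is that you stop at ``either show $\chi$ is trivial or absorb it,'' which is precisely where all the work lies, and you do not flag that in one case the answer is \emph{not} free. When $D=E$, $D=\mathbb{H}$, or $D=F$ with $\mathcal{W}_1$ symplectic, the computation $\widetilde{C}_{X^{\ast}}''(g,s(-1))=(-1,\det a)_F\,f(g)f(g^{s(-1)})^{-1}$ gives $(-1,\det a)_F=1$ and $\chi$ is trivial, so your lift works immediately. But when $D=F$, $\mathcal{W}_1$ is orthogonal ($\epsilon_1=1$) and $4\nmid\dim\mathcal{W}_2$, the restriction of $\widetilde{C}_{X^{\ast}}$ to $\Oa^{\pm}(\mathcal{W}_1)$ descends to a cocycle on $\Oa^{\pm}(\mathcal{W}_1)/\SO(\mathcal{W}_1)\simeq\mu_2\times\mu_2$ representing the \emph{nontrivial} class of $\Ha^2(\mu_2\times\mu_2,T)\simeq\mu_2$, so $\chi$ is genuinely nontrivial on $\Oa(\mathcal{W}_1)$. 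Whether it can be absorbed is exactly the statement that this nontrivial class lies in the kernel of inflation to $\Oa^{\pm}(\mathcal{W}_1)$, equivalently in the image of the transgression $\Hom(\SO(\mathcal{W}_1),T)^{\Oa^{\pm}(\mathcal{W}_1)}\to\Ha^2(\mu_2\times\mu_2,T)$. The paper verifies this by computing the transgression of the sign character of $\SO(H)$ explicitly and matching it with $[\widetilde{C}_{X^{\ast}}]$, using the abelianization control of Lemmas~\ref{comse} and~\ref{oth}. You name these lemmas and the Kudla trivialization as tools, but you do not carry out the identification, and without it the crucial subcase is unproved: absorbability is not automatic and in fact fails to hold a priori on the quotient $\mu_2\times\mu_2$.
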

\subsubsection{The proof of Part (1)} According to Lemma \ref{vigneraslemma2},  if $\mathcal{W}_i$ is not a hyperbolic space,  $\U^{\pm}(\mathcal{W}_i)=\U(\mathcal{W}_i)$, so the result holds.   Assume now that  $\mathcal{W}_i \simeq n_iH$, for  a hyperbolic plane $H$ over $D$. Let  $\mathcal{W}_i= Y\oplus Y^{\ast}$ be a complete polarisation.

By almost symmetry, we  assume  $i=1$. Assume $\dim \mathcal{W}_2=n_2$. Then  $W=\mathcal{W}_1\otimes_DW_2\simeq (Y\otimes \mathcal{W}_2) \oplus (Y^{\ast}\otimes \mathcal{W}_2)$ is a complete polarisation. Assume that   $X=Y\otimes \mathcal{W}_2$, $X^{\ast}=Y^{\ast}\otimes \mathcal{W}_2$.  There exists an exact sequence:
$$1\to \U(\mathcal{W}_1) \to \U^{\pm}(\mathcal{W}_1) \to \mu_2\to  1.$$
Now let  $\Ha^{2}( \U^{\pm}(\mathcal{W}_1),T)_1 $  denote the kernel of the  restriction from $\Ha^{2}( \U^{\pm}(\mathcal{W}_1),T)$ to $\Ha^2(\U(\mathcal{W}_1), T)$.
By  Hochschild-Serre spectral sequence,  there exists  the following long exact sequence  of six terms:
$$0 \longrightarrow \Hom(\mu_2,  T) \to \Hom(\U^{\pm}(\mathcal{W}_1),  T)  \longrightarrow  \Hom(\U(\mathcal{W}_1), T)^{\U^{\pm}(\mathcal{W}_1)} \longrightarrow
\Ha^2( \mu_2,T) $$
$$\longrightarrow \Ha^{2}( \U^{\pm}(\mathcal{W}_1),T)_1 \stackrel{p}{\longrightarrow} \Ha^1\big(  \mu_2, \Ha^1( \U(\mathcal{W}_1), T)\big) $$
Since the restriction of $[\widetilde{C}_{X^{\ast}}]$ to $\U(\mathcal{W}_1)$ is trivial, there exists a function $f: \U(\mathcal{W}_1) \longrightarrow T$ such that
$$\widetilde{C}_{X^{\ast}}(g_1, g_2)= f(g_1g_2)f(g_1)^{-1}f(g_2)^{-1}, \qquad \quad g_i\in \U(\mathcal{W}_1).$$
Let $f$   extend to be a Borel  function of $\U^{\pm}(\mathcal{W}_1)$ by  taking the trivial value outside $\U(\mathcal{W}_1) $. We replace $\widetilde{C}_{X^{\ast}}$ with $\widetilde{C}_{X^{\ast}}'= \widetilde{C}_{X^{\ast}} \circ \delta_1 f$. Then:
$$\widetilde{C}_{X^{\ast}}'(h_1, h_2)= \widetilde{C}_{X^{\ast}}(h_1, h_2)f(h_1) f(h_2) f(h_1h_2)^{-1}, \quad\quad  h_1, h_2 \in \U^{\pm}(\mathcal{W}_1).$$
Recall the section map $s_1: \mu_2 \longrightarrow \U^{\pm}(\mathcal{W}_1); \epsilon \longmapsto \begin{pmatrix} I& 0\\ 0 &\epsilon I\end{pmatrix}$, and $s_1(\epsilon) \otimes I_{ 2}=s(\epsilon)$. By abuse of notations, we will omit $I_2$ and don't distinguish $s_1$ from $s$. Then for $g\in \U(\mathcal{W}_1)$, we have:
$$\widetilde{C}_{X^{\ast}}'(s_1(1), g)= \widetilde{C}_{X^{\ast}}(s_1(1),g)= \widetilde{C}_{X^{\ast}}([1,1],[1,g])=1;$$
$$ \widetilde{C}_{X^{\ast}}'(s_1(-1), g)= \widetilde{C}_{X^{\ast}}(s_1(-1),g)f(g)= \widetilde{C}_{X^{\ast}}([-1,1],[1,g])f(g)=\widetilde{c}_{X^{\ast}}(1, g)f(g)=f(g).$$
For any $h\in \U^{\pm}(\mathcal{W}_1)$, let us write  $h=s_1(\epsilon_h)g_h$, for $\epsilon_h \in \mu_2$, $g_h\in \U(\mathcal{W}_1)$. Define a Borel function $\varkappa$ of $\U^{\pm}(\mathcal{W}_1)$ as
$ \varkappa(h)= \widetilde{C}_{X^{\ast}}'(s_1(\epsilon_h), g_h)$. Let us define
$$\widetilde{C}_{X^{\ast}}''= \widetilde{C}_{X^{\ast}}' \circ \delta_1(\varkappa).$$ So the map $p$ is just given by
$$p([\widetilde{C}_{X^{\ast}}]): \epsilon  \longmapsto ( g  \longrightarrow \widetilde{C}_{X^{\ast}}''(g, s(\epsilon))), \quad g\in \U(W_{\nu}), \epsilon \in \mu_2.$$
Note that $p([\widetilde{C}_{X^{\ast}}])(\epsilon )$ is a character of $\U(\mathcal{W}_1)$, which factors through  $\U(\mathcal{W}_1) \longrightarrow \U(\mathcal{W}_1)/[\U(\mathcal{W}_1),\U(\mathcal{W}_1)]$. According to Lemma \ref{comse}, if $(D, \epsilon_1) \neq (F, 1)$, $p([\widetilde{C}_{X^{\ast}}])(\epsilon )$ depends on the values on $D^{\times}$ . Let $g=\diag(1, \cdots, 1, a; 1,\cdots, 1, \overline{a}^{-1})$, for $a\in D^{\times}$. Then:
\begin{align}
\widetilde{C}_{X^{\ast}}''(g, s(-1))&=\widetilde{C}_{X^{\ast}}'(g, s(-1))\varkappa(g)\varkappa(s(-1))\varkappa(gs(-1))^{-1}\\
&=\widetilde{C}_{X^{\ast}}'(g, s(-1))\widetilde{C}_{X^{\ast}}'(s(1), g)\widetilde{C}_{X^{\ast}}'(s(-1),1)\widetilde{C}_{X^{\ast}}'(s(-1), s(-1)^{-1}gs(-1))^{-1}\\
&=\widetilde{C}_{X^{\ast}}(g, s(-1)) f(g)f(g^{s(-1)})^{-1}\\
&=\widetilde{c}_{X^{\ast}}(g, s(-1))\nu(-1,g)f(g)f(g^{s(-1)})^{-1}\\
&=(-1,\det a)_F f(g)f(g^{s(-1)})^{-1}.
\end{align}
\subsection{}If $D=E$, or $D=\mathbb{H}$, we know that $\det a=\Nrd(a)\in F_{+}^{\times}$, so $(-1,\det a)_{F}=1$.    Since $\Ha^2(\mu_2,T)=0$ and $p([\widetilde{C}_{X^{\ast}}])=0$, the restriction of $[\widetilde{C}_{X^{\ast}}]$ on $\U^{\pm}(\mathcal{W}_1)$ is trivial.
\subsection{} If  $D=F$,  $\mathcal{W}_1$ is symplectic and  $\mathcal{W}_2$ is orthogonal. Since $\dim \mathcal{W}_2 $ is even,  $\det a=a^{2k}\in F_{+}^{\times}$, so $(-1,\det a)_{F}=1$.
\subsection{} If $D=F$, $\epsilon_1=1$, $\mathcal{W}_1$ is orthogonal and  $\mathcal{W}_2$ is symplectic.  Let $\mathcal{A}_i=(f_{i1}, \cdots, f_{i n_i};  f_{i1}^{\ast}, \cdots, f_{in_i}^{\ast})$ be  a split hyperbolic basis of $\mathcal{W}_i$.    Let us write $X_1= (f_{11}, \cdots, f_{1 n_1})$, $X_1^{\ast}=(f_{11}^{\ast}, \cdots, f_{1 n_1}^{\ast})$. Then
$$\{X_1\otimes f_{21}, X_1^{\ast}\otimes f_{21}, \cdots,  X_1 \otimes f_{2n_2},  X^{\ast}_1\otimes f_{2n_2}; X^{\ast}_1\otimes f^{\ast}_{21},  X_1\otimes f^{\ast}_{21}, \cdots, X^{\ast}_1 \otimes f^{\ast}_{2n_2}, X_1 \otimes f^{\ast}_{2n_2}\}$$ forms a symplectic basis of $W$.   Recall:
$$s: \Oa^{\pm}(\mathcal{W}_1) \times \Sp^{\pm}(\mathcal{W}_2) \stackrel{s_1\otimes s_2}{ \longrightarrow} \Sp^{\pm}(\mathcal{W}_1 \otimes_F \mathcal{W}_2 ) = \Sp^{\pm}(W).$$
Hence $$s_1(g)\otimes I_{\U(\mathcal{W}_2)}=\begin{pmatrix} g\otimes I_2 & 0\\ 0 & (g^{\ast})^{-1}\otimes I_2\end{pmatrix}, \quad\quad g\in \Oa(\mathcal{W}_1);$$
 $$s_1(\epsilon)\otimes I_{\U(\mathcal{W}_2)}=\diag\Bigg(\begin{pmatrix} I_1 & 0\\ 0 & \epsilon I_1\end{pmatrix}, \cdots, \begin{pmatrix} I_1 & 0\\ 0 & \epsilon I_1\end{pmatrix}; \begin{pmatrix}\epsilon  I_2 & 0\\ 0 & I_2\end{pmatrix},\cdots, \begin{pmatrix} \epsilon I_2 & 0\\ 0 & I_2\end{pmatrix}\Bigg), \quad \epsilon \in \mu_2.$$
Let us write $s_1(\epsilon)\otimes I_{\U(\mathcal{W}_2)}=s(\epsilon) g({\epsilon})$, for some  $g({\epsilon})=\begin{pmatrix} a & 0\\ 0& a^{-1}\end{pmatrix}$, and $  h_i=g(\epsilon_i)[s_1(g_i)\otimes I_{\U(\mathcal{W}_2)}]$.  Then:
\begin{align}
&\widetilde{C}_{X^{\ast}}(s_1(\epsilon_1)s_1(g_1)\otimes I_{\U(\mathcal{W}_2)}, s_1(\epsilon_2)s_1(g_2)\otimes I_{\U(\mathcal{W}_2)})\\
&=\widetilde{C}_{X^{\ast}}([\epsilon_1, h_1],[[\epsilon_2, h_2]])=\nu(\epsilon_2, h_1)\widetilde{c}_{X^{\ast}}(h_1^{s(\epsilon_1)} , h_2)\\
&=\nu(\epsilon_2, h_1).
\end{align}
\subsubsection{}If $4\mid \dim \mathcal{W}_2$, then $\nu(\epsilon_2, h_1)=(\epsilon_2, (\epsilon_1\det g )^{n_1})_{F}=1$.
\subsubsection{}If $4\nmid \dim \mathcal{W}_2$, $\nu(\epsilon_2, h_1)=(\epsilon_2, (\epsilon_1\det g )^{n_1})_{F}=(\epsilon_2, \epsilon_1\det g )_{F}$. Then the restriction of $\widetilde{C}_{X^{\ast}}$ on $\Oa^{\pm}(\mathcal{W}_1)$ factors through $\Oa^{\pm}(\mathcal{W}_1)/\SO(\mathcal{W}_1)\simeq \mu_2\times \Oa(\mathcal{W}_1)/\SO(\mathcal{W}_1)\simeq \mu_2\times \mu_2$. Moreover,
$$\widetilde{C}_{X^{\ast}}((\epsilon_1,[\det g_1]), (\epsilon_2,[\det g_2]))=(\epsilon_2,  \epsilon_1 [\det g_1])_{F}=(\epsilon_2,  \epsilon_1)_F (\epsilon_2, [\det g_1])_F.$$
By Remark \ref{splittingc},$(\epsilon_2,  \epsilon_1)_F $ defines a splitting cocycle on $\mu_2$. As $$\Ha^2(\mu_2\times \mu_2, T) \simeq \Ha^2(\mu_2, T) \oplus \Ha^2(\mu_2, T)\oplus \Ha^1(\mu_2, H^1(\mu_2, T)),$$  $\widetilde{C}_{X^{\ast}}$ is not a splitting cocycle on $\mu_2\times \mu_2$.  The subtle part is to see  whether this $2$-cocycle can be splitting on $\Oa^{\pm}(\mathcal{W}_1)$.

By  Hochschild-Serre spectral sequence,  there exists  the following long exact sequence:
$$0 \longrightarrow \Hom(\mu_2\times \mu_2,  T) \to \Hom(\Oa^{\pm}(\mathcal{W}_1),  T)  \longrightarrow  \Hom(\SO(\mathcal{W}_1), T)^{\Oa^{\pm}(\mathcal{W}_1)} \stackrel{tr}{\longrightarrow}
\Ha^2( \mu_2\times \mu_2,T) $$
$$\stackrel{inf_2}{\longrightarrow}\Ha^{2}(\Oa^{\pm}(\mathcal{W}_1),T)_1 \stackrel{p_1}{\longrightarrow} \Ha^1\big(  \mu_2\times \mu_2, \Ha^1( \SO(\mathcal{W}_1), T)) $$
\paragraph{} Let us first show that $p_1[\widetilde{C}_{X^{\ast}}]$ is trivial. Note that
$$\widetilde{C}_{X^{\ast}}(s_1(g_1)\otimes I_{\U(\mathcal{W}_2)}, s_1(g_2)\otimes I_{\U(\mathcal{W}_2)})=1, \quad\quad g_i\in \Oa(\mathcal{W}_1).$$
For any $h\in \Oa^{\pm}(\mathcal{W}_1)$, let us write $h=[\epsilon_h, g_h]$, for some $\epsilon_h\in \mu_2$,  $g_h\in \Oa(\mathcal{W}_1)$. As
$$\widetilde{C}_{X^{\ast}}(s_1(\epsilon_h)\otimes I_{\U(\mathcal{W}_2)}, s_1(g_h)\otimes I_{\U(\mathcal{W}_2)})=1,$$
 the map $p_1$ is just given by
$$p_1([\widetilde{C}_{X^{\ast}}]): \epsilon  \longmapsto ( g  \longrightarrow \widetilde{C}_{X^{\ast}}(g\otimes I_{\U(\mathcal{W}_2)}, s_1(\epsilon)\otimes I_{\U(\mathcal{W}_2)})=1), \quad g\in \U(\mathcal{W}_1), \epsilon \in \mu_2.$$

\paragraph{} Let us show that the $2$-cocycle $\widetilde{C}_{X^{\ast}}$ does  come from $ \Hom(\SO(\mathcal{W}_1), T)^{\Oa^{\pm}(\mathcal{W}_1)}$ by the transgression.
 Under the basis $\mathcal{A}_1$ and its  subbase $\{ f_{1 n_1}, f_{1n_1}^{\ast}\}$, let us write $$\mathcal{W}_1=n_1H, \quad \Oa^{\pm}(\mathcal{W}_1)=\Oa^{\pm}(n_1H),\quad \SO(\mathcal{W}_1)=\SO(n_1H).$$ By Lemma \ref{oth}, $$\SO(n_1H)=\SO(H) E_{n_1H}(F), \quad \Oa^{\pm}(n_1H)=\Oa^{\pm}(H)E_{n_1H}(F),\quad E_{n_1H}(F)=[E_{n_1H}(F), E_{n_1H}(F)].$$ Any character of $\SO(n_1H)$ is trivial on $E_{n_1H}(F)$. By the explicit construction of  the transgression in \cite[p.65]{NeScWi}, we can assume $n_1=1$.  In this case, let $\chi\in \Hom(\SO(H), T)^{\Oa^{\pm}(H)}$, then $\chi$ is trivial or $\chi(\begin{pmatrix} a & 0\\ 0 & a^{-1}\end{pmatrix})=\sgn(a)$. It is clear that if $\chi$ is the trivial character, $[tr(\chi)]$ can not be $[\widetilde{C}_{X^{\ast}}]$. So let us consider the  other character. Note:
 $$\Oa^{\pm}(H)=\{\begin{pmatrix} \pm a & 0\\ 0 & a^{-1}\end{pmatrix}, \begin{pmatrix} 0 & \pm a\\a^{-1} & 0\end{pmatrix} \mid a\in F^{\times}\},$$
 and
 $$\Oa^{\pm}(H)/\{\begin{pmatrix} a^2 & 0\\ 0 & a^{-2}\end{pmatrix}\}\simeq \Bigg\{ \pm \begin{pmatrix} 1 & 0\\ 0 &1\end{pmatrix}, \pm \begin{pmatrix}0 & 1\\ 1 & 0\end{pmatrix}, \pm \begin{pmatrix} 0 & 1\\ -1&0\end{pmatrix},\pm \begin{pmatrix} 1 & 0\\0&-1\end{pmatrix}\Bigg\}.$$
Following \cite[p.65]{NeScWi}, let us extend $\chi$ to $\Oa^{\pm}(H)$ by defining
$$\chi(g)=1, \quad   \chi(-g)=-1,$$
for $g\in \mathcal{B}=\Bigg\{ \begin{pmatrix} 1 & 0\\ 0 &1\end{pmatrix}, \begin{pmatrix}0 & 1\\ 1 & 0\end{pmatrix}, \begin{pmatrix} 0 & 1\\ -1&0\end{pmatrix},\begin{pmatrix} 1 & 0\\0&-1\end{pmatrix}\Bigg\}$. Then:
$$tr(\chi)(g_1, g_2)=[\partial\chi](g_1,g_2)=\chi(g_1)\chi(g_2)\chi(g_1g_2)^{-1}.$$
Note that $\partial\chi(-g_1,g_2)=\partial\chi(g_1,g_2)=\partial\chi(g_1,-g_2)$. The correspondence  between $\mu_2\times \mu_2$ and $\mathcal{B}$ is given by
\begin{align}
\mu_2\times \mu_2\longrightarrow \mathcal{B};\\
[1,1] \to  \begin{pmatrix} 1 & 0\\ 0 &1\end{pmatrix}\\
[-1,1] \to \begin{pmatrix} 1 & 0\\ 0 &-1\end{pmatrix}\\
[1,-1] \to  \begin{pmatrix}0 & 1\\ 1 & 0\end{pmatrix}\\
[-1,-1] \to \begin{pmatrix} 0 & 1\\ -1&0\end{pmatrix}.
\end{align}
Hence
$$\partial\chi([\epsilon_1, \epsilon_1'], [\epsilon_2, \epsilon_2'])=(\epsilon_2, \epsilon_1')_F,$$
which means that $[\partial\chi]=[\widetilde{C}_{X^{\ast}}]$. Finally we can see that $[\widetilde{C}_{X^{\ast}}]$ is also splitting on $\Oa^{\pm}(\mathcal{W}_1)$.

\subsubsection{The proof of Part (2)}
In this case, $O^{\pm}(\mathcal{W}_2)=O(\mathcal{W}_2)$, and  $\widehat{\Sp}(\mathcal{W}_1) $ is not splitting. By the  long exact sequence  of six terms, we know that the two covering over $\Sp(W)$ only  extends to $\Sp^{\pm}(W)$ in  a unique way. Hence  $\widehat{\Sp^{\pm}}(\mathcal{W}_1)\simeq \widehat{\Sp}^{\pm}(\mathcal{W}_1) $, which  is also not splitting.
\subsection{The bicommutant}
We now let  $\widehat{\U^{\pm}}(\mathcal{W}_1)$, $\widehat{\U^{\pm}}(\mathcal{W}_2)$ be the  preimages of $\U^{\pm}(\mathcal{W}_1)$,  $\U^{\pm}(\mathcal{W}_2)$ in $\widehat{\Sp}^{\pm}(W)$ respectively.

\begin{proposition}\label{comm}
Let  $(\mathcal{W}_1, \mathcal{W}_2)$ be a pair  in Theorem \ref{scindagedugroupeR0}.
\begin{itemize}
\item[(1)] If $D=E$ or $\mathbb{H}$,  $\widehat{\U^{\pm}}(\mathcal{W}_1) $ commutes with  $\widehat{\U^{\pm}}(\mathcal{W}_2)$ in $\widehat{\Sp}^{\pm}(W)$.
\item[(2)] If $D=F$,  assume $\epsilon_1=-1$, $\epsilon_2=1$.
\begin{itemize}
 \item[(a)] If $2\mid \dim \mathcal{W}_2$, and $\mathcal{W}_2$ is not  a hyperbolic space, then:
 \begin{itemize}
\item[(i)] $\Oa^{\pm}(\mathcal{W}_2)=\Oa(\mathcal{W}_2)$.
 \item[(ii)] if $4 \mid \dim \mathcal{W}_1$, then $\widehat{\Sp^{\pm}}(\mathcal{W}_1) $ commutes with  $\Oa(\mathcal{W}_2)$ in $\widehat{\Sp}^{\pm}(W)$.
  \item[(iii)] if $4\nmid \dim \mathcal{W}_1$, then $\widehat{\Sp^{\pm}}(\mathcal{W}_1) $ only commutes with  $\SO(\mathcal{W}_2)$ in $\widehat{\Sp}^{\pm}(W)$, and $\Oa(\mathcal{W}_2)$ only commutes with  $\widehat{\Sp}(\mathcal{W}_1)$ in $\widehat{\Sp}^{\pm}(W)$.
  \end{itemize}
   \item[(b)] If $2\mid \dim \mathcal{W}_2$, and $\mathcal{W}_2$ is  a hyperbolic space, then:
 \begin{itemize}
\item[(i)] $\widehat{\Oa^{\pm}}(\mathcal{W}_2)\simeq \Oa^{\pm}(\mathcal{W}_2) \times T$.
 \item[(ii)] if $4 \mid \dim \mathcal{W}_1$,  then $\widehat{\Sp^{\pm}}(\mathcal{W}_1) $ commutes with  $\widehat{\Oa^{\pm}}(\mathcal{W}_2)$ in $\widehat{\Sp}^{\pm}(W)$.
  \item[(iii)] if $4 \nmid \dim \mathcal{W}_1$ and  $4\mid \dim \mathcal{W}_2$, then $\widehat{\Sp^{\pm}}(\mathcal{W}_1) $ only commutes with  $\widehat{\SO^{\pm}}(\mathcal{W}_2)$ in $\widehat{\Sp}^{\pm}(W)$, and $\widehat{\Oa^{\pm}}(\mathcal{W}_2)$ only commutes with  $\widehat{\Sp}(\mathcal{W}_1)$ in $\widehat{\Sp}^{\pm}(W)$.
 \item[(iv)] if  $4\nmid \dim \mathcal{W}_1$ and   $4\nmid \dim \mathcal{W}_2$,  then $\widehat{\Sp^{\pm}}(\mathcal{W}_1) $ only commutes with  $\widehat{\SO}(\mathcal{W}_2)$ in $\widehat{\Sp}^{\pm}(W)$, and $\widehat{\Oa^{\pm}}(\mathcal{W}_2)$ only commutes with  $\widehat{\Sp}(\mathcal{W}_1)$ in $\widehat{\Sp}^{\pm}(W)$.
  \end{itemize}
  \item[(c)]  If $2\nmid \dim \mathcal{W}_2$, then:
\begin{itemize}
\item[(i)] $\Oa^{\pm}(\mathcal{W}_2)=\Oa(\mathcal{W}_2)$.
 \item[(ii)] if $4 \mid \dim \mathcal{W}_1$, then $\widehat{\Sp^{\pm}}(\mathcal{W}_1) $ commutes with  $\Oa(\mathcal{W}_2)$ in $\widehat{\Sp}^{\pm}(W)$.
  \item[(iii)] if $4\nmid \dim \mathcal{W}_1$, then $\widehat{\Sp^{\pm}}(\mathcal{W}_1) $ only commutes with  $\SO(\mathcal{W}_2)$, and  $\Oa(\mathcal{W}_2)$ only commutes with  $\widehat{\Sp}(\mathcal{W}_1)$ in $\widehat{\Sp}^{\pm}(W)$.
  \end{itemize}
    \end{itemize}
    \end{itemize}
\end{proposition}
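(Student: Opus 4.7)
The plan is to translate the commutation question into one about symmetry of the $2$-cocycle $\widetilde{C}_{X^{\ast}}$ restricted to pairs $(u_1,u_2)\in \U^{\pm}(\mathcal{W}_1)\times \U^{\pm}(\mathcal{W}_2)$. Because the centre of the cover is abelian and because in $\GSp(W)$ the two tensor factors commute, any two lifts $\widehat{u_1},\widehat{u_2}$ satisfy $\widehat{u_1}\widehat{u_2}=\widehat{u_2}\widehat{u_1}$ if and only if
\begin{align*}
\widetilde{C}_{X^{\ast}}(u_1,u_2)=\widetilde{C}_{X^{\ast}}(u_2,u_1).
\end{align*}
Writing each element through the decomposition $\Sp^{\pm}(W)=F_2\ltimes\Sp(W)$ as $u_i=[\epsilon_i,g_i]$, with $\epsilon_i\in\Lambda_{\U^{\pm}(\mathcal{W}_i)}$ and $g_i\in \Sp(W)$, formula (\ref{yg}) turns the cocycle into the product $\nu(\epsilon_2,g_1)\,\widetilde{c}_{X^{\ast}}(g_1^{s(\epsilon_2)},g_2)$, so the whole question cleanly splits into two pieces: (a) symmetry of the underlying Perrin--Rao cocycle on $\Sp(W)$, and (b) symmetry of the twist factor $\nu$ of Example \ref{example2}.

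For piece (a), Theorem \ref{scindagedugroupeR4} gives $\widetilde{c}_{X^{\ast}}(g_1,g_2)=\widetilde{c}_{X^{\ast}}(g_2,g_1)$ for the classical dual pair inside $\Sp(W)$; since conjugation by $s(\epsilon)$ stabilises the images of $\U(\mathcal{W}_i)$ in $\Sp(W)$, the same symmetry survives the twist $g\mapsto g^{s(\epsilon)}$. For piece (b), I would insert the explicit formula $\nu(\epsilon,g)=(\det(a_1a_2),\epsilon)_F\gamma(\epsilon,\psi^{1/2})^{-|S|}$ for $g=p_1\omega_S p_2$, observe that the Weil-index contributions depend only on the Bruhat datum of the underlying $\Sp(W)$-element and cancel symmetrically in the commutator ratio, and read off a residual product of Hilbert symbols of the form $(\det,\epsilon)_F$ evaluated on the two embeddings. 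Part (1) then falls out immediately: for $D=E$ or $\mathbb{H}$ the determinant of the image in $\Sp(W)$ of any element of $\U(\mathcal{W}_j)$ is a product of reduced norms from $D$ and therefore lies in $F_+^{\times}$, so every residual Hilbert symbol equals $+1$ and commutativity is unconditional.

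Part (2), where $D=F$, $\mathcal{W}_1$ is symplectic and $\mathcal{W}_2$ orthogonal, requires more care. The embedding $\Oa(\mathcal{W}_2)\hookrightarrow \Sp(W)$ sends $g_2$ to an element whose block determinant on $X^{\ast}$ is essentially $(\det g_2)^{\dim \mathcal{W}_1/2}$, and symmetrically the image of the symplectic similitude side contributes a factor governed by $\dim \mathcal{W}_2$. The residual Hilbert symbol therefore collapses to powers of $(\epsilon_1,\det g_2)_F$ and $(\epsilon_2,\cdot)_F$ whose parities depend on whether $4\mid \dim \mathcal{W}_1$ and $4\mid \dim \mathcal{W}_2$. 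Combining this with Lemma \ref{mu2ps} (which tells us when $\Oa^{\pm}(\mathcal{W}_2)=\Oa(\mathcal{W}_2)$) and with Proposition \ref{splitting1}(1) specialised to the hyperbolic case (which gives the direct-product splitting $\widehat{\Oa^{\pm}}(\mathcal{W}_2)\simeq \Oa^{\pm}(\mathcal{W}_2)\times T$ announced in item (b)(i)) lets the sub-cases (a)(i)--(iii), (b)(i)--(iv) and (c)(i)--(iii) be verified one by one.

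I expect the main obstacle to be the case-by-case book-keeping in Part (2), in particular the separation of the hyperbolic orthogonal case (b), where the $\lambda=-1$ component of $\Oa^{\pm}(\mathcal{W}_2)$ has to be tracked in parallel with the similitude component of $\Sp^{\pm}(\mathcal{W}_1)$, from the anisotropic/odd-dimensional cases (a), (c), where $\Oa^{\pm}=\Oa$ and only the $\mathcal{W}_1$-side contributes genuinely new generators. For the ``only commutes with'' assertions of (a)(iii), (b)(iii)--(iv) and (c)(iii), I would exhibit explicit non-commuting pairs by choosing $g_2\in \Oa(\mathcal{W}_2)\setminus\SO(\mathcal{W}_2)$ together with $\epsilon_1=-1\in \Lambda_{\Sp^{\pm}(\mathcal{W}_1)}$, so that the residual Hilbert symbol $(-1,-1)_F$ appears with an odd exponent whenever $\dim\mathcal{W}_1\equiv 2\pmod 4$, producing a non-trivial commutator in $T$.
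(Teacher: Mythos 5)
Your high-level reduction---that commutativity of lifts is equivalent to symmetry of the restricted $2$-cocycle $\widetilde{C}_{X^{\ast}}$ on commuting pairs in the base---is correct, and it is the same mechanism the paper uses. But the clean factorization you propose into ``Perrin--Rao symmetry'' plus ``$\nu$-symmetry'' has a genuine gap. When you write $u_i=[\epsilon_i,g_i]$ through the fixed section $s$ of (\ref{ss}), the element $g_i=s(\epsilon_i)^{-1}u_i$ lies in $\Sp(W)$ but \emph{not} in $\U(\mathcal{W}_i)$ in general (see the correction factor $g(\epsilon)$ that appears in the paper's proof of Proposition \ref{splitting}). Consequently Theorem \ref{scindagedugroupeR4}, which asserts $\widetilde{c}_{X^{\ast}}$-symmetry only for pairs in $\U(\mathcal{W}_1)\times \U(\mathcal{W}_2)$, does not apply to the twisted pair $(g_1^{s(\epsilon_2)},g_2)$, and the desired equality is between $\widetilde{c}_{X^{\ast}}(g_1^{s(\epsilon_2)},g_2)$ and $\widetilde{c}_{X^{\ast}}(g_2^{s(\epsilon_1)},g_1)$, with the twist hitting different arguments in the two orderings. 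Likewise, the Weil index factor $\gamma(\epsilon,\psi^{1/2})^{-|S|}$ in $\nu$ does not cancel symmetrically in general.

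The paper sidesteps both issues by reducing to generators rather than treating arbitrary pairs. Since $\U^{\pm}(\mathcal{W}_i)=\langle S_i,\U(\mathcal{W}_i)\rangle$ with $S_i=s_i(\mu_2)$, and since $\widehat{\U}(\mathcal{W}_1)$ already commutes with $\widehat{\U}(\mathcal{W}_2)$ by Theorem \ref{scindagedugroupeR4}, it suffices to compare $\widehat{S_1}$ against $\widehat{\U}(\mathcal{W}_2)$, $\widehat{S_2}$ against $\widehat{\U}(\mathcal{W}_1)$, and $\widehat{S_1}$ against $\widehat{S_2}$. For these generator pairs the polarisation can be chosen so that every element is block upper-triangular ($|S|=0$), so the Weil index never appears and the cocycle in each ordering collapses to a single Hilbert symbol. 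Concretely one gets $\widetilde{C}_{X^{\ast}}(I\otimes g,\ s(-1)\otimes I)=((\det g)^{n_1},-1)_F$ versus $\widetilde{C}_{X^{\ast}}(s(-1)\otimes I,\ I\otimes g)=1$, and similarly $\widetilde{C}_{X^{\ast}}(I\otimes s_2(-1),\ s_1(-1)\otimes I)=(-1,(-1)^{n_1n_2})_F$ against $1$; the entire case analysis in (1) and (2)(a)--(c) is then read off from the parities of $n_1$, $n_2$ and from Lemma \ref{mu2ps}. Your proposed witnesses for the ``only commutes with'' assertions (taking $\det g_2=-1$ and $\epsilon_1=-1$, so the symbol becomes $((-1)^{n_1},-1)_F$) are exactly right and coincide with the paper's. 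So: keep the strategy, but replace the symmetric-factorization step by the reduction to block-diagonal generators, otherwise the appeal to Theorem \ref{scindagedugroupeR4} is unjustified.
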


\subsection{The proof of Proposition \ref{comm}}
\subsubsection{}
If neither $\mathcal{W}_1$ nor $\mathcal{W}_2$ is a hyperbolic space, then $\U^{\pm}(\mathcal{W}_i)=\U(\mathcal{W}_i)$. So the result follows from Theorem \ref{scindagedugroupeR4}.
\subsubsection{} If  $\mathcal{W}_1\simeq n_1H$, $\mathcal{W}_2$ is not a hyperbolic space, and  $\dim_F \mathcal{W}_2$ is even,  then $\U^{\pm}(\mathcal{W}_2)=\U(\mathcal{W}_2)$, $\U^{\pm}(\mathcal{W}_1)=\U^{\pm}(n_1H)$.   By Proposition \ref{splitting}, $$\widehat{\U^{\pm}}(\mathcal{W}_1)\simeq \U^{\pm}(\mathcal{W}_1) \times T, \quad \widehat{\U}(\mathcal{W}_i) \simeq \U(\mathcal{W}_i)\times T.$$
By Theorem \ref{scindagedugroupeR4}, $\widehat{\U}(\mathcal{W}_1)$ is commuted with $\widehat{\U}(\mathcal{W}_2)$.  Let  $\mathcal{W}_1= Y\oplus Y^{\ast}$ be a complete polarisation. Assume that
$X=Y\otimes_D \mathcal{W}_2$, $X^{\ast}=Y^{\ast}\otimes_D \mathcal{W}_2$. There exists a section map:
$$s: \mu_2\longrightarrow \widehat{\U}(\mathcal{W}_1); \epsilon \longmapsto \begin{pmatrix} I & 0\\ 0& \epsilon I\end{pmatrix}.$$
Let $S_1=s(\mu_2)$, and $\mathcal {S}_1=s(\mu_2)\otimes I_{\U(\mathcal{W}_2)}\subseteq \Sp^{\pm}(W)$. Then $  \widehat{\U^{\pm}}(\mathcal{W}_1)=\widehat{S}_2\widehat{\U}(\mathcal{W}_1)$. So it suffices to show that $\widehat{S}_1$ is commuted with $\widehat{\U}(\mathcal{W}_2)$. For $g\in \U(\mathcal{W}_2)$, $s(-1)\otimes I_{\U(\mathcal{W}_2)} \in \mathcal {S}_2$,
$$\widetilde{C}_{X^{\ast}}(I_{\U(\mathcal{W}_1)}\otimes g, s(-1)\otimes I_{\U(\mathcal{W}_2)})=\nu((\det g)^{n_1}, -1)\widetilde{c}_{X^{\ast}}(I_{\U(\mathcal{W}_1)}\otimes g, I_{\Sp(W)})=((\det g)^{n_1}, -1)_F;$$
$$\widetilde{C}_{X^{\ast}}(s(-1)\otimes I_{\U(\mathcal{W}_2)}, I_{\U(\mathcal{W}_1)}\otimes g)=\widetilde{c}_{X^{\ast}}( I_{\Sp(W)},I_{\U(\mathcal{W}_1)}\otimes g)=1.$$
a) If $D=E$ or $\mathbb{H}$, $\det g$ is a positive number, so  $((\det g)^{n_1}, -1)_F=1$ and $\widehat{S}_1$ commutes with $\widehat{\U}(\mathcal{W}_2)$.\\
b) If $D=F$, and $\mathcal{W}_1$ is symplectic with $2\mid {n_1}$, then   $((\det g)^{n_1}, -1)_F=1$ and $\widehat{S}_1$ commutes with $\widehat{\U}(\mathcal{W}_2)$.\\
c) If $D=F$, and $\mathcal{W}_1$ is symplectic with $2\nmid {n_1}$, then   $((\det g)^{n_1}, -1)_F=1$ implies that $\det g>0$. So $\widehat{\Sp^{\pm}}(\mathcal{W}_1)$ only  commutes with $\widehat{\SO}(\mathcal{W}_2)$. Consequently, $\widehat{\Oa}(\mathcal{W}_2)$ only  commutes with $\widehat{\Sp}(\mathcal{W}_1)$.
 \subsubsection{} If  $\mathcal{W}_1\simeq nH$, $\mathcal{W}_2$ is not a hyperbolic space, and  $\dim_F \mathcal{W}_2$ is odd. In this case, $\mathcal{W}_1$ is symplectic and $\mathcal{W}_2$ is orthogonal.  Then $\U^{\pm}(\mathcal{W}_1) \simeq \Sp^{\pm}(\mathcal{W}_1)$, $\U^{\pm}(\mathcal{W}_2) \simeq \Oa(\mathcal{W}_2)$, and $\widehat{\Sp^{\pm}}(\mathcal{W}_1) \simeq \widehat{\Sp}^{\pm}(\mathcal{W}_1)$, $\widehat{\Oa}(\mathcal{W}_2) \simeq \Oa(\mathcal{W}_2) \times T$. Similarly as above, we have:
 \begin{itemize}
 \item  If $2\mid {n_1}$, $\widehat{\Sp^{\pm}}(\mathcal{W}_1) $  commutes with $\widehat{\Oa}(\mathcal{W}_2)$ or $\Oa(\mathcal{W}_2)$.
 \item  If $2\nmid {n_1}$, $\widehat{\Sp^{\pm}}(\mathcal{W}_1) $ only commutes with $\widehat{\SO}(\mathcal{W}_2)$ or $\SO(\mathcal{W}_2)$, and $\widehat{\Oa}(\mathcal{W}_2)$ only commutes with $ \widehat{\Sp}(\mathcal{W}_1)$.
 \end{itemize}
\subsubsection{} If  $\mathcal{W}_2\simeq n_2H$,  $\mathcal{W}_1$ is not a hyperbolic space, and    $\dim_F \mathcal{W}_1$ is even, the proof is similar as above.
\subsubsection{} If  $\mathcal{W}_1$ and $\mathcal{W}_2$ both are hyperbolic spaces, then $\U^{\pm}(\mathcal{W}_i)\neq \U(\mathcal{W}_i)$ and  $
\widehat{ \U^{\pm}}(\mathcal{W}_i) \simeq \U^{\pm}(\mathcal{W}_i) \times T$. Let $\mathcal{A}_i=(f_{i1}, \cdots, f_{i n_i};  f_{i1}^{\ast}, \cdots, f_{in_i}^{\ast})$ be  a split hyperbolic basis of $\mathcal{W}_i$. Let us write:
$$s_i: \mu_2 \longrightarrow \U^{\pm}(\mathcal{W}_i); \epsilon \longmapsto  \begin{pmatrix} I& 0\\ 0 & \epsilon I\end{pmatrix},$$
where $$s_1(-1)(f_{11}, \cdots, f_{1 n_1};  f_{11}^{\ast}, \cdots, f_{1n_1}^{\ast})=(f_{11}, \cdots, f_{1 n_1};  -f_{11}^{\ast}, \cdots, -f_{1n_1}^{\ast}),$$
$$(f_{21}, \cdots, f_{2 n_2};  f_{21}^{\ast}, \cdots, f_{2n_2}^{\ast}) s_2(-1)=(f_{21}, \cdots, f_{2 n_2};  -f_{21}^{\ast}, \cdots, -f_{2n_2}^{\ast}).$$
Let $S_i=s_i(\mu_2)$, and $\widehat{S_i} \subseteq \widehat{\U^{\pm}}(\mathcal{W}_i)$.  Let us write $X_1= (f_{11}, \cdots, f_{1 n_1})$, $X_1^{\ast}=(f_{11}^{\ast}, \cdots, f_{1 n_1}^{\ast})$, $Y_1= (f_{21}, \cdots, f_{2 n_2})$, $Y_1^{\ast}=(f_{21}^{\ast}, \cdots, f_{2 n_2}^{\ast})$. Then:
$$(f_{11} \otimes Y_1, f_{11} \otimes Y_1^{\ast},  \cdots, f_{1 n_1}\otimes Y_1, f_{1 n_1}\otimes Y_1^{\ast};  f^{\ast}_{11} \otimes Y_1^{\ast},  \epsilon_2 f^{\ast}_{11} \otimes Y_1,  \cdots, f_{1 n_1}^{\ast}\otimes Y_1^{\ast}, \epsilon_2  f_{1 n_1}^{\ast}\otimes Y_1)$$
forms a split hyperbolic basis of $\mathcal{W}_1 \otimes_D \mathcal{W}_2 $. Recall:
$$s: \U^{\pm}(\mathcal{W}_1) \times \U^{\pm}(\mathcal{W}_2) \stackrel{s_1\otimes s_2}{ \longrightarrow} \U^{\pm}(\mathcal{W}_1 \otimes_D \mathcal{W}_2 ) \longrightarrow \Sp(W).$$
Hence $$s_1(\epsilon)\otimes I_{\U(\mathcal{W}_2)}=\begin{pmatrix} I_{1}\otimes I_2 & 0\\ 0 & \epsilon I_{1}\otimes I_2\end{pmatrix}=s(\epsilon),$$
$$I_{\U(\mathcal{W}_1)} \otimes s_2(\epsilon)=\diag\Bigg(\begin{pmatrix} I_2 & 0\\ 0 & \epsilon I_2\end{pmatrix}, \cdots, \begin{pmatrix} I_2 & 0\\ 0 & \epsilon I_2\end{pmatrix}; \begin{pmatrix}\epsilon  I_2 & 0\\ 0 & I_2\end{pmatrix},\cdots, \begin{pmatrix} \epsilon I_2 & 0\\ 0 & I_2\end{pmatrix}\Bigg).$$
Let us write $I_{\U(\mathcal{W}_2)} \otimes s_2(\epsilon)=s(\epsilon) g_0$, for some  $g_0=\begin{pmatrix} a & 0\\ 0& a^{-1}\end{pmatrix}$. Then:
$$\widetilde{C}_{X^{\ast}}(s_1(-1)\otimes I_{\U(\mathcal{W}_2)}, I_{\U(\mathcal{W}_1)} \otimes s_2(-1))=\widetilde{C}_{X^{\ast}}([-1, 1],[-1, g_0])=\nu(-1, 1)\widetilde{c}_{X^{\ast}}(1 , g_0)=1 ,$$
\begin{align*}
\widetilde{C}_{X^{\ast}}( I_{\U(\mathcal{W}_1)} \otimes s_2(-1), s_1(-1)\otimes I_{\U(\mathcal{W}_2)})&=\widetilde{C}_{X^{\ast}}([-1, g_0], [-1, 1])=\nu(-1, g_0)\widetilde{c}_{X^{\ast}}(g_0^{s(-1)} , 1)\\
&=\nu(-1, g_0)= \left\{ \begin{array}{cl} 1 & \textrm{ if } D=E, \mathbb{H},\\ (-1 ,(-1)^{n_1n_2})_{F} &  \textrm{ if } D=F.\end{array}\right.
\end{align*}
Hence:
\begin{itemize}
\item[(1)] If $D=E$ or $\mathbb{H}$, $\widehat{S_1}$ commutes with $\widehat{S_2}$. Consequently, $\widehat{\U^{\pm}}(\mathcal{W}_1)$  is commuted  with $\widehat{\U^{\pm}}(\mathcal{W}_2)$ in $\widetilde{\Sp}^{\pm}(W)$.
\item[(2)] If $D=F$,  assume that $\mathcal{W}_1$ is  symplectic and $\mathcal{W}_2$ is orthogonal. Let $g_1\in \Sp(\mathcal{W}_1)$, $g_2\in \Oa(\mathcal{W}_2)$. Then:
\begin{align*}
\widetilde{C}_{X^{\ast}}( s_1(-1)\otimes I_2, I_{1} \otimes g_2)&=\widetilde{C}_{X^{\ast}}([-1,1], [1, I_{1} \otimes g_2])\\
&=\nu(1, 1)\widetilde{c}_{X^{\ast}}(1 , I_{1} \otimes g_2)=1;
\end{align*}
\begin{align*}
\widetilde{C}_{X^{\ast}}(I_{1} \otimes g_2,  s_1(-1)\otimes I_2)&=\widetilde{C}_{X^{\ast}}([1, I_{1} \otimes g_2], [-1,1])\\
&=\nu(-1,  I_{1} \otimes g_2)\widetilde{c}_{X^{\ast}}( [I_{1} \otimes g_2]^{s(-1)} , 1)\\
&=\nu(-1,  I_{1} \otimes g_2)=((\det g_2)^{n_1}, -1)_{\R}.
\end{align*}
Hence:
\begin{itemize}
\item[i)] If $ 2 \mid n_1$, $\widehat{S_1}$ commutes with $\widehat{\Oa}(\mathcal{W}_2)$.
\item[ii)] If $ 2 \nmid n_1$, $\widehat{S_1}$  only commutes with $\widehat{\SO}(\mathcal{W}_2)$.
\end{itemize}
Similarly, by choosing the similar split hyperbolic basis for $\mathcal{W}_1\otimes \mathcal{W}_2$, we can obtain that $\widehat{S_2}$ commutes with $\widehat{\Sp}(\mathcal{W}_1)$. We conclude:
\begin{itemize}
\item[a)] If $ 2 \mid n_1$ and $2\mid n_2$,  by the above proof of Part (1), we know that  $\widehat{\Sp^{\pm}}(\mathcal{W}_1)$  commutes with $\widehat{\Oa}(\mathcal{W}_2)$  and $\widehat{\Oa^{\pm}}(\mathcal{W}_2)$ commutes   with $\widehat{\Sp}(\mathcal{W}_1)$.  By the above discussion,  $\widehat{S_1}$ commutes with $\widehat{S_2}$ and $\widehat{\Oa}(\mathcal{W}_2)$. Hence $\widehat{\Sp^{\pm}}(\mathcal{W}_1)$  commutes with $\widehat{\Oa^{\pm}}(\mathcal{W}_2)$.
\item[b)] If $ 2 \mid n_1$ and $2\nmid n_2$,   $\widehat{\Sp^{\pm}}(\mathcal{W}_1)$  commutes with $\widehat{\Oa}(\mathcal{W}_2)$, $\widehat{S_1}$ commutes with $\widehat{S_2}$,
 $\widehat{\Oa^{\pm}}(\mathcal{W}_2)$ commutes   with $\widehat{\Sp}(\mathcal{W}_1)$.   Hence $\widehat{\Sp^{\pm}}(\mathcal{W}_1)$  commutes with $\widehat{\Oa^{\pm}}(\mathcal{W}_2)$.
\item[c)] If  $2\nmid n_1$ and  $2\mid n_2$, $\widehat{\Sp^{\pm}}(\mathcal{W}_1)$ only commutes with $\widehat{\SO}(\mathcal{W}_2)$, $\widehat{S_1}$ commutes with $\widehat{S_2}$,
 $\widehat{\Oa^{\pm}}(\mathcal{W}_2)$ commutes   with $\widehat{\Sp}(\mathcal{W}_1)$.   Hence $\widehat{\Sp^{\pm}}(\mathcal{W}_1)$ only commutes with $\widehat{\SO^{\pm}}(\mathcal{W}_2)$ and  $\widehat{\Oa^{\pm}}(\mathcal{W}_2)$ only commutes with $\widehat{\Sp}(\mathcal{W}_2)$.
\item[d)] If $ 2 \nmid n_1$, $2\nmid n_2$, $\widehat{\Sp^{\pm}}(\mathcal{W}_1)$ only commutes with $\widehat{\SO}(\mathcal{W}_2)$, $\widehat{S_1}$ is not commuted with $\widehat{S_2}$,
 $\widehat{\Oa^{\pm}}(\mathcal{W}_2)$ commutes   with $\widehat{\Sp}(\mathcal{W}_1)$.   Hence $\widehat{\Sp^{\pm}}(\mathcal{W}_1)$ only commutes with $\widehat{\SO}(\mathcal{W}_2)$ and  $\widehat{\Oa^{\pm}}(\mathcal{W}_2)$ only commutes with $\widehat{\Sp}(\mathcal{W}_1)$.
\end{itemize}
\end{itemize}

\subsection{Unitary Weil representation}\label{un}
In this last subsection, we will let $V $ be a complex vector space of dimension $m$ with a basis $\{f_1, \cdots, f_m\}$. Let $(,)_{V}$ be a left hermitian form on $V$ given as follows:
$$(v, v')_V=\sum_{i=1}^m z_i\overline{z'}_i, \quad\quad z=\sum_{i=1}^m z_i f_i, z'=\sum_{i=1}^m z_i' f_i.$$
Let $\Ha(V)=V\oplus F$ be the group of elements $(v, t)$,  with the
multiplication law given by
$$(v, t)(v',t')=(v+v', \tfrac{i\Im(v,v')_V}{2}),   \quad\quad v, v'\in V, t,t'\in F.$$
Then there exists a canonical $\Gal(\C/\R)$-action on $V$.  We can extend this action  to $\Ha(V)$ as follows:
$$  \Ha(V)\times \Gal(\C/\R)\longrightarrow V; (\sigma, [v,t]) \longrightarrow  [v^{\sigma}, \chi(\sigma)t],$$
for the non-trivial quadratic character $\chi$ of $\Gal(\C/\R)$. Let $\Gal(\C/\R)=\langle \sigma\rangle $, $F_2=\langle s(-1)\rangle$.
\begin{lemma}\label{iso}
There exists a group isomorphism:
\begin{align}
\phi : &     \Gal(\C/\R)\ltimes \Ha(V) \longrightarrow \Ha^{\pm}(W)\simeq F_2\ltimes \Ha(W)  ;\\
         &\quad \sigma \longmapsto s(-1)\\
          &\quad  (\sum^m_{k=1} (a_k +ib_k ) f_k, t) \longmapsto  (\sum^m_{k=1} a_k e_k +b_k e_k^{\ast},t).
 \end{align}
\end{lemma}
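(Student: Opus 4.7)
The map $\phi$ is patently a set-theoretic bijection: on the Heisenberg part it is built from the $\R$-linear isomorphism $V \to W$ sending $(a_k + ib_k)f_k$ to $a_k e_k + b_k e_k^{\ast}$ and the identity on the central $F$; on the outer factor it matches the two order-two groups. So the content of the lemma is that $\phi$ intertwines the two group laws. I would carry this out in two steps.

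First, I would verify that $\phi$ restricts to a group isomorphism $\Ha(V) \to \Ha(W)$. For $v = \sum_k (a_k + ib_k) f_k$ and $v' = \sum_k (a_k' + ib_k') f_k$, a direct expansion of $(v,v')_V = \sum_k z_k \overline{z_k'}$ gives
\[
\Im (v,v')_V = \sum_k (b_k a_k' - a_k b_k').
\]
On the other hand, for $w = \phi(v) = \sum_k a_k e_k + b_k e_k^{\ast}$ and $w' = \phi(v')$, the symplectic pairing evaluates on the symplectic basis to $\langle w, w' \rangle = \sum_k (a_k b_k' - b_k a_k')$. Thus $\langle w, w' \rangle = - \Im(v,v')_V$ (any residual sign or factor of two is absorbed by the convention in the displayed multiplication law for $\Ha(V)$). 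Plugging into the multiplication laws then shows that $\phi((v,t)(v',t')) = \phi(v,t)\phi(v',t')$.

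Second, I would verify compatibility of the outer actions. The element $s(-1) = \begin{pmatrix} 1 & 0 \\ 0 & -1 \end{pmatrix} \in F_2$ fixes $e_k$ and sends $e_k^{\ast} \mapsto - e_k^{\ast}$; under $\phi^{-1}$ this is exactly the action of complex conjugation $\sigma$ on $V$, since it sends $a_k + ib_k \mapsto a_k - ib_k$. On the central $F$, $s(-1)$ acts through its similitude factor $\lambda_{s(-1)} = -1$, which matches $\chi(\sigma) = -1$. Therefore $\phi(\sigma \cdot (v,t) \cdot \sigma^{-1}) = s(-1) \phi(v,t) s(-1)^{-1}$, which is precisely the condition that $\phi$ extends to a homomorphism of semidirect products.

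There is essentially no obstacle: both steps are routine verifications on basis elements, and the only mildly delicate point is tracking signs and normalization factors between the hermitian form $\tfrac{i}{2}\Im(v,v')_V$ (which is the antisymmetric imaginary part of $(,)_V$) and the symplectic pairing $\tfrac{1}{2}\langle w, w' \rangle$. Once that sign is pinned down on the chosen bases, the lemma follows.
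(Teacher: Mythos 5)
Your plan matches the paper's own proof: the paper likewise splits the verification into (i) compatibility with the outer $\sigma \leftrightarrow s(-1)$ action and (ii) compatibility of the Heisenberg multiplication, the latter by expanding the central shift in coordinates. The only soft spot is your remark that "any residual sign or factor of two is absorbed by the convention" — you computed $\langle \phi(v), \phi(v')\rangle = -\Im(v,v')_V$, and that minus sign must genuinely cancel against the way the paper normalizes the center term in its $\Ha(V)$ product (the paper's display $\tfrac{i\Im(v,v')_V}{2}$ is itself a bit loosely written, since a real scalar is needed); the paper avoids the issue by expanding both centers in coordinates and checking they agree literally, so you should do the same rather than appeal to convention.
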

\begin{proof}
It is clear that $\phi$ is a bijective map. \\
1) \begin{align*}
\phi([\sum^m_{k=1} (a_k +ib_k ) f_k, t]^{\sigma})&=\phi([\sum^m_{k=1} (a_k -ib_k ) f_k, -t])=[\sum^m_{k=1} a_k e_k -b_k e_k^{\ast},-t]\\
&=[\sum^m_{k=1} a_k e_k +b_k e_k^{\ast},t]^{s(-1)}=\phi([\sum^m_{k=1} (a_k +ib_k ) f_k, t])^{\phi(\sigma)}.
\end{align*}
2) For two element $h_l=(w_l, t_l)\in  \Ha(W)$, with $w_l=\sum^m_{k=1} a^{l}_k e_k +b^{l}_k e_k^{\ast}$, for $l=1,2$, we have:
\begin{align*}
h_1 h_2=(w_1+w_2, t_1+t_2+\tfrac{\langle w_1,w_2\rangle}{2})=(w_1+w_2, t_1+t_2+\tfrac{1}{2}\sum_{k=1}^m  (a_k^1 b_k^2-a_k^2b_k^1));
\end{align*}
\begin{align*}
\phi^{-1}(h_l)=(\sum^m_{k=1} (a^l_k +ib^l_k ) f_k, t_l);
\end{align*}
\begin{align*}
\phi^{-1}(h_1h_2)=(\sum^m_{k=1} (a^1_k +ib^1_k ) f_k+\sum^m_{k=1} (a^2_k +ib^2_k ) f_k, t_1+t_2+\tfrac{1}{2}\sum_{k=1}^m  (a_k^1 b_k^2-a_k^2b_k^1));
\end{align*}
\begin{align*}
\phi^{-1}(h_1)\phi^{-1}(h_2)&=(\sum^m_{k=1} (a^1_k +ib^1_k ) f_k, t_1)(\sum^m_{k=1} (a^2_k +ib^2_k ) f_k, t_2)\\
&=(\sum^m_{k=1} (a^1_k +ib^1_k ) f_k+\sum^m_{k=1} (a^2_k +ib^2_k ) f_k, t_1+t_2+\tfrac{1}{2}\sum^m_{k=1}(a^1_kb^2_k-b^1_ka^2_k)).
\end{align*}
Hence $\phi^{-1}(h_1h_2)=\phi^{-1}(h_1)\phi^{-1}(h_2)$.
\end{proof}
\begin{lemma}
The above $\phi$ can extend to be a group homomorphism: $$\phi :   \Gal(\C/\R)\U(V)\ltimes \Ha(V)   \longrightarrow \Sp^{\pm}(W)\ltimes \Ha(W).$$
\end{lemma}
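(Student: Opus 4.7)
The plan is to extend $\phi$ in two stages: first I would define $\phi$ on $\U(V)$ via the standard realification embedding $\U(V)\hookrightarrow\Sp(W)$, and then I would verify that this extension is compatible with both the Galois action and the semidirect structure on $\Ha(V)$.

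Writing each $u\in\U(V)$ in the basis $\{f_1,\dots,f_m\}$ as $u=A+iB$ with $A,B\in M_m(\R)$, the unitary condition becomes $A^tA+B^tB=I_m$ and $A^tB-B^tA=0$. Under the $\R$-linear identification $f_k\leftrightarrow e_k$, $if_k\leftrightarrow e_k^\ast$ from Lemma \ref{iso}, the endomorphism $u$ is represented in the basis $\{e_1,\dots,e_m,e_1^\ast,\dots,e_m^\ast\}$ by the block matrix $\begin{pmatrix}A & -B\\ B & A\end{pmatrix}$. A short computation using the unitary relations shows that this matrix preserves $\langle,\rangle$, so defining $\phi(u)$ to be this matrix gives an injective group homomorphism $\phi:\U(V)\to\Sp(W)\subset\Sp^\pm(W)$, coinciding with the already-defined $\phi$ on $V\subset\Ha(V)$ in the sense that $\phi(u(v))=\phi(u)(\phi(v))$ for all $v\in V$.

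To check Galois compatibility $\phi(\sigma u\sigma^{-1})=\phi(\sigma)\phi(u)\phi(\sigma)^{-1}$, observe that the left-hand side is the realification of $\bar u=A-iB$, namely $\begin{pmatrix}A & B\\ -B & A\end{pmatrix}$, while conjugating $\begin{pmatrix}A & -B\\ B & A\end{pmatrix}$ by $\phi(\sigma)=s(-1)=\mathrm{diag}(I_m,-I_m)$ produces precisely the same matrix. For the compatibility on the Heisenberg part, the conjugation $u\cdot(v,t)\cdot u^{-1}=(u(v),t)$ inside $\U(V)\ltimes\Ha(V)$ must map under $\phi$ to $(\phi(u)(\phi(v)),t)$ in $\Sp^\pm(W)\ltimes\Ha(W)$, which is immediate from the intertwining identity $\phi\circ u=\phi(u)\circ\phi$ established above, together with the preservation of the central coordinate given by Lemma \ref{iso}.

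The main obstacle is purely bookkeeping: one needs to pin down consistent conventions for the left/right structure on the hermitian form, the direction of the $\U(V)$-action on $V$, and the signs in the realified block matrix, so that it actually lies in $\Sp(W)$ rather than in its anti-symplectic cousin. Once these conventions are fixed, the three verifications above reduce to immediate matrix computations that combine Lemma \ref{iso} with the standard realification of unitary matrices, and assemble into the desired group homomorphism on $\Gal(\C/\R)\U(V)\ltimes\Ha(V)$.
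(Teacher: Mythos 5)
Your proof is correct and takes essentially the same approach as the paper: a direct verification that the realification $\phi$ intertwines the conjugation actions. The paper simply compresses everything into a single computation showing $\phi(h^{(\sigma,g)})=\phi(h)^{\phi(\sigma,g)}$ for $h\in\Ha(V)$ (which, since $\Sp^{\pm}(W)$ acts faithfully on $\Ha(W)$, already forces $\phi$ to be multiplicative on $\Gal(\C/\R)\U(V)$), whereas you unpack the realification into explicit block matrices and check the symplectic, Galois, and Heisenberg compatibilities one at a time.
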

\begin{proof}
Let $h=(v, t)\in  \Ha(V)$, with $v=\sum^m_{k=1} (a_k  +ib_k)f_k$. Let $g\in \U(V)$. Then:
 \begin{align*}
h^{(\sigma, g)}=h^{(\sigma, 1)(1, g)}=(\sum^m_{k=1} (a_k  -ib_k)f_k,-t)^g=([\sum^m_{k=1} (a_k  -ib_k)f_k]g,-t);
\end{align*}
 \begin{align*}
\phi(h^{(\sigma, g)})=\phi([\sum^m_{k=1} (a_k  -ib_k)f_k]g,-t)=([\sum^m_{k=1} (a_k e_k -ib_ke^{\ast}_k)]g,-t);
\end{align*}
 \begin{align*}
\phi(h)^{\phi(\sigma, g)}&=(\sum^m_{k=1} (a_k e_k +ib_ke^{\ast}_k),t)^{(s(-1),g)}=(\sum^m_{k=1} (a_k e_k -ib_ke^{\ast}_k),-t)^{g}\\
&=([\sum^m_{k=1} (a_k e_k -ib_ke^{\ast}_k)]g,-t)=\phi(h^{(\sigma, g)}).
\end{align*}
\end{proof}
Let $\widehat{\U(V)}$ be the preimage of $\U(V)$ in $\widehat{\Sp}(W)$.  Then this $\phi$ can extend to  a group homomorphism:
$$\phi:  \Gal(\C/\R)\widehat{\U(V)}\ltimes \Ha(V)   \longrightarrow \widehat{\Sp}^{\pm}(W)\ltimes \Ha(W).$$
Let us next show that $\widehat{\U(V)}$ is splitting from Theorem \ref{scindagedugroupeR0} in details.
\subsubsection{} Let $\mathcal{W}=E$, be a right vector space over $E$,  endowed with the skew  hermitian form: $$(z,z')_{\mathcal{W}}=-i\overline{z}z'.$$
 Let $\mathcal{V}=V$, endowed with the hermitian form: $(-, -)_{\mathcal{V}}=(-, -)_{V}$.
 Then
 $$( \mathcal{W}\otimes \mathcal{V} , \tfrac{1}{2} \tr_{E/F}[(,)_{\mathcal{W}}\otimes \overline{(, )}_{\mathcal{V}}])$$
 is a symplectic space over $F$ of dimension $2m$. Let $\Ha(\mathcal{W} \otimes \mathcal{V})$ be the corresponding Heisenberg group. Then there exists the following isomorphism:
 $$\ell: \Ha(\mathcal{W} \otimes \mathcal{V}) \longrightarrow \Ha(V); (\sum w_l \otimes v_l, t) \longmapsto (\sum w_lv_l, t).$$
 It is clear that $\ell$ is a bijective map. Let us show that it is also a group homomorphism:
 $$ i\Im z=\tfrac{1}{2} \Tr_{E/F}(iz),  \quad \quad z\in E;$$
 \begin{align*}
 (\sum w_l \otimes v_l, t_1) (\sum w_k \otimes v_k, t_2)&=(\sum_l w_l \otimes v_l +\sum_{k}w_k \otimes v_k, t_1+t_2 + \tfrac{1}{4} \sum_l\sum_{k}\tr_{E/F}[-i \overline{w_l} w_k\overline{(v_l ,v_k )}_{V} ]);
\end{align*}
 \begin{align*}
 \ell((\sum w_l \otimes v_l, t_1) (\sum w_k \otimes v_k, t_2))=(\sum_l w_l  v_l +\sum_{k}w_k  v_k, t_1+t_2 + \tfrac{1}{4} \sum_l\sum_{k}\tr_{E/F}[-i \overline{w_l} w_k\overline{(v_l ,v_k )}_{V} ]);
\end{align*}
 \begin{align*}
 \ell([\sum_l w_l \otimes v_l, t_1]) \ell([\sum_k w_k \otimes v_k, t_2])&=(\sum_l w_l  v_l, t_1)(\sum_k w_k v_k, t_2)\\
 &=(\sum_lw_l  v_l+\sum_k w_k v_k, t_1+t_2+ \tfrac{1}{2}i\Im(\sum_l w_l  v_l,\sum_k w_k v_k)_V)\\
 &=(\sum_l w_l  v_l+\sum_k w_k v_k, t_1+t_2+ \tfrac{1}{2}\sum_l\sum_k i\Im(w_l\overline{w_k} (v_l  ,v_k)_V) \\
 &=(\sum_l w_l  v_l+\sum_k w_k v_k, t_1+t_2+ \tfrac{1}{4}\sum_l\sum_k \Tr_{E/F}(iw_l\overline{w_k} (v_l  ,v_k)_V).
\end{align*}
Moreover, $\ell$ can extend to  an isomorphism:
$$\ell: \Gal(\C/\R)\U( \mathcal{V})\ltimes \Ha(\mathcal{W}\otimes \mathcal{V})   \longrightarrow \Gal(\C/\R)\U(V)\ltimes \Ha(V).$$
By Theorem \ref{scindagedugroupeR0},  $\widehat{\U(V)}$ is splitting. In the next section, we will see the explicit trivialization when $\dim_E V=1$.
\section{Parenthesis: Modular form on the two fold covering space}\label{Parenthesis}
In this  section, let us consider the example when $\dim W=2$.  We mainly follow \cite{LiVe} to deal with theta series. On the other hand, approaching theta series in a more gobal way,   see \cite{We} and \cite{Ge}.

\subsection{Notations}\label{notationss} Assume $\psi=\psi_0$. In this case, $F=\R$, $E=\C=F(i)$,  $\Gal(E/F)=\langle \sigma\rangle$, $F_2=\langle s(-1)\rangle$ for $s(-1)=\begin{pmatrix} 1 & 0\\ 0& -1\end{pmatrix}\in \SL^{\pm}_2(\R)$, $\Sp(W)\simeq \SL_2(\R)$, $\Sp^{\pm}(W) \simeq \SL_2^{\pm}(\R)$, $X=Fe_1\simeq \R$, $X^{\ast}=Fe_1^{\ast}\simeq \R$, $V=E$, $\U(V)=\{ \cos \theta + i\sin \theta\mid -\pi<\theta \leq \pi\}$. By Lemma \ref{iso}, we can treat $\U(V)$ as the subgroup $\SO_2(\R)$ of $\SL_2(\R)$, through the map: $\cos \theta + i\sin \theta \longrightarrow \begin{pmatrix} \cos \theta & \sin \theta\\ -\sin \theta & \cos \theta\end{pmatrix}$. Let:
\begin{itemize}
\item $\sgn: \R \longrightarrow \{\pm 1, 0\}; t \longmapsto \left\{\begin{array}{rl} 1 & \textrm{ if } t>0\\ 0& \textrm{ if } t=0\\ -1 & \textrm{ if } t<0\end{array}\right.$;
\item $P(\R)=\{ g=\begin{pmatrix}a & b\\ 0& a^{-1}\end{pmatrix} \in\SL_2(\R) \}$, $N(\R)=\{ g=\begin{pmatrix}1 & b\\ 0& 1\end{pmatrix} \in\SL_2(\R) \}$, $N_-(\R)=\{ g=\begin{pmatrix}1 & 0\\ c& 1\end{pmatrix} \in\SL_2(\R) \}$;
\item $N$:  a positive integer number bigger than $2$;
\item $\SL_2(\Z)=\{ g=\begin{pmatrix}a & b\\ c& d\end{pmatrix} \in \SL_2(\R)\mid a,b,c,d\in \Z\}$;
\item $ \Gamma(N)=\{\begin{pmatrix}
a& b\\
c& d\end{pmatrix}\in \SL_2(\Z)\mid \begin{pmatrix}
a & b\\
c& d\end{pmatrix}\equiv \begin{pmatrix}
1 & 0\\
0& 1\end{pmatrix} (\bmod N)\}$;
\item $P^{\pm}(\R)=\{ g=\begin{pmatrix}a & b\\ 0& a^{-1} \det g  \end{pmatrix} \in\SL^{\pm}_2(\R) \}$;
\item $P_{>0}^{\pm}(\R)=\{ g=\begin{pmatrix} a & b\\ 0& a^{-1} \det g  \end{pmatrix} \in\SL^{\pm}_2(\R)\mid a>0\}$;
\item $\SL_2^{\pm}(\Z)=\{ g=\begin{pmatrix}a & b\\ c& d\end{pmatrix} \in \SL_2^{\pm}(\R)\mid a,b,c,d\in \Z\}$;
\item $\SO_2^{\pm}(\R)=\{ g= \begin{pmatrix} \cos \theta & \sin \theta\\ \mp\sin \theta & \pm \cos \theta\end{pmatrix}\mid -\pi < \theta \leq \pi\}$;
\item $\mathcal{H}^{\pm}=\{ b+ai\in \C\mid a\neq 0\}$;
\item $\Ha(V)=E\oplus \R$, with the multiplication: $(z, t)(z',t')=(z+z', t+t'+\tfrac{1}{2}i\Im(z\overline{z'}))$;
\item $\Ha(W)=\R\oplus \R \oplus \R$, with the multiplication: $(x, y; t)(x',y'; t')=(x+x', y+y'; t+t'+\tfrac{1}{2}(xy'-x'y))$;
\item $\Gal(\C/\R)\ltimes\Ha(V)=\Gal(\C/\R)\ltimes (\C\oplus \R)$, with the action: $(z, t)^{\sigma}=(z^{\sigma}, -t)$;
\item  $\Ha^{\pm}(W)=F_2\ltimes (\R\oplus \R \oplus \R)$, with the action: $(x, y; t)^{s(-1)}=(x,-y; -t)$;
\item  $\Arg(z)\in (-\pi, \pi]$, for the principal argument function $\Arg$ on $\C$;
\item  $\omega=\begin{pmatrix}0 & -1\\ 1& 0\end{pmatrix}$, $u(b)=\begin{pmatrix}
  1&b\\
  0 & 1
\end{pmatrix}$,  $u_-(c)=\begin{pmatrix}
  1&0\\
  c & 1
\end{pmatrix}$, $h(a)=\begin{pmatrix}
  a&0\\
  0 & a^{-1}
\end{pmatrix} $, $h_{\epsilon}=\begin{pmatrix}
  1&0\\
  0& \epsilon
\end{pmatrix}$.
\end{itemize}
Let $g_1, g_2, g_3=g_1g_2\in \SL_2(\R)$ with $g_i=\begin{pmatrix}
a_i & b_i\\
c_i& d_i\end{pmatrix}$.
\begin{itemize}
\item[(1)] $x(g_1)=\left\{\begin{array}{lr} d_1\R^{\times 2} & \textrm{ if } c_1=0,\\
c_1\R^{\times 2} & \textrm{ if } c_1\neq 0.\end{array}\right.$
\item[(2)] $m_{X^{\ast}}(g_1)=\left\{\begin{array}{lr} e^{\tfrac{i \pi}{4}[1-\sgn(d_1)]}& \textrm{ if } c_1=0,\\
e^{\tfrac{i \pi}{4}[- \sgn(c_1)]}& \textrm{ if } c_1\neq 0.\end{array}\right.$
\item[(3)] $\overline{c}_{X^{\ast}}(g_1, g_2)=(x(g_1), x(g_2))_\R(-x(g_1)x(g_2), x(g_3))_\R$.
\item[(4)] $\widetilde{c}_{X^{\ast}}(g_1, g_2)=\gamma(\psi(q(g_1, g_2)/2))=\gamma(\psi(\tfrac{1}{2}c_1c_2c_3))= e^{\tfrac{i \pi}{4} \sgn(c_1c_2c_3)}=\left\{\begin{array}{lr} 1 & \textrm{ if } c_1c_2c_3=0,\\
e^{\tfrac{i\pi}{4}}& \textrm{ if } c_1c_2c_3>0,\\
e^{-\tfrac{i\pi}{4}}& \textrm{ if } c_1c_2c_3<0.\end{array}\right.$
\end{itemize}
Let  $h_1, h_2, h_3=h_1h_2\in \SL^{\pm}_2(\R)$ with $h_i=\begin{pmatrix}
a_i & b_i\\
c_i& d_i\end{pmatrix}$, $y_i=\det h_i$, $g_i=\begin{pmatrix}
a_i & b_i\\
y_i^{-1}c_i& y_i^{-1}d_i\end{pmatrix}$. Then:
\begin{itemize}
\item $\widetilde{C}_{X^{\ast}}(h_1, h_2) =\nu(y_2,g_1)\widetilde{c}_{X^{\ast}}(g_1^{y_2}, g_2)$.
\item $\overline{C}_{X^{\ast}}(h_1, h_2) =\nu_2(y_2,g_1)\overline{c}_{X^{\ast}}(g_1^{y_2}, g_2)$.
\item $\nu(y_2,g_1)=\left\{\begin{array}{lr} (y_2,  a_1)_{\R} & c_1=0,\\ (y_1c_1, y_2)_\R \gamma(y_2, \psi^{\tfrac{1}{2}})^{-1} & c_1\neq 0.\end{array}\right.$
\item$\nu_2(y_2,g_1)=\left\{\begin{array}{lr}  (y_2,  a_1)_\R & c_1=0,\\ 1 & c_1\neq 0. \end{array}\right.$
\end{itemize}
\begin{lemma}
For $h=\begin{pmatrix}
a& b\\
c& d\end{pmatrix}\in  \SL^{\pm}_2(\R)$, $p=\begin{pmatrix}
a_1& b_1\\
0&   a_1^{-1} \det p \end{pmatrix}\in P^{\pm}_{>0}(\R)$, we have:
\begin{itemize}
\item $\widetilde{C}_{X^{\ast}}(p, h)=1$;
\item $\widetilde{C}_{X^{\ast}}(h, p)=\left\{\begin{array}{cl}  (\det p, a)_{\R}  & \textrm{ if } c=0\\(c\det h, \det p)_{\R} \gamma(\det p, \psi^{\tfrac{1}{2}})^{-1}  & \textrm{ if } c\neq 0 \end{array}\right.$.
\end{itemize}
\end{lemma}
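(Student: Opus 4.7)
The plan is to verify both identities by straight substitution into the explicit formula
\[
\widetilde{C}_{X^{\ast}}(h_1, h_2) = \nu(y_2, g_1)\,\widetilde{c}_{X^{\ast}}(g_1^{y_2}, g_2),
\]
recorded just above the statement, using the case division for $\nu(y_2,g_1)$ according to whether the $(2,1)$-entry of $g_1$ vanishes, together with the fact that $\widetilde{c}_{X^{\ast}}(g_1',g_2') = e^{\frac{i\pi}{4}\sgn(c'_1c'_2c'_3)}$ vanishes as soon as any of the three bottom-left entries is zero. The element $p \in P^{\pm}_{>0}(\R)$ has $c_p=0$ and corresponds to $g_p = \begin{pmatrix} a_1 & b_1 \\ 0 & a_1^{-1} \end{pmatrix}\in \SL_2(\R)$, and conjugation by $s(y)$ only modifies the off-diagonal upper entry, so $g_p^{y}$ again has vanishing $(2,1)$-entry.

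For the first identity, I would take $h_1=p$, $h_2=h$, so that $c_1=0$ and consequently $\nu(\det h, g_p) = (\det h, a_1)_{\R}$. Since $a_1>0$ by definition of $P^{\pm}_{>0}(\R)$, this Hilbert symbol is $1$. The remaining cocycle factor $\widetilde{c}_{X^{\ast}}(g_p^{\det h},g_h)$ equals $1$ because the bottom-left entry of $g_p^{\det h}$ is still $0$. Multiplying gives $\widetilde{C}_{X^{\ast}}(p,h)=1$.

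For the second identity, take $h_1=h$, $h_2=p$. The factor $\widetilde{c}_{X^{\ast}}(g_h^{\det p}, g_p)$ is $1$ in either case because $g_p$ has vanishing $(2,1)$-entry. It remains to evaluate $\nu(\det p, g_h)$ in each case. If $c=0$, then $g_h = \begin{pmatrix} a & b \\ 0 & a^{-1}\end{pmatrix}$ and the first branch gives $\nu(\det p, g_h) = (\det p, a)_{\R}$. If $c\neq 0$, the bottom-left entry of $g_h$ is $(\det h)^{-1}c \neq 0$, and the second branch yields
\[
\nu(\det p, g_h) = \bigl(\det h \cdot (\det h)^{-1}c,\ \det p\bigr)_{\R}\,\gamma(\det p, \psi^{\tfrac{1}{2}})^{-1} = (c\det h, \det p)_{\R}\,\gamma(\det p, \psi^{\tfrac{1}{2}})^{-1},
\]
where I used that Hilbert symbols take values in $\{\pm 1\}$ so $(y_1^{-1}c_1, y_2)_{\R}=(y_1 c_1, y_2)_{\R}$.

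The calculation is essentially mechanical once one is careful about the conversion between $h_i$ and $g_i = s(y_i)^{-1}h_i$ and remembers that the positivity of $a_1$ kills the Hilbert symbol obstruction in the first claim. The only mild subtlety to be double-checked is that the conjugation action $g\mapsto g^{y}$ used in the formula for $\nu$ does not change whether the $(2,1)$-entry of $g$ vanishes, which is immediate from the computation $g^{y} = \diag(1,y)^{-1}g\diag(1,y) = \begin{pmatrix} a & by \\ c/y & d \end{pmatrix}$ for $g = \begin{pmatrix} a & b \\ c & d \end{pmatrix}$.
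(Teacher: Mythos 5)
Your approach matches the paper's own proof: a direct substitution into the formula $\widetilde{C}_{X^{\ast}}(h_1,h_2)=\nu(y_2,g_1)\,\widetilde{c}_{X^{\ast}}(g_1^{y_2},g_2)$, using that $\widetilde{c}_{X^{\ast}}$ and the relevant branch of $\nu$ trivialize whenever a lower-left entry vanishes, together with $a_1>0$ to kill the Hilbert symbol in the first identity. The treatment of the first identity is correct.

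There is a slip in the display for the second identity. You plug in $y_1c_1 = \det h\cdot(\det h)^{-1}c$, i.e.\ you take $c_1$ to be the $(2,1)$-entry of $g_h$ rather than of $h$; but in the convention set up just before the lemma, $c_1$ is the $(2,1)$-entry of $h_1$, so the correct product is $y_1c_1 = (\det h)\cdot c$, which already gives $(c\det h,\det p)_{\R}$ with no further manipulation. As written, $\det h\cdot(\det h)^{-1}c$ simplifies to $c$, and $(c,\det p)_{\R}$ is \emph{not} equal to $(c\det h,\det p)_{\R}$ in general: they differ by $(\det h,\det p)_{\R}$, which is $-1$ precisely when $\det h=\det p=-1$. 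The identity $(y_1^{-1}c_1,y_2)_{\R}=(y_1c_1,y_2)_{\R}$ that you invoke is true (the arguments differ by the square $y_1^{-2}$), and it is indeed the right fact if one instead computes via $x(g_h)=(\det h)^{-1}c\cdot\R^{\times 2}$, since $(\det h)^{-1}c$ and $(\det h)c$ differ by $(\det h)^{-2}=1$; but it does \emph{not} justify equating $(c,\det p)_{\R}$ with $(c\det h,\det p)_{\R}$. Replace $\det h\cdot(\det h)^{-1}c$ in the display by $(\det h)\cdot c$ (or by $(\det h)^{-1}c$, the actual $(2,1)$-entry of $g_h$) and the step is clean.
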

\begin{proof}
 Let   $g_h=\begin{pmatrix}
a& b\\
c\det h& d \det h\end{pmatrix}$, and $g_p=\begin{pmatrix}
a_1& b_1\\
0&  a_1^{-1}\end{pmatrix}$. Then:
$$\widetilde{C}_{X^{\ast}}(p, h)=\nu(\det h,g_p)\widetilde{c}_{X^{\ast}}((g_p)^{\det h}, g_h)=(\det h, a_1)_{\R}=1;$$
$$\widetilde{C}_{X^{\ast}}( h,p)=\nu(\det p,g_h)\widetilde{c}_{X^{\ast}}((g_h)^{\det p}, g_p)=\nu(\det p,g_h)=\left\{\begin{array}{cl} (\det p, a)_{\R} & \textrm{ if } c=0,\\(c\det h, \det p)_{\R} \gamma(\det p, \psi^{\tfrac{1}{2}})^{-1}  & \textrm{ if } c\neq 0. \end{array}\right.$$
\end{proof}
\begin{lemma}
For $h=\begin{pmatrix}
a& b\\
c& d\end{pmatrix}\in  \SL^{\pm}_2(\R)$, $p=\begin{pmatrix}
a_1 & b_1\\
0& a_1^{-1} \det p  \end{pmatrix}\in P^{\pm}_{>0}(\R)$, we have:
\begin{itemize}
\item $\overline{C}_{X^{\ast}}(p, h)=1$;
\item $\overline{C}_{X^{\ast}}(h, p)=\left\{\begin{array}{cl}  (\det p, a)_{\R}  & \textrm{ if } c=0\\1  & \textrm{ if } c\neq 0 \end{array}\right.$.
\end{itemize}
\end{lemma}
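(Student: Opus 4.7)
The plan is to apply the semidirect-product formula
\begin{equation*}
\overline{C}_{X^\ast}(h_1,h_2)=\nu_2(y_2,g_1)\,\overline{c}_{X^\ast}(g_1^{y_2},g_2)
\end{equation*}
recorded just before the statement, together with the explicit expressions for $\nu_2$ and for $\overline{c}_{X^\ast}$ in the $\dim W=2$ example already provided. A second, equivalent route is to pass from the previous lemma via (\ref{292inter}), correcting $\widetilde{C}_{X^\ast}$ by the ratio of the $m_{X^\ast}$-factors; the two give the same result, but the first route is shorter because the Hilbert symbols involved collapse at once.

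The key observation is that the hypothesis $p\in P_{>0}^{\pm}(\R)$ forces $a_1>0$, so that $x(g_p)=a_1^{-1}F^{\times 2}$ is the trivial class in $F^\times/F^{\times 2}$. Moreover, conjugation by $s(\det h)$ leaves both diagonal entries of $g_p$ unchanged and preserves the vanishing of its bottom-left entry, so $x(g_p^{\det h})$ is also trivial. Consequently every Hilbert symbol having $x(g_p)$ or $x(g_p^{\det h})$ in one of its two slots is $1$, so the entire factor $\overline{c}_{X^\ast}$ drops out of both computations.

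I then split according to whether the bottom-left entry $c$ of $h$ vanishes. For $\overline{C}_{X^\ast}(p,h)$, the matrix $g_p$ has bottom-left entry zero, so the table gives $\nu_2(\det h,g_p)=(\det h,a_1)_\R$, which equals $1$ because $a_1>0$; combined with the vanishing of $\overline{c}_{X^\ast}$ this yields the first identity. For $\overline{C}_{X^\ast}(h,p)$ with $c=0$, one reads off $\nu_2(\det p,g_h)=(\det p,a)_\R$, which is the claimed answer. For $c\neq 0$, the matrix $g_h$ has nonzero bottom-left entry, so the big-cell formula for $\nu_2$ gives $\nu_2(\det p,g_h)=1$, and the $\overline{c}_{X^\ast}$-factor is trivial as before, producing the value $1$.

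No genuine obstacle is anticipated; the only care needed is in the bookkeeping of the twisted action $g_p\mapsto g_p^{\det h}$ and in matching the cases of the $\nu_2$ and $\overline{c}_{X^\ast}$ formulas. Unlike the previous lemma, no Weil index $\gamma(\cdot,\psi^{1/2})$ survives here, which is consistent with the passage from the $\mu_8$-cocycle $\widetilde{C}_{X^\ast}$ to the $\mu_2$-cocycle $\overline{C}_{X^\ast}$ via the normalizing constant $m_{X^\ast}$.
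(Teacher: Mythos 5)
Your approach is the paper's own: apply the semidirect-product formula $\overline{C}_{X^\ast}(h_1,h_2)=\nu_2(y_2,g_1)\,\overline{c}_{X^\ast}(g_1^{y_2},g_2)$, read off the $\nu_2$ values from the two-case table (using $a_1>0$ to kill $(\det h,a_1)_\R$), and argue that the $\overline{c}_{X^\ast}$ factor is trivial. The $\nu_2$ bookkeeping is all correct.

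There is one loose step. You claim that ``every Hilbert symbol having $x(g_p)$ or $x(g_p^{\det h})$ in one of its two slots is $1$, so the entire factor $\overline{c}_{X^\ast}$ drops out.'' That disposes of the first symbol $(x(g_p^{\det h}),x(g_h))_\R$, but the second factor in $\overline{c}_{X^\ast}(g_p^{\det h},g_h)$ is
\begin{equation*}
\bigl(-x(g_p^{\det h})x(g_h),\; x(g_p^{\det h}g_h)\bigr)_\R,
\end{equation*}
and neither slot is $x(g_p^{\det h})$: after cancelling the trivial class the first slot is $-x(g_h)$, and the second slot is $x(g_p^{\det h}g_h)$, whose value you have not identified. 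To finish you need the observation that $g_p^{\det h}\in P$ is upper triangular, so by the Bruhat-decomposition definition of $x$ one has $x(pg)=x(p)\,x(g)$ for any $p\in P$; hence $x(g_p^{\det h}g_h)=x(g_p^{\det h})\,x(g_h)=x(g_h)$ and the symbol becomes $(-x(g_h),x(g_h))_\R=1$. The same remark applies to $\overline{c}_{X^\ast}(g_h^{\det p},g_p)$ in the other direction, now with $g_p\in P$ on the right (using $x(gp)=x(g)\,x(p)$). The paper's proof is equally terse at this point, so the fix is small, but without the multiplicativity of $x$ against $P$ the asserted vanishing of $\overline{c}_{X^\ast}$ does not follow from what you wrote.
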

\begin{proof}
 Let   $g_h=\begin{pmatrix}
a& b\\
c\det h& d \det h\end{pmatrix}$, and $g_p=\begin{pmatrix}
a_1& b_1\\
0&  a_1^{-1}\end{pmatrix}$. Then:
\begin{align*}
\overline{C}_{X^{\ast}}(p, h)&=\nu_2(\det h,g_p)\overline{c}_{X^{\ast}}((g_p)^{\det h}, g_h)\\
&=(\det h, a_1)_{\R}(x(g_p^{\det h}), x(g_h))_\R(-x(g_p^{\det h})x(g_h), x((g_p)^{\det h}g_h))_\R\\
&=1;
\end{align*}
\begin{align*}
\overline{C}_{X^{\ast}}( h,p)&=\nu_2(\det p,g_h)\overline{c}_{X^{\ast}}((g_h)^{\det p}, g_p)=\nu_2(\det p,g_h)=\left\{\begin{array}{cl} (\det p, a)_{\R} & \textrm{ if } c=0,\\1  & \textrm{ if } c\neq 0. \end{array}\right.
\end{align*}
\end{proof}
\subsection{Splitting of $\widehat{\U}(V)$} Following \cite[pp.75-77]{LiVe},  let us define   the  function $u: \R \longrightarrow \Z$ by
$$ u(\theta)=\left\{ \begin{array}{rl}
2k & \textrm{ if } \theta=k\pi, \\
2k+1  & \textrm{ if } k\pi <\theta<(k+1)\pi.
\end{array}\right.$$
\begin{lemma}
$\sgn[\sin(\theta_1)\sin(\theta_2)\sin(\theta_1+\theta_2)]=u(\theta_1)+u(\theta_2)-u(\theta_1+\theta_2)$.
\end{lemma}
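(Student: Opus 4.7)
The plan is to prove the identity by reducing both sides modulo a symmetry and then checking a small finite list of cases. The key observation is that both the left-hand and right-hand sides are invariant under the shift $\theta_1 \mapsto \theta_1+\pi$ (and symmetrically under $\theta_2 \mapsto \theta_2+\pi$). For the left-hand side, replacing $\theta_1$ by $\theta_1+\pi$ flips the sign of $\sin\theta_1$ and of $\sin(\theta_1+\theta_2)$ while leaving $\sin\theta_2$ fixed, so the sign of the triple product is unchanged. For the right-hand side, the definition of $u$ gives $u(\theta+\pi)=u(\theta)+2$ for every $\theta\in\R$ (one checks both $\theta\in\pi\Z$ and $\theta\notin\pi\Z$ directly), hence both $u(\theta_1+\pi)$ and $u(\theta_1+\theta_2+\pi)$ increase by $2$, and the linear combination $u(\theta_1)+u(\theta_2)-u(\theta_1+\theta_2)$ is preserved.

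Using this invariance, I would reduce to $\theta_1,\theta_2\in[0,\pi)$, so that $u(\theta_1),u(\theta_2)\in\{0,1\}$. First handle the degenerate case $\theta_1=0$ or $\theta_2=0$: then $\sin\theta_1\sin\theta_2\sin(\theta_1+\theta_2)=0$, and $u(0)=0$ forces $u(\theta_1)+u(\theta_2)-u(\theta_1+\theta_2)=0$ as well. Next, assume $\theta_1,\theta_2\in(0,\pi)$, so $u(\theta_1)=u(\theta_2)=1$ and $\theta_1+\theta_2\in(0,2\pi)$. Split into three sub-cases according to the location of $\theta_1+\theta_2$:
\begin{itemize}
\item if $\theta_1+\theta_2\in(0,\pi)$, then $u(\theta_1+\theta_2)=1$ and $\sin(\theta_1+\theta_2)>0$, giving both sides equal to $1$;
\item if $\theta_1+\theta_2=\pi$, then $u(\theta_1+\theta_2)=2$ and $\sin(\theta_1+\theta_2)=0$, so both sides equal $0$;
\item if $\theta_1+\theta_2\in(\pi,2\pi)$, then $u(\theta_1+\theta_2)=3$ and $\sin(\theta_1+\theta_2)<0$, giving both sides equal to $-1$.
\end{itemize}

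The only real verification needed is the one-line identity $u(\theta+\pi)=u(\theta)+2$ and the three sub-cases above; everything else is bookkeeping. There is no substantive obstacle, but the most error-prone step is confirming the invariance of the right-hand side simultaneously at $\theta\in\pi\Z$ and $\theta\notin\pi\Z$, since the definition of $u$ is piecewise and one must be careful that translation by $\pi$ sends integer multiples of $\pi$ to integer multiples of $\pi$ and open intervals $(k\pi,(k+1)\pi)$ to $((k+1)\pi,(k+2)\pi)$, in each case shifting the index by exactly one and thereby increasing $u$ by exactly two.
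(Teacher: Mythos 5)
Your proof is correct. The paper itself offers no argument here, pointing only to \cite[pp.~75--77]{LiVe}, so there is no in-text proof to compare against; your derivation is self-contained and fills that gap cleanly. The two load-bearing facts you isolate are exactly the right ones: the translation identity $u(\theta+\pi)=u(\theta)+2$, which you verify on both branches of the piecewise definition (integer multiples of $\pi$ go to integer multiples of $\pi$, and the open interval $(k\pi,(k+1)\pi)$ shifts to $((k+1)\pi,(k+2)\pi)$, raising $k$ by one and hence $u$ by two), and the simultaneous invariance of both sides under $\theta_i\mapsto\theta_i+\pi$ (two sign flips in the triple product on the left, a $+2-2$ cancellation on the right). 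This legitimately reduces the identity to $\theta_1,\theta_2\in[0,\pi)$, and your case split — the degenerate boundary case $\theta_1=0$ or $\theta_2=0$, then the three positions of $\theta_1+\theta_2$ relative to $\pi$ when both angles lie in $(0,\pi)$ — is exhaustive (note $\theta_1+\theta_2<2\pi$ strictly, so no fourth case arises). Each line of the case check is arithmetic: $u(\theta_1)=u(\theta_2)=1$, $u(\theta_1+\theta_2)\in\{1,2,3\}$ according to position, matched against $\sgn\sin(\theta_1+\theta_2)\in\{1,0,-1\}$. Nothing is missing.
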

\begin{proof}
See \cite[pp.75-77]{LiVe}.
\end{proof}
Let us modify the above $u$ by a character of $\R$ in the following way:
$$u'(\theta)=u(\theta)+ \tfrac{2}{\pi}\theta.$$
\begin{lemma}\label{theq}
\begin{itemize}
\item[(1)] $u'(\theta+k\pi)=u'(\theta)+4k$, for $k\in \Z$.
\item[(2)] $\sgn[\sin(\theta_1)\sin(\theta_2)\sin(\theta_1+\theta_2)]=u'(\theta_1)+u'(\theta_2)-u'(\theta_1+\theta_2)$.
\end{itemize}
\end{lemma}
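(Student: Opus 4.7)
\textbf{Proof plan for Lemma \ref{theq}.}

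The plan is to deduce both assertions as formal consequences of the previous Lemma on $u$, using only the definition $u'(\theta)=u(\theta)+\tfrac{2}{\pi}\theta$ and the obvious shift property of $u$ itself. No new trigonometric identity is needed.

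For part (1), I would first establish the intermediate identity $u(\theta+k\pi)=u(\theta)+2k$ for every $k\in\Z$ and every $\theta\in\R$. This splits into two cases from the definition of $u$: if $\theta=l\pi$ then $\theta+k\pi=(l+k)\pi$, and if $l\pi<\theta<(l+1)\pi$ then $(l+k)\pi<\theta+k\pi<(l+k+1)\pi$; in either case one reads off the claim from $u(l\pi)=2l$ and $u(l\pi+t)=2l+1$ for $0<t<\pi$. Adding $\tfrac{2}{\pi}(\theta+k\pi)=\tfrac{2}{\pi}\theta+2k$ gives
\[
u'(\theta+k\pi)=u(\theta)+2k+\tfrac{2}{\pi}\theta+2k=u'(\theta)+4k.
\]

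For part (2), I would simply expand $u'(\theta_1)+u'(\theta_2)-u'(\theta_1+\theta_2)$ via the definition: the $\tfrac{2}{\pi}$-linear terms cancel exactly (since $\theta_1+\theta_2-(\theta_1+\theta_2)=0$), leaving $u(\theta_1)+u(\theta_2)-u(\theta_1+\theta_2)$, which by the preceding lemma equals $\sgn[\sin(\theta_1)\sin(\theta_2)\sin(\theta_1+\theta_2)]$.

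There is no real obstacle here; the only thing to watch is the edge case in (1) where $\theta$ is an integer multiple of $\pi$, which is handled by the $2k$ clause of the definition of $u$, and symmetrically the verification that adding the linear correction $\tfrac{2}{\pi}\theta$ does not disturb the cocycle identity in (2). Thus the lemma follows immediately from the previous one combined with a one-line bookkeeping of the linear term.
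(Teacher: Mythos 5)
Your proposal is correct and follows essentially the same route as the paper: establish the shift property $u(\theta+k\pi)=u(\theta)+2k$, add the linear term $\tfrac{2}{\pi}(\theta+k\pi)$ to get part (1), and observe that the linear correction is a homomorphism of $\R$ so the cocycle expression in part (2) is unchanged. The paper phrases the second part as ``$u'$ differs from $u$ by a character of $\R$,'' which is exactly your cancellation of the $\tfrac{2}{\pi}$-linear terms.
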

\begin{proof}
1) Note that $u(\theta+k\pi)=u(\theta)+2k$. So $u'(\theta+k\pi)=u(\theta+k\pi)+\tfrac{2}{\pi}(\theta+k\pi)=u(\theta)+ 2k+ \tfrac{2}{\pi}\theta+2k=u'(\theta)+4k$.\\
2) Since $u'$ is different from $u$ by a character of $\R$, the result follows from the above lemma.
\end{proof}
\begin{lemma}\label{thetaf}
 The restriction of $[\widetilde{c}_{X^{\ast}}]$ on $\U(V)$ is trivial, with an explicit trivialization:
$$
\widetilde{s}:  \U(V) \longrightarrow \C^{\ast};
  \cos \theta + i\sin \theta \longmapsto   e^{\tfrac{i\pi }{4}u'(\theta)},$$
such that $\widetilde{c}_{X^{\ast}}(g_1, g_2) =\widetilde{s}(g_1)^{-1}\widetilde{s}(g_2)^{-1}\widetilde{s}(g_1g_2)$, for $g_i\in \U(V)$.
\end{lemma}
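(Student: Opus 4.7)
The strategy is to identify $\U(V)$ with the compact subgroup $\SO_2(\R) \subseteq \SL_2(\R)$ of rotation matrices $k(\theta) = \begin{pmatrix} \cos\theta & \sin\theta \\ -\sin\theta & \cos\theta \end{pmatrix}$, exploit the explicit formula $\widetilde{c}_{X^{\ast}}(g_1,g_2) = e^{i\pi \sgn(c_1 c_2 c_3)/4}$ from the $\SL_2(\R)$ case displayed in Section \ref{notationss} to compute the restriction of the cocycle to this subgroup, and then recognize the outcome as a coboundary using Lemma \ref{theq}.

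First, since $k(\theta_1) k(\theta_2) = k(\theta_1 + \theta_2)$, the lower-left entries of $g_1 = k(\theta_1)$, $g_2 = k(\theta_2)$, and $g_3 = g_1 g_2 = k(\theta_1 + \theta_2)$ are $c_i = -\sin\theta_i$ (with $\theta_3 := \theta_1 + \theta_2$). Therefore $c_1 c_2 c_3 = -\sin\theta_1 \sin\theta_2 \sin(\theta_1 + \theta_2)$, and the explicit formula gives
$$\widetilde{c}_{X^{\ast}}(k(\theta_1), k(\theta_2)) = e^{-i\pi\, \sgn[\sin\theta_1 \sin\theta_2 \sin(\theta_1+\theta_2)]/4}.$$
Applying Lemma \ref{theq}(2) rewrites the sign as $u'(\theta_1) + u'(\theta_2) - u'(\theta_1 + \theta_2)$, so
$$\widetilde{c}_{X^{\ast}}(k(\theta_1), k(\theta_2)) = e^{-i\pi[u'(\theta_1) + u'(\theta_2) - u'(\theta_1+\theta_2)]/4},$$
which is exactly $\widetilde{s}(g_1)^{-1}\widetilde{s}(g_2)^{-1}\widetilde{s}(g_1 g_2)$ for $\widetilde{s}(k(\theta)) = e^{i\pi u'(\theta)/4}$. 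The degenerate case where some $c_i$ vanishes (i.e., one of $\theta_1,\theta_2,\theta_1+\theta_2$ lies in $\pi\Z$) is handled automatically, as both sides of Lemma \ref{theq}(2) vanish and the cocycle equals $1$ by the explicit formula.

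The point worth verifying — and the main obstacle to the proof going through cleanly — is that $\widetilde{s}$ descends to a well-defined function on $\U(V) \simeq \R/2\pi\Z$, i.e., that $e^{i\pi u'(\theta+2\pi)/4} = e^{i\pi u'(\theta)/4}$. By Lemma \ref{theq}(1), $u'(\theta + 2\pi) = u'(\theta) + 8$, producing a factor $e^{2\pi i} = 1$, so the descent works. This is precisely the purpose of the correction term $\tfrac{2}{\pi}\theta$ used to pass from $u$ to $u'$: the raw jump function would only give $u(\theta+2\pi) = u(\theta) + 4$, yielding a spurious sign $e^{i\pi} = -1$. Because $u'$ and $u$ differ by a character of $\R$, the additive sign identity in Lemma \ref{theq}(2) is unchanged, so the coboundary computation above remains valid. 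Together these checks establish both the triviality of $[\widetilde{c}_{X^{\ast}}]|_{\U(V)}$ and the claimed explicit trivialization $\widetilde{s}$.
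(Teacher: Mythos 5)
Your proof is correct and takes essentially the same approach as the paper's: identify $\U(V)$ with $\SO_2(\R)$, compute the restricted cocycle via the explicit formula $\widetilde{c}_{X^{\ast}}(g_1,g_2)=e^{\tfrac{i\pi}{4}\sgn(c_1c_2c_3)}$ so that $c_1c_2c_3=-\sin\theta_1\sin\theta_2\sin(\theta_1+\theta_2)$, and then apply Lemma~\ref{theq}(2) to recognize a coboundary, with Lemma~\ref{theq}(1) guaranteeing that $\widetilde{s}$ descends modulo $2\pi$. You have merely spelled out the well-definedness check and the role of the $\tfrac{2}{\pi}\theta$ correction, which the paper compresses into the single line ``It is well defined by the above lemma~(1).''
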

\begin{proof}
It is well defined by the above lemma (1).
\begin{align*}
\widetilde{c}_{X^{\ast}}(e^{i \theta_1}, e^{i \theta_2}) & =e^{\tfrac{i \pi}{4} \sgn[-\sin(\theta_1)\sin(\theta_2)\sin(\theta_1+\theta_2)]}\\
&\stackrel{ \textrm{ Lemma \ref{theq}(2)}}{=} e^{-\tfrac{i \pi}{4} u'(\theta_1)}e^{-\tfrac{i \pi}{4} u'(\theta_2)} e^{\tfrac{i \pi}{4} u'(\theta_1+\theta_2)}.
\end{align*}
\end{proof}

\begin{lemma}\label{trivial}
The restriction of $[\overline{c}_{X^{\ast}}]$ on $\U(V)$ is  trivial,  with an explicit trivialization:
$\overline{s}(e^{i\theta})=e^{\tfrac{i\theta}{2}}$, for $-\pi<\theta\leq \pi$,
such that $\overline{c}_{X^{\ast}}(g_1, g_2) =\overline{s}(g_1)^{-1}\overline{s}(g_2)^{-1}\overline{s}(g_1g_2)$, for $g_i\in \U(V)$.
\end{lemma}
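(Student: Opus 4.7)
The plan is to bootstrap from the trivialization of $\widetilde{c}_{X^{\ast}}$ on $\U(V)$ already obtained in Lemma \ref{thetaf} by means of the normalizing constant $m_{X^{\ast}}$. Concretely, formula (\ref{28inter}) gives
\begin{equation*}
\overline{c}_{X^{\ast}}(g_1,g_2)=m_{X^{\ast}}(g_1g_2)^{-1}\,m_{X^{\ast}}(g_1)\,m_{X^{\ast}}(g_2)\,\widetilde{c}_{X^{\ast}}(g_1,g_2),
\end{equation*}
while Lemma \ref{thetaf} provides the coboundary relation $\widetilde{c}_{X^{\ast}}(g_1,g_2)=\widetilde{s}(g_1)^{-1}\widetilde{s}(g_2)^{-1}\widetilde{s}(g_1g_2)$ on $\U(V)$. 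Substituting and regrouping,
\begin{equation*}
\overline{c}_{X^{\ast}}(g_1,g_2)=\bigl[m_{X^{\ast}}(g_1)^{-1}\widetilde{s}(g_1)\bigr]^{-1}\bigl[m_{X^{\ast}}(g_2)^{-1}\widetilde{s}(g_2)\bigr]^{-1}\bigl[m_{X^{\ast}}(g_1g_2)^{-1}\widetilde{s}(g_1g_2)\bigr],
\end{equation*}
so the candidate trivialization is $\overline{s}(g):=m_{X^{\ast}}(g)^{-1}\widetilde{s}(g)$ for $g\in\U(V)$.

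The remaining task is an explicit computation showing $\overline{s}(e^{i\theta})=e^{i\theta/2}$ on the parameter range $-\pi<\theta\le\pi$. For $g=\begin{pmatrix}\cos\theta&\sin\theta\\-\sin\theta&\cos\theta\end{pmatrix}$ one has $c=-\sin\theta$, so the explicit expression for $m_{X^{\ast}}$ recorded in item (2) of the notation list splits into two regimes: $\sin\theta=0$ (i.e.\ $\theta\in\{0,\pi\}$) and $\sin\theta\neq 0$. In parallel, the combinatorial function $u'$ defined before Lemma \ref{thetaf} satisfies $u'(\theta)=2\theta/\pi$ at $\theta=0$, $u'(\pi)=4$, $u'(\theta)=1+2\theta/\pi$ for $\theta\in(0,\pi)$, and $u'(\theta)=-1+2\theta/\pi$ for $\theta\in(-\pi,0)$. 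I will verify the identity case by case: at $\theta=0$ and $\theta=\pi$ both $m_{X^{\ast}}$ and $\widetilde{s}$ are easy to read off (giving $\overline{s}(I)=1$ and $\overline{s}(-I)=i$); for $\theta\in(0,\pi)$ one finds $m_{X^{\ast}}(g)=e^{i\pi/4}$ and $\widetilde{s}(g)=e^{i\pi/4+i\theta/2}$, hence $\overline{s}(g)=e^{i\theta/2}$; and for $\theta\in(-\pi,0)$ one similarly gets $m_{X^{\ast}}(g)=e^{-i\pi/4}$ with $\widetilde{s}(g)=e^{-i\pi/4+i\theta/2}$, again giving $\overline{s}(g)=e^{i\theta/2}$.

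The only subtle point is the bookkeeping of the $\pi/4$-phase contributions coming from $m_{X^{\ast}}$ and from $\widetilde{s}$: one must check that they cancel in exactly the same way independently of the sign of $\sin\theta$, which amounts to matching $\sgn(\sin\theta)$ against the parity of the integer part of $u(\theta)$. Once this cancellation is confirmed, the continuity of $\overline{s}$ at the boundary points and its coboundary property both follow automatically from the construction, so no further verification is needed.
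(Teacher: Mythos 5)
Your proposal is correct and follows essentially the same route as the paper: both pass from the trivialization $\widetilde{s}$ of Lemma \ref{thetaf} to $\overline{s}=m_{X^{\ast}}^{-1}\widetilde{s}$ via the relation (\ref{28inter}), and then verify the closed form $e^{i\theta/2}$ case by case according to the sign of $\sin\theta$. The piecewise computations of $m_{X^{\ast}}$ and $u'$ check out against the paper's values, so no gap.
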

\begin{proof}
By (\ref{28inter}) and the above lemma, we have:
\begin{align*}
\overline{c}_{X^{\ast}}(g_1, g_2)&=m_{X^{\ast}}(g_1g_2)^{-1} m_{X^{\ast}}(g_1) m_{X^{\ast}}(g_2) \widetilde{c}_{ {X^{\ast}}}(g_1,g_2)\\
&=[\widetilde{s}(g_1g_2)m_{X^{\ast}}(g_1g_2)^{-1}] [\widetilde{s}(g_1)m_{X^{\ast}}(g_1)^{-1}]^{-1} [\widetilde{s}(g_2)m_{X^{\ast}}(g_2)^{-1}]^{-1}.
\end{align*}
Let $\overline{s}(e^{i\theta})=\widetilde{s}(e^{i\theta})m_{X^{\ast}}(e^{i\theta})^{-1}=\left\{ \begin{array}{rl}
 1 &\textrm{ if } \theta=0,\\
e^{\tfrac{i\theta}{2}}  & \textrm{ if } 0< \theta< \pi,\\
e^{\tfrac{i\theta}{2}}  &\textrm{ if } -\pi<\theta< 0,\\
e^{\tfrac{i\theta}{2}}& \textrm{ if } \theta= \pi.
\end{array}\right.$ Then $\overline{c}_{X^{\ast}}(g_1, g_2)=\overline{s}(g_1g_2) \overline{s}(g_1)^{-1} \overline{s}(g_2)^{-1}$.
\end{proof}
\begin{remark}\label{triviall}
\begin{itemize}
\item[(1)] The trivialization map for  $\widetilde{c}_{X^{\ast}}$ can not be valued in $\mu_8$.
\item[(2)] The trivialization map for  $\overline{c}_{X^{\ast}}$ can not be valued in $\mu_2$.
\end{itemize}
\end{remark}
\begin{proof}
We only prove the first statement. If there exists another trivialization $ \widetilde{s}': \U(V) \longrightarrow \mu_8$, then $\widetilde{s}$ and $\widetilde{s}'$ will differ by a measurable group homomorphism $\chi:\U(V) \longrightarrow T$. Since $ \U(V)$ is separable, $\chi$ is also continuous. It is known that $\chi(e^{i\theta})=e^{i n \theta}$, for some $n\in \Z$. Then $e^{i\theta n-i\theta \tfrac{1}{2}} \in \mu_8$, for all $\theta\in \R$; this is impossible.
\end{proof}

\subsection{Spin group}
Let us go into the low  spin group $\Spin_2(\R)$. By \cite[pp.18-19]{Se}, there exists an exact sequence
\begin{align}\label{spin}
1\longrightarrow \{\pm 1\} \longrightarrow \Spin_2(\R) \stackrel{\mathcal{A}}{\longrightarrow } \SO_2(\R) \to 1,
\end{align}
where $\Spin_2(\R)=\{ \cos \theta +k \sin \theta\mid k^2=-1\}$, and $\mathcal{A}(\cos \theta +k \sin \theta)=\begin{pmatrix}
\cos 2\theta & \sin 2\theta \\
-\sin 2\theta & \cos 2\theta \end{pmatrix}$.  Note that $\Ha^2(\SO_2(\R), T)=0$ and $\Ha^{2}(\SO_2(\R), \mu_2)\simeq \mu_2$.  By Remark \ref{triviall}, $[\overline{c}_{X^{\ast}}]$ is the unique nontrivial class in $\Ha^{2}(\SO_2(\R), \mu_2)$. Hence $\overline{c}_{X^{\ast}}$ determines the above exact sequence (\ref{spin}). Let us identity $T$ with $\SO_2(\R)$ by $e^{i\theta} \longrightarrow  \begin{pmatrix}
\cos \theta & \sin \theta \\
-\sin \theta & \cos \theta \end{pmatrix}$.  Let
$$1 \longrightarrow \{\pm 1\} \longrightarrow \widetilde{T} \longrightarrow T \longrightarrow 1$$
be the exact sequence associated to $\overline{c}_{X^{\ast}}$. Then $\Spin_2(\R) \simeq \widetilde{T}$. Note that there exists a group homomorphism:  $\sqrt{\,}: \widetilde{T} \stackrel{\sim}{\longrightarrow} T$.
\subsection{Non-splitting of $\widehat{\SO^{\pm}_2}(\R)$}
\begin{lemma}\label{thetaf3}
 The restriction of $[\overline{C}_{X^{\ast}}]$ on $\SO^{\pm}_2(\R)$ is  not trivial.
\end{lemma}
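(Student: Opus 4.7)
The plan is to argue by contradiction. Suppose that $[\overline{C}_{X^{\ast}}]|_{\SO_2^{\pm}(\R)}$ were trivial, so that $\overline{C}_{X^{\ast}} = \delta f$ on $\SO_2^{\pm}(\R)$ for some Borel map $f : \SO_2^{\pm}(\R) \to T$. Write $r_\theta := \begin{pmatrix}\cos\theta & \sin\theta \\ -\sin\theta & \cos\theta\end{pmatrix}$, so that $\SO_2(\R)=\{r_\theta : -\pi<\theta\leq\pi\}$. Restricting to $\SO_2(\R) \simeq \U(V)$, the function $f|_{\SO_2(\R)}$ also trivialises $\overline{c}_{X^{\ast}}|_{\SO_2(\R)}$, and so differs from the trivialisation $\overline{s}$ of Lemma~\ref{trivial} by a Borel, hence continuous, character of $\SO_2(\R)$. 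Since such characters are exactly $e^{i\theta}\mapsto e^{im\theta}$ with $m \in \Z$, there exists an integer $m$ for which $f(r_\theta) = e^{i\theta/2}\,e^{im\theta}$ on $(-\pi,\pi]$.

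I would then exploit the elementary matrix identity $s(-1)\, r_\theta = r_{-\theta}\, s(-1)$: both sides equal $\begin{pmatrix}\cos\theta & \sin\theta \\ \sin\theta & -\cos\theta\end{pmatrix}$. Applying the cocycle relation $\overline{C}_{X^{\ast}} = \delta f$ to the two factorisations of this common element and cancelling $f(s(-1))$, one obtains
\[
\overline{C}_{X^{\ast}}(s(-1),\, r_\theta)^{-1}\, f(r_\theta) \;=\; \overline{C}_{X^{\ast}}(r_{-\theta},\, s(-1))^{-1}\, f(r_{-\theta}).
\]
The two cocycle factors are evaluated via the formula $\overline{C}_{X^{\ast}}([y_1,g_1],[y_2,g_2]) = \nu_2(y_2,g_1)\,\overline{c}_{X^{\ast}}(g_1^{y_2}, g_2)$ from Section~\ref{subquo}, using the decompositions $s(-1) = [-1, I]$, $r_\theta = [1, r_\theta]$, $r_{-\theta} = [1, r_{-\theta}]$. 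For $\theta \in (0,\pi)$ both values equal $1$: the $\overline{c}_{X^{\ast}}$-factor is trivial since one of its arguments is $I$, and the $\nu_2$-factor is trivial because the lower-left entry $\mp\sin\theta$ of $r_{\pm\theta}$ is nonzero, placing us in the case $\nu_2(y_2, g_1)=1$ of the two-line formula.

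The equation therefore collapses to $f(r_\theta) = f(r_{-\theta})$ for every $\theta \in (0,\pi)$, which reads $e^{i\theta/2+im\theta} = e^{-i\theta/2-im\theta}$, equivalently $(2m+1)\theta \in 2\pi\Z$ for all $\theta \in (0,\pi)$. This forces $2m+1 = 0$, impossible for $m \in \Z$, and the contradiction proves the lemma. The main obstacle is the routine but delicate bookkeeping of cocycle values under the $[y,g]$-decomposition; the conceptual point is purely a parity phenomenon: the half-integer exponent in $\overline{s}(e^{i\theta}) = e^{i\theta/2}$ cannot be absorbed by any integer character once the reflection $s(-1)$ interchanges $r_\theta$ and $r_{-\theta}$.
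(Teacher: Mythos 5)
Your argument is correct, and it takes a genuinely different path from the paper's. The paper invokes the Hochschild--Serre inflation--restriction exact sequence for $1 \to \SO_2(\R) \to \SO_2^{\pm}(\R) \to \mu_2 \to 1$: after adjusting $\overline{C}_{X^{\ast}}$ by two coboundaries it computes the transgressed class $p([\overline{C}_{X^{\ast}}])$, evaluates it on $(\,s(-1), r_\theta\,)$ to get $\overline{s}(r_\theta)\,\overline{s}(r_{-\theta})^{-1} = e^{i\theta}$, observes this is not of the form $f(e^{i\theta})f(e^{-i\theta})^{-1} = f(e^{2i\theta})$ for any continuous character $f$ of $\SO_2(\R)$, and concludes nontriviality because $\Ha^2(\mu_2, T)=0$ kills the other potential source. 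You unpack the same obstruction into a direct contradiction: the dihedral relation $s(-1)\,r_\theta = r_{-\theta}\,s(-1)$, combined with the computation that both cocycle factors $\overline{C}_{X^{\ast}}(s(-1),r_\theta)$ and $\overline{C}_{X^{\ast}}(r_{-\theta},s(-1))$ vanish (one via the $\overline{c}_{X^{\ast}}(\,\cdot\,,I)=1$ normalization, the other via the $c_1\neq 0$ branch of $\nu_2$), forces any putative Borel trivialization $f$ to satisfy $f(r_\theta)=f(r_{-\theta})$ on $(0,\pi)$, which is incompatible with the forced form $f(r_\theta)=e^{\pm i\theta/2}e^{im\theta}$. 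Your route avoids the spectral-sequence machinery entirely and isolates the parity phenomenon more transparently; the paper's route is uniform with its treatment of $\Oa^{\pm}(\mathcal{W}_1)$ and $\Gamma^{\pm}(2)$ elsewhere, which is presumably why the author chose it. One small convention mismatch: the paper writes $\overline{c}_{X^{\ast}}(g_1,g_2)=\overline{s}(g_1)^{-1}\overline{s}(g_2)^{-1}\overline{s}(g_1g_2)$, which with your sign convention for $\delta f$ gives $f(r_\theta) = e^{-i\theta/2}e^{im\theta}$ rather than $e^{+i\theta/2}e^{im\theta}$; this does not affect the contradiction, since either way the half-integer exponent cannot be cancelled by an integer character.
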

\begin{proof}
Note that there exists an exact sequence:
$$1\longrightarrow\SO_2(\R) \longrightarrow \SO^{\pm}_2(\R) \longrightarrow \mu_2 \longrightarrow 1.$$
By  Hochschild-Serre spectral sequence,  there exists  the following long exact sequence  of six terms:
\begin{align}\label{al22}
0 \longrightarrow \Hom(\mu_2,  T) \to \Hom(\SO^{\pm}_2(\R),  T)  \longrightarrow  \Hom(\SO_2(\R), T)^{\SO^{\pm}_2(\R)} \longrightarrow
\Ha^2( \mu_2,T) \\
\longrightarrow \Ha^{2}( \SO^{\pm}_2(\R),T)_1 \stackrel{p}{\longrightarrow} \Ha^1\big(  \mu_2, \Ha^1( \SO_2(\R), T)\big).
\end{align}
By Lemma \ref{trivial},
$$\overline{C}_{X^{\ast}}(g_1, g_2)= \overline{s}(g_1g_2)\overline{s}(g_1)^{-1}\overline{s}(g_2)^{-1}, \qquad \quad g_i\in \SO_2(\R).$$
Let $\overline{s}$   extend to be a Borel  function of $\SO^{\pm}_2(\R)$ by  taking the trivial value outside $\SO_2(\R)$. We replace $\overline{C}_{X^{\ast}}$ with $\overline{C}_{X^{\ast}}'= \overline{C}_{X^{\ast}} \circ \delta_1 \overline{s}$. Then:
$$\overline{C}_{X^{\ast}}'(h_1, h_2)= \overline{C}_{X^{\ast}}(h_1, h_2)\overline{s}(h_1) \overline{s}(h_2) \overline{s}(h_1h_2)^{-1}, \quad\quad  h_1, h_2 \in \SO^{\pm}_2(\R).$$
Recall the section map $s: \mu_2 \longrightarrow \SO^{\pm}_2(\R); \epsilon \longmapsto \begin{pmatrix} 1& 0\\ 0 &\epsilon \end{pmatrix}$. Then for $g\in \SO_2(\R)$, we have:
$$\overline{C}_{X^{\ast}}'(s(1), g)= \overline{C}_{X^{\ast}}(s(1),g)= \overline{C}_{X^{\ast}}([1,1],[1,g])=1;$$
$$ \overline{C}_{X^{\ast}}'(s(-1), g)= \overline{C}_{X^{\ast}}(s(-1),g) \overline{s}(g)= \overline{C}_{X^{\ast}}([-1,1],[1,g]) \overline{s}(g)=\widetilde{c}_{X^{\ast}}(1, g) \overline{s}(g)= \overline{s}(g).$$
For any $h\in \SO^{\pm}_2(\R)$, let us write  $h=s(\epsilon_h)g_h$, for $\epsilon_h \in \mu_2$, $g_h\in \SO_2(\R)$. Define a Borel function $\varkappa$ of $\SO_2^{\pm}(\R)$ as
$ \varkappa(h)= \overline{C}_{X^{\ast}}'(s_1(\epsilon_h), g_h)$. Let us define
$$\overline{C}_{X^{\ast}}''= \overline{C}_{X^{\ast}}' \circ \delta_1(\varkappa).$$ So the map $p$ is just given by
$$p([\overline{C}_{X^{\ast}}]): \epsilon  \longmapsto ( g  \longrightarrow \overline{C}_{X^{\ast}}''(g, s(\epsilon))), \quad g\in \SO_2(\R), \epsilon \in \mu_2.$$
Note that $p([\overline{C}_{X^{\ast}}])(\epsilon )$ is a character of $\SO_2(\R)$. Let $g=\begin{pmatrix} \cos \theta& \sin \theta\\ -\sin \theta &\cos \theta \end{pmatrix}\in \SO_2(\R)$, $g^{s(-1)} =\begin{pmatrix} \cos( -\theta)& \sin (-\theta)\\ -\sin (-\theta) &\cos (-\theta) \end{pmatrix}$.   Then:
\begin{align}
\overline{C}_{X^{\ast}}''(g, s(-1))&=\overline{C}_{X^{\ast}}'(g, s(-1))\varkappa(g)\varkappa(s(-1))\varkappa(gs(-1))^{-1}\\
&=\overline{C}_{X^{\ast}}'(g, s(-1))\overline{C}_{X^{\ast}}'(s(1), g)\overline{C}_{X^{\ast}}'(s(-1),1)\overline{C}_{X^{\ast}}'(s(-1), s(-1)^{-1}gs(-1))^{-1}\\
&=\overline{C}_{X^{\ast}}(g, s(-1)) \overline{s}(g)\overline{s}(g^{s(-1)})^{-1}\\
&=\left\{ \begin{array}{lr} \overline{s}(g)\overline{s}(g^{s(-1)})^{-1}= e^{i\theta} & \textrm{ if } -\pi<\theta< \pi \\
-1 & \textrm{ if } \theta= \pi\end{array}\right.\\
&=e^{i\theta}.
\end{align}
Note that there does not exist a character $f$ of $\SO_2(\R)$ such that $ e^{i\theta}= f(e^{i\theta}) f^{-1}((e^{i\theta})^{s(-1)})=f(e^{2i\theta})$. So In the above exact sequence of six terms, $p([ \overline{C}_{X^{\ast}}]) \neq 0$, but $\Ha^2( \mu_2,T)=0$, which implies that $[ \overline{C}_{X^{\ast}}]\neq 0$.
\end{proof}

\subsection{Non-splitting of $\overline{\Gamma^{\pm}(2)}$  I}\label{nonspI}

\begin{lemma}\label{nonsp}
 The restriction of $[\overline{C}_{X^{\ast}}]$ on $\Gamma^{\pm}(2)$ is not trivial.
\end{lemma}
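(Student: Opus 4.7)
My strategy is to produce a single commutator obstruction: exhibit a commuting pair $h_1,h_2\in \Gamma^{\pm}(2)$ such that $\overline{C}_{X^{\ast}}(h_1,h_2)\overline{C}_{X^{\ast}}(h_2,h_1)^{-1}\neq 1$ in $T$. For any central extension $1\to T\to E\to G\to 1$ described by a $2$-cocycle $c$, this quantity (defined whenever $h_1,h_2$ commute) equals the commutator $[\widehat{h_1},\widehat{h_2}]\in T$ of any lifts to $E$ and is therefore an invariant of the class $[c]\in \Ha^2(G,T)$; if $[c]=0$, lifts can be chosen to commute and this invariant must be $1$. Hence a single nonzero value suffices.

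The natural candidates are $h_1=h(-1)=-I$ and $h_2=h_{-1}=s(-1)$. They commute because $-I$ is central in $\GL_2(\R)$, and both lie in $\Gamma^{\pm}(2)=\Gamma(2)\cdot \langle s(-1)\rangle$: the first since $-1\equiv 1\pmod{2}$, the second by construction. The exceptional role of exactly these two elements (together with their product $h(-1)h_{-1}$) in the theta transformation formulas (\ref{theta1}) and (\ref{theta2}) is a strong hint that the obstruction is localized there. Using the formula $\overline{C}_{X^{\ast}}(h_1,h_2)=\nu_2(y_2,g_1)\overline{c}_{X^{\ast}}(g_1^{y_2},g_2)$ from Section~\ref{notationss} with the explicit shape of $\nu_2$, a direct substitution gives
$$\overline{C}_{X^{\ast}}(-I,s(-1))=\nu_2(-1,-I)=(-1,-1)_{\R}=-1\quad\text{and}\quad \overline{C}_{X^{\ast}}(s(-1),-I)=\nu_2(1,I)=1.$$

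The commutator invariant is therefore $-1\neq 1$, which concludes the proof. No serious obstacle arises: everything reduces to a two-line Hilbert-symbol calculation, once one notices that the central element $-I$ of $\Gamma(2)$ together with $s(-1)$ must carry the obstruction. If one prefers to stay parallel to Lemma~\ref{thetaf3}, the same conclusion comes from the Hochschild--Serre argument: after trivializing $\overline{c}_{X^{\ast}}$ on $\Gamma(2)$ by the splitting $\overline{s}$ constructed in Section~\ref{app}, the induced character $\chi(g)=\overline{C}_{X^{\ast}}''(g,s(-1))$ on $\Gamma(2)$ takes the value $\chi(-I)=-1$, while every coboundary $f\cdot (f^{s(-1)})^{-1}$ is forced to be $1$ on $-I$ (because $s(-1)$ fixes $-I$ under conjugation), so the image under the edge map $p$ in the six-term exact sequence is nonzero.
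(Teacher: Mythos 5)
Your proof is correct, and the primary argument --- detecting the nontriviality by a commutator obstruction at the commuting pair $(-I,\,s(-1))$ --- is a genuinely different and more elementary route than the paper's. The paper proves Lemma~\ref{nonsp} exactly as Lemma~\ref{thetaf3}: it trivializes $\overline{c}_{X^{\ast}}$ on $\Gamma(2)$ by the map $\overline{\beta}$ of Section~\ref{app}, computes the resulting character $\chi(g)=\overline{C}_{X^{\ast}}''(g,s(-1))$ of $\Gamma(2)$ case by case (obtaining $(a,-1)_F$ when $c=0$, and a Gauss-sum sign depending on $d\bmod 4$ when $c\ne0$), and then invokes the Hochschild--Serre six-term sequence to conclude that the edge map $p$ is nonzero. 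Both arguments rest on the same arithmetic fact --- the cocycle is asymmetric at $(-I,\,s(-1))$ --- but you extract it directly as the lift-commutator invariant $\overline{C}_{X^{\ast}}(-I,s(-1))\,\overline{C}_{X^{\ast}}(s(-1),-I)^{-1}=(-1)\cdot 1^{-1}=-1$, whereas the paper reads it off as $\chi(-I)=(-1,-1)_F=-1$, which no coboundary $f\cdot(f^{s(-1)})^{-1}$ can match because $-I$ is $s(-1)$-fixed (a point the paper leaves implicit under ``Similarly''). Your version dispenses with the spectral sequence and the Gauss-sum analysis entirely, needing only the principle that lifts of commuting elements in a split central extension can be chosen to commute; the paper's version buys, as a byproduct, the full formula for $\chi$ on $\Gamma(2)$ including the $c\ne0$ arithmetic, which is more information than the bare non-splitting conclusion requires. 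Your second paragraph, incidentally, is the precise completion of the step the paper elides (the trivialization in Section~\ref{app} is denoted $\overline{\beta}$ rather than $\overline{s}$, but the substance is right). Both arguments are sound.
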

\begin{proof}
The proof is similar to that of Lemma \ref{thetaf3} by replacing $\SO_2(\R)$ with $\Gamma^{\pm}(2)$. We use the notations from there. For $g=\begin{pmatrix}a & b\\ c & d \end{pmatrix} \in \Gamma(2)$, $g^{s(-1)} =\begin{pmatrix} a &-b\\ -c &d\end{pmatrix}$,
\begin{align}
\overline{C}_{X^{\ast}}''(g, s(-1))&=\overline{C}_{X^{\ast}}(g, s(-1)) \overline{\beta}(g)\overline{\beta}(g^{s(-1)})^{-1}.
\end{align}
If $c=0$, $\overline{\beta}(g)\overline{\beta}(g^{s(-1)})^{-1}=1$, $\overline{C}_{X^{\ast}}(g, s(-1))= (a,-1)_{F}$, so $\overline{C}_{X^{\ast}}''(g, s(-1))=(a,-1)_F$.\\
If $c\neq 0$, $\overline{C}_{X^{\ast}}(g, s(-1))=1$,
\begin{align*}
\overline{\beta}(g)\overline{\beta}(g^{s(-1)})^{-1}&=\beta(d,c)e^{\tfrac{\pi i \sgn(c)}{4}} \beta(d,c)e^{\tfrac{\pi i \sgn(c)}{4}}\\
&=\beta(d,c)^2 e^{\tfrac{\pi i \sgn(c)}{2}}\\
&=\left\{\begin{array}{lr} 1 &  d\equiv 1(\bmod 4),\\ -1 &  d\equiv 3(\bmod 4).  \end{array}\right.
\end{align*}
Similarly, $[ \overline{C}_{X^{\ast}}]\neq 0$ on $\Gamma^{\pm}(2)$. Let $\overline{\Gamma^{\pm}(2)}$ denote the two-degree  covering group over $\Gamma^{\pm}(2)$ associated to $\overline{C}_{X^{\ast}}$.
\end{proof}

\subsection{Two models} Let $\Pi_{\psi}$ be the Heisenberg representation of $\Ha^{\pm}(W)$ as  defined in Section \ref{sch} and Section \ref{EWR}. It can be realized on the vector spaces $\mathcal{H}^{\pm}(Y) =\Ind_{Y\times F}^{\Ha^{\pm}(W)}\psi_{Y}$ and $\mathcal{H}^{\pm}(L)=\Ind_{\Ha(L)}^{\Ha^{\pm}(W)} \psi_L$, for any $Y\in \Lambda$, $L\in \mathcal{L}_{\psi}$, where $\psi_{Y}$ (resp.  $\psi_L$) is an extending character of $Y\times F$(resp. $\Ha(L)$) from $\psi$ of $F$.

In particular, take $Y^{\ast}=X^{\ast}$, $Y=X$ and $L=\Z e_1\oplus \Z e_1^{\ast}$. Note that the actions of $\Ha^{\pm}(W)$ on $\mathcal{H}^{\pm}(X^{\ast})$ and $\mathcal{H}^{\pm}(L)$ are given by the formulas (\ref{representationsp21}) (\ref{representationsp22}) and (\ref{www1}) (\ref{www2}) respectively. By \cite[pp.164-165]{We}, or \cite[pp.142-145]{LiVe}, there exists  a pair of explicit isomorphisms between   $\mathcal{H}^{\pm}(X^{\ast})\simeq L^2(\mu_2\times X)$ and  $\mathcal{H}^{\pm}(L)$, given as follows:
 \begin{equation}\label{eq7}
\theta_{L, X^{\ast}}(f')(\epsilon, w)=\sum_{l\in L/L\cap X^{\ast}} f'(\epsilon,  w+l)\psi(\epsilon\tfrac{\langle l, w\rangle}{2}+\tfrac{\langle x_{l}, x^{\ast}_l\rangle}{2}),
 \end{equation}
 \begin{equation}\label{eq8}
 \begin{split}
 \theta_{X^{\ast}, L}(f)(\epsilon,y)&=  \int_{X^{\ast}/X^{\ast}\cap L} f([\dot{y}^{\ast},0]+[(\epsilon,y), 0]) d\dot{y}^{\ast}\\
 &= \int_{X^{\ast}/X^{\ast}\cap L} \psi(\epsilon\tfrac{\langle\dot{y}^{\ast}, y\rangle}{2})f(\epsilon,y+\epsilon\dot{y}^{\ast}) d\dot{y}^{\ast},
 \end{split}
      \end{equation}
for $ w=x+ x^{\ast} \in W$, $y\in X$,  $f'\in S(\mu_2\times X)\subseteq L^2(\mu_2 \times X)$,  $f\in L^1(W)\cap \mathcal{H}^{\pm}(L)$. It can be checked that $\theta_{L, X^{\ast}}$ and $\theta_{X^{\ast}, L}$ define a pair of inverse intertwining operators between $ \Ind_{\Ha(L)}^{\Ha^{\pm}(W)} \psi_L$ and  $\Ind_{X^{\ast} \times \R}^{\Ha^{\pm}(W)} \psi_{X^{\ast}}$.
For $g\in  \Sp^{\pm}(W)$, we can define
     \begin{equation}
     \Pi_{\psi}(g)f=\theta_{L, X^{\ast}}[\Pi_{\psi}(g)(f')]=\theta_{L, X^{\ast}}[\Pi_{\psi}(g)\theta_{X^{\ast},L}(f)].
     \end{equation}
Under such action, for $g_1, g_2\in \Sp^{\pm}(W)$,
     \begin{equation}
     \begin{split}
     \Pi_{\psi}(g_1)[\Pi_{\psi}(g_2)f]&=\theta_{L, X^{\ast}}[\Pi_{\psi}(g_1)\theta_{X^{\ast},L}]([\Pi_{\psi}(g_2)f])\\
     &=\theta_{L, X^{\ast}}([\Pi_{\psi}(g_1)\theta_{X^{\ast},L}\theta_{L, X^{\ast}}
     [\Pi_{\psi}(g_2)\theta_{X^{\ast},L}(f)]))\\
     &=\theta_{L, X^{\ast}}[\Pi_{\psi}(g_1)\Pi_{\psi}(g_2)\theta_{X^{\ast},L}(f)]\\
     &=\widetilde{C}_{X^{\ast}}(g_1, g_2)\Pi_{\psi}(g_1g_2)f.
     \end{split}
     \end{equation}
\subsection{Non-splitting of $\overline{\Gamma^{\pm}(2)}$  II}\label{Gamma2}
Let us consider  $g\in \Sp^{\pm}(L)$. Under the basis $\{ e_1,  e_1^{\ast}\}$ of $\mathcal{L}$, $\Sp^{\pm}(L)\simeq \SL_2^{\pm}(\Z)$.  Let $w=x+x^{\ast} \in W$.  \\
Case 1: $g=u(b)\in \Gamma^{\pm}(2)$, and $2\mid b$.
\begin{equation*}
\begin{split}
\Pi_{\psi}(g)f(\epsilon, w)&=\sum_{l\in L/L\cap X^{\ast}}\Pi_{\psi}(g)(f')(\epsilon,  w+l)\psi(\epsilon\tfrac{\langle l, w\rangle}{2}+\tfrac{\langle x_{l}, x^{\ast}_l\rangle}{2})\\
&=\sum_{l\in L\cap X}\Pi_{\psi}(g)(f')(\epsilon,  w+l)\psi(\epsilon\tfrac{\langle l, w\rangle}{2})\\
&=\sum_{l\in L\cap X}\Pi_{\psi}(g)(f')([1,(\epsilon x^{\ast},\epsilon\tfrac{\langle x+ l, x^{\ast}\rangle}{2}) ][ \epsilon,   x+l])\psi(\epsilon\tfrac{\langle l, w\rangle}{2})\\
&=\sum_{l\in L\cap X}\Pi_{\psi}(g)(f')(\epsilon, x+l)\psi(\epsilon\langle l, w\rangle) \psi(\epsilon\tfrac{\langle x, x^{\ast}\rangle}{2}) \\
&=\sum_{l\in L\cap X}\psi(\tfrac{1}{2}\langle (x+l),\epsilon (x+l)b\rangle) f'(\epsilon,x+l)\psi(\epsilon\langle l, w\rangle)\psi(\epsilon\tfrac{\langle x, x^{\ast}\rangle}{2})\\
&= \sum_{l\in L\cap X} \int_{X^{\ast}/X^{\ast}\cap L}  \psi(\tfrac{1}{2}\langle (x+l),\epsilon (x+l)b\rangle) \psi(\epsilon\tfrac{\langle\dot{y}^{\ast}, x+l\rangle}{2})f(\epsilon,x+l+\epsilon\dot{y}^{\ast})  \psi(\epsilon\langle l, w\rangle)\psi(\epsilon\tfrac{\langle x, x^{\ast}\rangle}{2}) d\dot{y}^{\ast}\\
&=  \sum_{l\in L\cap X} \int_{X^{\ast}/X^{\ast}\cap L}   \psi(\epsilon\tfrac{\langle\dot{y}^{\ast}, x+l\rangle}{2})f(\epsilon,x+l+\epsilon\dot{y}^{\ast})  \psi(\epsilon\langle l, x^{\ast}+xb\rangle)\psi(\epsilon\tfrac{\langle x, x^{\ast}+\epsilon xb\rangle}{2}) d\dot{y}^{\ast}\\
&= f([\epsilon, w]g)=\nu(\epsilon,g)f([\epsilon, w]g).
\end{split}
\end{equation*}
Case 2: $g=h(a)\in \Gamma^{\pm}(2)$, $a\in \{\pm 1\}$.
\begin{equation}
\begin{split}
\Pi_{\psi}(g)f(\epsilon, w)&=\sum_{l\in L\cap X}\Pi_{\psi}(g)(f')(\epsilon, x+l)\psi(\epsilon\langle l, w\rangle) \psi(\epsilon\tfrac{\langle x, x^{\ast}\rangle}{2}) \\
&=\sum_{l\in L\cap X}|a|^{1/2}(a,\epsilon)_Ff'(\epsilon, (x+l)a)\psi(\epsilon\langle l, w\rangle) \psi(\epsilon\tfrac{\langle x, x^{\ast}\rangle}{2})\\
&= \sum_{l\in L\cap X}\int_{X^{\ast}/X^{\ast}\cap L} \psi(\epsilon\tfrac{\langle\dot{y}^{\ast}, (x+l)a\rangle}{2})f(\epsilon,xa+la+\epsilon\dot{y}^{\ast}) (a,\epsilon)_F \psi(\epsilon\langle l, w\rangle) \psi(\epsilon\tfrac{\langle x, x^{\ast}\rangle}{2})d\dot{y}^{\ast}\\
&= \sum_{l\in L\cap X}\int_{X^{\ast}/X^{\ast}\cap L} \psi(\epsilon\tfrac{\langle\dot{y}^{\ast}, (x+l)a\rangle}{2})f(\epsilon,xa+la+\epsilon\dot{y}^{\ast}) (a,\epsilon)_F \psi(\epsilon\langle la, wa\rangle) \psi(\epsilon\tfrac{\langle xa, x^{\ast}a\rangle}{2}) d\dot{y}^{\ast}\\
&=(a,\epsilon)_F f(\epsilon, wa)=(a,\epsilon)_Ff([\epsilon,w]g)=\nu(\epsilon,g)f([\epsilon, w]g).
\end{split}
\end{equation}
Case 3: $g=h_{-1} \in  \Gamma^{\pm}(2)$.
\begin{equation}
\begin{split}
\Pi_{\psi}(g)f(\epsilon, w)&=\sum_{l\in L\cap X}\Pi_{\psi}(g)(f')(\epsilon, x+l)\psi(\epsilon\langle l, w\rangle) \psi(\epsilon\tfrac{\langle x, x^{\ast}\rangle}{2}) \\
&=\sum_{l\in L\cap X}f'(-\epsilon, x+l)\psi(\epsilon\langle l, w\rangle) \psi(\epsilon\tfrac{\langle x, x^{\ast}\rangle}{2}) \\
&= \sum_{l\in L\cap X}\int_{X^{\ast}/X^{\ast}\cap L} \psi(-\epsilon\tfrac{\langle\dot{y}^{\ast}, x+l\rangle}{2})f(-\epsilon,x+l-\epsilon\dot{y}^{\ast}) \psi(\epsilon\langle l, w\rangle) \psi(\epsilon\tfrac{\langle x, x^{\ast}\rangle}{2})d\dot{y}^{\ast}\\
&= \sum_{l\in L\cap X}\int_{X^{\ast}/X^{\ast}\cap L} \psi(-\epsilon\tfrac{\langle\dot{y}^{\ast}, x+l\rangle}{2})f(-\epsilon,x+l-\epsilon\dot{y}^{\ast}) \psi(-\epsilon\langle l, -x^{\ast}\rangle) \psi(-\epsilon\tfrac{\langle x, -x^{\ast}\rangle}{2})d\dot{y}^{\ast}\\
&=f(-\epsilon, x-x^{\ast})\\
&= f([\epsilon, w]g).
\end{split}
\end{equation}
Case 4: $g=\omega\in  \SL_2(\Z)$.
\begin{equation}\label{eq3}
\begin{split}
&\Pi_{\psi}(g)f([\epsilon, w])\\
&=\sum_{l\in L\cap X}\pi_{\psi}(g)(f')(\epsilon, x+l)\psi(\epsilon\langle l, w\rangle)\psi(\epsilon\tfrac{\langle x, x^{\ast}\rangle}{2}) \\
&=\sum_{l\in L\cap X} \int_{X^{\ast}} \nu(\epsilon,g) f'(\epsilon, y^{\ast}\omega^{-1}) \psi(\langle \epsilon(x+l), y^{\ast}\rangle)\psi(\epsilon\langle l, w\rangle)\psi(\epsilon\tfrac{\langle x, x^{\ast}\rangle}{2})dy^{\ast}\\
&=\sum_{l\in L\cap X} \int_{X^{\ast}} \nu(\epsilon,g) f'(\epsilon, y^{\ast}\omega^{-1}) \psi(\epsilon\langle x, y^{\ast}\rangle) \psi(\epsilon\langle l, x^{\ast}+y^{\ast}\rangle)\psi(\epsilon\tfrac{\langle x, x^{\ast}\rangle}{2})dy^{\ast}\\
&=\sum_{l\in L\cap X} \int_{X^{\ast}}  \int_{X^{\ast}/X^{\ast}\cap L}\nu(\epsilon,g)  f(\epsilon, y^{\ast}\omega^{-1}+\epsilon\dot{z}^{\ast})\psi(\epsilon\tfrac{\langle  \dot{z}^{\ast}, y^{\ast}\omega^{-1}\rangle}{2}) \psi(\epsilon\langle x, y^{\ast}\rangle) \psi(\epsilon\langle l, x^{\ast}+y^{\ast}\rangle)\psi(\epsilon\tfrac{\langle x, x^{\ast}\rangle}{2})dy^{\ast}dz^{\ast}\\
&= \sum_{l^{\ast}\in L\cap X^{\ast}}\int_{X^{\ast}/X^{\ast}\cap L} \nu(\epsilon,g)f(\epsilon, (-x^{\ast}+l^{\ast})\omega^{-1}+\epsilon\dot{z}^{\ast})\psi(\epsilon\tfrac{\langle \dot{z}^{\ast}, (-x^{\ast}+l^{\ast})\omega^{-1} \rangle}{2}) \psi(\epsilon\langle x, -x^{\ast}+l^{\ast}\rangle) \psi(\epsilon\tfrac{\langle x, x^{\ast}\rangle}{2})dz^{\ast}\\
&= \sum_{l\in L\cap X}\int_{X^{\ast}/X^{\ast}\cap L} \nu(\epsilon,g)f(\epsilon,-x^{\ast}\omega^{-1}+l+\epsilon\dot{z}^{\ast})\psi(\epsilon\tfrac{\langle \dot{z}^{\ast}, -x^{\ast}\omega^{-1}+l \rangle}{2}) \psi(\epsilon\langle x, -x^{\ast}+l\omega\rangle) \psi(\epsilon\tfrac{\langle x, x^{\ast}\rangle}{2})dz^{\ast}\\
&= \sum_{l\in L\cap X}\int_{X^{\ast}/X^{\ast}\cap L} \nu(\epsilon,g)f(\epsilon,x^{\ast}\omega+l+\epsilon\dot{z}^{\ast})\psi(\epsilon\tfrac{\langle\dot{z}^{\ast},  x^{\ast}\omega+l\rangle}{2}) \psi(\epsilon\langle l, x\omega\rangle) \psi(-\epsilon\tfrac{\langle x, x^{\ast}\rangle}{2})dz^{\ast}\\
&= \sum_{l\in L\cap X}\int_{X^{\ast}/X^{\ast}\cap L}\nu(\epsilon,g) f(\epsilon,x^{\ast}\omega+l+\epsilon\dot{z}^{\ast})\psi(\epsilon\tfrac{\langle \dot{z}^{\ast}, x^{\ast}\omega+l\rangle}{2}) \psi(\epsilon\langle l, x\omega\rangle) \psi(\epsilon\tfrac{\langle  x^{\ast}\omega, x\omega\rangle}{2})dz^{\ast}\\
&=\nu(\epsilon,g)f([\epsilon,wg])=\nu(\epsilon,g)f([\epsilon,w]g).
\end{split}
\end{equation}
Case 5: $g=u_-(c) =\omega^{-1} u(-c) \omega\in  \Gamma^{\pm}(2)$ with $2\mid c$. Then:
\begin{equation}\label{eq4}
\begin{split}
\Pi_{\psi}(g)f(\epsilon,w)&=\Pi_{\psi}(\omega^{-1} u(-c) \omega) f(\epsilon,w)\\
&=\widetilde{C}_{X^{\ast}}(\omega^{-1}, u(-c))^{-1}\widetilde{C}_{X^{\ast}}(\omega^{-1}u(-c), \omega)^{-1}\Pi_{\psi}(\omega^{-1})\Pi_{\psi}(u(-c)) \Pi_{\psi}(\omega) f(\epsilon,w)\\
&=\widetilde{C}_{X^{\ast}}(\omega^{-1}u(-c), \omega)^{-1}\Pi_{\psi}(\omega^{-1})\Pi_{\psi}(u(-c)) \Pi_{\psi}(\omega) f(\epsilon,w)\\
&=\widetilde{C}_{X^{\ast}}(\omega^{-1}u(-c), \omega)^{-1} \nu(\epsilon, \omega^{-1})\nu(\epsilon, \omega)f([\epsilon,wg])\\
&=\widetilde{c}_{X^{\ast}}(\omega^{-1}u(-c), \omega)^{-1} \nu(\epsilon, \omega^{-1})\nu(\epsilon, \omega)f([\epsilon,wg])\\
&=e^{\tfrac{\pi i \sgn(c)}{4}}f([\epsilon,wg]).
\end{split}
\end{equation}
Write $f(1, w)=f_1(w)$ and $f(-1, w)=f_2(w)$. Let us denote $\breve{\Gamma}(2)^{\pm}=\langle \omega\rangle \ltimes \Gamma(2)^{\pm}$.  Recall the results from Section \ref{sl2z}. Let $g\in \breve{\Gamma}(2)$. Then:
\subsubsection{}
\begin{itemize}
\item $\Pi_{\psi}(g) f_1( w)= \pi_{\psi}(g)f_1(w)= \widetilde{\beta}(g)^{-1} f_1(wg)$.
\item  $\Pi_{\psi}(h_{-1}) f_1(w)=\Pi_{\psi}(h_{-1}) f(1, w)=f(-1, wh_{-1})=f_2(wh_{-1})$.
\end{itemize}
\subsubsection{}
\begin{itemize}
\item $\Pi_{\psi}(h_{-1})f_2(w)= \Pi_{\psi}(h_{-1}) f(-1, w)=f(1, wh_{-1})=f_1(wh_{-1})$.
\item
\begin{align*}
\Pi_{\psi}(g) f_2(w)&= \Pi_{\psi}(g) \Pi_{\psi}(h_{-1}) f_1(wh_{-1})\\
&=\widetilde{C}_{X^{\ast}}(g, h_{-1})\Pi_{\psi}(gh_{-1})f_1(wh_{-1})\\
&=\widetilde{C}_{X^{\ast}}(g, h_{-1})\Pi_{\psi}(h_{-1}g^{s(-1)})f_1(wh_{-1})\\
&=\widetilde{C}_{X^{\ast}}(g, h_{-1})\widetilde{C}( h_{-1}, g^{s(-1)})^{-1}\Pi_{\psi}(h_{-1})\Pi_{\psi}(g^{s(-1)})f_1(wh_{-1})\\
&=\widetilde{C}_{X^{\ast}}(g, h_{-1}) \widetilde{\beta}(g^{s(-1)})^{-1}f_2(wg^{s(-1)}).
\end{align*}
\end{itemize}
\subsubsection{}
If $\det g=1$, write $h_{\epsilon}=\begin{pmatrix} 1 &0 \\ 0 & \epsilon\end{pmatrix}$,  then
\begin{align*}
\Pi_{\psi}(g) f([\epsilon, w])&=\widetilde{C}_{X^{\ast}}(g, h_{\epsilon}) \widetilde{\beta}(g^{s(\epsilon)})^{-1}f([\epsilon, wg^{s(\epsilon)}]).
\end{align*}
\subsubsection{}
If $\det g=-1$, let us write $g=h_{-1} (h_{-1} g)$. Then:
\begin{align*}
\Pi_{\psi}(g) f([\epsilon, w])&=\widetilde{C}_{X^{\ast}}(h_{-1}, h_{-1} g)^{-1}\Pi_{\psi}(h_{-1} ) [\Pi_{\psi}(h_{-1}g )f]([\epsilon, w])\\
&=[\Pi_{\psi}(h_{-1}g )f]([-\epsilon, wh_{-1}])\\
&=\widetilde{C}_{X^{\ast}}(h_{-1}g, h_{-\epsilon}) \widetilde{\beta}(h_{-1}g^{s(-\epsilon)})^{-1}f([-\epsilon, wg^{s(-\epsilon)}])\\
&=\widetilde{C}_{X^{\ast}}(g, h_{-\epsilon}) \widetilde{\beta}(h_{-1}g^{s(-\epsilon)})^{-1}f([-\epsilon, wg^{s(-\epsilon)}]).
\end{align*}
\subsubsection{}
Let  $g=\begin{pmatrix}
a& b\\ c& d \end{pmatrix} \in \breve{\Gamma}(2)^{\pm}$. \\
If $c=0$, then
\begin{align*}
\Pi_{\psi}(g) f([\epsilon, w])=((\det g)\epsilon, a)_{\R} f([(\det g) \epsilon, wg^{s((\det g)\epsilon)}]).
\end{align*}
If $c\neq 0$ and  $\det g=1$, then
\begin{align*}
\Pi_{\psi}(g) f([\epsilon, w])= e^{\tfrac{\pi i(1-\epsilon)}{4}}(c, \epsilon)_{\R} \beta(d, \epsilon c)^{-1}_{\R} f([\epsilon, wg^{s(\epsilon)}]).
\end{align*}
If $c\neq 0$ and   $\det g=-1$, then $h_{-1}g^{s(-\epsilon)}=\begin{pmatrix} a &-\epsilon b \\ -\epsilon c & d\end{pmatrix}$, and 
\begin{align*}
\Pi_{\psi}(g) f([\epsilon, w])&=\widetilde{C}_{X^{\ast}}(g, h_{-\epsilon}) \widetilde{\beta}(h_{-1}g^{s(-\epsilon)})^{-1}f([-\epsilon, wg^{s(-\epsilon)}])\\
&=e^{\tfrac{\pi i(1-\epsilon)}{4}}(-c, -\epsilon)_{\R}\beta(d,  -\epsilon c)^{-1}f([-\epsilon, wg^{s(-\epsilon)}]).
\end{align*}
We conclude:
\begin{align}
\Pi_{\psi}(g) f([\epsilon, w])=\Upsilon(g) f([(\det g) \epsilon, wg^{s((\det g)\epsilon)}]),
\end{align}
for the constant $$\Upsilon(g) = \left\{ \begin{array}{lr} ((\det g)\epsilon, a)_{\R} & \textrm{ if } c= 0, \\  e^{\tfrac{\pi i(1-\epsilon)}{4}} (c\det g, \epsilon\det g)_{\R} \beta(d, (\det g) \epsilon c)^{-1} & \textrm{ if } c\neq 0. \end{array}\right.$$
\subsection{Lattice}(cf. \cite{De}, \cite{DeSe})
Let $w_1=ae_1+be_1^{\ast}$, $w_2=ce_1+de_1^{\ast}$  be two elements   of $W$. Then $\Za w_1+\Za w_2$ forms a  lattice in $W$ iff $w_1$ and $w_2$ are $\R$-linear independence iff $\begin{pmatrix}a & b\\ c & d \end{pmatrix}\in \GL_2(\R)$. Moreover, it is a $\psi$-self dual lattice iff $\langle w_1, w_2\rangle=1$ or $-1$ iff $\begin{pmatrix}a & b\\ c & d \end{pmatrix} \in \SL_2^{\pm}(\R)$. Let $\mathcal{L}_{\psi}$ denote the set of all $\psi$-self dual lattices in $W$. We can define a right $\SL_2^{\pm}(\R)$-action on $\mathcal{L}_{\psi}$ as follows:
$$\begin{array}{rcl}
  \mathcal{L}_{\psi} \times \SL_2^{\pm}(\R)  &\longrightarrow &\mathcal{L}_{\psi} ; \\
  ( L= \Za w_1  \oplus \Za w_2, g)& \longmapsto & Lg=\Za  (w_1g)  \oplus \Za (w_2g).
\end{array}$$
Let  $L_0=\Za e_1  \oplus \Za e_1^{\ast} \in \mathcal{L}_{\psi}$. Then:
\begin{itemize}
\item  $\mathcal{L}_{\psi}=\{L_0 g \mid g\in \SL_2^{\pm}(\R)\}$.
\item $L_0g=L_0$ iff $g\in \SL_2^{\pm }(\Z)$.
\item There exists a bijection: $  \SL_2^{\pm }(\Z) \setminus \SL_2^{\pm}(\R) \longrightarrow  \mathcal{L}_{\psi}; g \longmapsto L_0 g$.
\end{itemize}
\subsection{Lagrangian Grassmanian}
By Section \ref{un}, we can identity $\C$ with $W$ by $a+ib \longrightarrow ae_1+be_1^{\ast}$, and define  $\langle z,z'\rangle_{\C}=i\Im(z\overline{z}')$. A Lagrangian plane   of $(\C, \langle, \rangle_{\C})$ is a set $\R z$, for some $z \in \C^{\ast}$. Let $\Lambda$ denote the set of all  Lagrangians in $\C$. Then there exists a
left $\C^{\ast}$-action on $\Lambda$ given as follows:
 $$\begin{array}{rcl}
  \C^{\ast}\times \Lambda &\longrightarrow &\Lambda ; \\
  ( t, \R z)& \longmapsto & t(\R z)=\R t(z).
\end{array}$$
Then $\C^{\ast}$ acts transitively on $\Lambda$, and there exists a bijective map:
$$\C^{\ast}/\R^{\ast} \longrightarrow \Lambda; z \longmapsto \R z.$$
Note that $\C^{\ast}/\R^{\ast} \simeq \U(\C)/\mu_2$.
\subsection{Modular functions}
Let us consider the left  action of $\SL_2^{\pm}(\R)$ on $\mathcal{H}^{\pm}$ by $gz=\tfrac{az+b}{cz+d}$, for $g\in \SL_2^{\pm}(\R)$, $z\in \mathcal{H}^{\pm}$. This action is transitive and the stabilizer of $i$ is just the group $\SO_2(\R)$. Then there exists a bijective map:
$$i:\SL_2^{\pm}(\R)/\SO_2(\R) \longrightarrow\mathcal{H}^{\pm}; [g] \longmapsto gi.$$
 Note that  $\SL_2^{\pm}(\R)=P_{>0}^{\pm}(\R)\SO_2(\R)$,  $ P_{>0}^{\pm}(\R) \cap \SO_2(\R)=I$.
So there exists a bijective map:
\begin{align}\label{ph}
\overline{i}: P^{\pm}_{>0}(\R) \longrightarrow \mathcal{H}^{\pm} ;  g=\begin{pmatrix} a& b \\ 0 &(\det g)   a^{-1}\end{pmatrix} \longmapsto ab\det g + ia^2 \det g.
\end{align}
For $g=\begin{pmatrix} a   & b \\ c& d\end{pmatrix}\in \SL^{\pm}_2(\R)$, let us write $g=p_g k_{g}$, for $p_g\in P^{\pm}_{>0}(\R) $ and $k_g\in \SO_2(\R)$. Then:
\begin{lemma}\label{512lmm}
$p_g=\begin{pmatrix} \tfrac{1}{ \sqrt{c^2+d^2}}    & \det g \tfrac{bd+ac}{\sqrt{c^2+d^2}} \\ 0& \det g\sqrt{c^2+d^2}  \end{pmatrix}$, $k_{g}=\det g\begin{pmatrix} \tfrac{d}{\sqrt{c^2+d^2}}   & -\tfrac{c}{\sqrt{c^2+d^2}}\\ \tfrac{c}{\sqrt{c^2+d^2}}& \tfrac{d}{\sqrt{c^2+d^2}}\end{pmatrix}$.
\end{lemma}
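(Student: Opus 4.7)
The plan is to use the Iwasawa-type decomposition $\SL_2^{\pm}(\R) = P_{>0}^{\pm}(\R)\cdot \SO_2(\R)$ combined with the M\"obius action on $\mathcal{H}^{\pm}$. First I would check that $P_{>0}^{\pm}(\R)\cap \SO_2(\R) = \{I\}$, so whenever $g = p_g k_g$ with $p_g\in P_{>0}^{\pm}(\R)$ and $k_g\in \SO_2(\R)$, the two factors are uniquely determined. The geometric input is that $\SO_2(\R)$ fixes $i$ (a one-line check from the M\"obius formula), together with the bijection $\overline{i}\colon P_{>0}^{\pm}(\R) \to \mathcal{H}^{\pm}$ in (\ref{ph}); combined they force $\overline{i}(p_g) = g\cdot i$, which will determine $p_g$ uniquely.

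Next I compute
\begin{equation*}
g\cdot i \;=\; \tfrac{ai+b}{ci+d} \;=\; \tfrac{(ai+b)(d-ci)}{c^2+d^2} \;=\; \tfrac{ac+bd}{c^2+d^2} + i\,\tfrac{\det g}{c^2+d^2},
\end{equation*}
which confirms $g\cdot i\in \mathcal{H}^{\pm}$ with the sign of the imaginary part governed by $\det g$. Writing a general element of $P_{>0}^{\pm}(\R)$ as $\begin{pmatrix}\alpha & \beta\\ 0 & \alpha^{-1}\det g\end{pmatrix}$ with $\alpha>0$, formula (\ref{ph}) gives $\overline{i}\colon \begin{pmatrix}\alpha&\beta\\0&\alpha^{-1}\det g\end{pmatrix}\mapsto \alpha\beta\det g + i\alpha^2\det g$, so matching the real and imaginary parts with $g\cdot i$ yields $\alpha = 1/\sqrt{c^2+d^2}$ and $\beta = (bd+ac)\det g/\sqrt{c^2+d^2}$, which recovers exactly the claimed $p_g$.

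Finally, for $k_g$ I would either compute $k_g = p_g^{-1}g$ via the $2\times 2$ inverse, or verify $p_g\cdot(\text{claimed }k_g) = g$ directly by one matrix multiplication; the $(1,1)$-entry simplifies to $(d\det g + c(ac+bd))/(c^2+d^2) = (d(\det g + bc) + ac^2)/(c^2+d^2) = (ad\cdot d + ac^2)/(c^2+d^2) = a$ using $\det g = ad-bc$, and the three remaining entries collapse in the same way. It remains to confirm that the claimed $k_g$ lies in $\SO_2(\R)$: its determinant is $(\det g)^2(c^2+d^2)/(c^2+d^2) = 1$, and it is of the form $\begin{pmatrix}\cos\theta & \sin\theta\\ -\sin\theta&\cos\theta\end{pmatrix}$ with $\cos\theta = \det g\cdot d/\sqrt{c^2+d^2}$ and $\sin\theta = -\det g\cdot c/\sqrt{c^2+d^2}$.

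The only point demanding attention is consistent sign-tracking of $\det g = \pm 1$---specifically the placement of $\det g$ in the $(2,2)$-entry of $p_g$ (needed so that $\det p_g = \det g$) and the global scalar $\det g$ multiplying $k_g$ (needed so that $k_g\in \SO_2(\R)$ rather than its coset in $\SO_2^{\pm}(\R)$ when $\det g=-1$). Beyond this bookkeeping I expect no real obstacle, as the argument reduces to a standard Iwasawa calculation.
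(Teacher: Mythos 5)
Your proof is correct. The key difference from the paper's own argument is one of emphasis rather than substance: the paper simply \emph{verifies} the decomposition by computing the $(1,1)$ and $(1,2)$ entries of $p_g k_g$ and checking they equal $a$ and $b$ (the $(2,1)$ and $(2,2)$ entries being immediate from the zero in $p_g$), whereas you first \emph{derive} $p_g$ by pushing forward through the M\"obius action and the bijection $\overline{i}$, and only then verify $k_g$ by the same multiplication check. Your extra step — equating $\overline{i}(p_g)$ with $g\cdot i = \tfrac{ac+bd}{c^2+d^2} + i\,\tfrac{\det g}{c^2+d^2}$ and solving for $\alpha,\beta$ — explains where the somewhat opaque entries of $p_g$ come from and makes the role of $\det g$ transparent (it controls both the sign of $\Im(g\cdot i)$ and the normalization of $k_g$ to lie in $\SO_2(\R)$ rather than $\SO_2^{\pm}(\R)$). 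Both approaches rest on the same Iwasawa-type decomposition $\SL_2^{\pm}(\R)=P_{>0}^{\pm}(\R)\SO_2(\R)$ with trivial intersection, which the paper records just before the lemma; the multiplication check at the end establishes existence of the claimed factorization, so your heuristic derivation incurs no logical debt.
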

\begin{proof}
\begin{align*}
\tfrac{1}{ \sqrt{c^2+d^2}} \tfrac{d\det g}{\sqrt{c^2+d^2}} +(\tfrac{bd+ac}{\sqrt{c^2+d^2}} )\tfrac{c}{\sqrt{c^2+d^2}}&= \tfrac{d\det g}{ \sqrt{c^2+d^2}}+ \tfrac{cbd+ac^2}{c^2+d^2}\\
&= \tfrac{dad-dbc+cbd+ac^2}{c^2+d^2}=a;
\end{align*}
\begin{align*}
(\tfrac{1}{ \sqrt{c^2+d^2}} )\tfrac{-c\det g }{\sqrt{c^2+d^2}} +(\tfrac{bd+ac}{\sqrt{c^2+d^2}} )\tfrac{d}{\sqrt{c^2+d^2}}&= \tfrac{-c\det g}{ \sqrt{c^2+d^2}}- \tfrac{bd^2+acd}{c^2+d^2}\\
&= \tfrac{-cad+bc^2+bd^2+acd}{c^2+d^2}=b.
\end{align*}
\end{proof}
Recall the functions $\widetilde{s}(k_g)$, $ \overline{s}(k_g)$ from Lemmas \ref{thetaf}, \ref{trivial}. Let us extend them  to the group  $\SL_2^{\pm}(\R)$ as follows:
\begin{align}
 \widetilde{s}(g)=\widetilde{s}(k_g) \quad \textrm{ and }  \quad \overline{s}(g)= \overline{s}(k_g).
\end{align}
\begin{lemma}
$ m_{X^{\ast}}(g)= m_{X^{\ast}}(p_g) m_{X^{\ast}}(k_g)$, for  $g=\begin{pmatrix} a & b \\ c& d \end{pmatrix} \in \SL_2^{\pm}(\R)$.
\end{lemma}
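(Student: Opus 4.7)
The plan is to reduce the identity to two observations: first, that $m_{X^{\ast}}(p_g)=1$ for every $p_g\in P_{>0}^{\pm}(\R)$, and second, that $m_{X^{\ast}}(g)=m_{X^{\ast}}(k_g)$ by a direct sign-matching computation using Lemma \ref{512lmm}. Since $m_{X^{\ast}}$ on $\SL_2^{\pm}(\R)$ is defined (by the abuse of notation from Section \ref{subquo}) through the factorization $h=s(\det h)\,h'$ with $h'\in \SL_2(\R)$ and by setting $m_{X^{\ast}}(h)=m_{X^{\ast}}(h')$, everything reduces to applying the explicit formula for $m_{X^{\ast}}$ on $\SL_2(\R)$ recalled in Section \ref{notationss}.

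First, I would apply this rewriting to $p_g$: Lemma \ref{512lmm} gives $p_g=s(\det g)p_g'$ with
$$p_g'=\begin{pmatrix} \tfrac{1}{\sqrt{c^2+d^2}} & \det g\cdot\tfrac{bd+ac}{\sqrt{c^2+d^2}} \\ 0 & \sqrt{c^2+d^2}\end{pmatrix}\in \SL_2(\R).$$
Since the lower-left entry is $0$ and the lower-right entry is the positive number $\sqrt{c^2+d^2}$, the formula for $m_{X^{\ast}}$ on $\SL_2(\R)$ forces $m_{X^{\ast}}(p_g')=e^{\tfrac{i\pi}{4}[1-\sgn(\sqrt{c^2+d^2})]}=1$. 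Hence $m_{X^{\ast}}(p_g)=1$, and the desired identity collapses to $m_{X^{\ast}}(g)=m_{X^{\ast}}(k_g)$.

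Next I would verify this remaining equality by splitting on whether $c=0$. If $c\neq0$, then $k_g\in \SO_2(\R)\subset \SL_2(\R)$ has lower-left entry $\det g\cdot c/\sqrt{c^2+d^2}$, so $m_{X^{\ast}}(k_g)=e^{\tfrac{i\pi}{4}[-\sgn(\det g\cdot c)]}$; on the other hand $g'=s(\det g)^{-1}g$ has lower-left entry $c/\det g$, so $m_{X^{\ast}}(g)=e^{\tfrac{i\pi}{4}[-\sgn(c/\det g)]}$, and these agree since $\sgn(\det g\cdot c)=\sgn(c/\det g)$. If $c=0$, Lemma \ref{512lmm} forces $k_g=\sgn(a)\cdot I$, so $m_{X^{\ast}}(k_g)=e^{\tfrac{i\pi}{4}[1-\sgn(a)]}$, while $g'=s(\det g)^{-1}g$ has lower-right entry $d/\det g=1/a$, giving $m_{X^{\ast}}(g)=e^{\tfrac{i\pi}{4}[1-\sgn(1/a)]}=e^{\tfrac{i\pi}{4}[1-\sgn(a)]}$. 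The two cases together finish the proof.

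The calculations are entirely mechanical; the only subtle point is keeping the $\det g$-twist consistent when converting between the $\SL_2^{\pm}(\R)$-expressions (where $m_{X^{\ast}}$ is defined via the section $s$) and the $\SL_2(\R)$-expressions where the formula for $m_{X^{\ast}}$ is directly applicable. The main obstacle is therefore bookkeeping rather than a genuine analytic difficulty.
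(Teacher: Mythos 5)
Your proof is correct and takes essentially the same approach as the paper's: both reduce to applying the explicit formula for $m_{X^{\ast}}$ (after factoring out $s(\det g)$) to $g$, $p_g$, and $k_g$, and both observe $m_{X^{\ast}}(p_g)=1$. You merely organize the bookkeeping into two cases ($c=0$ and $c\neq 0$) with the $\det g$-twist handled uniformly, where the paper spells out all four sign combinations separately.
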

\begin{proof}
1) If $c=0$, $\det g=1$, then $m_{X^{\ast}}(g)=e^{\tfrac{i \pi}{4}[1- \sgn(d)]}$, $m_{X^{\ast}}(p_g) =1$, $m_{X^{\ast}}(k_g)=e^{\tfrac{i \pi}{4}[1- \sgn(\tfrac{d}{\sqrt{c^2+d^2}})]}$.\\
2) If $c=0$, $\det g=-1$, then $m_{X^{\ast}}(g)=e^{\tfrac{i \pi}{4}[1+ \sgn(d)]}$, $m_{X^{\ast}}(p_g) =1$, $m_{X^{\ast}}(k_g)=e^{\tfrac{i \pi}{4}[1+ \sgn(\tfrac{d}{\sqrt{c^2+d^2}})]}$.\\
3) If $c\neq 0$, $\det g=1$, then $m_{X^{\ast}}(g)=e^{\tfrac{i \pi}{4}[- \sgn(c)]}$, $m_{X^{\ast}}(p_g) =1$, $m_{X^{\ast}}(k_g)=e^{\tfrac{i \pi}{4}[- \tfrac{c}{\sqrt{c^2+d^2}}]}$.\\
4)  If $c\neq 0$, $\det g=-1$, then $m_{X^{\ast}}(g)=e^{\tfrac{i \pi}{4}[ \sgn(c)]}$, $m_{X^{\ast}}(p_g) =1$, $m_{X^{\ast}}(k_g)=e^{\tfrac{i \pi}{4}[ \tfrac{c}{\sqrt{c^2+d^2}}]}$.
\end{proof}

As a consequence, we have:
$$\overline{s}(g)=\overline{s}(p_g)\overline{s}(k_g), \quad \quad \widetilde{s}(g)=\widetilde{s}(p_g)\widetilde{s}(k_g),  \quad \textrm{ and } \quad \widetilde{s}(g)=\overline{s}(g)m_{X^{\ast}}(g).$$
 Let us modify the above $\widetilde{C}_{X^{\ast}}$ and $\overline{C}_{X^{\ast}}$ as follows:
\begin{align}
\widetilde{\widetilde{C}}_{X^{\ast}}(g_1,g_2)=\widetilde{C}_{X^{\ast}}(g_1,g_2)\widetilde{s}(g_1)\widetilde{s}(g_2) \widetilde{s}(g_1g_2)^{-1},
\end{align}
\begin{align}\label{ovov}
\overline{\overline{C}}_{X^{\ast}}(g_1,g_2)=\overline{C}_{X^{\ast}}(g_1,g_2)\overline{s}(g_1)\overline{s}(g_2) \overline{s}(g_1g_2)^{-1}.
\end{align}
\begin{lemma}\label{cx}
\begin{itemize}
\item[(1)] $\widetilde{\widetilde{C}}_{X^{\ast}}(g_1,g_2)=\overline{\overline{C}}_{X^{\ast}}(g_1,g_2)$, for $g_i\in \SL_2^{\pm}(\R)$. 
\item[(2)] $\overline{\overline{C}}_{X^{\ast}}(p,g)=1$, for $g\in \SL_2^{\pm}(\R)$, $p\in P^{\pm}_{>0}(\R) $.
\item[(3)] $\overline{\overline{C}}_{X^{\ast}}(g,k)=1$, for $g\in \SL_2^{\pm}(\R)$, $k\in \SO_2(\R)$.
\end{itemize}
\end{lemma}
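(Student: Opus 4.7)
My plan is to reduce each of the three identities to facts already recorded for the $\SO_2(\R)$-restriction via the Iwasawa decomposition $\SL_2^{\pm}(\R)=P^{\pm}_{>0}(\R)\,\SO_2(\R)$, and then use the explicit comparison $(\ref{292inter})$ between $\widetilde{C}_{X^{\ast}}$ and $\overline{C}_{X^{\ast}}$ together with Lemmas \ref{thetaf}, \ref{trivial}. The key bookkeeping remark, which I will verify first, is that $m_{X^{\ast}}(p)=1$ for every $p\in P^{\pm}_{>0}(\R)$ (all four cases in the lemma computing $m_{X^{\ast}}(g)=m_{X^{\ast}}(p_g)m_{X^{\ast}}(k_g)$ evaluate the left-hand $P$-factor to $1$), so the extensions $\widetilde{s}(g):=\widetilde{s}(k_g)$ and $\overline{s}(g):=\overline{s}(k_g)$ satisfy $\widetilde{s}(g)=\overline{s}(g)\,m_{X^{\ast}}(g)$ on all of $\SL_2^{\pm}(\R)$, since this already holds on $\SO_2(\R)$ by the proof of Lemma \ref{trivial}.

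For Part (1) I would simply substitute $\widetilde{s}=\overline{s}\cdot m_{X^{\ast}}$ into the definition of $\widetilde{\widetilde{C}}_{X^{\ast}}$ and then invoke $(\ref{292inter})$: the factor $\widetilde{C}_{X^{\ast}}(g_1,g_2)m_{X^{\ast}}(g_1)m_{X^{\ast}}(g_2)m_{X^{\ast}}(g_1g_2)^{-1}$ collapses to $\overline{C}_{X^{\ast}}(g_1,g_2)$ because $m_{X^{\ast}}[y_1,g_1]=m_{X^{\ast}}(g_1)$ and the $\Sp$-part of the product in $\SL_2^{\pm}$ is $g_1^{y_2}g_2$. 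What remains is exactly $\overline{\overline{C}}_{X^{\ast}}(g_1,g_2)$.

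For Part (2) I would first apply the earlier lemma giving $\overline{C}_{X^{\ast}}(p,g)=1$ for $p\in P^{\pm}_{>0}(\R)$. Then the identity reduces to showing $\overline{s}(p)\overline{s}(g)\overline{s}(pg)^{-1}=1$. But $\overline{s}(p)=\overline{s}(k_p)=\overline{s}(I)=1$ since $p\in P^{\pm}_{>0}(\R)$, and uniqueness of the decomposition $\SL_2^{\pm}(\R)=P^{\pm}_{>0}(\R)\,\SO_2(\R)$ (with trivial intersection) forces $k_{pg}=k_g$, so $\overline{s}(pg)=\overline{s}(g)$.

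For Part (3) the main task is to bring $\overline{C}_{X^{\ast}}(g,k)$ down to the Rao cocycle on $\SO_2(\R)\times\SO_2(\R)$. Writing $g=[y,g']$ in $\SL_2^{\pm}$ with $g'=s(y)^{-1}g\in\SL_2(\R)$ and using the formula $\overline{C}_{X^{\ast}}(h_1,h_2)=\nu_2(y_2,g_1)\overline{c}_{X^{\ast}}(g_1^{y_2},g_2)$ with $y_2=\det k=1$, this value equals $\overline{c}_{X^{\ast}}(g',k)$. Now $g'$ has Iwasawa decomposition $g'=(s(y)^{-1}p_g)\,k_g$ inside $\SL_2(\R)$ with $s(y)^{-1}p_g\in P_{>0}(\R)$, so using the cocycle identity
\[
\overline{c}_{X^{\ast}}(s(y)^{-1}p_g,k_g)\,\overline{c}_{X^{\ast}}(g',k)=\overline{c}_{X^{\ast}}(s(y)^{-1}p_g,k_g k)\,\overline{c}_{X^{\ast}}(k_g,k),
\]
I would prove that both $\overline{c}_{X^{\ast}}(s(y)^{-1}p_g,k_g)$ and $\overline{c}_{X^{\ast}}(s(y)^{-1}p_g,k_g k)$ are $1$, via the explicit $\SL_2$-formula $\overline{c}_{X^{\ast}}(g_1,g_2)=(x(g_1),x(g_2))_\R(-x(g_1)x(g_2),x(g_1g_2))_\R$: the class $x(s(y)^{-1}p_g)$ is trivial in $\R^{\times}/\R^{\times 2}$ (its lower-right entry $a_p^{-1}$ is positive), and a short check using $x(p_{g'}k_{g'})=x(k_{g'})$ shows the remaining Hilbert symbol $(-x(k_g),x(k_g))_\R$ equals $1$. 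Hence $\overline{c}_{X^{\ast}}(g',k)=\overline{c}_{X^{\ast}}(k_g,k)$, and combined with $\overline{s}(gk)=\overline{s}(k_gk)$ the desired quantity becomes $\overline{c}_{X^{\ast}}(k_g,k)\,\overline{s}(k_g)\overline{s}(k)\overline{s}(k_gk)^{-1}=1$ by Lemma \ref{trivial}. The only slightly delicate step is the $P$-invariance of $x$, i.e.\ verifying $x(p_{g'}k_{g'})=x(k_{g'})$ by splitting into the cases $\sin\theta_0=0$ and $\sin\theta_0\neq0$; everything else is pure bookkeeping.
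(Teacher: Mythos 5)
Your proposal is correct, and for Parts~(1) and~(2) it follows the paper's argument essentially verbatim: substitute $\widetilde{s}=\overline{s}\cdot m_{X^{\ast}}$ into the definition and invoke~$(\ref{292inter})$ for~(1); for~(2), cite $\overline{C}_{X^{\ast}}(p,g)=1$ and observe $\overline{s}(p)=1$, $\overline{s}(pg)=\overline{s}(g)$.

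For Part~(3) you take a genuinely different route. You descend to the $\SL_2(\R)$-level cocycle $\overline{c}_{X^{\ast}}$ (via $\overline{C}_{X^{\ast}}(g,k)=\overline{c}_{X^{\ast}}(g',k)$ with $g'=s(y)^{-1}g$), apply the $2$-cocycle identity there to reduce to $\overline{c}_{X^{\ast}}(k_g,k)$, and kill the extra factors $\overline{c}_{X^{\ast}}(s(y)^{-1}p_g,\,\cdot\,)$ by direct Hilbert-symbol computation from Rao's explicit $\SL_2$-formula; then you finish with Lemma~\ref{trivial}. The paper instead stays at the level of the modified cocycle $\overline{\overline{C}}_{X^{\ast}}$ throughout: it first verifies $\overline{\overline{C}}_{X^{\ast}}(p_g,k_g)=1$ and $\overline{\overline{C}}_{X^{\ast}}(k_1,k_2)=1$ directly from the definition (using $\widetilde{C}_{X^{\ast}}(p,\cdot)=1$, $\widetilde{s}(p_g)=1$, and Lemma~\ref{thetaf}), then applies the cocycle identity for $\overline{\overline{C}}_{X^{\ast}}$ to $(p_g k_g, k)$ and plugs in Part~(2) to eliminate $\overline{\overline{C}}_{X^{\ast}}(p_g,k_gk)$. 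The paper's route is shorter and avoids the explicit symbol arithmetic and the auxiliary check $x(p_{g'}k_{g'})=x(k_{g'})$, but your computation is a legitimate substitute: $(-x(k_g),x(k_g))_{\R}=1$ always, and $x(ph)=x(h)$ for $p\in P_{>0}(\R)$ follows immediately from $x$ being determined by the sign of the $c$ or $d$ entry of the bottom row, which lower-triangular scaling by positive entries preserves.
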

\begin{proof}
1) \begin{align*}
\widetilde{\widetilde{C}}_{X^{\ast}}(g_1,g_2)&=\widetilde{C}_{X^{\ast}}(g_1,g_2)\widetilde{s}(g_1)\widetilde{s}(g_2) \widetilde{s}(g_1g_2)^{-1}\\
&=\overline{C}_{X^{\ast}}(g_1,g_2)m_{X^{\ast}}(g_1g_2)m_{X^{\ast}}(g_1)^{-1}m_{X^{\ast}}(g_2)^{-1}\widetilde{s}(g_1)\widetilde{s}(g_2) \widetilde{s}(g_1g_2)^{-1}\\
&=\overline{C}_{X^{\ast}}(g_1,g_2)\overline{s}(g_1)\overline{s}(g_2) \overline{s}(g_1g_2)^{-1}\\
&=\overline{\overline{C}}_{X^{\ast}}(g_1,g_2).
\end{align*}
2)
$$\overline{s}(pg)=\overline{s}(pp_gk_g)=\overline{s}(pp_g)\overline{s}(k_g)=\overline{s}(p)\overline{s}(p_g)\overline{s}(k_g)=\overline{s}(p)\overline{s}(g);$$
$$\overline{C}_{X^{\ast}}(p,g)=1;$$
\begin{align*}
\overline{\overline{C}}_{X^{\ast}}(p,g)&=\overline{C}_{X^{\ast}}(p,g)\overline{s}(p)\overline{s}(g) \overline{s}(pg)^{-1}=1.
\end{align*}
3) Let $k_1,k_2\in  \SO_2(\R)$.
\begin{align*}
\overline{\overline{C}}_{X^{\ast}}(p_g,k_g)&=\widetilde{C}_{X^{\ast}}(p_g,k_g)\widetilde{s}(p_g)\widetilde{s}(k_g)\widetilde{s}(g)^{-1}\\
&=\widetilde{s}(p_g)\widetilde{s}(k_g)\widetilde{s}(g)^{-1}=1;
\end{align*}
\begin{align*}
\overline{\overline{C}}_{X^{\ast}}(k_1,k_2)&=\widetilde{C}_{X^{\ast}}(k_1, k_2) \widetilde{s}(k_1) \widetilde{s}(k_2) \widetilde{s}(k_1k_2)^{-1}=1;
\end{align*}
\begin{align*}
\overline{\overline{C}}_{X^{\ast}}(g,k)=\overline{\overline{C}}_{X^{\ast}}(p_gk_g,k)&=\overline{\overline{C}}_{X^{\ast}}(p_g, k_g)^{-1}\overline{\overline{C}}_{X^{\ast}}(p_g,k_gk)\overline{\overline{C}}_{X^{\ast}}(k_g,k)=1.
\end{align*}
\end{proof}
For $z\in \mathcal{H}^{\pm}$, let us write $p_z$ for the corresponding element in $P_{>0}^{\pm}(\R)$ via the map $\overline{i}$.
\begin{definition}
For $g\in  \SL_2^{\pm}(\R), z\in \mathcal{H}^{\pm}$, we define:
$ \overline{j}(g, z)\stackrel{\Delta}{=} \overline{\overline{C}}_{X^{\ast}}(g, p_z)$.
\end{definition}
\begin{lemma}
$\overline{\overline{C}}_{X^{\ast}}(g_1, g_2)= \overline{j}(g_1, g_2z) \overline{j}(g_2, z) \overline{j}(g_1g_2, z)^{-1}$, for $g_1, g_2\in \SL_2^{\pm}(\R)$, $z\in \mathcal{H}^{\pm}$.
\end{lemma}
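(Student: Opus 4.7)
The plan is to use only two ingredients: the cocycle identity for $\overline{\overline{C}}_{X^{\ast}}$, which holds automatically since $\overline{\overline{C}}_{X^{\ast}}$ differs from $\overline{C}_{X^{\ast}}$ by a coboundary (see \eqref{ovov}), and part (3) of Lemma \ref{cx}, which says that $\overline{\overline{C}}_{X^{\ast}}(-,k)$ is trivial when the second variable lies in $\SO_2(\R)$.

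First, I would apply the cocycle identity
\[
\overline{\overline{C}}_{X^{\ast}}(g_1,g_2)\,\overline{\overline{C}}_{X^{\ast}}(g_1g_2, p_z)
= \overline{\overline{C}}_{X^{\ast}}(g_1, g_2 p_z)\,\overline{\overline{C}}_{X^{\ast}}(g_2, p_z),
\]
which rearranges to
\[
\overline{\overline{C}}_{X^{\ast}}(g_1,g_2)
= \overline{\overline{C}}_{X^{\ast}}(g_1, g_2 p_z)\,\overline{j}(g_2,z)\,\overline{j}(g_1g_2,z)^{-1}.
\]
It therefore suffices to show $\overline{\overline{C}}_{X^{\ast}}(g_1, g_2 p_z) = \overline{j}(g_1, g_2 z)$.

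Second, I would use the Iwasawa decomposition $\SL_2^{\pm}(\R)= P_{>0}^{\pm}(\R)\,\SO_2(\R)$ to write $g_2 p_z = p_{g_2 z}\, k$ for a unique $k\in \SO_2(\R)$; note this is legitimate because the map $\overline{i}$ in \eqref{ph} transports the $\SL_2^{\pm}(\R)$-action on $\mathcal{H}^{\pm}$ to left multiplication on $P_{>0}^{\pm}(\R)$ modulo $\SO_2(\R)$, so the $P_{>0}^{\pm}(\R)$-component of $g_2 p_z$ corresponds to the point $g_2 z$. Applying the cocycle identity once more,
\[
\overline{\overline{C}}_{X^{\ast}}(g_1, p_{g_2 z})\,\overline{\overline{C}}_{X^{\ast}}(g_1 p_{g_2 z}, k)
= \overline{\overline{C}}_{X^{\ast}}(g_1, p_{g_2 z} k)\,\overline{\overline{C}}_{X^{\ast}}(p_{g_2 z}, k),
\]
and by Lemma \ref{cx}(3) the two factors involving $k$ on the right of each side are both trivial. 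Hence $\overline{\overline{C}}_{X^{\ast}}(g_1, g_2 p_z) = \overline{\overline{C}}_{X^{\ast}}(g_1, p_{g_2 z}) = \overline{j}(g_1, g_2 z)$, which combined with the first display proves the identity.

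I do not anticipate a serious obstacle: the result is essentially bookkeeping for a section of the $\SO_2(\R)$-bundle $\SL_2^{\pm}(\R) \to \mathcal{H}^{\pm}$, and the whole point of having passed from $\overline{C}_{X^{\ast}}$ to the normalized cocycle $\overline{\overline{C}}_{X^{\ast}}$ via $\overline{s}$ (equivalently $\widetilde{s}$) was precisely to kill the $\SO_2(\R)$-ambiguity, as recorded in Lemma \ref{cx}(3). The only mild subtlety is checking that $g_2 p_z$ lies above $g_2 z$ in the sense that $g_2 p_z \cdot i = g_2 z$, which is immediate from the definition $p_z \cdot i = z$ and the fact that the action factors through $\SL_2^{\pm}(\R)/\SO_2(\R)$.
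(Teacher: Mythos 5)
Your proof is correct and takes essentially the same route as the paper: a single application of the cocycle identity to peel off the $\overline{j}(g_2,z)$ and $\overline{j}(g_1g_2,z)^{-1}$ factors, followed by the Iwasawa decomposition $g_2p_z = p_{g_2z}k$ and a second application of the cocycle identity together with Lemma \ref{cx}(3) (applied to kill the two terms with an $\SO_2(\R)$ entry in the second slot) to identify $\overline{\overline{C}}_{X^{\ast}}(g_1, g_2p_z)$ with $\overline{j}(g_1, g_2z)$. The observation that $p_{g_2 p_z}=p_{g_2z}$ because $k_{g_2p_z}$ stabilizes $i$ is exactly the point the paper uses implicitly in its final step.
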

\begin{proof}
$\overline{\overline{C}}_{X^{\ast}}(g_1, g_2)\overline{\overline{C}}_{X^{\ast}}(g_1g_2,  p_z)= \overline{\overline{C}}_{X^{\ast}}(g_1, g_2 p_z) \overline{\overline{C}}_{X^{\ast}}(g_2,  p_z) $. Then:
\begin{align*}
\overline{\overline{C}}_{X^{\ast}}(g_1, g_2)&= \overline{\overline{C}}_{X^{\ast}}(g_1, g_2 p_z) \overline{\overline{C}}_{X^{\ast}}(g_2,  p_z)\overline{\overline{C}}_{X^{\ast}}(g_1g_2,  p_z)^{-1}\\
&=\overline{\overline{C}}_{X^{\ast}}(g_1, p_{g_2 p_z}k_{g_2 p_z}) \overline{\overline{C}}_{X^{\ast}}(g_2,  p_z)\overline{\overline{C}}_{X^{\ast}}(g_1g_2,  p_z)^{-1}\\
&=\overline{\overline{C}}_{X^{\ast}}(g_1, p_{g_2 p_z}k_{g_2 p_z}) \overline{\overline{C}}_{X^{\ast}}( p_{g_2 p_z},k_{g_2 p_z})\overline{\overline{C}}_{X^{\ast}}(g_2,  p_z)\overline{\overline{C}}_{X^{\ast}}(g_1g_2,  p_z)^{-1}\\
&=\overline{\overline{C}}_{X^{\ast}}(g_1, p_{g_2 p_z}) \overline{\overline{C}}_{X^{\ast}}( g_1p_{g_2 p_z},k_{g_2 p_z})\overline{\overline{C}}_{X^{\ast}}(g_2,  p_z)\overline{\overline{C}}_{X^{\ast}}(g_1g_2,  p_z)^{-1}\\
&=\overline{\overline{C}}_{X^{\ast}}(g_1, p_{g_2 p_z}) \overline{\overline{C}}_{X^{\ast}}(g_2,  p_z)\overline{\overline{C}}_{X^{\ast}}(g_1g_2,  p_z)^{-1}\\
&=\widetilde{j}(g_1, g_2z) \widetilde{j}(g_2, z) \widetilde{j}(g_1g_2, z)^{-1}.
\end{align*}
\end{proof}
\subsection{Holomorphic function} Form Lemma \ref{512lmm}, we can see that $p_g$ is a $C^{\infty}$-function from $\SL_2^{\pm}(\R)$ to $P_{>0}^{\pm}(\R)$. Moreover, $\overline{i}$ is a  $C^{\infty}$-homeomorphism.
Let $$\mathfrak{sl}_2(\R)=\{ \begin{bmatrix} a & b\\ c& d\end{bmatrix}\in M_2(\R)\mid a+d=0\}$$ denote the real Lie algebra of $\SL_2(\R)$.  It is known that there exist the left and right regular $ \SL^{\pm}_2(\R)$-actions on $C^{\infty}(\SL_2^{\pm}(\R), \R)$. These actions can extend to the lie algebra in the following way:
$$[l_{Z}f](g)=\tfrac{d}{dt}[f(e^{-tZ}g)]|_{t=0}, \quad\quad  Z\in \mathfrak{sl}_2(\R),$$
$$[r_{Z}f](g)=\tfrac{d}{dt}[f(ge^{tZ})]|_{t=0}, \quad\quad  Z\in \mathfrak{sl}_2(\R).$$
If $f$ is a real-$C^{\infty}$ complex  function, we can extend this action  to the complex Lie algebra by linearity.  Following \cite[pp.181-182]{LiVe}, let
\begin{equation}\label{hefj}
h=\begin{bmatrix} 1 & 0\\ 0& -1\end{bmatrix}, e^+=\begin{bmatrix} 0 & 1\\ 0& 0\end{bmatrix}, e^-=\begin{bmatrix} 0 & 0\\ 1& 0\end{bmatrix}, J^+=\tfrac{1}{2}\begin{bmatrix} -i & 1\\ 1& i\end{bmatrix}, J^-=\tfrac{1}{2}\begin{bmatrix} i & 1\\ 1&-i\end{bmatrix}, J^0=\begin{bmatrix} 0& 1\\-1& 0\end{bmatrix}.
 \end{equation}
 The following two results come from \cite[p.182, Section 2.3.8,Lmm]{LiVe} for $\SL_2(\R)$.  We repeat the proofs for $\SL_2^{\pm}(\R)$ without too much change.
\begin{lemma}
 For a  real-$C^{\infty}$ complex function $f$ on $\SL_2^{\pm}(\R)$, $f$ is right $\SO_2(\R)$-invariant iff $r_{J^0}f=0$.
\end{lemma}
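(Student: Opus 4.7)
The plan is to exploit the fact that $J^0$ generates precisely the one-parameter subgroup $\SO_2(\R)$ via the exponential map, combined with the standard ODE argument for Lie-group invariance.

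First I would verify the infinitesimal description. Since $(J^0)^2 = -I$, the matrix exponential gives
\begin{equation*}
\exp(tJ^0) = (\cos t)\, I + (\sin t)\, J^0 = \begin{pmatrix} \cos t & \sin t \\ -\sin t & \cos t \end{pmatrix},
\end{equation*}
so $\{\exp(tJ^0) : t \in \R\}$ is a surjection onto $\SO_2(\R)$ (with kernel $2\pi\Z$), and $\SO_2(\R)$ is connected.

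The forward direction is immediate: if $f$ is right $\SO_2(\R)$-invariant, then $f(g\,e^{tJ^0}) = f(g)$ for all $t\in\R$, so differentiating at $t=0$ gives $r_{J^0}f(g)=0$.

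For the reverse direction, assume $r_{J^0}f \equiv 0$. Fix $g\in \SL_2^{\pm}(\R)$ and set $\phi(t) = f(g\,e^{tJ^0})$; then $\phi$ is $C^{\infty}$ on $\R$, and by the one-parameter-group property $e^{(t+s)J^0} = e^{tJ^0}e^{sJ^0}$,
\begin{equation*}
\phi'(t) = \tfrac{d}{ds}\bigl[f(g\,e^{tJ^0}\,e^{sJ^0})\bigr]\big|_{s=0} = (r_{J^0}f)(g\,e^{tJ^0}) = 0.
\end{equation*}
Hence $\phi$ is constant, i.e.\ $f(g\,e^{tJ^0})=f(g)$ for every $t\in\R$. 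Since every $k\in \SO_2(\R)$ is of the form $e^{tJ^0}$, this gives right $\SO_2(\R)$-invariance.

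There is no serious obstacle here; the only mild point to keep honest is that we are working on $\SL_2^{\pm}(\R)$ rather than $\SL_2(\R)$, but the right action of the connected group $\SO_2(\R)$ preserves each connected component of $\SL_2^{\pm}(\R)$, so the argument above applies equally to $g$ in either component.
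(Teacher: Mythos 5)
Your proof is correct and follows essentially the same route as the paper: recognize that $J^0$ generates the one-parameter subgroup $\exp(tJ^0)=\SO_2(\R)$, differentiate for $(\Rightarrow)$, and integrate the ODE $\phi'(t)=(r_{J^0}f)(g\,e^{tJ^0})=0$ for $(\Leftarrow)$. Your reverse direction is actually stated more cleanly than the paper's, which invokes the commutativity $r_{J^0}l_{g_t}f = l_{g_t}r_{J^0}f$ without spelling out how that yields the constancy of $t\mapsto f(gg_t)$; the direct ODE argument you give is exactly the right justification.
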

\begin{proof}
Let us write $f=f_1+if_2$.  \\
$(\Rightarrow) $ For $k=\begin{pmatrix} \cos \theta & \sin \theta\\ -\sin \theta & \cos \theta\end{pmatrix}\in \SO_2(\R)$, $f(gk)=f(g)$. Write
$$J^0=\begin{pmatrix} 0& 1\\ -1 & 0\end{pmatrix}=c^{-1}\begin{pmatrix} -i& 0\\ 0 & i\end{pmatrix}c, \qquad c=\tfrac{1}{\sqrt{2}}\begin{bmatrix} 1& i\\ i& 1\end{bmatrix}.$$ Then $$e^{t\begin{pmatrix} 0& 1\\ -1 & 0\end{pmatrix}}=c^{-1}\begin{bmatrix} e^{-it}& 0\\ 0& e^{it}\end{bmatrix}c=\begin{pmatrix} \cos t & \sin t\\ -\sin t & \cos t\end{pmatrix}.$$ Then $[f_i(ge^{t\begin{pmatrix} 0& -1\\ 1 & 0\end{pmatrix}})]=f_i(g)$. Hence $r_{J^0}f=0$.\\
$(\Leftarrow)$ Let  us  write $g_{t}=\begin{pmatrix} \cos t & \sin t\\ -\sin t & \cos t\end{pmatrix}$. If  $r_{J^0}f=0$, then $r_{J^0} l_{g_{t}} f= l_{g_{t}} r_{J^0}f=0$.  So $f_i(gg_t)=f_i(g)$, for any $t$. So $f$ is right $\SO_2(\R)$-invariant.
\end{proof}
For any function $f$ on $\mathcal{H}^{\pm}$, let us define
$$[I(f)](g)=f(gi).$$
\begin{lemma}\label{holo}
 For a complex valued real-$C^{\infty}$-function $f$ on $\mathcal{H}^{\pm}$, $f$ is holomorphic iff $r_{J^-}[I(f)]=0$.
\end{lemma}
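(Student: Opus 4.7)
My plan follows the strategy used for the previous lemma: compute $r_{J^-}[I(f)]$ explicitly via the $\SL_2^{\pm}(\R)$-action on $\mathcal{H}^{\pm}$, and show it equals, up to a nonvanishing factor, the Wirtinger derivative $\partial f/\partial\bar z$. Fix $g=\begin{pmatrix}\alpha&\beta\\ \gamma&\delta\end{pmatrix}\in\SL_2^{\pm}(\R)$ and set $z=gi\in\mathcal{H}^{\pm}$. For a real element $Z=\begin{pmatrix}a&b\\ c&-a\end{pmatrix}\in\mathfrak{sl}_2(\R)$, writing $ge^{tZ}i=g\cdot(e^{tZ}i)$ via the Möbius action and using $\tfrac{d}{dt}(e^{tZ}i)|_{t=0}=2ai+b-c\,i^{2}=2ai+b+c$ together with $\tfrac{d(gw)}{dw}\big|_{w=i}=\det g/(\gamma i+\delta)^{2}$, I obtain the tangent vector
\[
\dot w_{Z}\;:=\;\tfrac{d}{dt}(ge^{tZ}i)\big|_{t=0}\;=\;\tfrac{\det g}{(\gamma i+\delta)^{2}}(2ai+b+c).
\]

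Next I apply the real chain rule in Wirtinger form. Since $F=I(f)$ is complex-valued but real-smooth, for $Z\in\mathfrak{sl}_2(\R)$ one has
\[
r_{Z}F(g)\;=\;\dot w_{Z}\,\tfrac{\partial f}{\partial z}(z)+\overline{\dot w_{Z}}\,\tfrac{\partial f}{\partial\bar z}(z).
\]
Extending by $\C$-linearity to $\mathfrak{sl}_2(\C)$ and decomposing $J^{-}=\tfrac12(e^{+}+e^{-})+\tfrac{i}{2}h=Z_{1}+iZ_{2}$ with $Z_{1},Z_{2}\in\mathfrak{sl}_2(\R)$, I get
\[
r_{J^{-}}F(g)\;=\;\bigl(\dot w_{Z_{1}}+i\dot w_{Z_{2}}\bigr)\tfrac{\partial f}{\partial z}(z)+\bigl(\overline{\dot w_{Z_{1}}}+i\,\overline{\dot w_{Z_{2}}}\bigr)\tfrac{\partial f}{\partial\bar z}(z).
\]
Evaluating with the formula for $\dot w_{Z}$, the coefficient of $\partial f/\partial z$ becomes $\tfrac{\det g}{(\gamma i+\delta)^{2}}(1+i\cdot i)=0$, while the coefficient of $\partial f/\partial\bar z$ becomes $\tfrac{\det g}{\overline{(\gamma i+\delta)^{2}}}(1+i\cdot(-i))=\tfrac{2\det g}{\overline{(\gamma i+\delta)^{2}}}$. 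Hence
\[
r_{J^{-}}[I(f)](g)\;=\;\tfrac{2\det g}{\overline{(\gamma i+\delta)^{2}}}\,\tfrac{\partial f}{\partial\bar z}(gi).
\]

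Since $\det g=\pm1$ and $\gamma i+\delta\neq 0$, the prefactor never vanishes; and since the map $g\mapsto gi$ is surjective from $\SL_2^{\pm}(\R)$ onto $\mathcal{H}^{\pm}$ (the two connected components mapping onto $\mathcal{H}$ and $\mathcal{H}^{-}$ respectively, as recorded just before (\ref{ph})), the condition $r_{J^{-}}[I(f)]\equiv 0$ on $\SL_2^{\pm}(\R)$ is equivalent to $\partial f/\partial\bar z\equiv 0$ on $\mathcal{H}^{\pm}$, i.e.\ to $f$ being holomorphic. The only delicate point in the argument, though essentially bookkeeping, is the interaction between the $\C$-linear extension of the right regular action and the Wirtinger conjugation: conjugation must be applied to each $\dot w_{Z_{i}}$ \emph{before} forming the complex linear combination, which is precisely the reason the holomorphic coefficient cancels for $J^{-}$ (and, by a symmetric computation, would fail to cancel for $J^{+}$). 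This matches the Harish-Chandra picture in which $J^{\pm}$ span the $\pm$-eigenspaces of $\operatorname{ad}(J^{0})$ on $\mathfrak{sl}_2(\C)$.
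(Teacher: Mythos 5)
Your proof is correct and follows essentially the same strategy as the paper: decompose $J^{-}$ into its real and imaginary parts, apply the Wirtinger chain rule, and observe that the coefficient of $\partial f/\partial z$ vanishes while that of $\partial f/\partial\bar z$ does not. The only difference is that you carry out the computation directly at an arbitrary $g$ and thereby make explicit the nonvanishing factor $2\det g/\overline{(\gamma i+\delta)^{2}}$, whereas the paper computes at the identity and transfers by left-translation, eliding that factor (harmlessly, since it never vanishes).
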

\begin{proof}
Let us write:
 $$J^-=\begin{bmatrix} 0 & \tfrac{1}{2}\\\tfrac{1}{2}&0\end{bmatrix}+ i\begin{bmatrix}\tfrac{1}{2} & 0\\ 0&-\tfrac{1}{2}\end{bmatrix}, \quad \quad f(x+iy)=u(x,y)+iv(x,y).$$ Then:
\begin{align*}
e^{\begin{bmatrix} 0 & \tfrac{t}{2}\\ \tfrac{t}{2}&0\end{bmatrix}}&=\begin{bmatrix} 1& 0\\ 0&1\end{bmatrix}+\begin{bmatrix} 0 & \tfrac{t}{2}\\ \tfrac{t}{2}&0\end{bmatrix} +o(t^2), \\
 e^{\begin{bmatrix} t/2 & 0\\ 0&-t/2\end{bmatrix}}&=\begin{bmatrix} 1& 0\\ 0&1\end{bmatrix}+ \begin{bmatrix} t/2 & 0\\ 0&-t/2\end{bmatrix} +o(t^2);
\end{align*}
\begin{align*}
r_{J^-}[I(f)](\begin{bmatrix} 1 & 0\\ 0&1\end{bmatrix})&=\tfrac{d}{dt}f(e^{\begin{bmatrix} 0 & \tfrac{t}{2}\\ \tfrac{t}{2}&0\end{bmatrix}}i)|_{t=0}+ i\tfrac{d}{dt}f(e^{\begin{bmatrix} t/2 & 0\\ 0&-t/2\end{bmatrix}}i)|_{t=0}\\
&=\tfrac{d}{dt}f(\begin{bmatrix} 1 & \tfrac{t}{2}\\ \tfrac{t}{2}&1\end{bmatrix}i)|_{t=0}+ i\tfrac{d}{dt}f(\begin{bmatrix}1+ t/2 & 0\\ 0&1-t/2\end{bmatrix} i)|_{t=0}\\
&=\tfrac{d}{dt}[u(\tfrac{t}{1-t^2/4},1)+iv(\tfrac{t}{1-t^2/4},1)]|_{t=0}+ i\tfrac{d}{dt}[u(0, \tfrac{1+t/2}{1-t/2})+iv(0, \tfrac{1+t/2}{1-t/2})]|_{t=0}\\
&=\tfrac{\partial u}{\partial x }(0,1)+i\tfrac{\partial v}{\partial x }(0,1)+ i\tfrac{\partial u}{\partial y }(0, 1)-i\tfrac{\partial v}{\partial y }(0, 1)\\
&=(\tfrac{\partial }{\partial x }+i\tfrac{\partial }{\partial y })f(i)\\
&=2\tfrac{\partial }{\partial \overline{z} }f(i);
\end{align*}
\begin{align*}
r_{J^-}[I(f)](g)&=r_{J^-}[g^{-1}I(f)](\begin{bmatrix} 1 & 0\\ 0&1\end{bmatrix})\\
&=g^{-1}r_{J^-}[I(f)](\begin{bmatrix} 1 & 0\\ 0&1\end{bmatrix})\\
&=2\tfrac{\partial }{\partial \overline{z} }f(gi).
\end{align*}
\end{proof}

\subsection{  $\widehat{\SL^{\pm}_2}(\R)$}
Let $\widehat{\SL^{\pm}_2}(\R)$ denote the covering group over $\SL_2^{\pm}(\R)$ associated to the above $2$-cocycle $\overline{\overline{C}}_{X^{\ast}}$. Then there exists an exact sequence:
$$1 \longrightarrow T \longrightarrow \widehat{\SL^{\pm}_2}(\R)\longrightarrow \SL_2^{\pm}(\R) \to 1.$$
Let $g_i=\begin{pmatrix} a_i & b_i\\ c_i&d_i\end{pmatrix} \in \SL_2^{\pm}(\R)$. Then:
\begin{equation}\label{overlinecccc}
\begin{split}
\overline{\overline{C}}_{X^{\ast}}(p_{g_1}k_{g_1},p_{g_2}k_{g_2})&=\overline{\overline{C}}_{X^{\ast}}(p_{g_1}k_{g_1},p_{g_2}k_{g_2})
\overline{\overline{C}}_{X^{\ast}}(p_{g_2},k_{g_2})\\
&=\overline{\overline{C}}_{X^{\ast}}(p_{g_1}k_{g_1},p_{g_2})\\
&=\overline{\overline{C}}_{X^{\ast}}(p_{g_1},k_{g_1})\overline{\overline{C}}_{X^{\ast}}(p_{g_1}k_{g_1},p_{g_2})\\
&=\overline{\overline{C}}_{X^{\ast}}(p_{g_1},k_{g_1}p_{g_2})\overline{\overline{C}}_{X^{\ast}}(k_{g_1},p_{g_2})\\
&=\overline{\overline{C}}_{X^{\ast}}(k_{g_1},p_{g_2})\\
&=\overline{C}_{X^{\ast}}(k_{g_1},p_{g_2})\overline{s}(k_{g_1})\overline{s}(p_{g_2}) \overline{s}(k_{g_1}p_{g_2})^{-1}.
\end{split}
\end{equation}
By Lemma \ref{512lmm}, the maps $g\to k_g$,$g\to  p_{g}$  both are  real $C^{\infty}$.\\
If $c\neq 0$,  $$\overline{\overline{C}}_{X^{\ast}}(g_1, g_2)=\overline{s}(k_{g_1})\overline{s}(p_{g_2})\overline{s}(k_{g_1}p_{g_2})^{-1}.$$
In this case, $k_{g_1}p_{g_2}=\begin{pmatrix}\ast & \ast\\ c'&\ast \end{pmatrix} $ with $c'\neq0$. By Lemma \ref{trivial}, we know that $\overline{\overline{C}}_{X^{\ast}}(g_1, g_2)$ is $C^{\infty}$ on  $g_1$ and $g_2$.\\
If $d\neq 0$,
\begin{align*}
\overline{\overline{C}}_{X^{\ast}}(g_1, g_2)&=\overline{\overline{C}}_{X^{\ast}}(\omega,  \omega^{-1}g_1)^{-1}\overline{\overline{C}}_{X^{\ast}}(\omega, \omega^{-1} g_1 g_2)\overline{\overline{C}}_{X^{\ast}}(\omega^{-1} g_1, g_2).
\end{align*}
So it is also   $C^{\infty}$ on $g_1$ and $g_2$. Hence $\widehat{\SL^{\pm}_2}(\R)$ is a smooth manifold, and $\widehat{\SL^{\pm}_2}(\R) \longrightarrow \SL^{\pm}_2(\R)$ is a $C^{\infty}$-map.
\subsection{Extended Weil representation of $\widehat{\SL^{\pm}_2}(\R)$}
The group $\widehat{\SL_2^{\pm}}(\R)$ is generated by
$$[ u(b)= \begin{pmatrix}
  1&b\\
  0 & 1
\end{pmatrix}, 1], [h(a)= \begin{pmatrix}
  a& 0\\
  0 &a^{-1 }
\end{pmatrix},1], [h_{-1}= \begin{pmatrix}
  1& 0\\
  0 &-1
\end{pmatrix},1],  [\omega= \begin{pmatrix}
  0& -1\\
  1 &0
\end{pmatrix},1],  [1,t],$$
for  $t\in T$, $b\in F$, $a\in F^{\times}$.  By (\ref{292inter})(\ref{ovov}), we have:
 \begin{align*}
 \overline{\overline{C}}_{X^{\ast}}(g_1,g_2)&=\widetilde{C}_{X^{\ast}}(g_1, g_2)\widetilde{s}(g_1)\widetilde{s}(g_2) \widetilde{s}(g_1g_2)^{-1}.
  \end{align*}
1) If $g\in \in P^{\pm}_{>0}(\R)$, $\widetilde{s}(g)= 1$.\\
2)  If $g=h(a)$ with $a<0$, $\widetilde{s}(g)=m_{X^{\ast}}(g)\overline{s}(g)= i\overline{s}(g)=-1$.\\
3) If $g=\omega$, $\widetilde{s}(g)=-i$.\\
4) If $g=\omega^{-1}$, $\widetilde{s}(g)=i$.\\
5) If $g=\omega^{-1}u(b)\omega=\begin{pmatrix}
  1& 0\\
  -b &1
\end{pmatrix}  $, then  $\widetilde{s}(g)=m_{X^{\ast}}(g)\overline{s}(g)= e^{\tfrac{\pi i (\sgn b)}{4}} \sqrt{\tfrac{b}{\sqrt{1+b^2}}i+\tfrac{1}{\sqrt{1+b^2}}}$.

Go back to $(\ref{Cdou})$, and modify the action of $\Pi_{\psi} (g)$ on  $L^2(\mu_2 \times \R)$ by $\widetilde{s}(g)$. Let us write
\begin{align}\label{ovov}
\overline{\overline{\Pi}}_{\psi} (g)=\Pi_{\psi} (g)\widetilde{s}(g).
   \end{align}
Then:
 \begin{align*}
 \overline{\overline{\Pi}}_{\psi} (g_1) \overline{\overline{\Pi}}_{\psi} (g_2)&=\Pi_{\psi} (g_1) \widetilde{s}(g_1)\Pi_{\psi} (g_2) \widetilde{s}(g_2)\\
 &=\widetilde{C}_{X^{\ast}}(g_1,g_2) \Pi_{\psi} (g_1g_2)\widetilde{s}(g_1) \widetilde{s}(g_2)\\
 &=\overline{\overline{\Pi}}_{\psi} (g_1g_2)\widetilde{C}_{X^{\ast}}(g_1,g_2) \widetilde{s}(g_1) \widetilde{s}(g_2)\widetilde{s}(g_1g_2)^{-1} \\
 &=\overline{\overline{\Pi}}_{\psi} (g_1g_2)\overline{\overline{C}}_{X^{\ast}}(g_1,g_2).
     \end{align*}
By (\ref{representationsp22})(\ref{representationsp23})(\ref{representationsp24})(\ref{representationsp25}), the representation $\overline{\overline{\Pi}}_{\psi} $ of $\widehat{\SL^{\pm}_2}(\R)$ can be realized on $L^2(\mu_2 \times \R)$ by the following formulas:
\begin{equation}\label{ac1}
 \overline{\overline{\Pi}}_{\psi} [
 u(b)]f([\epsilon, x])=e^{\pi i \epsilon x^2 b} f([\epsilon,x]);
\end{equation}
\begin{equation}\label{ac2}
\overline{\overline{\Pi}}_{\psi}[ h(a)]f([\epsilon, x])=\sgn(a)|a|^{1/2} ( a,\epsilon)_Ff([\epsilon,xa]);
\end{equation}
\begin{equation}\label{ac3}
\overline{\overline{\Pi}}_{\psi}[\omega]f([\epsilon, x])=-i\nu(\epsilon,\omega)\int_{\R} e^{2\pi i \epsilon xy} f([\epsilon,-y]) dy;
\end{equation}
\begin{equation}\label{ac4}
\overline{\overline{\Pi}}_{\psi}[h_{-1}, t]f([\epsilon,x])=tf([-\epsilon, x]).
\end{equation}
\subsection{ Modular forms on $\widehat{\SL^{\pm}_2}(\R)$ }
Recall that a character of $\SO_2(\R)$ has the form:
$$\chi_{n}: \SO_2(\R) \longrightarrow \C^{\times};   \begin{pmatrix} \cos t& \sin t\\ -\sin t & \cos t\end{pmatrix} \longmapsto e^{-i
 nt}=[i\sin (-t)+\cos(- t)]^n.$$
Note that there exists a group embedding map:
$$\SO_2(\R) \longrightarrow  \widehat{\SL^{\pm}_2}(\R); k \longmapsto [k,1].$$
Let $\Gamma$ denote a discrete group of $\SL^{\pm}_2(\R)$, and let $\widehat{\Gamma}$ be inverse image of it in $\widehat{\SL_2^{\pm}}(\R)$.  Let $\delta_{\Gamma}$ denote a character of $\Gamma$. Let $\widehat{\delta_{\Gamma}}$ denote its extending representation on $\widehat{\Gamma}$.
By \cite[pp.184-188]{LiVe}, let $\mathcal{M}( \widehat{\SL^{\pm}_2}(\R),  J^-, \chi_n, \widehat{\delta_{\Gamma}})$ denote the space of  functions $f: \widehat{\SL^{\pm}_2}(\R) \longrightarrow \C$ such that
\begin{itemize}
\item[(1)] $f$ is analytic on $\widehat{\SL^{\pm}_2}(\R)$,
\item[(2)] $f(g k(t))=\chi_n(t) f(g )$,
\item[(3)] $r_{J^-}(f)=0$,
\item[(4)] $f(\gamma g)=\widehat{\delta_{\Gamma}}(\gamma)f(g)$,
\end{itemize}
 for $g \in \widehat{\SL^{\pm}_2}(\R)$, and $k(t)= \begin{pmatrix} \cos t& \sin t\\ -\sin t & \cos t\end{pmatrix}$,  $\gamma\in \widehat{\Gamma}$.
  Let us extend  $\chi_n$ to  a function on $\widehat{\SL^{\pm}_2}(\R)$ from Lemma \ref{512lmm}. For $g=\begin{pmatrix} a & b \\ c& d\end{pmatrix} \in \SL_2^{\pm}(\R)$,  define
$$\chi_{n}([g, \epsilon])=\epsilon(ci+d)^{n}.$$
If write $g=p_gk_g$, then $$\chi_{n}([p_g, \epsilon])=\epsilon(\det g \sqrt{c^2+d^2})^n, \quad \quad \chi_n([k_g, 1])= (\frac{\det g}{\sqrt{c^2+d^2}})^n(ci+d)^{n},$$
$$\chi_{n}([g, \epsilon])=\chi_{n}([p_g,\epsilon][k_g,1 ])=\chi_{n}([p_g, \epsilon]) \chi_n([k_g, 1]).$$
\begin{lemma}
For $[g, \epsilon]\in \widehat{\SL^{\pm}_2}(\R)$, $k(t)= \begin{pmatrix} \cos t& \sin t\\ -\sin t & \cos t\end{pmatrix} \in \SO_2(\R)$, we have:
$$\chi_{n}([g, \epsilon] [k(t),1])= e^{- i nt}\chi_{n}([g, \epsilon]).$$
\end{lemma}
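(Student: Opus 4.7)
The plan is to reduce the computation to matrix multiplication in $\SL_2^{\pm}(\R)$ using the fact that the cocycle $\overline{\overline{C}}_{X^{\ast}}$ vanishes on the right factor when that factor is in $\SO_2(\R)$. Precisely, by Lemma \ref{cx}(3), we have $\overline{\overline{C}}_{X^{\ast}}(g, k(t)) = 1$, so the multiplication law in $\widehat{\SL^{\pm}_2}(\R)$ gives
$$ [g,\epsilon]\cdot [k(t),1] = [g\,k(t),\,\epsilon]. $$
Hence the left-hand side of the desired identity is simply $\chi_n([gk(t),\epsilon])$, and the problem is reduced to computing the bottom row of $gk(t)$.

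Writing $g = \begin{pmatrix} a & b \\ c & d \end{pmatrix}$, the bottom row of $gk(t)$ is $(c\cos t - d\sin t,\; c\sin t + d\cos t)$. I would then compute
$$ (c\cos t - d\sin t)\,i + (c\sin t + d\cos t) = c(\sin t + i\cos t) + d(\cos t - i\sin t). $$
The key observation is the pair of identities $\cos t - i\sin t = e^{-it}$ and $\sin t + i\cos t = i\,e^{-it}$, which together factor the right-hand side as $e^{-it}(ci + d)$.

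Substituting back into the definition $\chi_n([h,\epsilon]) = \epsilon(c_h i + d_h)^n$, this yields
$$ \chi_n([gk(t),\epsilon]) = \epsilon\bigl(e^{-it}(ci+d)\bigr)^n = e^{-int}\,\epsilon(ci+d)^n = e^{-int}\,\chi_n([g,\epsilon]), $$
which is the stated equality. There is no real obstacle here: the content is entirely in the cocycle triviality from Lemma \ref{cx}(3) together with a one-line trigonometric identity. The only point to be careful about is that the formula $\chi_n([g,\epsilon]) = \epsilon(ci+d)^n$ is well-defined as written (so we do not need to revisit the branch-of-square-root issues used elsewhere for half-integral weights), which is why the argument is clean for integer $n$.
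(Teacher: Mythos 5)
Your proof is correct, and it takes a slightly different route from the paper. After the common first step (using Lemma~\ref{cx}(3) to reduce $[g,\epsilon][k(t),1]=[gk(t),\epsilon]$), the paper proceeds via the Iwasawa-type factorization $g=p_g k_g$: it writes $gk(t)=p_g(k_gk(t))$ with $k_gk(t)\in\SO_2(\R)$, applies multiplicativity of $\chi_n$ on the $\SO_2(\R)$ factor, and reads off $\chi_n([k(t),1])=e^{-int}$ from the definition. You instead apply the closed formula $\chi_n([h,\epsilon])=\epsilon(c_h i+d_h)^n$ directly to $h=gk(t)$ and verify the automorphy-factor identity $(c\cos t-d\sin t)i+(c\sin t+d\cos t)=e^{-it}(ci+d)$ by a one-line trigonometric computation. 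Both arguments rest on the same cocycle triviality; yours avoids invoking the $p_g k_g$ decomposition and the implicit fact that $p_{gk(t)}=p_g$, which makes it somewhat more elementary and self-contained, while the paper's version makes the structural reason transparent (right $\SO_2(\R)$-covariance plus multiplicativity of $\chi_n$ on the maximal compact). Your closing remark about branches is also well-placed: since $n\in\Z$, raising to the $n$-th power commutes with multiplication by $e^{-it}$ without any branch ambiguity, so the computation is unconditional.
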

\begin{proof}
\begin{align*}
[g, \epsilon] [k(t),1]&=[gk(t), \overline{\overline{C}}_{X^{\ast}}(g,k(t)) \epsilon]\\
&=[gk(t),  \epsilon]\\
&=[p_g, \epsilon][k_g, 1][k(t), 1].
\end{align*}
 \begin{align*}
\chi_{n}([g, \epsilon] [k(t),1])&=\chi_{n}([gk(t), \epsilon])\\
&=\chi_{n}([p_g, \epsilon][k_gk(t), 1])\\
&=\chi_{n}([p_g, \epsilon])\chi_{n}([k_g, 1])\chi_{n}([k(t), 1])\\
&=\chi_{n}([g, \epsilon])  \chi_{n}([k(t), 1]).
 \end{align*}
\end{proof}
Note that $\chi_n$ is a $C^{\infty}$-function on $\widehat{\SL^{\pm}_2}(\R)$.  Similarly as \cite[p.186, 2.3.14, Lemma]{LiVe}, we also have:
\begin{lemma}
$r_{J^-}(\chi_n)=0$.
\end{lemma}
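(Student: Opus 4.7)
The plan is to exploit two observations about $J^-$: the column vector $\begin{pmatrix} i \\ 1 \end{pmatrix}$ is annihilated by $J^-$, and the M\"obius action of $e^{tJ^-}$ on $\mathcal{H}^\pm$ fixes $i$ to first order in $t$. Taken together, these make each of the two factors appearing in the explicit formula for $\chi_n$ infinitesimally constant along the curve $\hat g_0 \cdot [e^{tJ^-}, 1]$, once one interprets $r_{J^-}$ by $\mathbb{C}$-linear extension to $\mathfrak{sl}_2(\mathbb{C})$ (e.g.\ writing $J^- = A + iB$ with $A = \tfrac{1}{2}(e^+ + e^-)$ and $B = \tfrac{1}{2} h$ in $\mathfrak{sl}_2(\mathbb{R})$, and using $r_{J^-} = r_A + i r_B$).

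Concretely, for $\hat g_0 = [g_0, \epsilon_0]$ with $g_0 = \begin{pmatrix} a & b \\ c & d \end{pmatrix}$, the multiplication law in the cover yields
\begin{equation*}
\chi_n\bigl(\hat g_0 \cdot [e^{tJ^-}, 1]\bigr) \;=\; \epsilon_0 \cdot \overline{\overline{C}}_{X^{\ast}}\bigl(g_0, e^{tJ^-}\bigr) \cdot \bigl(c(t)i + d(t)\bigr)^n,
\end{equation*}
where $(c(t), d(t)) = (c, d)\, e^{tJ^-}$ is the bottom row of $g_0 e^{tJ^-}$. For the $\bigl(c(t)i + d(t)\bigr)^n$ factor one computes
\begin{equation*}
\tfrac{d}{dt}\bigl(c(t)i + d(t)\bigr)\bigr|_{t=0} \;=\; (c, d)\, J^- \begin{pmatrix} i \\ 1 \end{pmatrix} \;=\; \tfrac{1}{2}(c, d) \begin{pmatrix} i^2 + 1 \\ i - i \end{pmatrix} \;=\; 0,
\end{equation*}
so this factor is infinitesimally constant at $t = 0$.

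For the cocycle factor I would specialize the identity displayed just before the definition of $\overline{j}$, namely $\overline{\overline{C}}_{X^{\ast}}(g_1, g_2) = \overline{j}(g_1, g_2 z)\,\overline{j}(g_2, z)\,\overline{j}(g_1 g_2, z)^{-1}$, to the point $z = i$. Via the diffeomorphism $\overline{i}$ of (\ref{ph}) one sees $p_i = I$, hence $\overline{j}(g, i) = \overline{\overline{C}}_{X^{\ast}}(g, I) = 1$ for every $g$, and the identity collapses to $\overline{\overline{C}}_{X^{\ast}}(g_0, e^{tJ^-}) = \overline{j}(g_0, e^{tJ^-} \cdot i)$. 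Smoothness of $\overline{j}$ in both arguments (from the smoothness of $z \mapsto p_z$ and of the cocycle) and the chain rule then reduce the vanishing of $\tfrac{d}{dt}\overline{\overline{C}}_{X^{\ast}}(g_0, e^{tJ^-})|_{t=0}$ to the first-order M\"obius-fixing of $i$ by $e^{tJ^-}$. For any $Z = \begin{pmatrix} \alpha & \beta \\ \gamma & \delta \end{pmatrix}$ the derivative of the M\"obius action at the identity is $-\gamma z^2 + (\alpha - \delta)z + \beta$; substituting $Z = J^-$ and $z = i$ gives $-\tfrac{1}{2}(-1) + i \cdot i + \tfrac{1}{2} = 0$. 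Combining both vanishing contributions, $r_{J^-}(\chi_n)(\hat g_0) = 0$. The only technicality is carrying the $\mathbb{C}$-linear extension and the chain rule through cleanly and confirming smoothness of $\overline{j}$ in $z$; this is routine given the smooth structure on $\widehat{\SL^{\pm}_2}(\R)$ already established in the paper.
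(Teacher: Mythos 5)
Your treatment of the $(c(t)i+d(t))^n$ factor is correct: $w\mapsto w^n$ is a $\mathbb{C}$-polynomial in $w=ci+d$, and since $(c,d)J^-\bigl(\begin{smallmatrix}i\\1\end{smallmatrix}\bigr)=0$, the $\mathbb{C}$-linear Jacobian annihilates the combined velocity $v_A+iv_B$ of the bottom row.

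The genuine gap is in the cocycle factor. After specializing to $z=i$ and using $\overline j(\cdot,i)\equiv 1$, you correctly obtain $\overline{\overline{C}}_{X^{\ast}}(g_0, e^{tJ^-})=\overline j(g_0,e^{tJ^-}\!\cdot i)$. But the claim that ``smoothness of $\overline{j}$ and the chain rule reduce the vanishing to the first-order M\"obius-fixing of $i$'' is not justified. The actual M\"obius velocities are $v_A=\tfrac{d}{dt}(e^{tA}\!\cdot i)|_0 =1$ and $v_B=\tfrac{d}{dt}(e^{tB}\!\cdot i)|_0 =i$, both nonzero; the identity $v_A+iv_B=0$ merely records $v_B=iv_A$ under the complex structure of $T_i\mathcal{H}^{\pm}$. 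Since $z\mapsto\overline j(g_0,z)$ is a priori only $\mathbb{R}$-differentiable, the chain rule gives
\begin{equation*}
\tfrac{d}{dt}\overline{\overline{C}}_{X^{\ast}}(g_0, e^{tJ^-})\big|_{t=0}
=D\overline j(g_0,i)[v_A]+i\,D\overline j(g_0,i)[v_B]
=\partial_x\overline j(g_0,i)+i\,\partial_y\overline j(g_0,i)
=2\,\partial_{\bar z}\overline j(g_0,i),
\end{equation*}
which vanishes \emph{iff} $\overline j(g_0,\cdot)$ is holomorphic at $i$. Your proof never verifies this, and it in fact fails when $k_{g_0}\neq\pm I$: tracing through the last line of (\ref{overlinecccc}) together with $J(g,i)=\overline s(g)^{-1}$ on $\SO_2(\R)$ and the two lemmas on $J(g,z)$ in this section, one finds that $\overline j(g_0,z)$ equals, up to a locally constant factor, $\sqrt{w(z)}\big/\sqrt{|w(z)|}$ where $w(z)=-\sin\theta_0\,z+\cos\theta_0$ is the linear automorphy factor of $k_{g_0}$; this has modulus one but $\partial_{\bar z}\bigl(\sqrt{w}/\sqrt{|w|}\bigr)\neq 0$ as soon as $\sin\theta_0\neq 0$. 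So the cocycle contribution does not vanish, and the step ``combining both vanishing contributions'' is where the argument breaks.

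For comparison, the paper's own proof does not factor $\chi_n(\hat g_0[e^{tJ^-},1])$ at all. It instead writes $J^-=\tfrac12(ih+2e^+-J^0)$, computes $r_h\chi_n(I)=-n$, $r_{e^+}\chi_n(I)=0$, $r_{J^0}\chi_n(I)=-in$ at the identity where the cocycle term is trivial, combines them, and then appeals to the factorization $\chi_n([g,\epsilon])=\chi_n([p_g,\epsilon])\,\chi_n([k_g,1])$ to pass to a general element. This is a substantively different strategy from yours, and it does not rely on any holomorphy of $\overline j$.
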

\begin{proof}
We follow the proof of Lemma 2.3.14 in \cite{LiVe}. Recall the notations $h,e^+,e^-,J^0,J^+,J^-$ from (\ref{hefj}). Let $I=\begin{bmatrix} 1 & 0\\ 0& 1\end{bmatrix}$. Then:
$$r_{e}(\chi_n)(I)= \frac{d}{dt} (\chi_n([e^{\begin{bmatrix} 0 & t\\ 0& 0\end{bmatrix}}, 1]))|_{t=0}=\frac{d}{dt} (\chi_n(\begin{bmatrix} 1 & t\\ 0& 1\end{bmatrix}))|_{t=0}=0;$$
$$r_{h}(\chi_n)(I)= \frac{d}{dt} (\chi_n([e^{\begin{bmatrix} t & 0\\ 0& -t\end{bmatrix}}, 1]))|_{t=0}=\frac{d}{dt} (e^{-nt})|_{t=0}=-n;$$
$$r_{J^0}(\chi_n)(I)=\frac{d}{dt}\chi_n([ \begin{pmatrix} \cos t & \sin t\\ -\sin t & \cos t\end{pmatrix},1])|_{t=0}=-in.$$
Note that $J^-=\frac{1}{2}(ih+2e^+-J^0)$. Hence $r_{J^-}(\chi_n)(I)=-in+in=0$.  For the general $[g, \epsilon]\in \widehat{\SL^{\pm}_2}(\R)$, we have
$$\chi_n([g, \epsilon])=\chi_{n}([p_g, \epsilon]) \chi_n([k_g, 1])=\epsilon(\det g \sqrt{c^2+d^2})^n\chi_n([k_g, 1])$$
\end{proof}
Let $f \in \mathcal{M}( \widehat{\SL^{\pm}_2}(\R), J^-, \chi_n, \widehat{\delta_{\Gamma}})$. We can  define a function  on $\widehat{\SL^{\pm}_2}(\R)$  twisted by $\chi_{n}$ as follows:
$$f_{n}=f\chi_{n}^{-1}.$$
Then $f_n$ is $\SO_2(\R)$-invariant and it satisfies the above conditions (1)(2)(3)(4). Note that  $\widehat{\SL_2^{\pm}}(\R)=\widehat{P_{>0}^{\pm}}(\R)\SO_2(\R)$,  $ \widehat{P_{>0}^{\pm}}(\R) \cap \SO_2(\R)=I$.
So there exists a bijective map: $$\overline{i}: \widehat{P^{\pm}_{>0}}(\R) \longrightarrow \mathcal{H}^{\pm}\times T;  [g, t]=[\begin{pmatrix} a& b \\ 0 &(\det g)   a^{-1}\end{pmatrix}, t] \longmapsto (ab\det g + ia^2 \det g, t).$$
For the above $f_n$, through $\overline{i}$, it defines a function on $\mathcal{H}^{\pm}\times T$.  By Lemma \ref{holo}, it is holomorphic on $\mathcal{H}^{\pm}$.
\subsection{Multiplier factors}
For $g=\begin{pmatrix} a&b\\c&d\end{pmatrix}\in\SL^{\pm}_2(\R)$, and $z\in \mathcal{H}^{\pm}$, let us  consider the analytic function on $\mathcal{H}^{\pm}$:
$$J(g, -): z \longrightarrow \sqrt{\det(g)(cz+d)} .$$
  Note that in our choice,  $\sqrt{|t|}=|t|^{\tfrac{1}{2}}$, and $\sqrt{e^{i\theta}}=e^{\tfrac{i\theta}{2}}$, for $-\pi< \theta<\pi$, and if $c=0$, $d=-1$, $\sqrt{-1}=-i$ in our choice. If $g\in \SO_2(\R)$, then $J(g, i)=\overline{s}(g)^{-1}$. 
  \begin{lemma}
  $J(g,z)=J(p_g, z)J(k_g,z)$.
  \end{lemma}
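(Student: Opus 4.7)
The plan is a direct computation based on the explicit formulas for $p_g$ and $k_g$ given in Lemma \ref{512lmm}. Since the branch of $\sqrt{\,}$ is specified (principal, with $\sqrt{-1}=-i$), the only real content is checking that the factorization respects the chosen branch; all of this is built into the shape of the Iwasawa decomposition, where $p_g$ has positive lower-right entry (up to the sign $\det g$) and $k_g$ has determinant $1$.

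First I would evaluate $J(p_g,z)$. Writing
\[
p_g=\begin{pmatrix} \tfrac{1}{\sqrt{c^2+d^2}} & \det g\,\tfrac{bd+ac}{\sqrt{c^2+d^2}} \\ 0 & \det g\,\sqrt{c^2+d^2}\end{pmatrix},
\]
we get $\det p_g=\det g$ and the lower row is $(0,(\det g)\sqrt{c^2+d^2})$, so
\[
J(p_g,z)=\sqrt{(\det g)\cdot(\det g)\sqrt{c^2+d^2}}=\sqrt{\sqrt{c^2+d^2}}=(c^2+d^2)^{1/4},
\]
which is a \emph{positive real} number, independent of $z$. Next I would compute $J(k_g,z)$: since $\det k_g=(\det g)^2=1$ and the lower row of $k_g$ is $((\det g)\tfrac{c}{\sqrt{c^2+d^2}},(\det g)\tfrac{d}{\sqrt{c^2+d^2}})$,
\[
J(k_g,z)=\sqrt{\,\det g\cdot\tfrac{cz+d}{\sqrt{c^2+d^2}}\,}.
\]

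Multiplying the two and using that $(c^2+d^2)^{1/4}>0$ allows it to enter under the square root without a branch ambiguity (for the principal branch, $\sqrt{\alpha}\sqrt{\beta}=\sqrt{\alpha\beta}$ whenever $\alpha>0$), so
\[
J(p_g,z)J(k_g,z)=\sqrt{\,(c^2+d^2)^{1/2}\cdot\det g\cdot\tfrac{cz+d}{\sqrt{c^2+d^2}}\,}=\sqrt{\det g\,(cz+d)}=J(g,z).
\]

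The only possible obstacle is the branch bookkeeping, and it is handled precisely because the ``parabolic'' factor $J(p_g,z)$ turns out to be a positive real, so the multiplicativity of $\sqrt{\,}$ holds for the principal branch without needing further correction factors. I would also note the boundary case $c=0$, $d<0$, where one uses the stipulated convention $\sqrt{-1}=-i$ to confirm consistency.
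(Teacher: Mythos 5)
Your proof is correct and follows the same approach as the paper's: both compute $J(p_g,z)$ and $J(k_g,z)$ directly from the explicit formulas in Lemma~\ref{512lmm} and multiply. The paper's proof is terser (it merely records the two values and leaves the multiplication implicit), whereas you usefully make explicit the key point that $J(p_g,z)=(c^2+d^2)^{1/4}$ is a positive real, so the principal-branch multiplicativity $\sqrt{\alpha}\sqrt{\beta}=\sqrt{\alpha\beta}$ (for $\alpha>0$) applies without any correction factor.
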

  \begin{proof}
  By Lemma \ref{512lmm}, $J(p_g, z)=\sqrt[4]{c^2+d^2}$, $J(k_g,z)=\sqrt{(\det g)(\tfrac{c}{\sqrt{c^2+d^2}} z+\tfrac{d}{\sqrt{c^2+d^2}})}$.
  \end{proof}
\begin{lemma}
  $J(gp_z,i)=J(g,z)J(p_z,i)$.
  \end{lemma}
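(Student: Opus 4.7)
\emph{Plan.} The plan is to verify the identity by a direct matrix calculation at the level of squares, and then pin down the branch by a continuity argument. First, I write $p_z = \begin{pmatrix} a_z & b_z \\ 0 & \epsilon_z a_z^{-1} \end{pmatrix}$ with $\epsilon_z = \det p_z$ and $a_z > 0$. The defining condition $p_z \cdot i = z$ (from the map $\overline{i}$ in (\ref{ph})) immediately yields $\epsilon_z = \sgn(\Im z)$, $a_z = \sqrt{|\Im z|}$, and $b_z = \epsilon_z \Re(z)/a_z$. In particular, $J(p_z, i)^2 = \det(p_z) \cdot (\epsilon_z a_z^{-1}) = a_z^{-1}$.

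Next, for $g = \begin{pmatrix} a & b \\ c & d\end{pmatrix}$ the bottom row of $gp_z$ is $(ca_z,\ cb_z + d\epsilon_z a_z^{-1})$, so
\begin{equation*}
J(gp_z, i)^2 \;=\; \det(gp_z)\bigl(ca_z \cdot i + cb_z + d\epsilon_z a_z^{-1}\bigr) \;=\; \epsilon_z a_z^{-1}\det(g)\bigl(ca_z^2\, i + cb_z a_z + d\epsilon_z\bigr).
\end{equation*}
A direct check in the two cases $\epsilon_z = \pm 1$, using $z = \epsilon_z^{-1}(a_z b_z + a_z^2 i)$, shows that $ca_z^2 i + cb_z a_z + d\epsilon_z = \epsilon_z(cz + d)$. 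Substituting gives $J(gp_z, i)^2 = a_z^{-1}\det(g)(cz + d) = J(g, z)^2 \, J(p_z, i)^2$, which is the identity up to sign.

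The remaining task is to pin down the branch of the square root. Both $J(gp_z, i)$ and $J(g, z)\, J(p_z, i)$ depend continuously on $(g, z) \in \SL_2^{\pm}(\R) \times \mathcal{H}^{\pm}$ thanks to the conventions fixed in the section ($\sqrt{|t|} = |t|^{1/2}$, $\sqrt{e^{i\theta}} = e^{i\theta/2}$ for $-\pi < \theta < \pi$, and $\sqrt{-1} = -i$ at $c = 0,\, d = -1$). Hence their quotient is a locally constant $\pm 1$-valued function on each connected component of the domain, and it suffices to check equality at one test point per component: at $(g, z) = (I, i)$ both sides are $1$, and at $(g, z) = (h_{-1}, -i)$ one has $J(h_{-1} p_{-i}, i) = J(p_z, i) = 1/\sqrt{a_z}$ and $J(h_{-1}, -i) = \sqrt{-1 \cdot 1} = \ldots$, which can be directly verified to match.

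The main obstacle, and really the only subtle point, is the bookkeeping across the branch cut: the convention $\sqrt{e^{i\theta}} = e^{i\theta/2}$ is one-sided at $\theta = \pi$, so one must ensure no sign jump is introduced as $\det(g)(cz + d)$ sweeps across $(-\infty, 0]$ and that the same convention governs both $J(gp_z, i)$ and the product $J(g, z)\, J(p_z, i)$. Since $a_z > 0$ is real, the factor $J(p_z, i) = 1/\sqrt{a_z}$ is continuous and positive, so all branch jumps come from the $J(g, z)$ factor, which matches the corresponding jump in $J(gp_z, i)$ because the argument $\det(gp_z) \cdot (cb_z + d\epsilon_z a_z^{-1} + ca_z i)$ differs from $\det(g)(cz + d)$ only by the positive real factor $a_z^{-1}$, on the nose (not just up to sign), by the computation above.
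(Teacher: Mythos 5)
Your computation is on target and, after squaring, you recover the same identity of arguments as the paper's proof; the core observation in your final paragraph — that the complex number under the square root in $J(gp_z,i)$ equals $a_z^{-1}$ (a \emph{positive} real) times the complex number $\det(g)(cz+d)$ under the square root in $J(g,z)$ — is exactly the paper's engine, since $\sqrt{tw}=\sqrt{t}\sqrt{w}$ for $t>0$ real with the stated branch convention.

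However, the middle paragraph is both unnecessary and incorrectly stated. The claim that $J(gp_z,i)$ and $J(g,z)J(p_z,i)$ ``depend continuously on $(g,z)$'' is false under the section's conventions: $\sqrt{\cdot}$ is one-sided at $\theta=\pi$, so $J(g,z)$ jumps by $-1$ whenever $\det(g)(cz+d)$ crosses the negative real axis, and likewise for $J(gp_z,i)$. Consequently ``their quotient is locally constant on each connected component'' is not established at that stage. Your last paragraph in effect repairs this by showing the two arguments differ by a positive real factor on the nose, so the jumps coincide — but once you know that, no continuity or check-at-a-point argument is needed at all: you already have $J(gp_z,i)=\sqrt{a_z^{-1}\det g(cz+d)}=\sqrt{a_z^{-1}}\sqrt{\det g(cz+d)}=J(p_z,i)J(g,z)$ directly, which is precisely how the paper concludes. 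I would drop the ``square then fix the sign by continuity'' scaffolding and go straight to the direct factorization, as the paper does.
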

  \begin{proof}
  Write $p_z=\begin{pmatrix} y&x \\ 0 &(\det p_z)   y^{-1}\end{pmatrix}$. Then: 
  $$z=yx\det p_z + iy^2 \det p_z;$$
   $$J(p_z,i)=\sqrt{y^{-1}};$$
   $$ J(g,z)=\sqrt{\det g(cz+d)};$$
  $$gp_z=\begin{pmatrix} ay&ax+by^{-1}\det p_z \\ cy &cx+dy^{-1}\det p_z\end{pmatrix};$$
  \begin{align*}
  J(gp_z,i)&=\sqrt{(cyi+cx+dy^{-1}\det p_z) \det (gp_z)}\\
  &=\sqrt{y^{-1}\det g(cz+d)}\\
  &=J(g,z)J(p_z,i).
  \end{align*}
  \end{proof}
 
\subsection{Modular forms from an extended Weil representation}
 Let $\overline{\SL_2}^{\pm}(\R)$ denote the two-degree covering group over $\SL_2^{\pm}(\R)$ associated to the above $2$-cocycle $\overline{C}_{X^{\ast}}$.  Go back to $(\ref{Cdou})$, and modify the action of $\Pi_{\psi} (g)$ on  $L^2(\mu_2 \times \R)$ by $\overline{s}(g)$. Let us write
\begin{align}\label{ovov}
\overline{\Pi}_{\psi} (g)=\Pi_{\psi} (g)m_{X^{\ast}}(g).
   \end{align}
Then:
 \begin{align*}
 \overline{\Pi}_{\psi} (g_1) \overline{\Pi}_{\psi} (g_2)&=\overline{\Pi}_{\psi} (g_1g_2)\overline{C}_{X^{\ast}}(g_1,g_2).
     \end{align*}
   Note that $\overline{\Pi}_{\psi}=\overline{\overline{\Pi}}_{\psi} \overline{s}(g)^{-1}$. By (\ref{ac1})-(\ref{ac4}),  the representation $\overline{\Pi}_{\psi} $ of $\overline{\SL^{\pm}_2}(\R)$ can be realized on $L^2(\mu_2 \times \R)$ by the following formulas:
\begin{equation}\label{ac11}
 \overline{\Pi}_{\psi} [
 u(b)]f([\epsilon, x])=e^{\pi i \epsilon x^2 b} f([\epsilon,x]);
\end{equation}
\begin{equation}\label{ac21}
\overline{\Pi}_{\psi}[ h(a)]f([\epsilon, x])=e^{\tfrac{\pi i}{4}[1-\sgn(a)]}|a|^{1/2} ( a,\epsilon)_Ff([\epsilon,xa]);
\end{equation}
\begin{equation}\label{ac31}
\overline{\Pi}_{\psi}[\omega]f([\epsilon, x])=e^{-\tfrac{\pi i}{4}}\nu(\epsilon,\omega)\int_{\R} e^{2\pi i \epsilon xy} f([\epsilon,-y]) dy;
\end{equation}
\begin{equation}\label{ac41}
\overline{\Pi}_{\psi}[h_{-1}, t]f([\epsilon,x])=tf([-\epsilon, x]).
\end{equation}
Recall the notations $h,e^+,e^-,J^0,J^+,J^-$ from (\ref{hefj}). Write  $f([1, x])=f_1(x)$ and $f([-1, x])=f_2(x)$. Assume $f_i\in S(\R)$. Then:\\
1) Note that $e^{th}=\begin{pmatrix}
  e^t&0\\
  0&e^{-t}
\end{pmatrix}$ and $\overline{C}_{X^{\ast}}(e^{t_1h}, e^{t_2h})=1$. So let us consider the one parameter group homomorphism:
$$\exp: th \longrightarrow [\begin{pmatrix}
  e^t&0\\
  0&e^{-t}
\end{pmatrix}, 1].$$
\begin{align*}
 d\overline{\Pi}_{\psi} (h)f([\epsilon, x])&=\frac{d}{dt}\Big\{ \overline{\Pi}_{\psi}([e^{th}, 1]) f([\epsilon, x])\Big\}\Big|_{t=0} \\
 &= \frac{d}{dt}\Big\{e^{\tfrac{t}{2}} f([\epsilon, xe^t])\Big\}\Big|_{t=0}\\
&= \{ \tfrac{1}{2}+x \tfrac{d }{dx}\}f([\epsilon, x]).
 \end{align*}
2) Note that $e^{te^+}=\begin{pmatrix}
  1&t\\
  0&1
\end{pmatrix}$ and $\overline{C}_{X^{\ast}}(e^{t_1e^+}, e^{t_2e^+})=1$. So let us consider the one parameter group homomorphism:
$$\exp: te^+ \longrightarrow [\begin{pmatrix}
  1&t\\
  0&1
\end{pmatrix}, 1].$$
 \begin{align*}
 d\overline{\Pi}_{\psi} (e^+)f([\epsilon, x])&=\frac{d}{dt}\Big\{ \Pi_{\psi}([e^{te^+}, 1]) f([\epsilon, x])\Big\}\Big|_{t=0} \\
 &=\frac{d}{dt}\Big\{ e^{\pi i \epsilon x^2 t} f([\epsilon, x]) \Big\}\Big|_{t=0} \\
&=\pi i\epsilon x^2 f([\epsilon, x]).
 \end{align*}
3)  Note that $\exp(te^-)=\begin{pmatrix}
  1& 0\\
  t&1
\end{pmatrix}=\omega^{-1}\begin{pmatrix}
  1&-t\\
  0&1
\end{pmatrix} \omega$.  Moreover, we have:
\begin{align*}
\overline{C}_{X^{\ast}}(\omega^{-1}, u(-t))&=\overline{c}_{X^{\ast}}(\omega^{-1}, u(-t))\\
 &=(x(\omega^{-1}), x(u(-t)))_\R(-x(\omega^{-1})x(u(-t)), x(\omega^{-1}u(-t)))_\R\\
 &=1;
 \end{align*}
 \begin{align*}
\overline{C}_{X^{\ast}}(\omega^{-1}u(-t), \omega)&=\overline{c}_{X^{\ast}}(\omega^{-1}u(-t), \omega)\\
 &=(x(\omega^{-1}u(-t)), x(\omega))_\R(-x(\omega^{-1}u(-t))x(\omega), x(u_-(t))_\R\\
 &=1;
 \end{align*}
 $$ \overline{C}_{X^{\ast}}(\omega^{-1} , \omega)=1;$$
 \begin{align}
\overline{C}_{X^{\ast}}(u_-(t_1), u_-(t_2))&=\overline{c}_{X^{\ast}}(u_-(t_1), u_-(t_2))\\
 &=(x(u_-(t_1)), x(u_-(t_2))_\R(-x(u_-(t_1))x(u_-(t_2)), x(u_-(t_1+t_2))_\R\label{qqq};
 \end{align}
 \begin{itemize}
\item If $t_1t_2=0$, $(\ref{qqq})=1$.
\item  If $t_1t_2>0$, assume $t_1>0$ and $t_2<0$; then  $(x(u_-(t_1)), x(u_-(t_2))_\R=1$, $(-x(u_-(t_1))x(u_-(t_2)), x(u_-(t_1+t_2))_\R=(-t_1t_2,  x(u_-(t_1+t_2))_\R=1$. So $(\ref{qqq})=1$.
\item  If $t_1>0$, $t_2>0$, $(x(u_-(t_1)), x(u_-(t_2))_\R=(t_1, t_2)_\R=1$, $(-x(u_-(t_1))x(u_-(t_2)), x(u_-(t_1+t_2))_\R=(-t_1t_2, t_1+t_2)_\R=1$, so $(\ref{qqq})=1$.
 \item If $t_1<0$, $t_2<0$, $(x(u_-(t_1)), x(u_-(t_2))_\R=(t_1, t_2)_\R=-1$, $(-x(u_-(t_1))x(u_-(t_2)), x(u_-(t_1+t_2))_\R=(-t_1t_2, t_1+t_2)_\R=-1$. Therefore,  $(\ref{qqq})=1$.
 \end{itemize}
Let us consider the one parameter group homomorphism:
$$\exp: te^- \longrightarrow [\omega^{-1}, 1][\begin{pmatrix}
  1&-t\\
  0&1
\end{pmatrix}, 1][\omega, 1]=[e^{te^-}, 1].$$
a)\begin{align*}
\overline{\Pi}_{\psi} (\exp(te^-)) f_1(x)&= \overline{\Pi}_{\psi} ([-\omega, 1])\{\overline{\Pi}_{\psi}([
 u(-t), 1])\overline{\Pi}_{\psi} ([\omega, 1])f_1\}(x)\\
 &=ie^{-\tfrac{\pi i}{4}}\nu(1, \omega)\int_{\R} e^{2\pi i xy} \{\overline{\Pi}_{\psi}([
 u(-t), 1])\overline{\Pi}_{\psi} ([\omega, 1])f_1\}( y) dy\\
 &=ie^{-\tfrac{\pi i}{4}}\int_{\R} e^{2\pi i xy} e^{-\pi i y^2t}\{\overline{\Pi}_{\psi} ([\omega, 1])f_1\}( y) dy\\
 &=ie^{-\tfrac{\pi i}{4}}[e^{-\tfrac{\pi i}{4}}]\int_{\R} e^{2\pi i xy} e^{-\pi i y^2t}\widehat{f_1}(y) dy\\
 &=\int_{\R} e^{-\pi i t y^2} \widehat{f_1}(y) e^{2\pi i xy} dy. 
 \end{align*}
 \begin{align*}
 d\overline{\Pi}_{\psi} (e^-)f([1, x])&= d\overline{\Pi}_{\psi} (e^-)f_1( x)\\
&=\frac{d}{dt}\Big\{  \overline{\Pi}_{\psi} (\exp(te^-)) f_1( x)\Big\}\Big|_{t=0} \\
&=\int_{\R} (-\pi i y^2)\widehat{f_1}(y) e^{2\pi i xy} dy\\
&=-\frac{1}{4\pi i}\frac{d^2}{dx^2}\int_{\R}\widehat{f_1}(y) e^{2\pi i xy} dy\\.
&=-\frac{1}{4\pi i}\frac{d^2}{dx^2} f_1(x).
 \end{align*}

b) \begin{align*}
 \overline{\Pi}_{\psi} ([\omega, 1]) f_2(x)&= \overline{\Pi}_{\psi} ([\omega, 1]) f([-1,x])\\
&=e^{-\tfrac{\pi i}{4}}\nu(-1, \omega) \int_{\R} e^{-2\pi i xy} f([-1, -y]) dy\\
&=e^{-\tfrac{\pi i}{4}}\nu(-1, \omega)\int_{\R} e^{-2\pi i xy} f_2( -y) dy\\
&=e^{-\tfrac{\pi i}{4}}\nu(-1, \omega)\widehat{f_2}(-x).
 \end{align*}
 $$[-\omega, 1]=[\omega, 1][h(-1), 1]$$
 \begin{align*}
 \overline{\Pi}_{\psi} ([-\omega, 1]) f_2(x)&= \overline{\Pi}_{\psi} ([-\omega, 1]) f([-1,x])\\
&=e^{-\tfrac{\pi i}{4}}\nu(-1, \omega) \int_{\R} e^{-2\pi i xy} [\overline{\Pi}_{\psi} ([h(-1), 1])f]([-1, -y]) dy\\
&=-ie^{-\tfrac{\pi i}{4}}\nu(-1, \omega)  \int_{\R} e^{-2\pi i xy}f([-1, y]) dy\\
&=-ie^{-\tfrac{\pi i}{4}}\nu(-1, \omega)\int_{\R} e^{-2\pi i xy} f_2( y) dy\\
&=-ie^{-\tfrac{\pi i}{4}}\nu(-1, \omega)\widehat{f_2}(x).
 \end{align*}
\begin{align*}
\overline{\Pi}_{\psi}([
 u(-t), 1])f([-1, x])=e^{\pi i  x^2 t} f([-1,x])
\end{align*}
\begin{align*}
\overline{\Pi}_{\psi} (\exp(te^-)) f_2( x)&= \overline{\Pi}_{\psi} ([-\omega, 1])\{\overline{\Pi}_{\psi}([
 u(-t), 1])\overline{\Pi}_{\psi} ([\omega, 1])f_2\}(x)\\
 &=-ie^{-\tfrac{\pi i}{4}}\nu(-1, \omega)\int_{\R} e^{-2\pi i xy} \{\overline{\Pi}_{\psi}([
 u(-t), 1])\overline{\Pi}_{\psi} ([\omega, 1])f_2\}( y) dy\\
  &=-ie^{-\tfrac{\pi i}{4}}\nu(-1, \omega)\int_{\R} e^{-2\pi i xy}e^{\pi i y^2 t} \{\overline{\Pi}_{\psi} ([\omega, 1])f_2\}( y) dy\\
  &=-ie^{-\tfrac{\pi i}{4}}\nu(-1, \omega)[ e^{-\tfrac{\pi i}{4}}\nu(-1, \omega)]\int_{\R} e^{-2\pi i xy}e^{\pi i y^2 t} \widehat{ f_2}( -y) dy\\
 &=\int_{\R}e^{-2\pi i xy} \widehat{ f_2}( -y)e^{\pi i y^2t}dy.
\end{align*}
 \begin{align*}
 d\overline{\Pi}_{\psi} (e^-)f([-1, x])&= d\overline{\Pi}_{\psi} (e^-)f_2( x)\\
&=\frac{d}{dt}\Big\{  \overline{\Pi}_{\psi} (\exp(te^-)) f_2( x)\Big\}\Big|_{t=0} \\
&=\frac{d}{dt}\Big\{ \int_{\R}e^{-2\pi i xy} \widehat{ f_2}( -y)e^{\pi i y^2t}dy\Big\}\Big|_{t=0}\\
&=\frac{d}{dt}\Big\{ \int_{\R}e^{2\pi i xy} \widehat{ f_2}( y)e^{\pi i y^2}dy\Big\}\Big|_{t=0}\\
&=\int_{\R} (\pi i y^2)\widehat{f_2}(y) e^{2\pi i xy} dy\\
&=\frac{1}{4\pi i}\frac{d^2}{dx^2}\int_{\R}\widehat{f_2}(y) e^{2\pi i xy} dy\\.
&=\frac{1}{4\pi i}\frac{d^2}{dx^2} f_2(x).
 \end{align*}

Hence:
 \begin{align*}
 d\overline{\Pi}_{\psi} (e^-)f([\epsilon, x])&=\frac{d}{dt}\Big\{ \Pi_{\psi}(\exp(te^-)) f([\epsilon, x])\Big\}\Big|_{t=0} \\
&=\frac{\epsilon i}{4\pi }\frac{d^2}{dx^2}f([\epsilon, x]).
 \end{align*}
 Analogue of the lemma in \cite[p.199]{LiVe}, we have:
\begin{lemma}
Let $A([\epsilon,x])=e^{-\epsilon\pi x^2}$, $B([\epsilon, x])=xe^{-\epsilon\pi x^2}\in L^2(\mu_2 \times \R)$. Then:
\begin{itemize}
\item[(1)] $[ d\overline{\Pi}_{\psi} (J^0)]A([\epsilon, x])=\frac{i}{2}A([\epsilon, x])$ and $[ d\overline{\Pi}_{\psi} (J^-)]A([\epsilon, x])=0$,
\item[(2)] $[ d\overline{\Pi}_{\psi} (J^0)]B([\epsilon, x])=\frac{3i}{2} B([\epsilon, x])$ and $[ d\overline{\Pi}_{\psi} (J^-)]B([\epsilon, x])=0$.
\end{itemize}
\end{lemma}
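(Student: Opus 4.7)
The plan is to reduce both claims to direct computation using the already-derived formulas for $d\overline{\Pi}_{\psi}(h)$, $d\overline{\Pi}_{\psi}(e^+)$, $d\overline{\Pi}_{\psi}(e^-)$, together with the two identities
\[
J^0 = e^+ - e^-, \qquad J^- = \tfrac{1}{2}\bigl(ih + 2e^+ - J^0\bigr)
\]
inside $\mathfrak{sl}_2(\C)$. The first identity is immediate from the matrices in (\ref{hefj}); the second is the same identity that was used in the proof of $r_{J^-}(\chi_n)=0$, and is checked by writing out the $2\times 2$ entries. Extending $d\overline{\Pi}_{\psi}$ $\C$-linearly gives
\[
d\overline{\Pi}_{\psi}(J^0) = \pi i\epsilon x^2 \;-\; \frac{\epsilon i}{4\pi}\,\frac{d^2}{dx^2},\qquad
d\overline{\Pi}_{\psi}(J^-) = \tfrac{1}{2}\Bigl(\tfrac{i}{2} + ix\tfrac{d}{dx} + \pi i\epsilon x^2 + \tfrac{\epsilon i}{4\pi}\,\tfrac{d^2}{dx^2}\Bigr),
\]
where I have used $d\overline{\Pi}_\psi(h)=\tfrac12 + x\tfrac{d}{dx}$ for the $ih$ piece in the formula for $J^-$. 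Note that on each fibre $\{\epsilon\}\times\R$ both $A$ and $B$ restrict to Schwartz functions, so all of the differential operators above act on them without any analytic difficulty.

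Next I would plug in the Gaussian $A([\epsilon,x]) = e^{-\epsilon\pi x^2}$, using $A' = -2\epsilon\pi x\,A$ and $A'' = (-2\epsilon\pi + 4\pi^2 x^2)\,A$ (here I use $\epsilon^2 = 1$). The $x^2$-terms in $d\overline{\Pi}_\psi(J^0)A$ cancel exactly and leave $\tfrac{i}{2}A$, proving the first half of (1). For the $J^-$-computation on $A$, the four pieces $\tfrac{i}{2}A$, $ix A' = -2i\epsilon\pi x^2 A$, $\pi i\epsilon x^2 A$, and $\tfrac{\epsilon i}{4\pi}A'' = (-\tfrac{i}{2} + i\epsilon\pi x^2)A$ sum to zero; this yields $d\overline{\Pi}_\psi(J^-)A=0$.

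For $B([\epsilon,x]) = x e^{-\epsilon\pi x^2}$ the same strategy works. I compute $B' = (1-2\epsilon\pi x^2)e^{-\epsilon\pi x^2}$ and $B'' = (-6\epsilon\pi x + 4\pi^2 x^3)e^{-\epsilon\pi x^2}$. For $J^0$ the $x^3$-terms cancel and the surviving $x$-term is precisely $\tfrac{3i}{2}B$; for $J^-$ one collects the coefficients of $x$ and of $\epsilon\pi x^3$ separately and both vanish. There is no genuine obstacle here---the main thing to get right is the bookkeeping of the sign conventions for the one-parameter subgroups $\exp(te^\pm)$ used to define $d\overline{\Pi}_\psi(e^\pm)$, which must match exactly the formulas derived before the statement. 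Once those sign conventions are lined up, (1) and (2) follow by the above arithmetic, analogously to the corresponding lemma on p.199 of \cite{LiVe} for $\SL_2(\R)$; the only new feature is the presence of the parameter $\epsilon\in\mu_2$, which is benign because each computation respects the decomposition $L^2(\mu_2\times\R) = L^2(\R)\oplus L^2(\R)$ and $\epsilon^2=1$ eliminates any genuine $\epsilon$-dependence in the final answers.
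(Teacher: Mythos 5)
Your proposal is correct and follows essentially the same route as the paper: expand $J^0 = e^+ - e^-$ and $J^- = \tfrac12(ih + e^+ + e^-)$ (your formula $J^- = \tfrac12(ih + 2e^+ - J^0)$ is the same thing after substituting $J^0 = e^+ - e^-$), use the previously derived operator formulas for $d\overline{\Pi}_\psi(h)$, $d\overline{\Pi}_\psi(e^\pm)$, and then plug in the Gaussians $A,B$, using $\epsilon^2 = 1$ to collapse the $x^2$- and $x^3$-coefficients. The only cosmetic difference is that you assemble the differential operators $d\overline{\Pi}_\psi(J^0)$, $d\overline{\Pi}_\psi(J^-)$ symbolically before applying them, whereas the paper evaluates each $\mathfrak{sl}_2$-generator on $A$ and $B$ separately and then combines.
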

\begin{proof}
1) \begin{align*}
& d\overline{\Pi}_{\psi} (e^+)A([\epsilon,x])- d\overline{\Pi}_{\psi} (e^-)A([\epsilon,x])\\
&=\pi i\epsilon x^2 A([\epsilon,x])-\frac{\epsilon i}{4\pi }\frac{d^2}{dx^2}A([\epsilon,x])\\
&=\pi i\epsilon x^2 A([\epsilon,x])-\pi i\epsilon x^2 A([\epsilon,x])- \frac{\epsilon i}{4\pi }[-2\epsilon \pi e^{-\epsilon \pi x^2}]\\
&=\tfrac{i}{2}  A([\epsilon,x]);
\end{align*}
\begin{align*}
2 d\overline{\Pi}_{\psi} (J^-)A([\epsilon,x])&=id \Pi_{\psi}(h)A([\epsilon,x])+ d\overline{\Pi}_{\psi} (e^+)A([\epsilon,x])+ d\overline{\Pi}_{\psi} (e^-)A([\epsilon,x])\\
&=\tfrac{i}{2} e^{-\epsilon \pi x^2} -2\epsilon \pi i x^2 e^{-\epsilon \pi x^2}+  \pi i\epsilon x^2e^{-\epsilon \pi x^2} +\pi i\epsilon x^2e^{-\epsilon \pi x^2} -\tfrac{i}{2}  e^{-\epsilon \pi x^2}\\
&=0.
\end{align*}
2) \begin{align*}
& d\overline{\Pi}_{\psi} (e^+)B([\epsilon,x])- d\overline{\Pi}_{\psi} (e^-)B([\epsilon,x])\\
&=\pi i\epsilon x^2 B([\epsilon,x])-\frac{\epsilon i}{4\pi }\frac{d^2}{dx^2}B([\epsilon,x])\\
&=\pi i\epsilon x^3e^{-\epsilon \pi x^2}-\frac{\epsilon i}{4\pi } [-2\pi \epsilon x e^{-\epsilon \pi x^2} -4\pi \epsilon x e^{-\epsilon \pi x^2}+ 4\pi^2 x^3 e^{-\epsilon \pi x^2}]\\
&=\tfrac{3i}{2}  B([\epsilon,x]);
\end{align*}
\begin{align*}
&2 d\overline{\Pi}_{\psi} (J^-)B([\epsilon,x])\\
&=id \overline{\Pi}(h)B([\epsilon,x])+ d\overline{\Pi}_{\psi} (e^+)B([\epsilon,x])+ d\overline{\Pi}_{\psi} (e^-)B([\epsilon,x])\\
&=\tfrac{i}{2} x e^{-\epsilon \pi x^2} + i x  e^{-\epsilon \pi x^2} - 2 \pi i \epsilon x^3 e^{-\epsilon \pi x^2}+ \pi i\epsilon x^3e^{-\epsilon \pi x^2} +\pi i\epsilon x^3e^{-\epsilon \pi x^2} -\tfrac{3i}{2}  xe^{-\epsilon \pi x^2}\\
&=0.
\end{align*}
\end{proof}
As a consequence(cf. \cite[Chapter \S 3]{Kn}), we have:
\begin{lemma}
\begin{itemize}
\item[(1)] $\overline{\Pi}_{\psi}([g_{t}, e^{\tfrac{-it}{2}}])A([\epsilon,x])=A([\epsilon,x]) $,
\item[(2)] $\overline{\Pi}_{\psi}([g_{t}, e^{\tfrac{-3it}{2}}])B([\epsilon,x])=B([\epsilon,x])$,
\end{itemize}
for $g_t=\begin{pmatrix} \cos t & \sin t\\ -\sin t & \cos t\end{pmatrix} $, for $-\pi <t<  \pi $.
\end{lemma}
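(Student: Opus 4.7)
The plan is to integrate the infinitesimal relations established in the previous lemma to the one-parameter subgroup of $\overline{\SL^{\pm}_2}(\R)$ generated by $J^0$. Note first that $e^{tJ^0} = g_t$ in $\SL_2(\R)$, so the whole content of the statement is to identify the correct second coordinate in the cover $\overline{\SL^{\pm}_2}(\R)$ and to use the given $J^0$-eigenvalues $\tfrac{i}{2}$ and $\tfrac{3i}{2}$ of $A$ and $B$.

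The first step is to determine the lift of $\exp(tJ^0)$ to $\overline{\SL^{\pm}_2}(\R)$. For $g_{t_1}, g_{t_2} \in \SO_2(\R) \subset \SL_2(\R)$, both have determinant $1$, so from the definition of $\overline{C}_{X^{\ast}}$ we get $\overline{C}_{X^{\ast}}(g_{t_1}, g_{t_2}) = \overline{c}_{X^{\ast}}(g_{t_1}, g_{t_2})$. Lemma \ref{trivial} together with the identification $\overline{s}(g_t) = e^{it/2}$ for $-\pi < t \le \pi$ gives
\[
\overline{c}_{X^{\ast}}(g_{t_1}, g_{t_2}) = \overline{s}(g_{t_1})^{-1} \overline{s}(g_{t_2})^{-1} \overline{s}(g_{t_1+t_2}) = 1
\]
whenever $t_1, t_2, t_1+t_2$ all lie in $(-\pi,\pi)$. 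Consequently $t \mapsto [g_t,1]$ is a (local) one-parameter subgroup of $\overline{\SL^{\pm}_2}(\R)$, and its infinitesimal generator is exactly $J^0$.

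The second step is to invoke the Lie-algebra to Lie-group integration. Both $A$ and $B$ are Schwartz functions on which $d\overline{\Pi}_{\psi}$ acts by differential operators with polynomial coefficients, so they are analytic vectors and the exponential formula $\overline{\Pi}_{\psi}(\exp tJ^0) v = \exp(t\, d\overline{\Pi}_{\psi}(J^0)) v$ applies as in \cite[Chapter \S 3]{Kn}. Combined with the preceding lemma this gives
\[
\overline{\Pi}_{\psi}([g_t,1])A = e^{it/2} A, \qquad \overline{\Pi}_{\psi}([g_t,1])B = e^{3it/2} B.
\]
Finally, since the central $T$ acts on the representation by its defining character, i.e.\ $\overline{\Pi}_{\psi}([1,c]) = c \cdot \mathrm{id}$, multiplying by $e^{-it/2}$ resp.\ $e^{-3it/2}$ yields the two claimed identities.

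The main delicate point is the first step: one must be sure that $[g_t,1]$, and not some nontrivially twisted section $[g_t, c(t)]$, is the genuine exponential of $J^0$ in the two-fold cover. This is exactly what the explicit trivialization $\overline{s}(g_t) = e^{it/2}$ from Lemma \ref{trivial} guarantees on the interval $(-\pi,\pi)$, after which the rest is a routine application of the analytic-vector machinery.
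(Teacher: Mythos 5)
Your proof is correct and takes essentially the same route as the paper: both arguments integrate the $J^0$-eigenvalue relations $d\overline{\Pi}_{\psi}(J^0)A=\tfrac{i}{2}A$ and $d\overline{\Pi}_{\psi}(J^0)B=\tfrac{3i}{2}B$ from the preceding lemma via the analytic-vector machinery of \cite[Chapter \S 3]{Kn}, and then cancel the resulting phase by the central twist $e^{-it/2}$ (resp.\ $e^{-3it/2}$). One small remark: the paper's displayed computation writes $d\overline{\Pi}_{\psi}(J^-)A-\tfrac{i}{2}A=0$, which is evidently a typo for $d\overline{\Pi}_{\psi}(J^0)A-\tfrac{i}{2}A=0$; your version makes this step explicit and also justifies (via the trivialization $\overline{s}$ on $(-\pi,\pi)$) why $t\mapsto[g_t,1]$ is the correct local one-parameter subgroup in the cover, a point the paper leaves implicit.
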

\begin{proof}
1) $$\lim\limits_{t\to 0} \frac{\overline{\Pi}_{\psi}([g_{t}, e^{\tfrac{-it}{2}}])A([\epsilon,x])-A([\epsilon,x])}{t}=d\overline{\Pi}_{\psi} (J^-) A([\epsilon,x])-\tfrac{i}{2} A([\epsilon,x])=0.$$
Consequently,
$$\lim\limits_{t\to 0} \frac{\overline{\Pi}_{\psi}([g_{t}, e^{\tfrac{-it}{2}}])\overline{\Pi}_{\psi}([g_{t_0}, e^{\tfrac{-it_0}{2}}])A([\epsilon,x])-\overline{\Pi}_{\psi}([g_{t_0}, e^{\tfrac{-it_0}{2}}])A([\epsilon,x])}{t}=0.$$
2) Similarly.
\end{proof}
\begin{lemma}
 $\overline{\Pi}_{\psi}([h(-1), -i])_{\R}A[\epsilon, x]=(-1, \epsilon)_{\R}A[\epsilon,x] $ and $\overline{\Pi}_{\psi}([h(-1), (-i)^3])B[\epsilon,x]=(-1,\epsilon)_\R  B[\epsilon,x]$.
 \end{lemma}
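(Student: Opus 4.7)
The plan is to compute directly using the explicit formula (\ref{ac21}). For $a=-1$, we have $\sgn(-1)=-1$, so the prefactor $e^{\tfrac{\pi i}{4}[1-\sgn(a)]}=e^{\pi i/2}=i$, and $|{-1}|^{1/2}=1$. Thus
\[
\overline{\Pi}_{\psi}[h(-1),1]f([\epsilon,x])=i\,(-1,\epsilon)_{\R}\,f([\epsilon,-x]).
\]
The central $T$-factor acts by scalar multiplication, as is apparent from (\ref{ac41}); so for any $t\in T$ one gets $\overline{\Pi}_{\psi}[h(-1),t]f([\epsilon,x])=t\cdot i\,(-1,\epsilon)_{\R}\,f([\epsilon,-x])$.

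Next I specialize to $f=A$ and $f=B$ and exploit the parity of the Gaussian templates. Since $A([\epsilon,x])=e^{-\epsilon\pi x^2}$ is even in $x$, we have $A([\epsilon,-x])=A([\epsilon,x])$, hence
\[
\overline{\Pi}_{\psi}[h(-1),-i]A([\epsilon,x])=(-i)\cdot i\,(-1,\epsilon)_{\R}\,A([\epsilon,x])=(-1,\epsilon)_{\R}\,A([\epsilon,x]),
\]
which is the first assertion. For $B([\epsilon,x])=xe^{-\epsilon\pi x^2}$ the parity flips sign: $B([\epsilon,-x])=-B([\epsilon,x])$, so
\[
\overline{\Pi}_{\psi}[h(-1),1]B([\epsilon,x])=-i\,(-1,\epsilon)_{\R}\,B([\epsilon,x]).
\]
Using $(-i)^3=i$, the central twist contributes an extra factor of $i$, giving
\[
\overline{\Pi}_{\psi}[h(-1),(-i)^3]B([\epsilon,x])=i\cdot(-i)\,(-1,\epsilon)_{\R}\,B([\epsilon,x])=(-1,\epsilon)_{\R}\,B([\epsilon,x]),
\]
which is the second assertion.

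There is no real obstacle: the entire lemma is a one-line substitution into (\ref{ac21}) followed by bookkeeping of the two $\mu_4$-twists $-i$ and $(-i)^3$, which are precisely chosen to cancel the $e^{\pi i/2}$ Weil-index factor and the parity sign of $B$ respectively. The only thing worth emphasizing is why the twists $-i$ and $(-i)^3$ appear rather than, say, $\pm 1$: they reflect that $A$ and $B$ sit in the $\chi_{1}$-- and $\chi_{3}$--isotypic components of $\SO_2(\R)$ established in the preceding lemma (eigenvalues $\tfrac{i}{2}$ and $\tfrac{3i}{2}$ of $d\overline{\Pi}_{\psi}(J^0)$), and $h(-1)$ corresponds to the square of the standard generator of $\SO_2(\R)$ after the appropriate lifting to $\overline{\SL_2^{\pm}}(\R)$.
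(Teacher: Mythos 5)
Your computation is correct and matches the paper's proof step by step: both plug $a=-1$ into (\ref{ac21}) to get the prefactor $e^{\pi i/2}=i$, then use the evenness of $A$ and oddness of $B$ together with the central twists $-i$ and $(-i)^3=i$ (and, for $B$, the identity $(-1,-\epsilon)_\R=-(-1,\epsilon)_\R$) to land on $(-1,\epsilon)_\R A$ and $(-1,\epsilon)_\R B$ respectively. Your closing remark about the twists being forced by the $J^0$-eigenvalues $\tfrac{i}{2}$, $\tfrac{3i}{2}$ is a helpful heuristic, but it is not quite the full story: extending the one-parameter identity $\overline{\Pi}_\psi([g_t,e^{-it/2}])A=A$ to $t=\pi$ would give $\overline{\Pi}_\psi([h(-1),-i])A=A$ with no $(-1,\epsilon)_\R$ factor, and the extra Hilbert-symbol sign on the $\epsilon=-1$ sheet is precisely the boundary discontinuity that the restriction $-\pi<t<\pi$ in the preceding lemma is there to avoid.
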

 \begin{proof}
 By (\ref{ac21}), we have:
 \begin{align*}
 \overline{\Pi}_{\psi}([h(-1),-i])A([\epsilon, x])&=(-i)e^{\tfrac{\pi i}{4}[1+1]}(-1,\epsilon)_\R A([\epsilon,-x])\\
 &=(-1,\epsilon)_\R A([\epsilon,-x]).
 \end{align*}
  \begin{align*}
 \overline{\Pi}_{\psi}([h(-1),i])B([\epsilon, x])&=ie^{\tfrac{\pi i}{4}[1+1]}(-1,\epsilon)_\R B([\epsilon,-x])\\
 &=(-1,-\epsilon)_\R  B([\epsilon,-x])\\
  &=(-1,\epsilon)_\R  B([\epsilon,x]).
 \end{align*}
 \end{proof}

\subsubsection{Theta series}
For $p=\begin{pmatrix} a & b\\ 0& a^{-1} \det p  \end{pmatrix} \in P_{>0}^{\pm}(\R)$, let us write
$$p=\begin{pmatrix} a& 0\\ 0& a^{-1} \end{pmatrix}\begin{pmatrix} 1 & \tfrac{b\det p}{a}\\ 0&1 \end{pmatrix}\begin{pmatrix} 1 & 0\\ 0&\det p  \end{pmatrix}.$$
By (\ref{ph}), $g$ corresponds to the element  $z_p=ab\det p + ia^2 \det p$ of  $\mathcal{H}^{\pm}$.  By (\ref{representationsp23})(\ref{representationsp24}), we have:
\begin{equation}
\overline{\Pi}_{\psi} (p)A([\epsilon, x])= a^{\tfrac{1}{2}} e^{-(\det p) \epsilon \pi x^2 a^2} \cdot e^{\pi i \epsilon (\det p) x^2 ab}=  a^{\tfrac{1}{2}}e^{i \epsilon  \pi x^2 z_p}.
\end{equation}
\begin{equation}
\overline{\Pi}_{\psi} (p)B([\epsilon, x])=  a^{\tfrac{3}{2}}x e^{-(\det p) \epsilon \pi x^2 a^2} \cdot e^{\pi i \epsilon (\det p) x^2 ab}= a^{\tfrac{3}{2}}x  e^{i \epsilon  \pi x^2 z_p}.
\end{equation}

Let us consider  $\Gamma=\breve{\Gamma}(2)^{\pm}$, which is  a discrete group of $\SL_2^{\pm}(\R)$.   Recall $\theta_{L, X^{\ast}}$  from (\ref{eq7}).
Let us write $g=p_gk_g$, for $g\in \SL_2^{\pm}(\R)$, $p_g\in P_{>0}^{\pm}(\R)$, $k_g\in \SO_2(\R)$. Then:
 \begin{align*}
 \theta_{L, X^{\ast}}(A)([\epsilon, 0])=\sum_{l\in L\cap X}A(\epsilon,  l)=\sum_{n\in \Z}A(\epsilon,  n).
 \end{align*}
$$z_{p_g}=\frac{ai+b}{ci+d}=gi=z_g.$$
 If $k_g\neq h(-1)$, then  
 $$J(g,i)\overline{\Pi}_{\psi} (g)A=J(p_g,i)J(k_g,i)\overline{\Pi}_{\psi} (p_g)\overline{\Pi}_{\psi} (k_g)A=J(p_g,i)\overline{\Pi}_{\psi} (p_g)A,$$
 $$J(g,i)^3\overline{\Pi}_{\psi} (g)B=J(p_g,i)^3J(k_g,i)^3\overline{\Pi}_{\psi} (p_g)\overline{\Pi}_{\psi} (k_g)^3B=J(p_g,i)^3\overline{\Pi}_{\psi} (p_g)B.$$
Let:
 \begin{align*}
 \theta_{1/2}(p_g, \epsilon)&= \theta_{L, X^{\ast}}\Big(J(p_g,i)\overline{\Pi}_{\psi} (p_g)A\Big)[\epsilon, 0]\\
&=\sum_{n\in \Z}J(p_g,i)\overline{\Pi}_{\psi} (p_g)A(\epsilon,  n)\\
&=\sum_{n\in \Z} e^{i  \epsilon   \pi n^2 z_g}.
 \end{align*}
 \begin{align*}
 \theta_{3/2}(p_g, \epsilon)&=\theta_{L, X^{\ast}}\Big(J(p_g,i)^3\overline{\Pi}_{\psi} (p_g)B\Big)[\epsilon, 0]\\
 &=\sum_{n\in \Z}J(p_g,i)^3\overline{\Pi}_{\psi} (p_g)B(\epsilon,  n)\\
&=\sum_{n\in \Z}n e^{i \epsilon  \pi n^2 z_g}.
  \end{align*}
Let:
\begin{align*}
\theta_{1/2}(z, \epsilon)&=\sum_{n\in \Z} e^{i \epsilon  \pi n^2 z},
\end{align*}
\begin{align*}
\theta_{3/2}(z, \epsilon)&=\sum_{n\in \Z} n e^{i \epsilon  \pi n^2 z},
\end{align*}
for $z\in \mathcal{H}^{\pm}$, $\epsilon \in \mu_2$.  Let us consider the explicit action of $\Gamma$ on $\theta_{1/2}(z,\epsilon)$ and  $\theta_{3/2}(z,\epsilon)$ from Section \ref{Gamma2}. Note that if $k_{\gamma p_g}=h(-1)$, then $\gamma p_g=-p_{\gamma p_g}$, which implies that $-\gamma\in P^{\pm}_{>0}(\R)$, and then $\gamma=h(-1)$ or $h(-1)h_{-1}$. For $\gamma \in \Gamma$, we have:
\begin{align*}
\theta_{1/2}(\gamma p_g, \epsilon)&=\theta_{L, X^{\ast}}\Big(J(\gamma  p_g,i)\overline{\Pi}_{\psi} (\gamma p_g)A\Big)[\epsilon, 0]\\
&= \overline{C}_{X^{\ast}}(\gamma, p_g)^{-1}J(\gamma ,p_gi)  \overline{\Pi}_{\psi} (\gamma)\theta_{L, X^{\ast}}\Big(J(p_g,i)\overline{\Pi}_{\psi} ( p_g)A\Big)[\epsilon, 0] \\
&= \overline{C}_{X^{\ast}}(\gamma,p_g)^{-1}J(\gamma ,p_gi) m_{X^{\ast}} (\gamma) \Pi_{\psi} (\gamma)\theta_{L, X^{\ast}}\Big(J(p_g,i)\overline{\Pi}_{\psi} ( p_g)A\Big)[\epsilon, 0] \\
&\stackrel{p_gi=z}{=}\overline{C}_{X^{\ast}}(\gamma,p_g)^{-1} J(\gamma ,z)\Upsilon(\gamma) m_{X^{\ast}} (\gamma)\sum_{n\in \Z} e^{i (\det \gamma)\epsilon  \pi n^2 z}\\
&=\lambda^{\pm}(\gamma, \epsilon) J(\gamma ,z)\sum_{n\in \Z} e^{i (\det \gamma)\epsilon  \pi n^2 z};
\end{align*}
\begin{align*}
\theta_{3/2}(\gamma p_g,  \epsilon)&=\theta_{L, X^{\ast}}\Big(J(\gamma  p_g,i)^3\overline{\Pi}_{\psi} (\gamma p_g)B\Big)[\epsilon, 0]\\
&=  \overline{C}_{X^{\ast}}(\gamma,p_g)^{-1}J(\gamma ,p_gi)^3  \overline{\Pi}_{\psi} (\gamma)\theta_{L, X^{\ast}}\Big(J(p_g,i)^3\overline{\Pi}_{\psi} ( p_g)B\Big)[\epsilon, 0] \\
&=  \overline{C}_{X^{\ast}}(\gamma,p_g)^{-1}J(\gamma ,p_gi)^3 m_{X^{\ast}} (\gamma) \Pi_{\psi} (\gamma)\theta_{L, X^{\ast}}\Big(J(p_g,i)^3\overline{\Pi}_{\psi} ( p_g)B\Big)[\epsilon, 0] \\
&\stackrel{p_gi=z}{=}  \overline{C}_{X^{\ast}}(\gamma,p_g)^{-1} J(\gamma ,z)^3\Upsilon(\gamma) m_{X^{\ast}} (\gamma) \sum_{n\in \Z}n e^{i (\det \gamma)\epsilon  \pi n^2 z}\\
&=\lambda^{\pm}(\gamma, \epsilon) J(\gamma ,z)^3\sum_{n\in \Z} n e^{i (\det \gamma)\epsilon  \pi n^2 z}.
\end{align*}
for the constant $\lambda^{\pm}(\gamma, \epsilon)= \overline{C}_{X^{\ast}}(\gamma,p_g)^{-1}\Upsilon(\gamma)m_{X^{\ast}} (\gamma) $, where $ \overline{C}_{X^{\ast}}(\gamma,p_g)^{-1}$ only depends on $\gamma$ and $\det g$.
\paragraph{} Let us give   the expression of $\lambda^{\pm}(\gamma)$ in the following. Let $\gamma=\begin{pmatrix}
                                        a & b \\
                                        c & d 
                                      \end{pmatrix}\in \Gamma $. Let us define $\sgn(z)=1$ or $-1$ according to $z\in \mathcal{H}$ or $\mathcal{H}^-$.\\
\begin{itemize}
\item If $c=0$, $\Upsilon(\gamma)=(\epsilon \det \gamma, a)_{\R}$, $\overline{C}_{X^{\ast}}(\gamma, p_g)^{-1}=(\det p_g, a)_{\R}=(\sgn z_g, a)_{\R}$, $m_{X^{\ast}}(\gamma)=e^{\tfrac{\pi i}{4}[1-\sgn(a)]}$. 
\begin{align}
\lambda^{\pm}(\gamma, \epsilon)&=\overline{C}_{X^{\ast}}(\gamma,p_g)^{-1}\Upsilon(\gamma) m_{X^{\ast}}(\gamma) e^{\tfrac{\pi i}{4}[1-\sgn(a)]} \\
&=(\epsilon\det \gamma, a)_{\R}(\sgn z_g, a)_{\R}e^{\tfrac{\pi i}{4}[1-\sgn(a)]}.
\end{align}   
\item     If $c\neq 0$, $\Upsilon(\gamma)=e^{\tfrac{\pi i(1-\epsilon)}{4}} (c \det \gamma, \epsilon\det\gamma)_{\R} \beta(d, (\det \gamma) \epsilon c)^{-1}$,   $\overline{C}_{X^{\ast}}(\gamma,p_g)=1$, $m_{X^{\ast}}(\gamma)=e^{\tfrac{\pi i}{4}[-\sgn(c)]}$.
    \begin{align}
\lambda^{\pm}(\gamma, \epsilon)&=\overline{C}_{X^{\ast}}(\gamma,p_g)^{-1}\Upsilon(\gamma) m_{X^{\ast}}(\gamma)  \\
&=e^{\tfrac{\pi i}{4}[-\sgn(c)]}e^{\tfrac{\pi i(1-\epsilon)}{4}} (c\det \gamma, \epsilon\det \gamma)_{\R} \beta(d, (\det \gamma) \epsilon c)^{-1}.
\end{align}
\end{itemize} 
\begin{remark}
 If $\epsilon=1$, $\det \gamma=1$, the above constant $\lambda^{\pm}$ is compatible with  the constant $\lambda$ given in \cite[p.2.4.13, theorem]{LiVe} by adding the factor $J(\gamma ,z)$  in our choice.
\end{remark}    
 Hence:
\begin{align}\label{theta1}
 \theta_{1/2}(\tfrac{az+b}{cz+d}, \epsilon)=\lambda^{\pm}(\gamma, \epsilon) \sqrt{\det \gamma(cz+d)}\sum_{n\in \Z} e^{i (\det \gamma)\epsilon  \pi n^2 z},
\end{align}
\begin{align}\label{theta2}
 \theta_{3/2}(\tfrac{az+b}{cz+d}, \epsilon)=\lambda^{\pm}(\gamma, \epsilon) (\sqrt{\det \gamma(cz+d)})^3\sum_{n\in \Z} ne^{i (\det \gamma)\epsilon  \pi n^2 z},
\end{align}
for $z\in \mathcal{H}^{\pm}$, $\gamma\in \Gamma $ if $\epsilon=1$ or  $\gamma\in \Gamma\setminus \{ h(-1), h(-1)h_{-1} \}$ if $\epsilon\neq -1$, and some constant $\lambda^{\pm}(\gamma, \epsilon)$.
 \begin{remark}
 One can add  $\{ h(-1), h(-1)h_{-1} \}$ to change the constant in above two statements, but it is indeed a projective representation of $\Gamma$, which does not factor through the group $\Gamma/\{\pm 1\}$.
\end{remark}      
\section{Appendix: Weil representation of  $\Mp_2(\R)$}\label{app}
The  purpose of this section is to study the works \cite{LiVe}, \cite{Pe}, and \cite{Ra} in the simple case---$\dim W=2$. Let $\psi=\psi_0: t\longrightarrow e^{2\pi i t}$.   Let $dx$ denote the Lebesgue measure on $\R$.   Recall the  classical  Fourier transformation:
\begin{align*}
\mathcal{F}: & L^2(\R) \longrightarrow L^2(\R);\\
                                & f \longmapsto \mathcal{F}(f)(x)=\int_{\R} \psi( -xt) f(t)  dt
\end{align*}
for $f\in S(\R)$. By convection, let us write $ \hat{f}=\mathcal{F}(f)$. Then:
\begin{itemize}
\item $\hat{\hat{f}}(x)=f(-x)$.
\item  $\hat{f}=f$, for $f(x)=e^{-\pi x^2}$.
\item  $(\tau_t(f))^{\textasciicircum }(x)=\hat{f}(x) e^{2\pi ixt}$, where $\tau_t(f)(x)=f(x+t)$.
\item If $g(x)=f(x)e^{2\pi i xt}$, then  $\hat{g}(x)=\hat{f}(x-t)$.
\item If $g(x)=\lambda^{-1}f(\lambda^{-1}x)$, then  $\hat{g}(x)=\hat{f}(\lambda x)$.
\item $\int_{x\in \R} \hat{f}(x) e^{\pi i x^2} dx=e^{\tfrac{\pi i}{4}} \int_{x\in \R} f(x) e^{-\pi i x^2} dx$.(cf.\cite[p.50, Lemma]{LiVe})
\item $\int_{x\in \R} \hat{f}(x) e^{-\pi i x^2} dx=e^{-\tfrac{\pi i}{4}} \int_{x\in \R} f(x) e^{\pi i x^2} dx$.(cf.\cite[p.50, Lemma]{LiVe})
\end{itemize}
Let  $X=\R e_1\simeq \R$, $X^{\ast}=\R e_1^{\ast}\simeq \R$. By the fixed basis $\{e_1, e_1^{\ast}\}$, let us identity $X$  with $\R$, and $X^{\ast}$ with $\R$.  Let $dx$, $dx^{\ast}$ denote the Lebesgue measures on $X$ and $X^{\ast}$ respectively.  Let us give  two classical  Fourier transformations(cf. \cite[p.147]{We}):
\begin{align*}
\mathcal{F}'_{XX^{\ast} }: &  L^2(X)\longrightarrow  L^2(X^{\ast});\\
                                & f \longmapsto \mathcal{F}'_{XX^{\ast} }(f)(x^{\ast})=\int_{X} \psi(\langle x, x^{\ast}\rangle) f(x)  dx,
\end{align*}
\begin{align*}
\mathcal{F}'_{X^{\ast} X}: &  L^2(X^{\ast})\longrightarrow  L^2(X);\\
                                & g \longmapsto \mathcal{F}'_{X^{\ast} X}(g)(x)=\int_{X^{\ast}} \psi(\langle x^{\ast}, x\rangle)g(x^{\ast})  dx^{\ast},
\end{align*}
for $f\in L^2(X)$, $g\in L^2(X^{\ast})$. Then  $\mathcal{F}'_{ X^{\ast}X}=\mathcal{F}_{X X^{\ast}}^{'-1}$. Let $\pi_{\psi} $ be the Weil  representation of $\Ha(W)$ associated to $\psi$. Then $\pi_{\psi}\simeq \Ind_{X\times \R}^{\Ha(W)}(\id\otimes \psi)$.  Following \cite{Pe}, let $\mathcal{H}(X)$ be  the set of measurable functions $f: \Ha(W) \longrightarrow \C$ such that
\begin{itemize}
\item[(i)] $f([x,t]h)=\psi(t) f(h)$, $x\in X$, $t\in \R$,  $h\in \Ha(W)$;
\item[(ii)] $\int_{(X\times \R)\setminus \Ha(W)} ||f(\dot{h})||^2 d\dot{h}<+\infty$, where  $d\dot{h}$ is an $\Ha(W)$-right invariant measure on $(X\times \R)\setminus \Ha(W)$.
\end{itemize}
Note that $(X\times \R)\setminus \Ha(W)$ is homeomorphic to $X^{\ast}$, so  we  choose the  $d\dot{w}$  corresponding to  the Lebesgue measure on $\R e_1^{\ast}$ through the base $e_1^{\ast}$. Then $\pi_{\psi}$ can be realized on  $L^2(X^{\ast})$ by the following formulas:
\begin{equation}\label{representationsp211}
\pi_{\psi}[(x^{\ast},0)+(x,0)+(0,k)]g(y^{\ast})=\psi(k+\langle x^{\ast}+y^{\ast},x\rangle) g(x^{\ast}+y^{\ast}),
\end{equation}
for  $g\in L^2(X^{\ast})$, $x\in X$, $x^{\ast}\in X^{\ast}$, $k\in \R$. Similarly, we can also consider $\mathcal{H}(X^{\ast})=\Ind_{X^{\ast}\times F}^{\Ha(W)}(\id\otimes \psi) $. By \cite[p.373]{Pe}, we can define the Fourier transformation:
\begin{align*}
\mathcal{F}_{X^{\ast} X}: &  \mathcal{H}(X)\longrightarrow  \mathcal{H}(X^{\ast});\\
                                & f \longmapsto \mathcal{F}_{X^{\ast} X}(f)(h)=\int_{X^{\ast}}  f([x^{\ast}, 0]h)  dx^{\ast}.
\end{align*}
It can be checked that $\mathcal{F}_{X^{\ast} X}(f)([x^{\ast},t]h)=\psi(t)\mathcal{F}_{X^{\ast} X}(f)(h)$, for $[x^{\ast}, t]\in X^{\ast} \times \R$. Moreover, there exists the following commutative diagram:
\[
\begin{CD}
\mathcal{H}(X) @>\mathcal{F}_{X^{\ast} X}>> \mathcal{H}(X^{\ast}) \\
@V\wr VV @V\wr VV \\
L^2(X^{\ast})@>\mathcal{G}_{X^{\ast} X}>> L^2(X)
\end{CD}
\]
Let us check that $\mathcal{G}_{X^{\ast} X}=\mathcal{F}'_{X^{\ast} X}$:
\begin{align*}
\mathcal{G}_{X^{\ast} X}(f)(x)&=\int_{X^{\ast}} f([y^{\ast},0][x,0])dy^{\ast}\\
&=\int_{X^{\ast}} \psi(\langle y^{\ast}, x\rangle) f(y^{\ast})dy^{\ast}\\
&=\mathcal{F}'_{X^{\ast} X}(f)(x).
\end{align*}
Hence $\mathcal{F}_{X^{\ast} X}=\mathcal{F}_{X X^{\ast}}^{-1}$. For simplicity, we will not  distinguish between $\mathcal{F}_{X^{\ast}X}$ and $\mathcal{F}'_{X^{\ast}X}$. For two general  Lagrangian lines $Y^{\ast}=\R e_{Y^{\ast}}$ and $Z^{\ast}=\R e_{Z^{\ast}}$, we have:
 $$Y^{\ast} \cap Z^{\ast}\neq 0 \textrm{ iff } Y^{\ast}=Z^{\ast}.$$
 1) If  $Y^{\ast} \cap Z^{\ast}=0$, $W=Y^{\ast}\oplus Z^{\ast}$. Let us  define $A_{Y^{\ast} Z^{\ast}}=\langle e_{Y^{\ast}}, e_{Z^{\ast}}\rangle$ by following \cite{Pe}. Let $e'_{Y^{\ast}}=|A_{Y^{\ast}Z^{\ast}}|^{-\tfrac{1}{2}} e_{Y^{\ast}}$ and $e'_{Z^{\ast}}=|A_{Y^{\ast}Z^{\ast}}|^{-\tfrac{1}{2}} e_{Z^{\ast}}$. Then $\langle e'_{Y^{\ast}}, e'_{Z^{\ast}}\rangle=\pm1$.  By the basis $\{ e'_{Y^{\ast}}, e'_{Z^{\ast}}\}$, let  $d'y^{\ast}$, $d'z^{\ast}$ denote the Lebesgue measures on $Y^{\ast}$ and $Z^{\ast}$ respectively. By the basis $\{e_{Y^{\ast}}, e_{Z^{\ast}}\}$, let  $dy^{\ast}$, $dz^{\ast}$ denote the Lebesgue measures on $Y^{\ast}$ and $Z^{\ast}$ respectively.  Similarly, we can define:
\begin{align}\label{ZZYY}
\mathcal{F}_{ Z^{\ast}Y^{\ast}}: &    \mathcal{H}(Y^{\ast})\longrightarrow  \mathcal{H}(Z^{\ast});\\
                                & f \longmapsto \mathcal{F}_{Z^{\ast} Y^{\ast}}(f)(h)=\int_{Z^{\ast}}  f([z^{\ast}, 0]h)  d'z^{\ast}\\
                                &\qquad\quad\qquad\quad\qquad\quad=\int_{Z^{\ast}}  f([z^{\ast}, 0]h)  |A_{Y^{\ast}Z^{\ast}}|^{1/2}dz^{\ast}.
\end{align}
2) If $Y^{\ast} \cap Z^{\ast}\neq 0$, then $Y^{\ast}=Z^{\ast}$. In this case, let us define:
 $$\mathcal{F}_{Y^{\ast} Z^{\ast}}=\id.$$
For $w=xe_1+ye_1^{\ast}\in W$, let us write $w=(x,y)\begin{pmatrix} e_1\\ e_1^{\ast}\end{pmatrix}$. For $g\in \Sp(W)$, let us write $\begin{pmatrix} e_1\\ e_1^{\ast}\end{pmatrix}g=A_g\begin{pmatrix} e_1\\ e_1^{\ast}\end{pmatrix}$, for some $A_g\in \SL_2(\R)$. In this way, we will identity $\Sp(W)$ with $\SL_2(\R)$. Let us consider two elements $g_1=\begin{pmatrix} a &b\\ c& d\end{pmatrix}$ and $g_2=\begin{pmatrix} p &q\\ r& s\end{pmatrix}$ of  $\SL_2(\R)$. Let  $e_{Y}=ae_1+be_1^{\ast}$, $e_{Y^{\ast}}=ce_1+de_1^{\ast}=e_1^{\ast}g_1$, $e_Z=pe_1+qe_1^{\ast}$, $e_{Z^{\ast}}=re_1+se_1^{\ast}=e_1^{\ast}g_2$. Let $Y=\R e_{Y}$, $Y^{\ast}=\R e_{Y^{\ast}}$, $Z=\R e_Z$, $Z^{\ast}=\R e_{Z^{\ast}}$. Following \cite[pp.71-73]{LiVe}, let us first consider the case that  $c>0$, $d=0$ and $r=1,s>0$. In this case,   $Y^{\ast}=X$, and $e_{Y^{\ast}}=ce_1$,  $Z^{\ast}=\R (e_1+se_1^{\ast})$. Let us fix $W=X\oplus X^{\ast}$.\\
a) $\mathcal{H}(X^{\ast}) \simeq L^2(X)$, $\mathcal{H}(X) \simeq L^2(X^{\ast})$, $|A_{Y^{\ast}X^{\ast}}|=c$.  Then $\mathcal{F}_{Y^{\ast}X^{\ast}}=\mathcal{F}_{XX^{\ast}}: L^2(X) \longrightarrow L^2(X^{\ast})$.
\begin{align}
\mathcal{F}_{Y^{\ast}X^{\ast}}(f)(xe_{1}^{\ast})&=\int_{tce_1\in X}  f([tce_{1}, 0][xe^{\ast}_{1},0])  |A_{Y^{\ast}X^{\ast}}|^{1/2}dt\\
 &=\int_{\R} \psi(tcx) f([tce_{1}, 0])c^{\tfrac{1}{2}} dt\\
  &=\int_{\R} \psi(tx) f([te_{1}, 0])c^{-\tfrac{1}{2}} dt.
\end{align}
b) $\mathcal{H}(Z^{\ast}) \simeq L^2(X)$, $\mathcal{H}(X) \simeq L^2(X^{\ast})$, $|A_{Z^{\ast}Y^{\ast}}|= |cs|=cs$, $\mathcal{F}_{Z^{\ast}Y^{\ast}}=\mathcal{F}_{Z^{\ast}X}: L^2(X^{\ast}) \longrightarrow L^2(X)$.
\begin{align}
\mathcal{F}_{Z^{\ast}Y^{\ast}}(f)(xe_{1})&=\int_{te_{Z^{\ast}}\in Z^{\ast}}  f([te_{Z^{\ast}}, 0][xe_1,0])  |A_{Z^{\ast}Y^{\ast}}|^{1/2}dt\\
&=\int_{te_{Z^{\ast}}\in Z^{\ast}}  f([(t+x)e_{1}, -\tfrac{st^2}{2}-tsx][tse^{\ast}_1,0]) (cs)^{\tfrac{1}{2}}dt\\
 &=\int_{\R} \psi(-tsx-\tfrac{st^2}{2}) f([tse_{1}^{\ast}, 0])(cs)^{\tfrac{1}{2}} dt\\
 &=\int_{\R} \psi(-ts^{\tfrac{1}{2}}x-\tfrac{t^2}{2}) f([ts^{\tfrac{1}{2}}e_{1}^{\ast}, 0]) c^{\tfrac{1}{2}} dt.
\end{align}
c) $\mathcal{H}(X^{\ast}) \simeq L^2(X)$, $\mathcal{H}(Z^{\ast}) \simeq L^2(X)$, $|A_{X^{\ast}Z^{\ast}}|= 1$, $\mathcal{F}_{X^{\ast}Z^{\ast}}: L^2(X) \longrightarrow L^2(X)$.
\begin{align}
\mathcal{F}_{X^{\ast}Z^{\ast}}(f)(xe_{1})&=\int_{te_1^{\ast} \in X^{\ast}}  f([te_1^{\ast}, 0][xe_{1},0])  |A_{X^{\ast}Z^{\ast}}|^{1/2}dt\\
&=\int_{te_1^{\ast} \in X^{\ast}}  f([te_1+tse_1^{\ast}, -\tfrac{st^2}{2}][(x-t)e_1, 0]) s dt\\
 &=\int_{\R} \psi(-\tfrac{st^2}{2}) f([(x-t)e_{1},0]) sdt\\
  &=\int_{\R} \psi(-\tfrac{t^2}{2}) f([(x- s^{-\tfrac{1}{2}}t)e_{1},0]) s^{\tfrac{1}{2}}dt.
\end{align}
d) $\mathcal{H}(X^{\ast}) \simeq L^2(X)$, $\mathcal{H}(Z^{\ast}) \simeq L^2(X)$, $\mathcal{F}_{Z^{\ast}X^{\ast}}: L^2(X) \longrightarrow L^2(X)$.
\begin{align}
\mathcal{F}_{Z^{\ast}X^{\ast}}(f)(xe_{1})&=\int_{te_{Z^{\ast}}\in Z^{\ast}}  f([te_{Z^{\ast}}, 0][xe_{1},0])  |A_{Z^{\ast}X^{\ast}}|^{1/2}dt\\
&=\int_{te_{Z^{\ast}}\in Z^{\ast}}   f([(t+x)e_1+tse_1^{\ast}, -\tfrac{tsx}{2}])  dt\\
&=\int_{te_{Z^{\ast}}\in Z^{\ast}}   f([tse_1^{\ast}, \tfrac{st^2}{2}][(t+x)e_1, 0])  dt\\
 &=\int_{\R} \psi(\tfrac{st^2}{2}) f([(x+t)e_{1},0]) dt\\
  &=\int_{\R} \psi(\tfrac{t^2}{2}) f([(x+ts^{-\tfrac{1}{2}})e_{1},0])s^{-\tfrac{1}{2}} dt.
\end{align}
\begin{lemma}[{\cite[p.47]{LiVe}}]\label{pi1}
For the above $X^{\ast}, Y^{\ast}, Z^{\ast}$, $\mathcal{F}_{ X^{\ast}Z^{\ast}} \circ \mathcal{F}_{Z^{\ast}Y^{\ast} } \circ \mathcal{F}_{Y^{\ast}X^{\ast} } =e^{-\tfrac{\pi i}{4}} \id_{\mathcal{H}(X^{\ast})}$.
\end{lemma}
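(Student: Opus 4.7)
The plan is a direct three–step calculation using the explicit formulas (a), (b), (c) for the Fourier transforms $\mathcal{F}_{Y^{\ast}X^{\ast}}$, $\mathcal{F}_{Z^{\ast}Y^{\ast}}$ and $\mathcal{F}_{X^{\ast}Z^{\ast}}$ that were computed just before the statement. Fix $f\in\mathcal{H}(X^{\ast})$ and let $f_0\in L^2(X)$ be its image, i.e.\ $f_0(x)=f([xe_1,0])$. The goal is to show that the iterated transform evaluated at $xe_1$ equals $e^{-\pi i/4}f_0(x)$.

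First, by formula (a),
$g_1(ye_1^{\ast})=\mathcal{F}_{Y^{\ast}X^{\ast}}(f)(ye_1^{\ast})=c^{-1/2}\int_{\R}\psi(ty)f_0(t)\,dt=c^{-1/2}\widehat{f_0}(-y),$
where $\widehat{\phantom{f}}$ denotes the standard Fourier transform on $\R$. Next, evaluating $g_1$ on the points $[ts^{1/2}e_1^{\ast},0]$ needed by formula (b) and making the change of variable $u=-ts^{1/2}$, one finds
$g_2(xe_1)=\mathcal{F}_{Z^{\ast}Y^{\ast}}(g_1)(xe_1)=s^{-1/2}\int_{\R}\psi\bigl(ux-\tfrac{u^2}{2s}\bigr)\widehat{f_0}(u)\,du,$
the factors $c^{\pm 1/2}$ cancelling. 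Finally, applying formula (c) gives a double integral
\begin{equation*}
\mathcal{F}_{X^{\ast}Z^{\ast}}(g_2)(xe_1)=\int_{\R}\!\!\int_{\R}\psi\bigl(-\tfrac{t^2}{2}+u(x-s^{-1/2}t)-\tfrac{u^2}{2s}\bigr)\widehat{f_0}(u)\,du\,dt.
\end{equation*}

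The one real computation is to exchange the order of integration (legitimate by approximating $f_0$ by a Schwartz function) and complete the square in $t$: the quadratic phase in $t$ is
$-\tfrac{1}{2}t^2-us^{-1/2}t=-\tfrac{1}{2}\bigl(t+us^{-1/2}\bigr)^{2}+\tfrac{u^{2}}{2s},$
so after a translation in $t$ the inner integral reduces to the Fresnel integral $\int_{\R}\psi(-t^{2}/2)\,dt=e^{-\pi i/4}$ (stated among the bulleted Fourier facts at the top of this section), and the extra phase $\psi(u^{2}/(2s))$ cancels exactly the $-u^{2}/(2s)$ appearing in $g_2$. What remains is
$\mathcal{F}_{X^{\ast}Z^{\ast}}(g_2)(xe_1)=e^{-\pi i/4}\int_{\R}\widehat{f_0}(u)\psi(ux)\,du=e^{-\pi i/4}f_0(x),$
by Fourier inversion, proving the lemma.

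The only delicate point is the Fubini step, since the integrand is merely oscillatory and not absolutely integrable; the standard remedy is to first carry out the proof for $f_0$ in a dense subspace of Schwartz functions, where all integrals converge absolutely and the completion-of-square manipulation is rigorous, and then extend to $L^{2}$ by continuity of each of the three Fourier operators. Apart from this technicality the argument is a bookkeeping exercise, the only nontrivial input being the Fresnel value $e^{-\pi i/4}$, which is exactly where the constant on the right-hand side of the statement comes from.
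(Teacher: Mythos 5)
Your computation is correct and proceeds along essentially the same lines as the paper's — a direct expansion of the three Fourier transforms, a completion of the square in the $t$-variable, and identification of the Gaussian constant $e^{-\pi i/4}$. The paper organizes the bookkeeping slightly differently: it only composes the first two transforms, proves $\mathcal{F}_{Z^{\ast}Y^{\ast}}\circ\mathcal{F}_{Y^{\ast}X^{\ast}}=e^{-\pi i/4}\mathcal{F}_{Z^{\ast}X^{\ast}}$, and then concludes using $\mathcal{F}_{X^{\ast}Z^{\ast}}=\mathcal{F}_{Z^{\ast}X^{\ast}}^{-1}$, whereas you carry all three transforms through to a double integral and finish with Fourier inversion. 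This is a cosmetic difference.

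However, there is a real soft spot in the last paragraph of your argument. You write that the Fubini step is justified because, for Schwartz $f_0$, ``all integrals converge absolutely and the completion-of-square manipulation is rigorous.'' This is not true: even for Schwartz $f_0$, the double integral
$\int_{\R}\int_{\R}\psi\bigl(-\tfrac{t^2}{2}+u(x-s^{-1/2}t)-\tfrac{u^2}{2s}\bigr)\widehat{f_0}(u)\,du\,dt$
is never absolutely convergent — the integrand has modulus $|\widehat{f_0}(u)|$, independent of $t$, so $\int\int|\cdot|=\infty$. After completing the square, the $t$-integral becomes the bare Fresnel integral $\int_{\R}\psi(-t^2/2)\,dt$, whose integrand is not in $L^{1}(\R)$; no amount of regularity in $f_0$ repairs this. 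So Fubini does not apply as written, and the Fresnel value is not literally among the bulleted facts at the start of the section either. What the paper actually uses is the regularized identity listed there, $\int_{\R}\hat{h}(t)e^{-\pi i t^2}\,dt=e^{-\pi i/4}\int_{\R}h(t)e^{\pi i t^2}\,dt$, which is well-posed because both sides pair the oscillatory Gaussian against a decaying Schwartz function; the paper is careful to produce an explicit Schwartz $h$ so that this identity applies. You should either invoke that bulleted identity (as the paper does, by recognizing the penultimate $t$-integral as $\int\psi(-t^2/2)\hat h(t)\,dt$), or else justify the Fresnel integral by the usual Gaussian regularization $e^{-\pi i t^2}\to e^{-(\pi i+\varepsilon)t^2}$, $\varepsilon\to0^{+}$, carried out before the change of variables. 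The rest of the argument and the final answer are fine.
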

\begin{proof}
By the base $e_1$, we will  identity $X$ with $\R$. For $f\in S(X)$, let $g(t)= f(t) e^{-2\pi i x t s}$, $h(t)=s^{-\tfrac{1}{2}}g(s^{-\tfrac{1}{2}} t)$. Then:
$$ \hat{h}(t)=\hat{g}(s^{1/2}t)=\hat{f}(s^{1/2}t+sx).$$
\begin{align}
&\mathcal{F}_{Z^{\ast}Y^{\ast} } \circ \mathcal{F}_{Y^{\ast}X^{\ast} }(f)(xe_1)\\
&=\int_{\R} \psi(-ts^{\tfrac{1}{2}}x-\tfrac{t^2}{2}) \mathcal{F}_{Y^{\ast}X^{\ast}}(f)([ts^{\tfrac{1}{2}}e_1^{\ast},0])c^{\tfrac{1}{2}}dt\\
&=\int_{\R} \int_{\R} \psi(ts^{\tfrac{1}{2}}u-ts^{\tfrac{1}{2}}x-\tfrac{t^2}{2})f([ue_1,0])dudt \\
&=\int_{\R} \int_{\R} \psi(-ts^{\tfrac{1}{2}}u+ts^{\tfrac{1}{2}}x-\tfrac{t^2}{2})f([ue_1,0])dudt\\
&=\int_{\R}\psi(ts^{\tfrac{1}{2}}x-\tfrac{t^2}{2})\hat{f}(ts^{\tfrac{1}{2}})dt\\
&=\psi(\tfrac{sx^2}{2})\int_{\R}\psi(-\tfrac{(t-s^{\tfrac{1}{2}}x)^2}{2})\hat{f}(ts^{\tfrac{1}{2}})dt\\
&=\psi(\tfrac{sx^2}{2})\int_{\R}\psi(-\tfrac{t^2}{2})\hat{f}(ts^{\tfrac{1}{2}}+sx)dt\\
&=\psi(\tfrac{sx^2}{2})\int_{\R}\psi(-\tfrac{t^2}{2})\hat{h}(t)dt\\
&=e^{-\tfrac{\pi i}{4}}\psi(\tfrac{sx^2}{2})\int_{t\in \R}   h(t) e^{\pi i t^2} dt\\
&=e^{-\tfrac{\pi i}{4}}\int_{t\in \R}   s^{-\tfrac{1}{2}} f( s^{-\tfrac{1}{2}}t) e^{\pi i s x^2} e^{-2\pi i xts^{\tfrac{1}{2}}}e^{\pi i t^2} dt\\
&=e^{-\tfrac{\pi i}{4}}\int_{t\in \R}   s^{-\tfrac{1}{2}} f( s^{-\tfrac{1}{2}}t) e^{\pi i (t-s^{\tfrac{1}{2}}x)^2} dt\\
&=e^{-\tfrac{\pi i}{4}}\int_{t\in \R}   s^{-\tfrac{1}{2}} f( s^{-\tfrac{1}{2}}t+x) \psi(\tfrac{t^2}{2}) dt\\
&=e^{-\tfrac{\pi i}{4}}\mathcal{F}_{Z^{\ast}X^{\ast}}(f)(xe_{1}).
\end{align}
\end{proof}
Let us consider the second case that $c>0$, $d=0$ and $r=1,s=-|s|<0$. In this case,   $Y^{\ast}=X=\R ce_1$, $Z^{\ast}=\R (e_1+se_1^{\ast})$. Let us fix $W=X\oplus X^{\ast}$.\\
b') $\mathcal{H}(Z^{\ast}) \simeq L^2(X)$, $\mathcal{H}(X) \simeq L^2(X^{\ast})$, $\mathcal{F}_{Z^{\ast}Y^{\ast}}=\mathcal{F}_{Z^{\ast}X}: L^2(X^{\ast}) \longrightarrow L^2(X)$.
\begin{align}
\mathcal{F}_{Z^{\ast}Y^{\ast}}(f)(xe_{1})&=\int_{te_{Z^{\ast}}\in Z^{\ast}}  f([te_{Z^{\ast}}, 0][xe_1,0])  |A_{Z^{\ast}Y^{\ast}}|^{1/2}dt\\
&=\int_{te_{Z^{\ast}}\in Z^{\ast}}  f([(t+x)e_{1}, -\tfrac{st^2}{2}-tsx][tse^{\ast}_1,0]) (c|s|)^{\tfrac{1}{2}}dt\\
 &=\int_{\R} \psi(-tsx-\tfrac{st^2}{2}) f([tse_{1}^{\ast}, 0])(c|s|)^{\tfrac{1}{2}} dt\\
 &=\int_{\R} \psi(t|s|^{\tfrac{1}{2}}x+\tfrac{t^2}{2}) f([-t|s|^{\tfrac{1}{2}}e_{1}^{\ast}, 0]) c^{\tfrac{1}{2}}dt.
\end{align}
c') $\mathcal{H}(X^{\ast}) \simeq L^2(X)$, $\mathcal{H}(Z^{\ast}) \simeq L^2(X)$, $\mathcal{F}_{X^{\ast}Z^{\ast}}: L^2(X) \longrightarrow L^2(X)$.
\begin{align}
\mathcal{F}_{X^{\ast}Z^{\ast}}(f)(xe_{1})&=\int_{te_1^{\ast} \in X^{\ast}}  f([te_1^{\ast}, 0][xe_{1},0])  |A_{X^{\ast}Z^{\ast}}|^{1/2}dt\\
&=\int_{te_1^{\ast} \in X^{\ast}}  f([xe_1+te_1^{\ast}, -\tfrac{tx}{2}])  dt\\
&=\int_{te_1^{\ast} \in X^{\ast}}  f([xe_1+tse_1^{\ast}, -\tfrac{tsx}{2}]) s dt\\
&=\int_{te_1^{\ast} \in X^{\ast}}  f([te_1+tse_1^{\ast}, -\tfrac{st^2}{2}][(x-t)e_1, 0]) s dt\\
 &=\int_{\R} \psi(-\tfrac{st^2}{2}) f([(x-t)e_{1},0]) sdt\\
  &=\int_{\R} \psi(\tfrac{t^2}{2}) f([(x+ |s|^{-\tfrac{1}{2}}t)e_{1},0]) |s|^{\tfrac{1}{2}}dt.
\end{align}
d') $\mathcal{H}(X^{\ast}) \simeq L^2(X)$, $\mathcal{H}(Z^{\ast}) \simeq L^2(X)$, $\mathcal{F}_{Z^{\ast}X^{\ast}}: L^2(X) \longrightarrow L^2(X)$.
\begin{align}
\mathcal{F}_{Z^{\ast}X^{\ast}}(f)(xe_{1})&=\int_{te_{Z^{\ast}}\in Z^{\ast}}  f([te_{Z^{\ast}}, 0][xe_{1},0])  |A_{Z^{\ast}X^{\ast}}|^{1/2}dt\\
&=\int_{te_{Z^{\ast}}\in Z^{\ast}}   f([(t+x)e_1+tse_1^{\ast}, -\tfrac{tsx}{2}])  dt\\
&=\int_{te_{Z^{\ast}}\in Z^{\ast}}   f([tse_1^{\ast}, \tfrac{st^2}{2}][(t+x)e_1, 0])  dt\\
 &=\int_{\R} \psi(\tfrac{st^2}{2}) f([(x+t)e_{1},0]) dt\\
  &=\int_{\R} \psi(-\tfrac{t^2}{2}) f([(x+t|s|^{-\tfrac{1}{2}})e_{1},0])|s|^{-\tfrac{1}{2}} dt.
\end{align}
\begin{lemma}[{\cite[pp.47-48]{LiVe}}]\label{pi2}
For the above $X^{\ast}, Y^{\ast}, Z^{\ast}$, $\mathcal{F}_{ X^{\ast}Z^{\ast}} \circ \mathcal{F}_{Z^{\ast}Y^{\ast} } \circ \mathcal{F}_{Y^{\ast}X^{\ast} } =e^{\tfrac{\pi i}{4}} \id_{\mathcal{H}(X^{\ast})}$.
\end{lemma}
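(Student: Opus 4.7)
The strategy is to mirror the computation of Lemma~\ref{pi1}, composing the three Fourier transforms given by the explicit formulas (b'), (c'), (d') in the second case and reducing the triple integral to a single Gaussian integral whose value is evaluated by the stationary-phase identity
\[
\int_{t\in\R}\hat{f}(t)\,e^{\pi i t^{2}}\,dt \;=\; e^{\tfrac{\pi i}{4}}\int_{t\in\R}f(t)\,e^{-\pi i t^{2}}\,dt
\]
recalled in the list of Fourier identities at the start of the section. The sign flip from $s>0$ to $s=-|s|<0$ will change $e^{-\pi i/4}$ into $e^{+\pi i/4}$, which is exactly what the statement predicts.

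\textbf{Steps.} First, using formula~(b') I would compute $\mathcal{F}_{Z^{\ast}Y^{\ast}}\circ\mathcal{F}_{Y^{\ast}X^{\ast}}(f)(xe_{1})$ as an iterated integral. Substituting the formula for $\mathcal{F}_{Y^{\ast}X^{\ast}}(f)$ and simplifying the inner Fourier transform, I obtain (in analogy with the proof of Lemma~\ref{pi1})
\[
\mathcal{F}_{Z^{\ast}Y^{\ast}}\circ\mathcal{F}_{Y^{\ast}X^{\ast}}(f)(xe_{1}) \;=\; \int_{\R}\psi\!\left(t|s|^{\tfrac12}x+\tfrac{t^{2}}{2}\right)\hat{f}(-t|s|^{\tfrac12})\,dt .
\]
Next I would complete the square in the exponent, write the resulting factor as $e^{\pi i(t+|s|^{1/2}x)^{2}}\cdot e^{-\pi i|s|x^{2}}$, and translate the integration variable to absorb the shift. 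Setting $h(t)=|s|^{-1/2}f(-|s|^{-1/2}t)\,e^{2\pi i x t s}$ (so that $\hat{h}(t)=\hat{f}(-|s|^{1/2}t+sx)$ up to explicit factors), this reduces the double integral to $e^{-\pi i|s|x^{2}}\int_{\R}\hat{h}(t)\,e^{\pi i t^{2}}\,dt$. Applying the stationary-phase identity quoted above converts this to $e^{\pi i/4}\int h(t)\,e^{-\pi i t^{2}}\,dt$, and a change of variables back to the original function $f$ then yields, after comparing with formula~(c'),
\[
\mathcal{F}_{X^{\ast}Z^{\ast}}\circ\mathcal{F}_{Z^{\ast}Y^{\ast}}\circ\mathcal{F}_{Y^{\ast}X^{\ast}}(f)(xe_{1}) \;=\; e^{\tfrac{\pi i}{4}} f(xe_{1}).
\]

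\textbf{Main obstacle.} The calculation itself is essentially bookkeeping; the only delicate point is tracking the sign of $s$ consistently through three places: (i) the sign appearing in the Gaussian exponent in~(b') versus~(b), (ii) the sign inside the translation argument of $\hat f$, which governs whether completion of the square produces $+\pi i(\cdots)^{2}$ or $-\pi i(\cdots)^{2}$, and (iii) the orientation constant $|A_{Z^{\ast}Y^{\ast}}|^{1/2}=(c|s|)^{1/2}$ rather than $(cs)^{1/2}$, which ensures the prefactors still cancel correctly to give the identity operator up to the scalar. Once these signs are handled carefully, the outcome $e^{+\pi i/4}$ (as opposed to $e^{-\pi i/4}$ in Lemma~\ref{pi1}) is forced by the use of the second stationary-phase identity rather than the first, and no further computation is required.
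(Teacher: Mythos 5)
Your proposal follows essentially the same route as the paper: compute $\mathcal{F}_{Z^{\ast}Y^{\ast}}\circ\mathcal{F}_{Y^{\ast}X^{\ast}}(f)$ from the explicit formulas (b') and (a), complete the square, change variables via an auxiliary function $h$, and invoke the Gaussian/stationary-phase identity $\int\hat{h}(t)e^{\pi i t^2}dt = e^{\pi i/4}\int h(t)e^{-\pi i t^2}dt$ — which, because $s<0$ makes $\psi(t^2/2)=e^{+\pi i t^2}$ appear (rather than $e^{-\pi i t^2}$ as in Lemma~\ref{pi1}), produces $e^{+\pi i/4}$. This is exactly the paper's argument.

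One small bookkeeping discrepancy: after substituting formula (a) into (b'), the paper arrives at $\int_{\R}\psi(t|s|^{1/2}x+\tfrac{t^2}{2})\hat{f}(t|s|^{1/2})\,dt$, whereas you wrote $\hat{f}(-t|s|^{1/2})$. With the convention $\hat{f}(y)=\int\psi(-yu)f(u)\,du$, formula (a) gives $\mathcal{F}_{Y^{\ast}X^{\ast}}(f)(xe_1^{\ast})=c^{-1/2}\hat{f}(-x)$, and evaluating at $x=-t|s|^{1/2}$ cancels the two minus signs, yielding $\hat{f}(+t|s|^{1/2})$. Correspondingly the paper's auxiliary function is $h(t)=|s|^{-1/2}f(|s|^{-1/2}t)e^{-2\pi i x t s}$ (no extra minus inside the argument of $f$). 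Your version would force a different translation when completing the square and could shift the final argument of $f$ by $-x$ rather than $+x$; worth re-checking if you push the computation through. The plan, the choice of Gaussian identity, and the predicted constant $e^{\pi i/4}$ are all correct.
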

\begin{proof}
By the base $e_1$, we will  identity $X$ with $\R$. For $f\in S(X)$, let $g(t)= f(t) e^{-2\pi i x t s}$, $h(t)=|s|^{-\tfrac{1}{2}}g(|s|^{-\tfrac{1}{2}} t)$. Then:
$$ \hat{h}(t)=\hat{g}(|s|^{1/2}t)=\hat{f}(|s|^{1/2}t+sx).$$
\begin{align}
\mathcal{F}_{Z^{\ast}Y^{\ast} } \circ \mathcal{F}_{Y^{\ast}X^{\ast} }(f)(xe_1)&=\int_{\R} \psi(t|s|^{\tfrac{1}{2}}x+\tfrac{t^2}{2}) \mathcal{F}_{Y^{\ast}X^{\ast}}(f)([-t|s|^{\tfrac{1}{2}}e_1^{\ast},0])c^{\tfrac{1}{2}}dt\\
&=\int_{\R}\psi(t|s|^{\tfrac{1}{2}}x+\tfrac{t^2}{2})\hat{f}(t|s|^{\tfrac{1}{2}})dt\\
&=\psi(\tfrac{sx^2}{2})\int_{\R}\psi(\tfrac{(t+|s|^{\tfrac{1}{2}}x)^2}{2})\hat{f}(t|s|^{\tfrac{1}{2}})dt\\
&=\psi(\tfrac{sx^2}{2})\int_{\R}\psi(\tfrac{t^2}{2})\hat{f}(t|s|^{\tfrac{1}{2}}+sx)dt\\
&=\psi(\tfrac{sx^2}{2})\int_{\R}\psi(\tfrac{t^2}{2})\hat{h}(t)dt\\
&=e^{\tfrac{\pi i}{4}}\psi(\tfrac{sx^2}{2})\int_{t\in \R}   h(t) e^{-\pi i t^2} dt\\
&=e^{\tfrac{\pi i}{4}}\int_{t\in \R}   |s|^{-\tfrac{1}{2}} f( |s|^{-\tfrac{1}{2}}t) e^{\pi i s x^2} e^{2\pi i xt|s|^{\tfrac{1}{2}}}e^{-\pi i t^2} dt\\
&=e^{\tfrac{\pi i}{4}}\int_{t\in \R}   |s|^{-\tfrac{1}{2}} f( |s|^{-\tfrac{1}{2}}t) e^{-\pi i (t-|s|^{\tfrac{1}{2}}x)^2} dt\\
&=e^{\tfrac{\pi i}{4}}\int_{t\in \R}   |s|^{-\tfrac{1}{2}} f( |s|^{-\tfrac{1}{2}}t+x) \psi(-\tfrac{t^2}{2}) dt\\
&=e^{\tfrac{\pi i}{4}}\mathcal{F}_{Z^{\ast}X^{\ast}}(f)(xe_{1}).
\end{align}
\end{proof}

\begin{lemma}[{\cite[p.56, Theorem]{LiVe}}]\label{three}
$ \mathcal{F}_{ X^{\ast}Z^{\ast}} \circ \mathcal{F}_{Z^{\ast}Y^{\ast} } \circ \mathcal{F}_{Y^{\ast}X^{\ast} }= \widetilde{c}_{X^{\ast}}(g_2 g_1^{-1}, g_1)\id_{\mathcal{H}(X^{\ast})}$.
\end{lemma}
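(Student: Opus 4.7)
The plan has three stages. First I would observe that each Fourier transform $\mathcal{F}_{V^{\ast}W^{\ast}}$ is, by construction, an $\Ha(W)$-equivariant intertwiner between the two Schrödinger realizations $\mathcal{H}(W^{\ast})$ and $\mathcal{H}(V^{\ast})$ of the Heisenberg representation $\pi_{\psi}$. Hence the composition in the statement is an $\Ha(W)$-endomorphism of $\mathcal{H}(X^{\ast})$, and by Stone--von Neumann together with Schur's lemma it is a scalar multiple of $\id_{\mathcal{H}(X^{\ast})}$. This reduces the lemma to identifying the scalar as the Perrin--Rao cocycle value $\widetilde{c}_{X^{\ast}}(g_2 g_1^{-1}, g_1)$.

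Second, I would reduce the general case to the two normal forms already settled by Lemmas \ref{pi1} and \ref{pi2}. Writing $Y^{\ast} = X^{\ast} g_1$ and $Z^{\ast} = X^{\ast} g_2$, the stabilizer of $X^{\ast}$ is the Siegel parabolic $P(X^{\ast})$; using the Bruhat decomposition and adjusting $g_1, g_2$ on the left by elements of $P(X^{\ast})$, one may bring $(g_1, g_2)$ into the form $g_1 = \begin{pmatrix} a & b \\ c & 0 \end{pmatrix}$ with $c > 0$ and $g_2 = \begin{pmatrix} p & q \\ 1 & s \end{pmatrix}$ treated in Lemmas \ref{pi1} (case $s > 0$) and \ref{pi2} (case $s < 0$). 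To legitimize this reduction, I need to check that both sides are insensitive to the ambiguity, i.e.\ that the Fourier composition and the cocycle transform identically under the left $P(X^{\ast})$-action on $(g_1, g_2)$; for the cocycle side this comes from the vanishing of $\widetilde{c}_{X^{\ast}}$ on $P(X^{\ast})$-entries, and for the Fourier side it follows from tracing how $P(X^{\ast})$ acts on $\mathcal{H}(X^{\ast})$ via $\pi_{\psi}$.

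Third, I would match the scalars. With the normal form above, a direct calculation gives
\[
g_2 g_1^{-1} = \begin{pmatrix} -qc & qa-pb \\ -sc & sa - b \end{pmatrix},
\]
so the lower-left entries of $g_2 g_1^{-1}$, $g_1$ and $(g_2 g_1^{-1})g_1 = g_2$ are $-sc$, $c$, $1$ respectively. The formula recalled in Section \ref{notationss},
\[
\widetilde{c}_{X^{\ast}}(h_1, h_2) = e^{\tfrac{i\pi}{4}\sgn(c_{h_1} c_{h_2} c_{h_1 h_2})},
\]
then yields $\widetilde{c}_{X^{\ast}}(g_2 g_1^{-1}, g_1) = e^{\tfrac{i\pi}{4}\sgn(-s)}$, which is $e^{-\pi i/4}$ for $s > 0$ and $e^{\pi i/4}$ for $s < 0$, in perfect agreement with Lemmas \ref{pi1} and \ref{pi2}.

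The main obstacles I expect are the degenerate configurations. When two of $X^{\ast}, Y^{\ast}, Z^{\ast}$ coincide (so that the corresponding Fourier transform is defined to be $\id$), or when the Leray invariant $q(g_2 g_1^{-1}, g_1)$ drops rank, one has to check separately that both sides collapse to the same scalar; here the convention $\mathcal{F}_{Y^{\ast}Z^{\ast}} = \id$ when $Y^{\ast} = Z^{\ast}$ and the identity $\gamma(\psi(0)) = 1$ conspire to give compatibility. The second delicate point is the normalization of the Haar measures on $Y^{\ast}, Z^{\ast}$ through the factors $|A_{Y^{\ast}Z^{\ast}}|^{1/2}$; this is what ensures that the scalar produced by the composition is precisely $\widetilde{c}_{X^{\ast}}$ and not merely a cohomologous cocycle, and it is exactly this normalization that produces the Weil index $\gamma(\psi(q/2))$ in Perrin--Rao's formula.
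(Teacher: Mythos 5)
Your overall architecture is right (Schur's lemma reduces to a scalar, the two normal-form computations from Lemmas \ref{pi1} and \ref{pi2} supply the scalar on the analytic side, and the lower-left-entry formula for $\widetilde{c}_{X^{\ast}}$ supplies it on the cocycle side), your scalar-matching computation of $g_2 g_1^{-1}$ and its $(2,1)$-entry $-sc$ is correct, and your handling of the degenerate configurations (two coincident Lagrangians, which in $\dim W = 2$ is exactly when the Leray invariant is degenerate) agrees with the paper's Cases 1--3. However, the reduction step has a concrete error that would derail the argument.

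You propose to bring $g_1$ into the normal form $\begin{pmatrix} a & b\\ c & 0\end{pmatrix}$, $c>0$, by ``adjusting $g_1, g_2$ on the left by elements of $P(X^{\ast})$.'' But left multiplication $g_1 \mapsto p_1 g_1$ with $p_1 = \begin{pmatrix}\alpha & \beta\\ 0 & \alpha^{-1}\end{pmatrix}$ only rescales the bottom row to $\alpha^{-1}(c_1,d_1)$: it keeps $(c_1,d_1)$ on the same ray, so you can arrange $c_1>0$ but can never kill $d_1$ unless it was already zero. (This is forced: the bottom row of $g_1$ is what cuts out the Lagrangian $Y^{\ast}=X^{\ast}g_1$, and that Lagrangian is unchanged when you replace $g_1$ by $p_1g_1$, so no left-coset adjustment can turn a general $Y^{\ast}$ into $X$.) The move the paper actually makes is a \emph{simultaneous change of basis}: right-multiply both $g_1$ and $g_2$ by a single $p=\begin{pmatrix}x & y\\ 0 & x^{-1}\end{pmatrix}\in P(X^{\ast})$, so that the second row of $g_1 p$ becomes $(c_1 x,\ c_1 y + d_1 x^{-1})$; for $c_1\neq 0$ one can choose $x,y$ to reach $(c',0)$ with $c'>0$, and simultaneously normalize the second row of $g_2 p$ to $(1,s')$. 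On the cocycle side this is harmless because $g_2 g_1^{-1}=(g_2p)(g_1p)^{-1}$ is literally unchanged and $\widetilde{c}_{X^{\ast}}(\,\cdot\,, g_1 p)=\widetilde{c}_{X^{\ast}}(\,\cdot\,, g_1)$ by right $P(X^{\ast})$-invariance in the second argument (visible from the Leray-invariant definition $q(h_1,h_2)=q(X^{\ast},X^{\ast}h_2^{-1},X^{\ast}h_1)$ and $X^{\ast}p^{-1}=X^{\ast}$). On the analytic side it is harmless because $X^{\ast},Y^{\ast},Z^{\ast}$ are fixed subspaces of $W$, so the abstract spaces $\mathcal{H}(X^{\ast})$, $\mathcal{H}(Y^{\ast})$, $\mathcal{H}(Z^{\ast})$ and the intertwiners between them do not see the base change; only their identification with $L^2$-models does, which is exactly what is being exploited. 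So the fix is: replace ``left by $P(X^{\ast})$'' with ``right by a common $p\in P(X^{\ast})$,'' and replace your vague invariance-by-tracing-$\pi_{\psi}$ argument for the Fourier side with the observation that the Lagrangians themselves do not move. With that correction, the remainder of your argument goes through and agrees with the paper's Case~4.
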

\begin{proof}
1) If $c=0$, then $ Y^{\ast}=X^{\ast}$, $\mathcal{F}_{X^{\ast} Y^{\ast}}=\id$, and  $\mathcal{F}_{Y^{\ast} Z^{\ast}} \mathcal{F}_{Z^{\ast} X^{\ast}}=\mathcal{F}_{Y^{\ast} Z^{\ast}} \mathcal{F}_{Z^{\ast} X^{\ast}}=\id$.  On the right hand side, $ \widetilde{c}_{X^{\ast}}(g_2 g_1^{-1}, g_1)=1$, so the result holds.\\
2) If $r=0$,  then $Z^{\ast}=X^{\ast}$, $\mathcal{F}_{X^{\ast} Y^{\ast}} \mathcal{F}_{Y^{\ast} Z^{\ast}}=\mathcal{F}_{X^{\ast} Y^{\ast}} \mathcal{F}_{Y^{\ast} X^{\ast}}=\id$, $\mathcal{F}_{Z^{\ast} X^{\ast}}=\id$. Moreover, $  \widetilde{c}_{X^{\ast}}(g_2 g_1^{-1}, g_1)=1$, so the result holds.\\
3) Note that $g_2g_1^{-1}=\begin{pmatrix} \ast &\ast\\- cs+dr& \ast\end{pmatrix}$.  If $cs-dr=0$, then $Y^{\ast}=Z^{\ast}$. Then $\mathcal{F}_{X^{\ast} Y^{\ast}} \circ \mathcal{F}_{Y^{\ast} Z^{\ast}} \circ \mathcal{F}_{Z^{\ast} X^{\ast}}=\id$. On the other hand, $ \widetilde{c}_{X^{\ast}}(g_2 g_1^{-1}, g_1)=1$.\\
4) Assume $cr(cs-dr)\neq 0$. Then $Z^{\ast}\neq X^{\ast}\neq Y^{\ast}$, and $Y^{\ast}\neq Z^{\ast}$.   Note that
$$\begin{pmatrix}
c& d \\
r & s
\end{pmatrix}=\begin{pmatrix}
\tfrac{c}{r}& 0 \\
1 & \tfrac{r}{c} (cs-rd)
\end{pmatrix}\begin{pmatrix}
r& \tfrac{d}{c}r\\
0 & r^{-1}
\end{pmatrix}.$$
Hence there exists $p=\begin{pmatrix}
x& y \\
0& x^{-1}
\end{pmatrix}\in \SL_2(\R)$ such that
$$(c,d)p=(c',0), \quad \quad (r,s) p=(1,s')$$
for some positive $c'$, and non-zero $s'$.  Let $\begin{pmatrix} e_1'\\e_1^{'\ast}\end{pmatrix}=p\begin{pmatrix} e_1\\e_1^{\ast}\end{pmatrix}$. Under the basis of $\{e_1', e_1^{'\ast}\}$, $$e_{Y^{\ast}}=(c,d)\begin{pmatrix} e_1\\e_1^{\ast}\end{pmatrix}=(c,d)p\begin{pmatrix} e_1'\\e_1^{'\ast}\end{pmatrix}=(c', 0)\begin{pmatrix} e_1'\\e_1^{'\ast}\end{pmatrix},$$
$$e_{Y^{\ast}}=(r,s)\begin{pmatrix} e_1\\e_1^{\ast}\end{pmatrix}=(r,s)p\begin{pmatrix} e_1'\\e_1^{'\ast}\end{pmatrix}=(1, s')\begin{pmatrix} e_1'\\e_1^{'\ast}\end{pmatrix}.$$
By Lemmas \ref{pi1}, \ref{pi2}, we have:
$$\mathcal{F}_{ X^{\ast}Z^{\ast}} \circ \mathcal{F}_{Z^{\ast}Y^{\ast} } \circ \mathcal{F}_{Y^{\ast}X^{\ast} }= e^{\tfrac{\pi i \sgn(-s')}{4}} \id_{\mathcal{H}(X^{\ast})}.$$
In this case, $$g_1p=\begin{pmatrix}
\ast &-c^{'-1}\\
c' & 0
\end{pmatrix}, \quad  g_2p=\begin{pmatrix}
\ast &\ast\\
1 & s'
\end{pmatrix}, \quad g_2g_1^{-1}=g_2p (g_1p)^{-1}=\begin{pmatrix}
\ast &\ast\\
-c's'& \ast
\end{pmatrix}.$$
Hence $\widetilde{c}_{X^{\ast}}(g_2 g_1^{-1}, g_1)=\widetilde{c}_{X^{\ast}}(g_2p (g_1p)^{-1}, g_1p)=e^{\tfrac{\pi i\sgn(c' 1 (-c')s')}{4}}=e^{ \tfrac{\pi i\sgn(-s')}{4}}$. So the result is right.
\end{proof}

\subsection{Lattice model}
Let $L=\Z e_1+\Z e_1^{\ast}$ be a self-dual lattice with respect to $\psi$. Then $\pi_{\psi}\simeq \Ind_{\Ha(L)}^{\Ha(W)} \psi$. By Example \ref{laex}, we let $\mathcal{H}(L)$ be the set of measurable  functions $f: W \longrightarrow \C$ such that
\begin{itemize}
\item[(i)] $f(l+w)=\psi(-\tfrac{\langle x_{l}, x^{\ast}_l\rangle}{2}-\tfrac{\langle l, w\rangle}{2}) f(w)$, for all $l=x_l+x_l^{\ast}\in L=(X\cap L) \oplus (X^{\ast}\cap L)$, $w\in W$;
\item[(ii)] $\int_{L\setminus W} ||f(\dot{w})||^2 d\dot{w}<+\infty$.
\end{itemize}
Here, we choose a $W$-right invariant measure $d\dot{w}$ on $L\setminus W$. By the basis $\{e_1, e_1^{\ast}\}$, $L\setminus W \simeq (\R/\Z)^2$, so we choose the measure $d\dot{w}$, which corresponds to the product   normal  Haar measure on $(\R/\Z)^2$. Following \cite[p.149]{LiVe}, let $Y^{\ast}$ be a Lagrangian plane of $W$ such that  $Y^{\ast}\cap L\neq 0$.  Then there exists two coprime integers $c, d$ such that $ce_1+de_1^{\ast}\in Y^{\ast}$. Let us choose two integers $a, b$ such that $ad-bc=1$, i.e. $ g=\begin{pmatrix} a &b\\ c& d\end{pmatrix}\in \SL_2(\Z)$. Let $e_{Y}=ae_1+be_1^{\ast}$, $e_{Y^{\ast}}=ce_1+de_1^{\ast}$, $Y=\R e_Y$.  So $W=Y\oplus Y^{\ast}$ and $L=\Z e_Y\oplus \Z e_{Y^{\ast}}$.

Let us consider $2\mid cd$. By \cite[pp.164-165]{We}, or \cite[pp.142-145]{LiVe}, there exists  a pair of explicit isomorphisms between   $\mathcal{H}(Y^{\ast})\simeq L^2(Y)$ and  $\mathcal{H}(L)$, given as follows:
 \begin{equation}\label{eq1}
\theta_{L, Y^{\ast}}(f')( w)=\sum_{l\in L/L\cap Y^{\ast}} f'( w+l)\psi(\tfrac{\langle l, w\rangle}{2}+\tfrac{\langle y_{l}, y^{\ast}_l\rangle}{2}),
 \end{equation}
 \begin{equation}\label{eq2}
 \theta_{Y^{\ast}, L}(f)(y)=  \int_{Y^{\ast}/Y^{\ast}\cap L} f([\dot{y}^{\ast},0]+[y, 0]) d\dot{y}^{\ast},
      \end{equation}
    for $ w=x+ x^{\ast} \in W$, $y\in Y$,  $f'\in S(Y)\subseteq L^2(Y)$,  $f\in L^1(W)\cap \mathcal{H}(L)$. It can be checked that $\theta_{L, Y^{\ast}}$ and $\theta_{Y^{\ast}, L}$ define a pair of inverse intertwining operators between $ \Ind_{\Ha(L)}^{\Ha(W)} \psi$ and  $\Ind_{Y^{\ast} \times \R}^{\Ha(W)} \psi$.

Let us consider  $g_2=\begin{pmatrix} p &q\\ r& s\end{pmatrix} \in \SL_2(\Z)$. Let $e_Z=pe_1+qe_1^{\ast}$, $e_{Z^{\ast}}=re_1+se_1^{\ast}=e_1^{\ast}g_2$. Let $Z=\R e_Z$, $Z^{\ast}=\R e_{Z^{\ast}}$. Following \cite[p.149]{LiVe}, let  $\beta(Y^{\ast}, Z^{\ast})$ be an element in $T$ such that
     $$ \theta_{Y^{\ast}, L}  \circ  \theta_{ L, Z^{\ast}}=\beta(Y^{\ast}, Z^{\ast}) \mathcal{F}_{Y^{\ast}Z^{\ast}}.$$
By Lemma \ref{three}, we have:
$$\beta(X^{\ast}, Z^{\ast})\beta(Z^{\ast},Y^{\ast})\beta(Y^{\ast}, X^{\ast})=\widetilde{c}_{X^{\ast}}(g_2g_1^{-1}, g_1)^{-1}.$$
Let us define $$\widetilde{\beta}(g_1)=\beta(X^{\ast}g_1, X^{\ast}).$$
\begin{lemma}
$\widetilde{c}_{X^{\ast}}(h_1,h_2)=\widetilde{\beta}(h_1)^{-1}\widetilde{\beta}(h_2)^{-1}\widetilde{\beta}(h_1h_2)$,  for $h_i=\begin{pmatrix} a_i& \beta_i\\c_i& d_i\end{pmatrix}\in \SL_2(\Z)$ with $2\mid c_id_i$.
\end{lemma}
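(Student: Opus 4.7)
The plan is to deduce the claimed identity directly from the three-Lagrangian relation
$$\beta(X^{\ast}, Z^{\ast})\,\beta(Z^{\ast},Y^{\ast})\,\beta(Y^{\ast}, X^{\ast})\;=\;\widetilde{c}_{X^{\ast}}(g_2 g_1^{-1}, g_1)^{-1}$$
established just above as a consequence of Lemma~\ref{three}. To do so I will need two auxiliary properties of $\beta$, and then make a judicious choice of $g_1,g_2$.

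\textbf{Step 1 (inversion).} I will first show that $\beta(Y^{\ast}, Z^{\ast})\,\beta(Z^{\ast}, Y^{\ast})=1$. This will be obtained by composing the defining equations
$$\theta_{Y^{\ast},L}\circ\theta_{L,Z^{\ast}}=\beta(Y^{\ast},Z^{\ast})\,\mathcal{F}_{Y^{\ast}Z^{\ast}}, \qquad \theta_{Z^{\ast},L}\circ\theta_{L,Y^{\ast}}=\beta(Z^{\ast},Y^{\ast})\,\mathcal{F}_{Z^{\ast}Y^{\ast}}$$
and observing that $\theta_{L,Y^{\ast}}$ and $\theta_{Y^{\ast},L}$ are mutually inverse, while $\mathcal{F}_{Y^{\ast}Z^{\ast}}\circ \mathcal{F}_{Z^{\ast}Y^{\ast}}=\id$ (the Fourier inversion formula already implicit in Lemmas~\ref{pi1} and \ref{pi2}). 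In particular, $\beta(X^{\ast}, X^{\ast} g)=\widetilde{\beta}(g)^{-1}$ for every admissible $g$.

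\textbf{Step 2 (equivariance under $\SL_2(\Z)$).} I will next show that for $g\in\SL_2(\Z)$,
$$\beta(Y^{\ast} g,\, Z^{\ast} g)\;=\;\beta(Y^{\ast}, Z^{\ast}).$$
The point is that right translation by $g$ on $\Ha(W)$ preserves $\Ha(L)$ (since $Lg=L$) and therefore induces unitary intertwiners $R_g:\mathcal{H}(L)\to \mathcal{H}(L)$ and $R_g:\mathcal{H}(Y^{\ast})\to\mathcal{H}(Y^{\ast} g)$ that are tautologically compatible with the operators (\ref{eq1}), (\ref{eq2}) defining $\theta_{L,Y^{\ast}}$, $\theta_{Y^{\ast},L}$ and with the Fourier operator~(\ref{ZZYY}). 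Conjugating the defining equation for $\beta(Y^{\ast}, Z^{\ast})$ by $R_g$ yields the same scalar for the pair $(Y^{\ast} g, Z^{\ast} g)$.

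\textbf{Step 3 (specialization).} I then specialize the three-Lagrangian identity to $g_1=h_2$ and $g_2=h_1 h_2$, so that $Y^{\ast}=X^{\ast} h_2$, $Z^{\ast}=X^{\ast} h_1 h_2$, and $g_2 g_1^{-1}=h_1$. This yields
$$\beta(X^{\ast}, X^{\ast} h_1 h_2)\,\beta(X^{\ast} h_1 h_2, X^{\ast} h_2)\,\beta(X^{\ast} h_2, X^{\ast}) \;=\; \widetilde{c}_{X^{\ast}}(h_1, h_2)^{-1}.$$
By Step 1 the first factor equals $\widetilde{\beta}(h_1 h_2)^{-1}$ and the third equals $\widetilde{\beta}(h_2)$. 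By Step 2 applied to $g=h_2$, the middle factor equals $\beta(X^{\ast} h_1, X^{\ast})=\widetilde{\beta}(h_1)$. Since $T$ is abelian, rearranging yields the claim.

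\textbf{Main obstacle.} The subtlest point is Step 2: one must track the normalizations of the measures on $L\setminus W$, on $Y^{\ast}/(Y^{\ast}\cap L)$, and on $Z^{\ast}$ appearing in (\ref{eq1}), (\ref{eq2}), (\ref{ZZYY}), and verify that they transport consistently under right translation by $g$. The hypothesis $2\mid c_i d_i$ is precisely what is needed for the formulas (\ref{eq1})--(\ref{eq2}) to define honest intertwiners when $Y^{\ast}=X^{\ast} h_i$; and for $h_i\in\Gamma(2)^{\pm}$ the product $h_1 h_2$ automatically inherits this condition, so that $\widetilde{\beta}(h_1h_2)$ on the right-hand side is likewise well-defined.
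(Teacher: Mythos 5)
Your proposal follows essentially the same route as the paper's own proof: start from the three-Lagrangian identity $\beta(X^{\ast},Z^{\ast})\beta(Z^{\ast},Y^{\ast})\beta(Y^{\ast},X^{\ast})=\widetilde{c}_{X^{\ast}}(g_2g_1^{-1},g_1)^{-1}$, convert each $\beta$-factor into a value of $\widetilde{\beta}$ using the inversion relation $\beta(A,B)\beta(B,A)=1$ and the translation-invariance $\beta(Ag,Bg)=\beta(A,B)$, and substitute $g_1=h_2$, $g_2=h_1h_2$. The paper uses the two properties of $\beta$ silently; you state them as Steps~1 and~2 and sketch proofs, which is a genuine improvement in transparency.

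One point in Step~2 deserves more care than your ``main obstacle'' paragraph gives it, and it is not merely a question of measure normalization. Right translation by $g\in\SL_2(\Z)$ does preserve the subgroup $\Ha(L)$ (since $Lg=L$), but the lattice model $\mathcal{H}(L)$ is an induced module attached not to $\Ha(L)$ alone but to the specific extending character $\psi_L$, which (via condition~(i) of Example~\ref{laex}) depends on the chosen splitting $L=(L\cap X)\oplus(L\cap X^{\ast})$. Concretely, $\psi_L$ restricted to $L$ is $l=(m,n)\mapsto(-1)^{mn}$, and one checks that this is stable under $l\mapsto lg$ for $g=\begin{pmatrix}a&b\\c&d\end{pmatrix}$ if and only if $2\mid ab$ \emph{and} $2\mid cd$. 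Thus $R_g$ maps $\mathcal{H}(L)$ into the lattice model for the twisted character $\psi_L\circ g$, and yields an honest endomorphism of $\mathcal{H}(L)$ only when both parity conditions hold. The hypothesis of the lemma supplies $2\mid c_id_i$ but not $2\mid a_ib_i$ (e.g.\ $h_2=\begin{pmatrix}1&1\\0&1\end{pmatrix}$ has $c_2d_2=0$ yet $a_2b_2=1$), so as written Step~2 has a gap. For the lemma's actual use inside the proof of Lemma~\ref{trii}, the relevant $h_i$ lie in $\breve{\Gamma}(2)=\langle\omega\rangle\ltimes\Gamma(2)$, where both $2\mid ab$ and $2\mid cd$ hold, so your argument is correct in the intended scope; the paper glosses over the same point. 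You should either restrict the stated hypothesis to $h_i\in\breve{\Gamma}(2)$, or supplement Step~2 with the character-invariance check $\psi_L\circ g=\psi_L$ and note the extra parity condition it requires.

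Step~1 is fine once one grants the Fourier inversion $\mathcal{F}_{Y^{\ast}Z^{\ast}}\circ\mathcal{F}_{Z^{\ast}Y^{\ast}}=\id$ for a general pair of transverse Lagrangians; this is established in the paper only for the pair $(X,X^{\ast})$, but extends by the same change-of-basis device used in the proof of Lemma~\ref{three}, so the assertion is harmless. Step~3 is exactly the paper's substitution.
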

\begin{proof}
$\widetilde{\beta}(g_2)^{-1}=\beta(X^{\ast}, Z^{\ast})=\widetilde{\beta}([g_2g_1^{-1}]g_1)^{-1}$,
$\beta(Z^{\ast}, Y^{\ast})=\beta(X^{\ast}g_2 g_1^{-1}, X^{\ast})=\widetilde{\beta}(g_2g_1^{-1})$, $\beta(Y^{\ast}, X^{\ast})=\widetilde{\beta}(g_1)$. Hence $\widetilde{\beta}(g_2)^{-1}\widetilde{\beta}(g_2^{-1} g_1)\widetilde{\beta}(g_1)=\widetilde{c}_{X^{\ast}}(g_2g_1^{-1},g_1)^{-1}$. Then write $h_1=g_2g_1^{-1}$, $h_2=g_1$, $h_1h_2=g_2$.
\end{proof}
Following \cite{LiVe}, let $\breve{\Gamma}(2)$ be the group generated by $\Gamma(2)$ and $\begin{pmatrix} 0& -1\\1& 0\end{pmatrix} $.  It is clear that $\breve{\Gamma}(2)\simeq  \langle \begin{pmatrix} 0& -1\\1& 0\end{pmatrix} \rangle \ltimes \Gamma(2)$.
\begin{lemma}\label{trii}
The restriction of $[\widetilde{c}_{X^{\ast}}]$ on  $\breve{\Gamma}(2)$ is trivial, with an explicit trivialization:
$$
 \widetilde{\beta}:  \breve{\Gamma}(2) \longrightarrow \mu_8;
  g \longmapsto   \widetilde{\beta}(g),$$
such that $\widetilde{c}_{X^{\ast}}(g_1, g_2) = \widetilde{\beta}(g_1)^{-1} \widetilde{\beta}(g_2)^{-1} \widetilde{\beta}(g_1g_2)$, for $g_i\in \breve{\Gamma}(2)$.
\end{lemma}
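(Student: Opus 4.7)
The plan is to obtain Lemma \ref{trii} as an immediate consequence of the previous lemma, once two points are settled: (a) every element of $\breve{\Gamma}(2)$ satisfies the parity hypothesis $2\mid cd$ required to invoke that lemma, and (b) the explicit trivialization $\widetilde{\beta}(g)=\beta(X^{\ast}g,X^{\ast})$ actually takes values in $\mu_8$ rather than merely in $T$.

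For (a), I would observe that $\breve{\Gamma}(2) = \Gamma(2)\sqcup \omega\Gamma(2)$. For $\gamma=\begin{pmatrix}a&b\\c&d\end{pmatrix}\in\Gamma(2)$, the congruence $\gamma\equiv I\pmod 2$ gives $c\equiv 0$, so $2\mid cd$. For the other coset, $\omega\gamma=\begin{pmatrix}-c&-d\\a&b\end{pmatrix}$ has bottom row $(a,b)$ with $a$ odd and $b$ even, so again $2\mid ab$. Thus the preceding lemma applies verbatim to any pair $h_1,h_2\in\breve{\Gamma}(2)$, yielding the cocycle identity
\[
\widetilde{c}_{X^{\ast}}(h_1,h_2)=\widetilde{\beta}(h_1)^{-1}\widetilde{\beta}(h_2)^{-1}\widetilde{\beta}(h_1h_2).
\]

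For (b), I would argue that $\widetilde{\beta}$ lands in $\mu_8$ by computing it on a set of generators of $\breve{\Gamma}(2)$, namely $\omega$, $u(2)$, $u_-(2)$ and $-I$. By the cocycle identity and the fact that $\widetilde{c}_{X^{\ast}}\in\mu_8$, if $\widetilde{\beta}$ is $\mu_8$-valued on generators it is $\mu_8$-valued everywhere. The values on generators come directly from the formulas already established for $\theta_{L,X^{\ast}}$, $\theta_{X^{\ast},L}$ and the partial Fourier transforms $\mathcal{F}_{Y^{\ast}X^{\ast}}$, $\mathcal{F}_{X^{\ast}Z^{\ast}}$: for $u(2)$ one has $X^{\ast}u(2)=X^{\ast}$ so $\widetilde{\beta}(u(2))=1$; for $\omega$ one has $X^{\ast}\omega=X$ and the comparison of $\theta_{X^{\ast},L}\circ\theta_{L,X}$ with $\mathcal{F}_{X,X^{\ast}}$ reduces to the Weil index $\gamma(\psi_0)=e^{i\pi/4}\in\mu_8$; for $u_-(2)$ one evaluates a finite Gauss sum over $L/(L\cap Y^{\ast})\cong\Z/2\Z$, which again lies in $\mu_8$; and $\widetilde{\beta}(-I)$ is computed from the sign arising in the reflection map on $L$.

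The main obstacle is the explicit Gauss-sum bookkeeping in these generator calculations. One must carefully match the normalizations of the measures used in (\ref{eq1}) and (\ref{eq2}) with those in the definition (\ref{ZZYY}) of $\mathcal{F}_{Z^{\ast}Y^{\ast}}$, and keep track of the correct determinations of square roots coming from $e^{\pm i\pi/4}$ in Lemmas \ref{pi1}--\ref{pi2}. Once these normalizations are consistent, every $\widetilde{\beta}$ value on a generator is visibly an eighth root of unity, and the cocycle identity then propagates this to all of $\breve{\Gamma}(2)$. Finally, I would note that the resulting $\widetilde{\beta}$ is the $\mu_8$-trivialization used implicitly in Section \ref{Gamma2}, so the lemma is precisely what is needed to feed back into the theta-series computations of the preceding section.
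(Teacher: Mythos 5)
Your proof takes the same route as the paper, which reduces Lemma \ref{trii} to the immediately preceding lemma with a one-line justification; you correctly identify and supply the two checks that one-liner implicitly requires, namely that every element of $\breve{\Gamma}(2)$ has bottom row with $2\mid cd$, and that $\widetilde{\beta}$ in fact lands in $\mu_8$ (which, once verified on generators, propagates through the cocycle identity because $\widetilde{c}_{X^{\ast}}$ itself is $\mu_8$-valued). One small inaccuracy worth flagging: with the paper's normalizations one computes $\widetilde{\beta}(g)=\beta(d,c)$, so for $g=\omega$ (where $c=1$, $d=0$) one gets $\widetilde{\beta}(\omega)=\beta(0,1)=1$, not the Weil index $e^{i\pi/4}$ as you suggest; since $1\in\mu_8$ this does not affect the conclusion, but the Poisson-summation comparison of $\theta_{X^{\ast},L}\circ\theta_{L,X}$ with $\mathcal{F}_{X,X^{\ast}}$ produces no Weil-index factor.
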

\begin{proof}
It following from the above lemma.
\end{proof}
\subsection{The expression of $\widetilde{\beta}(g)$} Let $c, d$ be two coprime integers with $c d\neq  0$.  Following \cite[p.162]{LiVe}, let us define
$$ \betaup(c,d)\stackrel{\Delta}{=}|d|^{-\tfrac{1}{2}}\sum_{n=0}^{|d|-1} e^{-\tfrac{\pi i c n^2}{d}}.$$
\begin{lemma}
If  $cd $ is even, $\beta(d,c)\beta(c,d)=e^{-\tfrac{\pi i}{4} \sgn(dc)}$.
\end{lemma}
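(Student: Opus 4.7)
The plan is to deduce this from the classical Landsberg--Schaar reciprocity for quadratic Gauss sums, which is the analytic heart of the computation; the rest is bookkeeping with signs and the coprimality hypothesis.

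First I would reduce to the case $c,d>0$. From the definition, complex conjugation interchanges the sign of $c$: $\overline{\betaup(c,d)} = \betaup(-c,d)$ when $d>0$, while $\betaup(c,-d) = \betaup(c,d)$ since the sum runs over $n=0,\ldots,|d|-1$ and $n^{2}$ is even in $n$. Using these elementary identities, each of the four sign configurations for $(c,d)$ reduces, up to complex conjugation and relabeling, to the case $c,d>0$, and one checks that the right-hand side $e^{-\pi i\,\sgn(cd)/4}$ transforms correspondingly. The parity hypothesis $2\mid cd$ is preserved in all cases.

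Second, for $c,d>0$ coprime with $cd$ even, I would invoke the Landsberg--Schaar relation
\begin{equation*}
\frac{1}{\sqrt{d}}\sum_{n=0}^{d-1} e^{-\pi i c n^{2}/d}
\;=\; \frac{e^{-\pi i/4}}{\sqrt{c}}\sum_{n=0}^{c-1} e^{\pi i d n^{2}/c},
\end{equation*}
i.e.\ $\betaup(c,d) = e^{-\pi i/4}\,\overline{\betaup(d,c)}$. The standard derivation runs through Poisson summation applied to a Gaussian $e^{-\pi \epsilon n^{2}}e^{-\pi i c n^{2}/d}$ followed by the limit $\epsilon\to 0^{+}$, or equivalently by evaluating the modular transform $\theta(-1/z) = \sqrt{z/i}\,\theta(z)$ of the Jacobi theta function at $z = d/c + i\epsilon$; the evenness of $cd$ is exactly what makes the limiting boundary values of both sides finite and equal. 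I would cite this as a known result rather than re-derive it.

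Third, multiplying gives $\betaup(c,d)\betaup(d,c) = e^{-\pi i/4}\,\bigl|\betaup(d,c)\bigr|^{2}$, and it remains to show $|\betaup(d,c)|=1$. This is a direct computation:
\begin{equation*}
|\betaup(d,c)|^{2} \;=\; \frac{1}{c}\sum_{m,n=0}^{c-1} e^{-\pi i d(n^{2}-m^{2})/c} \;=\; \frac{1}{c}\sum_{k=0}^{c-1} e^{-\pi i d k^{2}/c}\sum_{m=0}^{c-1} e^{-2\pi i d k m/c},
\end{equation*}
and the inner geometric sum vanishes unless $dk\equiv 0\pmod c$. Since $\gcd(c,d)=1$, this forces $k=0$, leaving $|\betaup(d,c)|^{2}=1$. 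Combined with the sign reduction of the first step, this yields the claimed formula $\betaup(d,c)\betaup(c,d) = e^{-\pi i\,\sgn(cd)/4}$.

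The main obstacle, such as it is, lies not in the analysis but in the sign bookkeeping: one must carefully track how the eighth root $e^{-\pi i/4}$ produced by Landsberg--Schaar transforms under conjugation of $c$ or $d$, and verify that the combined phases assemble exactly to $e^{-\pi i\,\sgn(cd)/4}$ in all four sign configurations. The evenness hypothesis $2\mid cd$ is essential: without it, the Gauss sum $\betaup(d,c)$ would in general have modulus $\sqrt{2}$ rather than $1$, and the Landsberg--Schaar identity would pick up an additional factor of $1+i^{d}$ type.
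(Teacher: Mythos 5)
Your route --- reduce to $c,d>0$, invoke Landsberg--Schaar reciprocity, and check $|\beta(d,c)|=1$ --- is a valid self-contained proof, which is more than the paper supplies: the paper simply cites \cite[p.170, Prop.]{LiVe} and proves nothing. So you are necessarily taking a different route (a direct Gauss-sum argument rather than the Weil-representation machinery of Lion--Vergne), and that is perfectly reasonable.

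However, one of your stated symmetries is wrong and would break the sign reduction if used as written. You claim $\beta(c,-d)=\beta(c,d)$, justified by ``$n^2$ is even in $n$,'' but flipping the sign of $d$ does not flip the sign of $n$ --- it flips the sign of $d$ sitting in the \emph{denominator} of the exponent, while the range $\{0,\dots,|d|-1\}$ and the prefactor $|d|^{-1/2}$ are unchanged. The correct identity is
\[
\beta(c,-d)=|d|^{-1/2}\sum_{n=0}^{|d|-1}e^{+\pi i c n^2/d}=\overline{\beta(c,d)},
\]
exactly parallel to the first symmetry $\overline{\beta(c,d)}=\beta(-c,d)$. This is not a harmless slip: carrying your version through the case $c>0$, $d<0$ gives
\[
\beta(d,c)\beta(c,d)=\overline{\beta(|d|,c)}\,\beta(c,|d|)=e^{-\pi i/4}\,\overline{\beta(|d|,c)}^{\,2},
\]
a quantity that depends on $c,d$ and is generally not $e^{+\pi i/4}$, so the argument fails rather than merely needing tidying. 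With the corrected symmetry one instead obtains $\beta(d,c)\beta(c,d)=\overline{\beta(|d|,c)\,\beta(c,|d|)}=e^{+\pi i/4}=e^{-\pi i\,\sgn(cd)/4}$, and the remaining sign configurations go the same way. One smaller point worth adding: in your computation of $|\beta(d,c)|^2$, passing to $k=n-m$ and summing $m$ over a full period requires the summand to be $c$-periodic in $m$ and $n$ separately; that periodicity holds precisely because $cd$ is even, the same hypothesis you correctly flag as essential at the end.
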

\begin{proof}
See \cite[p.170, Prop.]{LiVe}.
\end{proof}

\begin{lemma}
If  $d>0$, $c=2c'$, then
$$\betaup(d,c)=\left\{\begin{array}{cc}
 (\tfrac{c'}{|d|}) e^{-\tfrac{\pi i}{4} \sgn(c)} & \textrm{ if } d\equiv 1(\bmod 4),\\ (\tfrac{c'}{|d|} )i  e^{-\tfrac{\pi i}{4} \sgn(c)}   & \textrm{ if } d\equiv 3(\bmod 4).\end{array}\right.$$
 If $d<0$,  $c=2c'$, then $$\betaup(d,c)=\left\{\begin{array}{cc}
 (\tfrac{c'}{|d|}) e^{-\tfrac{\pi i}{4} \sgn(cd)} & \textrm{ if } d\equiv 3(\bmod 4),\\ (\tfrac{c'}{|d|} )(-i)  e^{-\tfrac{\pi i}{4} \sgn(cd)}   & \textrm{ if } d\equiv 1(\bmod 4).\end{array}\right.$$
\end{lemma}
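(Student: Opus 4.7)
The plan is to reduce the computation of $\betaup(d,c)$ with $c=2c'$ to a standard quadratic Gauss sum modulo $|d|$ by means of the reciprocity formula from the previous lemma, and then invoke Gauss's classical evaluation. Since $\gcd(c,d)=1$ and $c$ is even, the integer $d$ must be odd, so the denominators appearing in $\betaup(c,d)=\betaup(2c',d)$ are odd and the exponent $e^{-2\pi i c'n^{2}/d}$ is genuinely periodic modulo $|d|$. First I would apply the previous lemma to write
\[
\betaup(d,c)=e^{-\tfrac{\pi i}{4}\sgn(dc)}\,\betaup(c,d)^{-1}=e^{-\tfrac{\pi i}{4}\sgn(dc)}\,\betaup(2c',d)^{-1},
\]
thereby trading the sum with half-integer modulus $c$ for a sum with an odd modulus $|d|$.

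Next, I would evaluate $\betaup(2c',d)=|d|^{-1/2}\sum_{n=0}^{|d|-1}e^{-2\pi i c'n^{2}/d}$ by the standard multiplicativity identity
\[
\sum_{n\bmod|d|}e^{2\pi i a n^{2}/|d|}=\Bigl(\tfrac{a}{|d|}\Bigr)\,G(1,|d|),\qquad \gcd(a,|d|)=1,
\]
where $G(1,|d|)=\sum_{n\bmod|d|}e^{2\pi in^{2}/|d|}$ is Gauss's sum and $(\tfrac{\cdot}{|d|})$ is the Jacobi symbol. The sign of $d$ is absorbed by noting that $-1/d=1/|d|$ when $d<0$, while writing the integer $a=\mp c'$ accordingly. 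Gauss's classical evaluation
\[
G(1,m)=\begin{cases}\sqrt{m}&m\equiv1\pmod 4,\\ i\sqrt{m}&m\equiv3\pmod 4,\end{cases}\qquad m>0\text{ odd},
\]
together with $(\tfrac{-1}{m})=(-1)^{(m-1)/2}$, expresses $\betaup(2c',d)$ explicitly in terms of $(\tfrac{c'}{|d|})$ and a power of $i$.

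Finally, I would substitute back into the reciprocity identity to obtain $\betaup(d,c)$. The case analysis splits along the parity $|d|\bmod 4$ and the sign of $d$: if $d>0$ then $|d|\equiv d\pmod 4$, while if $d<0$ then $|d|\equiv -d\pmod 4$, so the ``$d\equiv 1$'' case from the statement when $d<0$ corresponds to $|d|\equiv 3$ in Gauss's formula, which accounts for the extra $-i$ that appears. The factor $e^{-\tfrac{\pi i}{4}\sgn(dc)}$ coming from reciprocity, together with the inverse of the Gauss factor ($1/i=-i$), reproduces the four formulas in the statement on the nose.

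The main obstacle is the careful bookkeeping of signs: the Jacobi symbol $(\tfrac{\cdot}{|d|})$ changes under $c'\mapsto -c'$ by the factor $(\tfrac{-1}{|d|})$, the Gauss factor depends on $|d|\bmod 4$ (not on $d\bmod 4$), the reciprocity exponent depends on $\sgn(dc)$, and the inversion $\betaup(2c',d)^{-1}$ flips the sign of the imaginary unit. Collating these four sign-sensitive ingredients into the four cases of the lemma is essentially the entire content of the proof, and the cleanest approach is to tabulate the contributions by $(\sgn d, d\bmod 4, \sgn c')$ and verify each of the four resulting expressions matches the statement.
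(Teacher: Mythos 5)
Your proposal is correct and takes essentially the same route as the paper: both apply the reciprocity identity $\betaup(c,d)\betaup(d,c)=e^{-\pi i\sgn(cd)/4}$ from the preceding lemma to trade $\betaup(d,c)$ for $\betaup(c,d)$, and then evaluate the latter as a classical quadratic Gauss sum modulo the odd integer $|d|$. The only difference is presentational: the paper cites Berndt--Evans--Williams, Thm.~1.5.2, for the Gauss sum value and treats only $d>0$ explicitly, whereas you unfold the evaluation via the multiplicativity identity and Gauss's formula for $G(1,m)$ and work both signs of $d$ in detail; that is a perfectly reasonable way to make the cited step self-contained.
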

\begin{proof}
Assume $d>0$. By the above lemma,  $\betaup(c,d)\betaup(d,c)=e^{-\tfrac{\pi i}{4} \sgn(cd)}=e^{-\tfrac{\pi i}{4} \sgn(c)}$.  Moreover, by \cite[Thm. 1.5.2]{BeEvWi}, we have: $$\betaup(c,d)=\left\{\begin{array}{cc}
 (\tfrac{c'}{d})  & \textrm{ if } d\equiv 1(\bmod 4),\\ -(\tfrac{c'}{d})i  & \textrm{ if } d\equiv 3(\bmod 4).\end{array}\right.$$
\end{proof}

Let  us calculate the constant $\widetilde{\beta}(g)$, for $g=\begin{pmatrix} a& b\\c& d\end{pmatrix} \in \SL_2(\Z)$ with $2\mid cd$. In this case, $X^{\ast}g=Y^{\ast}=\R (c e_1+de_1^{\ast})$.
By the basis $\{e_1, e_1^{\ast}\}$, we identity $W$ with $\R^2$, $X^{\ast}$ with $\R$, $X$ with $\R$.\\
 \begin{align}
\theta_{Y^{\ast}, L}(f)(xe_Y)&=  \int_{Y^{\ast}/Y^{\ast}\cap L} f([te_{Y^{\ast}},0]+[xe_Y, 0]) dt\\
 &=\int_{t\in \R/\Z} f(xe_y+te_{Y^{\ast}})\psi(-\tfrac{xt}{2})dt.
      \end{align}
 \begin{align}
\theta_{L, X^{\ast}}(f')( ye_1+ze_1^{\ast})&=\sum_{t\in \Z} f'(ye_1+ze_1^{\ast}+te_1)\psi(\tfrac{\langle te_1, ye_1+ze_1^{\ast}\rangle}{2})\\
&=\sum_{t\in \Z} f'(ye_1+ze_1^{\ast}+te_1)\psi(\tfrac{ tz}{2}).
 \end{align}
 \begin{align}
\mathcal{F}_{Y^{\ast}, X^{\ast}}(f')(xe_Y)&=\int_{te_{Y^{\ast}}\in Y^{\ast}}  f'([te_{Y^{\ast}}, 0]+[xe_Y,0])  |A_{Y^{\ast}X^{\ast}}|^{1/2}dt\\
&=\int_{t\in \R}  f'([te_{Y^{\ast}}, 0]+[xe_Y,0])  |A_{Y^{\ast}X^{\ast}}|^{1/2}dt\\
&=\int_{t\in \R}  f'([(td+xb)e_1^{\ast}, \tfrac{(tc+xa)(td+xb)-tx}{2}][(tc+xa)e_1, 0])|c|^{1/2}dt\\
&=\int_{t\in \R} \psi(\tfrac{(tc+xa)(td+xb)-tx}{2}) f'([(tc+xa)e_1, 0])|c|^{1/2}dt.
 \end{align}
 \begin{align}
& \theta_{Y^{\ast}, L}[\theta_{L, X^{\ast}}(f')](xe_Y)\\
 &=\int_{t\in \R/\Z} \theta_{L, X^{\ast}}(f')(xe_y+te_{Y^{\ast}})\psi(-\tfrac{xt}{2})dt\\
 &=\int_{t\in \R/\Z} \theta_{L, X^{\ast}}(f')((xa+tc)e_1+(xb+td)e_{2}^{\ast})\psi(-\tfrac{xt}{2})dt\\
 &=\int_{t\in \R/\Z} \sum_{s\in \Z} f'((xa+tc)e_1+(xb+td)e_1^{\ast}+se_1)\psi(\tfrac{ s(xb+td)}{2})\psi(-\tfrac{xt}{2})dt
 \end{align}
 \begin{align}
  &=\int_{t\in \R/\Z} \sum_{s\in \Z}  f'([(td+xb)e_1^{\ast}, \tfrac{(tc+xa+s)(td+xb)}{2}][(tc+xa+s)e_1, 0])\psi(\tfrac{ s(xb+td)}{2})\psi(-\tfrac{xt}{2})dt\\
  &=\int_{t\in \R/\Z} \sum_{s\in \Z}  f'([(tc+xa+s)e_1, 0])\psi(\tfrac{(tc+xa)(td+xb)-xt}{2} +s(xb+td))dt\\
  &= \int_{t\in \R/\Z}\sum_{l\in \Z/c\Z} \sum_{s\in \Z}  f'([(tc+xa+cs+l)e_1, 0])\psi(\tfrac{(tc+xa)(td+xb)-xt}{2} +(cs+l)(xb+td))dt\\
 & = \int_{t\in \R/\Z}\sum_{l\in \Z/c\Z} \sum_{s\in \Z}  f'([((t+s)c+xa+l)e_1, 0])\psi(\tfrac{((t+s)c+xa)((t+s)d+xb)-x(t+s)}{2} +l(xb+(t+s)d))dt\\
   & = \int_{t\in \R}\sum_{l\in \Z/c\Z}   f'([(tc+xa+l)e_1, 0])\psi(\tfrac{(tc+xa)(td+xb)-xt}{2} +l(xb+td))dt\\
   & =\sum_{l\in \Z/c\Z} \int_{t\in \R}  f'([(tc+xa)e_1, 0])\psi(\tfrac{([t-\tfrac{l}{c}]c+xa)([t-\tfrac{l}{c}]d+xb)-x[t-\tfrac{l}{c}]}{2}+l[t-\tfrac{l}{c}]d+lxb)dt\\
    & =\int_{t\in \R}  f'([(tc+xa)e_1, 0])\psi(\tfrac{(tc+xa)(td+xb)-xt}{2})[\sum_{l\in \Z/c\Z} \psi(-\tfrac{dl^2}{2c})]dt.
 \end{align}
Hence
$$\widetilde{\beta}(g)=|c|^{-1/2}\sum_{l\in \Z/c\Z} \psi(-\tfrac{dl^2}{2c})=\beta(d,c).$$
\begin{lemma}
 The restriction of $[\overline{c}_{X^{\ast}}]$ on $\Gamma(2) $ is trivial, with an explicit trivialization:
 $$\begin{array}{rlcl}
\overline{s}: & \Gamma(2) &\longrightarrow &\mu_8; \\
  &  \begin{pmatrix}
a & b\\
c& d\end{pmatrix} &\longmapsto  & \left\{\begin{array}{cc} e^{-\tfrac{\pi i[1-\sgn(d)]}{4}}& \textrm{ if } c=0,\\ \beta(d,c)e^{\tfrac{\pi i \sgn(c)}{4}} & \textrm{ if } c\neq 0.\end{array}\right.
\end{array}$$
 such that $\overline{c}_{X^{\ast}}(g_1, g_2) =\overline{ \beta}(g_1)^{-1}\overline{ \beta}(g_2)^{-1}\overline{ \beta}(g_1g_2)$, for $g_i\in \Gamma(2) $.
\end{lemma}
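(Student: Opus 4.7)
The plan is to deduce this lemma directly from Lemma \ref{trii} combined with the universal relation (\ref{28inter}) between the two $2$-cocycles $\widetilde{c}_{X^{\ast}}$ and $\overline{c}_{X^{\ast}}$. Since $\Gamma(2)\subseteq \breve{\Gamma}(2)$ and every element $g=\begin{pmatrix} a & b\\ c & d\end{pmatrix}\in\Gamma(2)$ has $c$ even, the hypothesis $2\mid cd$ of Lemma \ref{trii} is satisfied automatically, so we may write $\widetilde{c}_{X^{\ast}}(g_1,g_2)=\widetilde{\beta}(g_1)^{-1}\widetilde{\beta}(g_2)^{-1}\widetilde{\beta}(g_1g_2)$ for all $g_1,g_2\in\Gamma(2)$.

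Next, (\ref{28inter}) rearranges to
$$\overline{c}_{X^{\ast}}(g_1,g_2)= [\widetilde{\beta}(g_1) m_{X^{\ast}}(g_1)^{-1}]^{-1}[\widetilde{\beta}(g_2)m_{X^{\ast}}(g_2)^{-1}]^{-1}[\widetilde{\beta}(g_1g_2)m_{X^{\ast}}(g_1g_2)^{-1}],$$
which shows that $\overline{s}(g):=\widetilde{\beta}(g)\,m_{X^{\ast}}(g)^{-1}$ is a trivialization of $\overline{c}_{X^{\ast}}$ on $\Gamma(2)$ in the abstract. The lemma then reduces to computing this product case by case and matching it against the piecewise formula in the statement.

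If $c=0$, then $g\in \Gamma(2)\cap P(\R)$ forces $g=\pm I$, so $X^{\ast}g=X^{\ast}$; since $\theta_{X^{\ast},L}$ and $\theta_{L,X^{\ast}}$ are mutually inverse the defining relation $\theta_{Y^{\ast},L}\circ\theta_{L,Z^{\ast}}=\beta(Y^{\ast},Z^{\ast})\mathcal{F}_{Y^{\ast}Z^{\ast}}$ specialized to $Y^{\ast}=Z^{\ast}=X^{\ast}$ gives $\widetilde{\beta}(g)=1$, and combining with the formula $m_{X^{\ast}}(g)=e^{\tfrac{i\pi}{4}[1-\sgn(d)]}$ yields $\overline{s}(g)=e^{-\tfrac{\pi i[1-\sgn(d)]}{4}}$, as required. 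If $c\neq 0$, the explicit integral computation carried out immediately before Lemma \ref{trii} showed $\widetilde{\beta}(g)=\beta(d,c)$, while $m_{X^{\ast}}(g)=e^{-\tfrac{i\pi\sgn(c)}{4}}$, so $\overline{s}(g)=\beta(d,c)\,e^{\tfrac{\pi i\sgn(c)}{4}}$, matching the second branch.

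Finally, one verifies that $\overline{s}$ is indeed $\mu_8$-valued: $m_{X^{\ast}}$ takes values in $\mu_8$ by (\ref{mx}), and $\beta(d,c)\in\mu_8$ via the explicit evaluation in terms of a Jacobi-type symbol and a fourth root of unity given in the preceding subsection. The argument is essentially formal once Lemma \ref{trii} is available; the only mild subtlety is the parabolic case $c=0$, but this is transparent from the convention $\mathcal{F}_{X^{\ast}X^{\ast}}=\id$ established earlier, so no real obstacle arises.
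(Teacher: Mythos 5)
Your approach is essentially the same as the paper's: rearrange the normalization identity (\ref{28inter}) into the form $\overline{c}_{X^{\ast}}(g_1,g_2)=[\widetilde{\beta}\,m_{X^{\ast}}^{-1}](g_1)^{-1}[\widetilde{\beta}\,m_{X^{\ast}}^{-1}](g_2)^{-1}[\widetilde{\beta}\,m_{X^{\ast}}^{-1}](g_1g_2)$ using Lemma \ref{trii}, set $\overline{s}=\widetilde{\beta}\,m_{X^{\ast}}^{-1}$, and then evaluate the two factors case by case.

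One small inaccuracy in the $c=0$ case: you assert that $c=0$ and $g\in\Gamma(2)$ force $g=\pm I$. That is false — $\Gamma(2)\cap P(\R)$ consists of all $\pm\begin{pmatrix}1&2k\\0&1\end{pmatrix}$, $k\in\Z$, not just $\pm I$. Fortunately this does not affect your argument, since what you actually need is only that $c=0$ implies $X^{\ast}g=X^{\ast}$ (true for any upper triangular $g$), whence $\widetilde{\beta}(g)=\beta(X^{\ast},X^{\ast})=1$ by the convention $\mathcal{F}_{X^{\ast}X^{\ast}}=\id$ together with the fact that $\theta_{X^{\ast},L}$ and $\theta_{L,X^{\ast}}$ are mutually inverse. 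You should simply delete the claim $g=\pm I$; the rest of the reasoning stands and matches the paper.
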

\begin{proof}
\begin{align*}
\overline{c}_{X^{\ast}}(g_1, g_2)&=m_{X^{\ast}}(g_1g_2)^{-1} m_{X^{\ast}}(g_1) m_{X^{\ast}}(g_2) \widetilde{c}_{ {X^{\ast}}}(g_1,g_2)\\
&=[ \widetilde{\beta}(g_1g_2)m_{X^{\ast}}(g_1g_2)^{-1}] [ \widetilde{\beta}(g_1)m_{X^{\ast}}(g_1)^{-1}]^{-1} [\widetilde{\beta}(g_2)m_{X^{\ast}}(g_2)^{-1}]^{-1}.
\end{align*}
So we can take $\overline{ \beta}(g)= \widetilde{\beta}(g)m_{X^{\ast}}(g)^{-1}$.  \\
1) If $c=0$, $ \widetilde{\beta}(g)=1$, $m_{X^{\ast}}(g)^{-1}=e^{-\tfrac{\pi i[1-\sgn(d)]}{4}}$.\\
2) If $c\neq 0$, $ \widetilde{\beta}(g)=\beta(d,c)$, $m_{X^{\ast}}(g)^{-1}=e^{\tfrac{\pi i \sgn(c)}{4}}$.
\end{proof}
Note that $\Gamma^{\pm}(2)\cap \SO_2(\R)=\langle \pm I\rangle$. By calculation,  one can see that the two trivialization maps on $\langle \pm I\rangle$ are different, up to a sign character. As $ \Gamma(2) \simeq \langle \pm I\rangle \times \langle  n_{2}, n^-_{2}\rangle $,  for $n_2 =\begin{pmatrix} 1& 2 \\ 0 &1\end{pmatrix}$, $n^-_{2}=\begin{pmatrix} 1& 0 \\ 2 &1\end{pmatrix}$, let us modify the  trivialization map on $\Gamma(2)$ by the sign character and write it by $\overline{\beta}^-$. Similarly, we can modify  the trivialization map $\widetilde{\beta}$ on $\Gamma(2)$ by the sign character and write it by $\widetilde{\beta}^-$.
\subsection{The splitting of $\widehat{\SL_2}(\Z)$}
  In this section, we will let
$g=\begin{pmatrix}
a & b\\ c & d\end{pmatrix}, h= \begin{pmatrix}
p& q\\ r & s\end{pmatrix}$ be two elements of $\SL_2(\Z)$. Let us first recall some results of Dedekind sums from \cite{RaWh},\cite{Po}.
\begin{itemize}
\item $((x))=\left\{\begin{array}{ll} x-[x]-\tfrac{1}{2} & \textrm{ if } x\in \R\setminus \Z,\\ 0 & \textrm{ if } x\in \Z.\end{array}\right.$
\item $s(d,c)=\sum_{k\bmod(|c|)} ((\tfrac{k}{c}))((\tfrac{kd}{c}))$, for two coprime integers $c,d$.(Dedekind sum)
\item $s(-d, c)=-s(d,c)$, $s(d,-c)=s(d,c)$.
\end{itemize}
\begin{theorem}
\begin{itemize}
\item[(a)] For positive odd $d$, we have $$12 ds(c,d)\equiv d+1-(\tfrac{c}{d}) (\bmod 8),$$ where $(\tfrac{c}{d})$ is the Jacobi symbol.\\
\item[(b)] If $c>0, d>0$ and $c,d $ are coprime, we have
$$12s(d,c)+12s(c,d)=-3+ \tfrac{d}{c}+\tfrac{c}{d}+ \tfrac{1}{cd}.$$
\end{itemize}
\end{theorem}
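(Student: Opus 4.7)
Both statements are classical identities about Dedekind sums, so the plan is to derive them from a small number of structural facts about the symbol $((x))$ rather than to discover something new. For part (b), I would follow the standard cotangent approach: note the identity $((x)) = -\tfrac{1}{2i}\sum_{k\geq 1}\tfrac{\sin(2\pi kx)}{\pi k}$ (or, more cleanly, the finite identity $\cot(\pi k/c) = -2\sum_{j\bmod c} ((j/c)) e^{-2\pi i jk/c}$), which reformulates $s(d,c)$ as a cotangent sum
\begin{equation*}
s(d,c)=\tfrac{1}{4c}\sum_{k=1}^{c-1}\cot(\tfrac{\pi k}{c})\cot(\tfrac{\pi kd}{c}).
\end{equation*}
Reciprocity (b) then follows from a contour integration of $\pi \cot(\pi cz)\cot(\pi dz)/z$ around a large rectangle, picking up residues at $z=0$ and at $z=k/c$, $z=k/d$ for $1\leq k\leq c-1$ (resp.\ $1\leq k\leq d-1$); the residue at $0$ produces the $-3+\tfrac{d}{c}+\tfrac{c}{d}+\tfrac{1}{cd}$ term while the other residues assemble into $12s(d,c)+12s(c,d)$. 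Alternatively, I would reproduce Rademacher's lattice-point proof, counting pairs $(x,y)$ with $0<x<c$, $0<y<d$, $dx > cy$ in two ways.

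For part (a), the plan is induction on $d$ (positive odd), using reciprocity (b) together with the evident periodicity $s(c+d,d) = s(c,d)$. After reducing to $0<c<d$, the reciprocity law gives
\begin{equation*}
12d\,s(c,d) = -12c\,s(d,c) - 3d + d\bigl(\tfrac{d}{c}+\tfrac{c}{d}+\tfrac{1}{cd}\bigr)\cdot 1,
\end{equation*}
and one tracks the right-hand side modulo $8$. Here the key input is quadratic reciprocity for the Jacobi symbol, which controls how $(\tfrac{c}{d})$ changes when $(c,d)$ is replaced by $(d\bmod c,\, c)$: for coprime positive odd $c,d$ one has $(\tfrac{c}{d})(\tfrac{d}{c}) = (-1)^{\tfrac{(c-1)(d-1)}{4}}$, with appropriate supplementary rules when $c$ is even. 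Combining the inductive hypothesis $12c\,s(d,c) \equiv c+1-(\tfrac{d}{c})\pmod 8$ (when $c$ is odd) or the analogous even-$c$ congruence with these symbol manipulations yields the desired congruence for $(c,d)$. The base case $c=1$ is immediate since $s(1,d) = \tfrac{(d-1)(d-2)}{12d}$, giving $12d\,s(1,d) = (d-1)(d-2) \equiv d+1-1 \pmod 8$.

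The main obstacle is the second step: the parity/congruence bookkeeping in part (a). One must split cases according to the parities of $c$ and $d$ and according to residues modulo $4$ of the odd one(s), and then match the change in the Jacobi symbol under each Euclidean step with the arithmetic appearing on the right of reciprocity reduced modulo $8$. The rest (the cotangent/residue calculus for (b), the base case, and the reduction step for (a)) is routine and can be taken directly from \cite{RaWh} or \cite{Po}.
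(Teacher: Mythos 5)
The paper does not actually prove this Theorem; it simply cites Rademacher--Whiteman (Theorems 3 and 18 of \cite{RaWh}), so your sketch should be compared against that classical source. For part (b), the cotangent/contour-integral argument (or Rademacher's lattice-point count) is exactly the standard proof of reciprocity and is fine. For part (a), the general plan --- Euclidean induction via reciprocity, with quadratic reciprocity controlling the change of the Jacobi symbol --- is also the right one in spirit.

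However, your base case check is arithmetically false, and in fact the congruence cannot hold as stated. You have $12d\,s(1,d) = (d-1)(d-2)$. For $d$ odd this is always \emph{even} (product of two consecutive integers), whereas $d+1-(\tfrac{1}{d}) = d$ is odd, so they are never congruent modulo $8$; the instance $c=d=1$ already gives $0\not\equiv 1$. What this reveals is that the displayed statement has dropped a factor of $2$: Rademacher--Whiteman's Theorem 3 reads
$$12\,d\,s(c,d)\equiv d+1-2\left(\tfrac{c}{d}\right)\pmod 8$$
for $d$ odd and positive. With that factor the base case does work, since $(d-1)(d-2)-(d-1)=(d-1)(d-3)\equiv 0\pmod 8$ for odd $d$ (one of the two consecutive even numbers $d-1,d-3$ is divisible by $4$). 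You should have noticed the parity obstruction when writing ``$\equiv d+1-1$''; a correct proof cannot proceed from the formula as printed, and the subsequent induction step, which compares both sides modulo $8$, would similarly be off by the missing $2$ at every stage. The remedy is to prove the corrected congruence (and, if you wish, to flag the typo in the statement).
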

\begin{proof}
See \cite[Theorems 3, 18]{RaWh}.
\end{proof}
\begin{theorem}
$s(c,d)+s(r,s)=s(cu-dv,cs+dr)-\tfrac{1}{4}+\tfrac{1}{12}(\tfrac{d}{st}+\tfrac{s}{td}+\tfrac{t}{ds})$, for all positive $c,d,r,s$, where  $t=cs+dr$ and  $u,v$ are any two integers such that $ru+sv=1$.
\end{theorem}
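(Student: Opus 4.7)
The statement is the classical Rademacher three-term relation for Dedekind sums. My plan is to prove it by invoking the preceding reciprocity theorem three times, once on each of $s(c,d)$, $s(r,s)$, and $s(cu-dv,t)$, and then verifying that the resulting identity reduces to a symmetric sum of Dedekind sums plus an explicit rational correction that can be matched using $t=cs+dr$ and $ru+sv=1$. Before starting, I would note that the statement is insensitive to the choice of Bezout pair $(u,v)$: replacing $(u,v)\mapsto (u+ks,v-kr)$ shifts $cu-dv$ by $kt$, hence leaves $s(cu-dv,t)$ unchanged, and leaves the rational terms on the right-hand side untouched. This freedom will be used later to align certain congruences.

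The core computation is to evaluate $s(t,cu-dv)$ in terms of $s(d,c)$ and $s(s,r)$. Using the congruences
\begin{align*}
t\equiv dr\pmod c,\qquad t\equiv cs\pmod d,\qquad cu-dv\equiv -dv\pmod c,\qquad cu-dv\equiv cu\pmod d,
\end{align*}
together with the homogeneity $s(ha,ka)=s(h,k)$ when $\gcd(a,k)=1$ and a Chinese-remainder-type decomposition of the index set $\{1,\ldots,t-1\}$ coming from the splitting $t=cs+dr$, the sum $\sum_{k}((k/t))((k(cu-dv)/t))$ separates into two pieces that, after using $ru+sv=1$ to identify the residues, recombine into $s(d,c)+s(s,r)$, modulo explicit rational terms coming from sawtooth boundary contributions. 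The remaining rational pieces, when combined with the corrections from the three applications of reciprocity, should collapse to $-\tfrac14+\tfrac1{12}(d/(st)+s/(td)+t/(ds))$.

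The main obstacle is precisely this sawtooth bookkeeping: when $k$ is reindexed by its residues modulo $c$ and $d$, the values $((k/t))$ pick up integer shifts whose total contribution must cancel exactly against the symmetric rational expression in $c,d,r,s,t$. A cleaner alternative, very much in the spirit of the appendix, is to deduce the identity from the cocycle property of Rademacher's function $\Phi(\gamma)=(a+d)/c-12s(d,c)$ on $\SL_2(\Z)$. One chooses $M_1=\left(\begin{smallmatrix}\alpha_1 & \beta_1\\ d & c\end{smallmatrix}\right)$ and $M_2=\left(\begin{smallmatrix}u & -v\\ s & r\end{smallmatrix}\right)$ in $\SL_2(\Z)$ (the condition $\det M_2=ur+vs=1$ being exactly the hypothesis), and uses the $(u,v)$-freedom above to normalize so that the bottom row of $M_1M_2$ equals $(t,cu-dv)$. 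The relation $\Phi(M_1M_2)=\Phi(M_1)+\Phi(M_2)-3\,\mathrm{sgn}(c_1c_2c_{12})$, with all three lower-left entries positive, then yields the three-term Dedekind sum relation after substituting the explicit form of $\Phi$ and simplifying $(a_1+c)/d+(a_2+r)/s-(a_3+cu-dv)/t$ via unimodularity; this second approach moves the burden from sawtooth bookkeeping to an honest $\SL_2(\Z)$-decomposition, which is the harder step in that formulation.
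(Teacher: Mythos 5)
The paper does not prove this statement; it cites \cite[Theorem 7]{Po}, which obtains it from toric geometry. Your first plan (triple reciprocity plus sawtooth accounting) is left at the level of a sketch: you say the rational pieces ``should collapse'' and explicitly name the sawtooth bookkeeping as the unresolved obstacle, so that route is not a proof.

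Your second plan, via Rademacher's $\Phi$-cocycle, is the right mechanism, but your matrix choice is wrong and your normalization step cannot be carried out. With $M_1=\left(\begin{smallmatrix}\alpha_1 & \beta_1\\ d & c\end{smallmatrix}\right)$ and $M_2=\left(\begin{smallmatrix}u & -v\\ s & r\end{smallmatrix}\right)$, the bottom row of $M_1M_2$ is $(du+cs,\ cr-dv)$, not the required $(cs+dr,\ cu-dv)$; matching forces $u=r$. The $(u,v)$-freedom $(u,v)\mapsto(u+ks,v-kr)$ only moves $u$ within the residue class $u\equiv r^{-1}\pmod s$, so $u=r$ is reachable iff $r^2\equiv 1\pmod s$, which fails generically. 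The remedy is to instead take $M_2=\left(\begin{smallmatrix}r & -v\\ s & u\end{smallmatrix}\right)$, still of determinant $ru+sv=1$: now $M_1M_2$ has bottom row exactly $(cs+dr,\ cu-dv)=(t,\ cu-dv)$ for \emph{every} Bezout pair, with no normalization needed at all. The price is that $\Phi(M_2)=\tfrac{r+u}{s}-12s(u,s)$ produces $s(u,s)$ rather than $s(r,s)$, so you must also invoke the inversion symmetry $s(h,k)=s(h',k)$ whenever $hh'\equiv 1\pmod k$ (immediate from the substitution $j\mapsto jh'$ in the defining sum), applied with $ur\equiv 1\pmod s$. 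With $d,s,t>0$ the sign term is $-3$, and what remains --- and what you also leave unverified --- is the routine unimodularity computation $\tfrac{\alpha_1+c}{d}+\tfrac{r+u}{s}-\tfrac{a_3+(cu-dv)}{t}=\tfrac{d}{st}+\tfrac{s}{td}+\tfrac{t}{ds}$, where $a_3=\alpha_1 r+\beta_1 s$ is the top-left entry of $M_1M_2$. Only after filling in both the corrected matrices and this last algebraic identity do you have a proof.
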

\begin{proof}
See \cite[Theorem 7]{Po}.
\end{proof}
Let $g=\begin{pmatrix}
a & b\\ c& d\end{pmatrix} \in \SL_2(\Z)$. In \cite{As}, Asai gave the following function:
$$\nu(g)=\left\{\begin{array}{ll} \tfrac{b}{12d}+\tfrac{1-sgn(d)}{4}  & \textrm{ if } c=0,\\ \tfrac{a+d}{12c}-\sgn(c)\bigg( \tfrac{1}{4}+s(d, |c|)\bigg)& \textrm{ if } c\neq 0.\end{array}\right.$$

By \cite{As},  the  signature cocycle on $\SL_2(\Z)$ with values on $\Z$ can be splitting by the above function. In fact, the  signature  cocycle  differs from   the Rao-Perrin cocycle only by a character $e^{-\pi i (-)}$. So let us define:
\begin{itemize}
\item $\overline{\beta_1}(g)=e^{-\pi i \nu(g)}$.
\item $\widetilde{\beta_1}(g_i)=\overline{\beta_1}(g_1)m_{X^{\ast}}(g_1)=\left\{ \begin{array}{lr} e^{-\pi i \tfrac{b}{12a}}&  c=0,\\ e^{-\pi i \Big[\tfrac{a+d}{12c}-\sgn(c) s(d, |c|)\Big]}& c\neq 0. \end{array}\right.$
\end{itemize}
 Let $g_i= \begin{pmatrix} a_i& b_i\\c_i& d_i\end{pmatrix} \in \SL_2(\Z)$.
\begin{lemma}
$\overline{c}_{X^{\ast}}(g_1,g_2)=\overline{\beta_1}(g_1)^{-1}\overline{\beta_1}(g_2)^{-1}\overline{\beta_1}(g_1g_2)$.
\end{lemma}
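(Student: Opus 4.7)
The plan is to verify the coboundary identity by direct case analysis on whether the lower-left entries $c_1,c_2,c_3$ of $g_1,g_2,g_3:=g_1g_2$ vanish. The conceptual input is Asai's theorem, recalled just before the lemma, which asserts that $12\nu$ trivializes the integer-valued Meyer signature cocycle on $\SL_2(\Z)$. Combined with the explicit formula $\overline{c}_{X^{\ast}}(g_1,g_2)=(x(g_1),x(g_2))_\R(-x(g_1)x(g_2),x(g_3))_\R$ from the Example in Section~2 and the relation $\overline{c}_{X^{\ast}}= m_{X^{\ast}}(g_1g_2)^{-1}m_{X^{\ast}}(g_1)m_{X^{\ast}}(g_2)\widetilde{c}_{X^{\ast}}$, the lemma reduces to verifying that $e^{-\pi i\,\delta\nu(g_1,g_2)}=\overline{c}_{X^{\ast}}(g_1,g_2)$, where $\delta\nu(g_1,g_2):=\nu(g_1)+\nu(g_2)-\nu(g_3)$. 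In particular I must check both that $\delta\nu\in\Z$ (so that the left side lies in $\mu_2$) and that it has the correct parity.

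If at least one $c_i$ vanishes the three matrices are partly upper-triangular and both sides collapse to elementary expressions. For instance when $c_1=c_2=0$ one has $\overline{c}_{X^{\ast}}(g_1,g_2)=(d_1,d_2)_\R$, and the trivialization side matches because the $\tfrac{b_i}{12d_i}$ pieces of $\nu$ telescope using $d_3=d_1d_2$ and $b_3=d_1^{-1}b_2+b_1d_2$, leaving only the $\tfrac{1-\sgn(d_i)}{4}$ terms that reproduce $(d_1,d_2)_\R$ exactly. The other subcases with one or two $c_i=0$ are handled analogously using $s(d,\pm1)=0$ and the boundary compatibility of the two branches in the definition of $\nu$.

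The essential case is $c_1,c_2,c_3\neq 0$. From $\widetilde{c}_{X^{\ast}}(g_1,g_2)=e^{i\pi\sgn(c_1c_2c_3)/4}$ and $m_{X^{\ast}}(g)=e^{-i\pi\sgn(c)/4}$ the right side of the target identity becomes $e^{i\pi[\sgn(c_1c_2c_3)+\sgn(c_3)-\sgn(c_1)-\sgn(c_2)]/4}$. On the trivialization side I plug in $\nu(g_i)=\tfrac{a_i+d_i}{12c_i}-\sgn(c_i)\bigl(\tfrac14+s(d_i,|c_i|)\bigr)$ and apply Pommersheim's three-term Dedekind identity \cite[Thm.~7]{Po} to the combination $s(d_1,|c_1|)+s(d_2,|c_2|)-s(d_3,|c_3|)$ after normalizing to positive moduli via $s(-d,c)=-s(d,c)$ and $s(d,-c)=s(d,c)$. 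The rational $\tfrac{1}{12}(\cdots)$ contribution from Pommersheim's identity cancels against the $\tfrac{a_i+d_i}{12c_i}$ pieces of $\nu$ through a direct bookkeeping based on the matrix product $g_1g_2=g_3$, leaving only the $\pm\tfrac14\sgn(c_1c_2c_3)$ sign contribution required to match the target.

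The main obstacle is the sign bookkeeping in this last step. Pommersheim's identity is formulated only for positive moduli and under the constraint $ru+sv=1$, so several adjustments governed by the Dedekind reflection identities, by the sign of $c_3=c_1s+d_1r$, and by the signs of $a_i+d_i$ have to be tracked carefully before and after invoking it. Once these are handled, the integrality $\delta\nu\in\Z$ emerges as an automatic byproduct of the computation, and matching the resulting parity with the explicit formula for $\overline{c}_{X^{\ast}}$ completes the proof.
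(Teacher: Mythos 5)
The paper's own proof of this lemma is a one-line citation to Asai \cite{As}. Your proposal is a self-contained attempt to reconstruct Asai's computation by direct verification, which is the same basic route — Dedekind-sum reciprocity — rather than a genuinely different one. The reduction to checking $\overline{c}_{X^{\ast}}(g_1,g_2)=e^{\pi i\,\delta\nu}$ is right (you wrote $e^{-\pi i\,\delta\nu}$, but the two agree once $\delta\nu\in\Z$ is established), and your sketch of the $c_1=c_2=0$ case checks out: for $c_i=0$ one has $d_i=\pm1$, the $\tfrac{b_i}{12d_i}$ terms telescope, and $(d_1,d_2)_\R$ matches $(-1)^{[(1-\sgn d_1)+(1-\sgn d_2)-(1-\sgn d_1d_2)]/4}$.

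The genuine gap is in the main case $c_1c_2c_3\neq 0$, which is the entire content of the lemma and which you defer. Two specific issues. First, to bring $s(d_1,|c_1|)+s(d_2,|c_2|)-s(d_3,|c_3|)$ into the shape of Pommersheim's Theorem 7 you need a substitution such as $(c,d,r,s)=(d_1,c_1,a_2,c_2)$; with $u=d_2$, $v=-b_2$ this produces $t=c_3$ and $cu-dv=d_3$, but the left-hand side of the identity then carries $s(a_2,c_2)$, not $s(d_2,c_2)$. You therefore also need the elementary but unmentioned identity $s(\bar d,c)=s(d,c)$ for $d\bar d\equiv 1\pmod c$, which is not recalled in the paper either. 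Second, Pommersheim's identity requires all four arguments positive, and the $\sgn$-dependent pieces of $\nu$ and the Hilbert symbols in $\overline c_{X^\ast}$ depend on the signs of $c_1,c_2,c_3$; you explicitly name this normalization as "the main obstacle" but do not carry out any of the eight sign cases. That bookkeeping — running $s(-d,c)=-s(d,c)$, $s(d,-c)=s(d,c)$, and the $\sgn(c_i)(\tfrac14+s(d_i,|c_i|))$ terms against the target $e^{\pi i[\sgn(c_1c_2c_3)-\sgn c_1-\sgn c_2+\sgn c_3]/4}$ — is exactly where the lemma lives. Either that computation must be completed, or one should invoke the Rademacher transformation law $\Phi(g_1)+\Phi(g_2)-\Phi(g_1g_2)=3\sgn(c_1c_2c_3)$ (valid whenever $c_1c_2c_3\neq 0$) directly, which packages the Pommersheim calculation together with the sign reductions and makes the remaining arithmetic immediate.
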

\begin{proof}
See \cite[2-3]{As}.
\end{proof}
\begin{lemma}
$\widetilde{c}_{X^{\ast}}(g_1,g_2)=\widetilde{\beta_1}(g_1)^{-1}\widetilde{\beta_1}(g_2)^{-1}\widetilde{\beta_1}(g_1g_2)$.
\end{lemma}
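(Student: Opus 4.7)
The plan is to deduce this identity purely formally from the previous lemma (Asai's trivialization $\overline{c}_{X^\ast}=\delta\overline{\beta_1}$) together with the comparison formula (\ref{28inter}) between the $\mu_2$-valued Rao cocycle $\overline{c}_{X^\ast}$ and the $\mu_8$-valued Perrin--Rao cocycle $\widetilde{c}_{X^\ast}$. No new case analysis on whether $c=0$ or $c\ne 0$ should be needed; that case split is already absorbed into the definitions of $m_{X^\ast}$ and of $\widetilde{\beta_1}$.

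First I would unfold the definition $\widetilde{\beta_1}(g)=\overline{\beta_1}(g)\,m_{X^\ast}(g)$ and split the coboundary into two factors:
\begin{align*}
\widetilde{\beta_1}(g_1)^{-1}\widetilde{\beta_1}(g_2)^{-1}\widetilde{\beta_1}(g_1g_2)
&=\bigl[\overline{\beta_1}(g_1)^{-1}\overline{\beta_1}(g_2)^{-1}\overline{\beta_1}(g_1g_2)\bigr]\\
&\quad\cdot\bigl[m_{X^\ast}(g_1)^{-1}m_{X^\ast}(g_2)^{-1}m_{X^\ast}(g_1g_2)\bigr].
\end{align*}
By the preceding lemma the first bracket equals $\overline{c}_{X^\ast}(g_1,g_2)$, so the right-hand side becomes
\[
\overline{c}_{X^\ast}(g_1,g_2)\,m_{X^\ast}(g_1)^{-1}m_{X^\ast}(g_2)^{-1}m_{X^\ast}(g_1g_2).
\]

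Next I would invoke the normalization identity (\ref{28inter}), namely
\[
\overline{c}_{X^\ast}(g_1,g_2)=m_{X^\ast}(g_1g_2)^{-1}m_{X^\ast}(g_1)m_{X^\ast}(g_2)\,\widetilde{c}_{X^\ast}(g_1,g_2),
\]
and substitute it into the previous expression. All the $m_{X^\ast}$-factors cancel in pairs, and what remains is exactly $\widetilde{c}_{X^\ast}(g_1,g_2)$, completing the proof.

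There is no genuine obstacle: the only nontrivial inputs are Asai's result for $\overline{\beta_1}$ (the previous lemma) and Rao's relation (\ref{28inter}), both of which are already established. If one wanted to verify the result in a more hands-on way, the mild bookkeeping issue would be checking that the explicit piecewise definition of $\widetilde{\beta_1}$ really coincides with $\overline{\beta_1}\cdot m_{X^\ast}$ on both branches $c=0$ and $c\ne 0$, using the values $m_{X^\ast}(g)=e^{\tfrac{i\pi}{4}[1-\sgn(d)]}$ when $c=0$ and $m_{X^\ast}(g)=e^{-\tfrac{i\pi}{4}\sgn(c)}$ when $c\ne 0$; but this is a direct substitution rather than a real difficulty.
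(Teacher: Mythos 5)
Your proof is correct and matches the paper's argument exactly: both use the definition $\widetilde{\beta_1}=\overline{\beta_1}\,m_{X^\ast}$, the splitting relation (\ref{28inter}), and Asai's trivialization of $\overline{c}_{X^\ast}$, differing only in whether one starts from $\widetilde{c}_{X^\ast}$ and unpacks it (the paper) or starts from the coboundary of $\widetilde{\beta_1}$ and reassembles it (you). These are the same computation read in opposite directions, and nothing is missing.
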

\begin{proof}
\begin{align*}
\widetilde{c}_{ {X^{\ast}}}(g_1,g_2) &=m_{X^{\ast}}(g_1g_2) m_{X^{\ast}}(g_1)^{-1} m_{X^{\ast}}(g_2)^{-1}\overline{c}_{X^{\ast}}(g_1, g_2) \\
&=[\overline{\beta_1}(g_1g_2)m_{X^{\ast}}(g_1g_2)] [\overline{\beta_1}(g_1)m_{X^{\ast}}(g_1)]^{-1} [\overline{\beta_1}(g_2)m_{X^{\ast}}(g_2)]^{-1}.
\end{align*}
\end{proof}
Now let us consider the subgroup $\Gamma(2) \subseteq \SL_2(\Z)$.  According to Lemma \ref{trii}, the cocycle $\widetilde{c}_{ {X^{\ast}}}$ on $\Gamma(2)$ can also be splitting by the function $\widetilde{\beta}$. So $\widetilde{\beta}$   differs from  $\widetilde{\beta_1}$ by a character $\chi$. Let us formulate this character explicitly.
Let $\chi(g)=\widetilde{\beta_1}(g)/\widetilde{\beta}(g)$, for $g=\begin{pmatrix} a & b\\ c& d\end{pmatrix}\in \Gamma(2)$.
\begin{itemize}
\item \begin{align*}
12\sgn(c)s(d, |c|)&=\sgn(d)12\sgn(c)s(|d|, |c|)\\
&=\sgn(d)[-3\sgn(c)+ \tfrac{|d|}{c}+\tfrac{c}{|d|}+ \tfrac{1}{c|d|}-12s(c,|d|)]\\
&=-3\sgn(cd)+ \tfrac{d}{c}+\tfrac{c}{d}+ \tfrac{1}{cd}-12\sgn(d)s(c,d).
\end{align*}
 \begin{align*}
\tfrac{a+d}{12c}-\sgn(c) s(d, |c|)&=\tfrac{1}{12}[\tfrac{a}{c}+\tfrac{d}{c}-12\sgn(c)s(d, |c|)]\\
&=\tfrac{1}{12c}[a+d+3c\sgn(cd)-d-\tfrac{c^2}{d}- \tfrac{1}{d}+12\sgn(d)cs(c,d)]\\
&=\tfrac{1}{12}[3\sgn(cd)-\tfrac{c}{d}+\tfrac{b}{d}+12\sgn(d)s(c,d)].
\end{align*}
\item If  $d>0$, $c=2c'$, then
$$\betaup(d,c)=\left\{\begin{array}{cc}
 (\tfrac{c'}{|d|}) e^{-\tfrac{\pi i}{4} \sgn(c)} & \textrm{ if } d\equiv 1(\bmod 4),\\ (\tfrac{c'}{|d|} )i  e^{-\tfrac{\pi i}{4} \sgn(c)}  & \textrm{ if } d\equiv 3(\bmod 4).\end{array}\right.$$
\item If $d<0$,  $c=2c'$, then $$\betaup(d,c)=\left\{\begin{array}{cc}
 (\tfrac{c'}{|d|}) e^{-\tfrac{\pi i}{4} \sgn(cd)} & \textrm{ if } d\equiv 3(\bmod 4),\\ (\tfrac{c'}{|d|} )(-i)  e^{-\tfrac{\pi i}{4} \sgn(cd)}  & \textrm{ if } d\equiv 1(\bmod 4).\end{array}\right.$$
\item If $c=0$, $\chi(g)=e^{-\pi i \tfrac{b}{12a}}$.
\item If $c\neq 0$, $d>0$,  $d\equiv 1(\bmod 4)$, and $c=2c'$,  then:
\begin{align*}
\chi(g)&= e^{-\pi i \Big[\tfrac{a+d}{12c}-\sgn(c) s(d, |c|)\Big]}\beta^{-1}(d,c)\\
&= (\tfrac{c'}{|d|})e^{\tfrac{-\pi i}{12}[-\tfrac{c}{d}+\tfrac{b}{d}+12s(c,d)]}.
 \end{align*}
 \item If $c\neq 0$, $d>0$,  $d\equiv 3(\bmod 4)$, and $c=2c'$,  then:
\begin{align*}
\chi(g)&= e^{-\pi i \Big[\tfrac{a+d}{12c}-\sgn(c) s(d, |c|)\Big]}\beta^{-1}(d,c)\\
&=  (\tfrac{c'}{|d|} )(-i) e^{\tfrac{-\pi i}{12}[-\tfrac{c}{d}+\tfrac{b}{d}+12s(c,d)]}.
 \end{align*}
 \item If $c\neq 0$, $d<0$,  $d\equiv 3(\bmod 4)$, and $c=2c'$,  then:
\begin{align*}
\chi(g)&= e^{-\pi i \Big[\tfrac{a+d}{12c}-\sgn(c) s(d, |c|)\Big]}\beta^{-1}(d,c)\\
&= e^{-\pi i \tfrac{1}{12}[-3\sgn(c)-\tfrac{c}{d}+\tfrac{b}{d}-12s(c,d)]} (\tfrac{c'}{|d|}) e^{\tfrac{-\pi i}{4} \sgn(c)} \\
&=(\tfrac{c'}{|d|})e^{-\pi i \tfrac{1}{12}[-\tfrac{c}{d}+\tfrac{b}{d}-12s(c,d)]}.
 \end{align*}
  \item If $c\neq 0$, $d<0$,  $d\equiv 1(\bmod 4)$, and $c=2c'$,  then:
\begin{align*}
\chi(g)&=e^{-\pi i \Big[\tfrac{a+d}{12c}-\sgn(c) s(d, |c|)\Big]}\beta^{-1}(d,c)\\
&=e^{-\pi i \tfrac{1}{12}[-3\sgn(c)-\tfrac{c}{d}+\tfrac{b}{d}-12s(c,d)]} (\tfrac{c'}{|d|} )i  e^{-\tfrac{\pi i}{4} \sgn(c)} \\
&=(\tfrac{c'}{|d|})ie^{-\pi i \tfrac{1}{12}[-\tfrac{c}{d}+\tfrac{b}{d}-12s(c,d)]}.
 \end{align*}
\end{itemize}
Let $n_{2k}=\begin{pmatrix} 1& 2k \\ 0 &1\end{pmatrix}$, $n^-_{2k}=\begin{pmatrix} 1& 0 \\ 2k &1\end{pmatrix}$. Note that $\chi(n_{2k})=e^{-\tfrac{k\pi i}{6}}$, $\chi(n^-_{2k})=e^{\tfrac{k\pi i}{6}}$.
\begin{lemma}
Let $g=\begin{pmatrix} a & b\\c& d\end{pmatrix}\in \Gamma(2)$, and $c=2c'$.
\begin{itemize}
\item[(1)] $\chi(\pm g)=\chi(g)$.
\item[(2)] $\chi( g n_{2})=\chi( g)\chi(n_{2})$
\item[(3)] $\chi( g n^-_{2})=\chi( g)\chi(n^-_{2})$.
\end{itemize}
\end{lemma}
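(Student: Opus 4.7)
The key observation is that $\chi=\widetilde{\beta_1}/\widetilde{\beta}$ is automatically a group homomorphism on $\Gamma(2)$. Indeed, the restriction of $\widetilde{\beta_1}$ (a trivialization of $\widetilde{c}_{X^{\ast}}$ on all of $\SL_2(\Z)$) and the restriction of $\widetilde{\beta}$ (a trivialization of $\widetilde{c}_{X^{\ast}}$ on $\breve{\Gamma}(2)\supseteq \Gamma(2)$) are both splittings of the same $2$-cocycle $\widetilde{c}_{X^{\ast}}|_{\Gamma(2)\times \Gamma(2)}$, so the cocycle factors cancel in the ratio, yielding $\chi(g_1 g_2)=\chi(g_1)\chi(g_2)$ for $g_1,g_2\in\Gamma(2)$. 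This single fact underlies all three parts.

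Granted the homomorphism property, each statement reduces to evaluating $\chi$ on a single generator. For (1), the claim $\chi(-g)=\chi(g)$ is equivalent to $\chi(-I)=1$, which follows instantly from the $c=0$ formula applied to $-I$ (with $a=d=-1$, $b=0$): $\chi(-I)=e^{-\pi i\cdot 0/(12\cdot(-1))}=1$. For (2), applying the $c=0$ formula to $n_2$ gives $\chi(n_2)=e^{-\pi i/6}$, so $\chi(gn_2)=\chi(g)\chi(n_2)$ is immediate. For (3), applying the $c\neq 0$, $d>0$, $d\equiv 1\pmod 4$ formula to $n^-_2$ (with $c=2$, $c'=1$, $d=1$, $b=0$ and using $s(2,1)=0$) yields $\chi(n^-_2)=e^{\pi i/6}$, and again the homomorphism property gives $\chi(gn^-_2)=\chi(g)\chi(n^-_2)$.

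A more laborious direct route is also available, which expands both sides of each identity using the piecewise definition of $\chi$, case by case according to the sign of $d$ (or of the new $d$-entry) and $d\pmod 4$. The tools needed are the Jacobi-symbol relation $\bigl(\tfrac{-c'}{|d|}\bigr)=\bigl(\tfrac{-1}{|d|}\bigr)\bigl(\tfrac{c'}{|d|}\bigr)$, the Dedekind-sum antisymmetry $s(-c,-d)=-s(c,d)$, the periodicity $s(c,d)=s(c\bmod d,d)$, and the Rademacher reciprocity $12[s(c,d)+s(d,c)]=-3+c/d+d/c+1/(cd)$ for coprime positive $c,d$. The main obstacle of this direct approach is pure bookkeeping: the matrix $gn_2$ (respectively $gn^-_2$) may land in a different subcase of the definition of $\chi$ than $g$, since $d+2c$ may flip sign relative to $d$ and its residue mod~$4$ changes in general, and one must carefully match Jacobi-symbol prefactors and $i$-powers across subcases. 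The conceptual argument via splittings sidesteps all of this, and is the route I would present.
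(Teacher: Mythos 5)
Your proposal is correct, and the argument is sound: since $\widetilde{\beta_1}$ (the lemma before the statement of $\chi$) and $\widetilde{\beta}$ (Lemma~\ref{trii}, restricted to $\Gamma(2)\subseteq\breve{\Gamma}(2)$) are both written as trivializations of the \emph{same} cocycle $\widetilde{c}_{X^{\ast}}$ with the same sign convention $\widetilde{c}_{X^{\ast}}(g_1,g_2)=f(g_1)^{-1}f(g_2)^{-1}f(g_1g_2)$, the cocycle factor cancels and $\chi=\widetilde{\beta_1}/\widetilde{\beta}$ satisfies $\chi(g_1g_2)=\chi(g_1)\chi(g_2)$ for all $g_1,g_2\in\Gamma(2)$, from which all three parts follow after evaluating $\chi$ on $-I$, $n_2$, $n_2^{-}$ individually (and your evaluations agree with the paper's note $\chi(n_{2k})=e^{-k\pi i/6}$, $\chi(n^-_{2k})=e^{k\pi i/6}$). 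This differs genuinely from the paper's proof, which is the direct case-by-case verification you describe as the laborious alternative: it expands both sides of each identity using the piecewise closed form of $\chi$ and invokes Dedekind-sum periodicity and reciprocity, Gauss-sum identities for $\betaup(d,c)$, and Jacobi-symbol manipulations across the subcases. Worth noting: the paper \emph{already} makes your key observation in the sentence preceding the definition of $\chi$ --- ``So $\widetilde{\beta}$ differs from $\widetilde{\beta_1}$ by a character $\chi$'' --- so the lemma in the paper really functions as an independent cross-check that the derived closed-form expression for $\chi$ is internally consistent, not as the primary reason $\chi$ is multiplicative; by bypassing the case analysis your proof is logically sufficient for the statement as written, but it forgoes that consistency check on the explicit formula (an error in the piecewise expression would not be caught by your argument, while it would be by the paper's).
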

\begin{proof}
1) If $c=0$, $\chi(- g)=e^{-\pi i \tfrac{-b}{-12a}}=\chi(g)$. \\
If $c\neq 0$,  then
\begin{align*}
\chi(- g)&= e^{-\pi i \Big[\tfrac{-a-d}{-12c}-\sgn(-c) s(-d, |c|)\Big]}\beta^{-1}(-d,-c)\\
&=e^{-\pi i \Big[\tfrac{a+d}{12c}-\sgn(c) s(d, |c|)\Big]}\beta^{-1}(-d,-c)\\
&=\chi( g)\beta^{-1}(-d,-c)/\beta^{-1}(d,c)\\
&=\chi(g).
\end{align*}
2) Note that $gn_2=\begin{pmatrix} a & b+2a\\ c & d+2c\end{pmatrix}$. \\
If $c=0$, $\chi(gn_2)=e^{-\pi i \tfrac{b+2a}{12a}}=\chi(g)\tfrac{-2\pi i}{12}=\chi(g)\chi(n_2) $.\\
If $c\neq 0$,  then:
\begin{align*}
\chi( gn_2)&= e^{-\pi i \Big[\tfrac{a+d+2c}{12c}-\sgn(c) s(d+2c, |c|)\Big]}\beta^{-1}(d+2c,c)\\
&= e^{-\pi i\tfrac{1}{6}} e^{-\pi i \Big[\tfrac{a+d}{12c}-\sgn(c) s(d, |c|)\Big]}\beta^{-1}(d,c)\\
&=\chi(n_2)\chi( g).
\end{align*}
3) Note that $g n^-_{2}=\begin{pmatrix} a+2b & b\\ c+2d & d\end{pmatrix}$.\\
If $c=0$,  $g n^-_{2}=\begin{pmatrix} a & b\\ 2d & d\end{pmatrix}$. Then
\begin{align*}
\chi(g n^-_{2})&= e^{\tfrac{-\pi i}{12}[3\sgn(2dd)-\tfrac{2d}{d}+\tfrac{b}{d}+12\sgn(d)s(2d,d)]}\beta^{-1}(d,2d)\\
&=e^{\tfrac{-\pi i}{12}[3-2+\tfrac{b}{d}]}\beta^{-1}(1,2)\\
&=e^{\tfrac{-\pi i}{12}\tfrac{b}{d}} e^{\tfrac{-\pi i}{12}}e^{\tfrac{\pi i}{4}}\\
&=e^{\tfrac{-\pi i}{12}\tfrac{b}{d}} e^{\tfrac{\pi i}{6}}\\
&=\chi(g )\chi(n^-_{2}).
\end{align*}
If $c\neq0$, by Part (1), we can assume $d>0$. \\
a)  $c+2d=0$, $d=1$, $c=-2$, $a+2b=1$. Then:
$$\chi(g n^-_{2})=e^{-\pi i \tfrac{b}{12}}, \quad \quad  \chi( n^-_{2})=e^{\tfrac{\pi i}{6}};$$
\begin{align*}
\chi(g )&=e^{\tfrac{-\pi i}{12}\tfrac{1}{12}[3\sgn(cd)-\tfrac{c}{d}+\tfrac{b}{d}+12\sgn(d)s(c,d)]}\betaup(d,c)^{-1}\\
&=e^{\tfrac{-\pi i}{12}\tfrac{1}{12}[-3+2+b]}\betaup(1,-2)^{-1}\\
&=e^{-\pi i\tfrac{b}{12}}e^{\tfrac{\pi i}{12}}e^{-\tfrac{\pi i}{4}}\\
&=e^{-\pi i \tfrac{b}{12}}e^{-\tfrac{\pi i}{6}}\\
&=\chi(g n^-_{2}) \chi(n^-_{2})^{-1}.
\end{align*}
b) $c+2d\neq 0$, $d\equiv 1(\bmod 4)$, $c=2c'$.
\begin{align*}
\chi(gn_2^-)&= (\tfrac{c'+d}{d})e^{\tfrac{-\pi i}{12}[-\tfrac{ c+2d}{d}+\tfrac{b}{d}+12s(c+2d,d)]}\\
&=(\tfrac{c'}{d})e^{\tfrac{-\pi i}{12}[-\tfrac{ c}{d}+\tfrac{b}{d}+12s(c,d)]}e^{\tfrac{\pi i}{6}}\\
&=\chi(g)\chi(n_2^-).
 \end{align*}
c) $c+2d\neq 0$, $d\equiv 3(\bmod 4)$, $c=2c'$.
\begin{align*}
\chi(gn_2^-)&=  (\tfrac{c'+d}{d} )(-i) e^{\tfrac{-\pi i}{12}[-\tfrac{c+2d}{d}+\tfrac{b}{d}+12s(c+2d,d)]}\\
&=  (\tfrac{c'}{|d|} )(-i) e^{\tfrac{-\pi i}{12}[-\tfrac{c}{d}+\tfrac{b}{d}+12s(c,d)]}e^{\tfrac{\pi i}{6}}\\
&=\chi(g)\chi(n_2^-).
 \end{align*}
\end{proof}
 According to A. Putman's answer on the question titled 'Generators for congruence group $\Gamma(2)$' on MathOverflow, $\Gamma(2)/\{\pm I\}$ is a free group of two generators  $n_2,n_2^-$.  So by the above lemma, $\chi$ is a character of $\Gamma(2)$.  Moreover, this character is determined by $\chi(\pm I)=1$, $\chi(n_2)=e^{-\tfrac{\pi i}{6}}$, $\chi(n_2^-)=e^{\tfrac{\pi i}{6}}$.
\begin{question}
\begin{itemize}
\item Does  everything can be done  for higher rank symplectic groups?
\item  Moreover, one can  also connect them  with  A. Putman,  J.E. Pommersheim,  R.Kirby, P.Melvin  R.Brooks, or  others'  works in some sense?
\end{itemize}
\end{question}
\subsection{Two models} Go back to the formulas (\ref{eq1})(\ref{eq2}). Take $Y^{\ast}=X^{\ast}, Y=X$.  Through the isomorphisms $\theta_{L, X^{\ast}}$, $\theta_{ X^{\ast},L}$, we can transfer the actions of $\widehat{\Sp}(W)$ from  $\mathcal{H}(X^{\ast})$ to  $\mathcal{H}(L)$.  For an element   $f\in \mathcal{H}(L)$, let $f'=\theta_{ X^{\ast},L}(f)\in \mathcal{H}(X^{\ast})$. Then
$f=\theta_{L, X^{\ast}}(f')$.  For $g\in  \widehat{\Sp}(W)$, we can define
     \begin{equation}
     \pi_{\psi}(g)f=\theta_{L, X^{\ast}}[\pi_{\psi}(g)(f')]=\theta_{L, X^{\ast}}[\pi_{\psi}(g)\theta_{X^{\ast},L}(f)].
     \end{equation}
     Under such action, for $g_1, g_2\in \widehat{\Sp}(W)$,
     \begin{equation}
     \begin{split}
     \pi_{\psi}(g_1)[\pi_{\psi}(g_2)f]&=\theta_{L, X^{\ast}}[\pi_{\psi}(g_1)\theta_{X^{\ast},L}]([\pi_{\psi}(g_2)f])\\
     &=\theta_{L, X^{\ast}}[\pi_{\psi}(g_1)\theta_{X^{\ast},L}\theta_{L, X^{\ast}}
     [\pi_{\psi}(g_2)\theta_{X^{\ast},L}(f)]))\\
     &=\theta_{L, X^{\ast}}[\pi_{\psi}(g_1)\pi_{\psi}(g_2)\theta_{X^{\ast},L}(f)]\\
     &=\widetilde{c}_{X^{\ast}}(g_1, g_2)\pi_{\psi}(g_1g_2)f.
     \end{split}
     \end{equation}
\subsection{$\SL_2(\Z)$}\label{sl2z} In particular, let us consider  $g\in \Sp(L)$. Under the basis $\{ e_1,  e_1^{\ast}\}$, $\Sp(L)\simeq \SL_2(\Z)$.  Let $w=x+x^{\ast} \in W$.  \\
Case 1: $g=u(b)\in \SL_2(\Z)$ with  $2\mid b$.
\begin{equation*}
\begin{split}
\pi_{\psi}(g)f( w)& =\sum_{l\in L/L\cap X^{\ast}}\pi_{\psi}(g)( f')( w+l)\psi(\tfrac{\langle l, w\rangle}{2}+\tfrac{\langle x_{l}, x^{\ast}_l\rangle}{2})\\
&=\sum_{l\in L\cap X}\pi_{\psi}(g)(f')(w+l)\psi(\tfrac{\langle l, w\rangle}{2}) \\
&=\sum_{l\in L\cap X}\pi_{\psi}(g)(f')(x+l)\psi(\langle l, w\rangle)\psi(\tfrac{\langle x, x^{\ast}\rangle}{2}) \\
&=\sum_{l\in L\cap X}\psi(\tfrac{1}{2}\langle (x+l),(x+l)b\rangle) f'(x+l)\psi(\langle l, w\rangle)\psi(\tfrac{\langle x, x^{\ast}\rangle}{2})\\
&= \sum_{l\in L\cap X} \int_{X^{\ast}/X^{\ast}\cap L}  \psi(\tfrac{1}{2}\langle (x+l), (x+l)b\rangle)f(x+l+\dot{y}^{\ast}) \psi(-\tfrac{\langle x+l, \dot{y}^{\ast}\rangle}{2})  \psi(\langle l, w\rangle)\psi(\tfrac{\langle x, x^{\ast}\rangle}{2})d\dot{y}^{\ast}\\
&=\sum_{l\in L\cap X} \int_{X^{\ast}/X^{\ast}\cap L} f(x+l+\dot{y}^{\ast}) \psi(-\tfrac{\langle x+l, \dot{y}^{\ast}\rangle}{2})   \psi(\langle l, x^{\ast}+xb\rangle)\psi(\tfrac{\langle x, x^{\ast}+xb\rangle}{2})d \dot{y}^{\ast}\\
&= f( wg).
\end{split}
\end{equation*}
Case 2: $g=h(a) \in \SL_2(\Z)$, $a\in \{\pm 1\}$.
\begin{equation}
\begin{split}
\pi_{\psi}(g)f(w)&=\sum_{l\in L/L\cap X^{\ast}}\pi_{\psi}(g)( f')( w+l)\psi(\tfrac{\langle l, w\rangle}{2}+\tfrac{\langle x_{l}, x^{\ast}_l\rangle}{2})\\
&=\sum_{l\in L\cap X}\pi_{\psi}(g)(f')(x+l)\psi(\langle l, w\rangle)\psi(\tfrac{\langle x, x^{\ast}\rangle}{2}) \\
&=\sum_{l\in L\cap X}f'(-x-l)\psi(\langle l, w\rangle)\psi(\tfrac{\langle x, x^{\ast}\rangle}{2}) \\
&= \sum_{l\in L\cap X}\int_{X^{\ast}/X^{\ast}\cap L} f(-x-l+\dot{y}^{\ast}) \psi(-\tfrac{\langle -x-l, \dot{y}^{\ast}\rangle}{2}) \psi(\langle l, w\rangle)\psi(\tfrac{\langle x, x^{\ast}\rangle}{2}) d\dot{y}^{\ast}\\
&= \sum_{l\in L\cap X}\int_{X^{\ast}/X^{\ast}\cap L} f(-x+l+\dot{y}^{\ast}) \psi(-\tfrac{\langle -x+l, \dot{y}^{\ast}\rangle}{2}) \psi(\langle l, -w\rangle)\psi(\tfrac{\langle -x, -x^{\ast}\rangle}{2}) d\dot{y}^{\ast}\\
&=f(wg).
\end{split}
\end{equation}
Case 3: $g=\omega\in  \SL_2(\Z)$.
\begin{equation}\label{eq3}
\begin{split}
\pi_{\psi}(g)f(w)&=\sum_{l\in L/L\cap X^{\ast}}\pi_{\psi}(g)( f')( w+l)\psi(\tfrac{\langle l, w\rangle}{2}+\tfrac{\langle x_{l}, x^{\ast}_l\rangle}{2})\\
&=\sum_{l\in L\cap X}\pi_{\psi}(g)(f')(x+l)\psi(\langle l, w\rangle)\psi(\tfrac{\langle x, x^{\ast}\rangle}{2}) \\
&=\sum_{l\in L\cap X} \int_{X^{\ast}}  f'(y^{\ast}\omega^{-1}) \psi(\langle x+l, y^{\ast}\rangle) \psi(\langle l, w\rangle)\psi(\tfrac{\langle x, x^{\ast}\rangle}{2})dy^{\ast}\\
&=\sum_{l\in L\cap X} \int_{X^{\ast}}  f'(y^{\ast}\omega^{-1}) \psi(\langle x, y^{\ast}\rangle) \psi(\langle l, x^{\ast}+y^{\ast}\rangle)\psi(\tfrac{\langle x, x^{\ast}\rangle}{2})dy^{\ast}\\
&=\sum_{l\in L\cap X} \int_{X^{\ast}}  \int_{X^{\ast}/X^{\ast}\cap L}   f(y^{\ast}\omega^{-1}+\dot{z}^{\ast})\psi(-\tfrac{\langle y^{\ast}\omega^{-1}, \dot{z}^{\ast}\rangle}{2}) \psi(\langle x+l, y^{\ast}\rangle) \psi(\langle l, w\rangle)\psi(\tfrac{\langle x, x^{\ast}\rangle}{2})dz^{\ast}dy^{\ast}\\
&= \sum_{l^{\ast}\in L\cap X^{\ast}}\int_{X^{\ast}/X^{\ast}\cap L} f((-x^{\ast}+l^{\ast})\omega^{-1}+\dot{z}^{\ast})\psi(-\tfrac{\langle (-x^{\ast}+l^{\ast})\omega^{-1}, \dot{z}^{\ast}\rangle}{2}) \psi(\langle x, -x^{\ast}+l^{\ast}\rangle) \psi(\tfrac{\langle x, x^{\ast}\rangle}{2})dz^{\ast}\\
&= \sum_{l\in L\cap X}\int_{X^{\ast}/X^{\ast}\cap L} f((-x^{\ast}\omega^{-1}+l+\dot{z}^{\ast})\psi(-\tfrac{\langle (-x^{\ast})\omega^{-1}+l, \dot{z}^{\ast}\rangle}{2}) \psi(\langle x, -x^{\ast}+l\omega\rangle) \psi(\tfrac{\langle x, x^{\ast}\rangle}{2})dz^{\ast}\\
&= \sum_{l\in L\cap X}\int_{X^{\ast}/X^{\ast}\cap L} f(x^{\ast}\omega+l+\dot{z}^{\ast})\psi(-\tfrac{\langle x^{\ast}\omega+l, \dot{z}^{\ast}\rangle}{2}) \psi(\langle l, x\omega\rangle) \psi(-\tfrac{\langle x, x^{\ast}\rangle}{2})dz^{\ast}\\
&= \sum_{l\in L\cap X}\int_{X^{\ast}/X^{\ast}\cap L} f(x^{\ast}\omega+l+\dot{z}^{\ast})\psi(-\tfrac{\langle x^{\ast}\omega+l, \dot{z}^{\ast}\rangle}{2}) \psi(\langle l, x\omega\rangle) \psi(\tfrac{\langle  x^{\ast}\omega, x\omega\rangle}{2})dz^{\ast}\\
&=f(wg).
\end{split}
\end{equation}
Case 4: $g=u_-(c)=\omega^{-1} u(-c) \omega\in \SL_2(\Z)$ with $2\mid c$. Then:
\begin{equation}\label{eq4}
\begin{split}
\pi_{\psi}(g)f(w )&=\pi_{\psi}(\omega^{-1} u(-c) \omega) f(w)\\
&=\widetilde{c}_{X^{\ast}}(\omega^{-1}u(-c), \omega)^{-1}\pi_{\psi}(\omega^{-1})\pi_{\psi}(u(-c)) \pi_{\psi}(\omega) f(w)\\
&=\widetilde{c}_{X^{\ast}}(\omega^{-1}u(-c), \omega)^{-1} \pi_{\psi}(\omega) f(wg)\\
&=e^{\tfrac{\pi i}{4} \sgn(c)} f(wg).
\end{split}
\end{equation}
Recall $n_{2}=\begin{pmatrix} 1& 2 \\ 0 &1\end{pmatrix}$, $n^-_{2}=\begin{pmatrix} 1& 0 \\ 2 &1\end{pmatrix}$. Moreover, $\Gamma(2)/{\pm I}$ is a free group with the generators $n_{2}$, $n^-_{2}$. For any $g\in \Gamma(2)$, let us write it in the form
\begin{align}\label{kkk}
g=(-I)^{\epsilon} n_{2}^{k_1} (n^-_{2})^{k_2}\cdots n_{2}^{k_l} (n^-_{2})^{k_{l+1}}.
\end{align}
Hence for any $g$, there exists $\epsilon_g\in \mu_8$, which is a product of Weil indexes such that
\begin{equation}\label{eq5}
 \pi_{\psi}(g)f(w)= \epsilon_g f(wg).
 \end{equation}
 Note that $\epsilon_g$ is determined by $(\ref{eq5})$, and can be calculated  from (\ref{kkk}). Then: for $g_1, g_2\in \Gamma(2)$, we have:
 \begin{align*}
 \pi_{\psi}(g_1)\pi_{\psi}(g_2)f(w)&=\epsilon_{g_1} \epsilon_{g_2}f(wg_1g_2)\\
 &=\widetilde{c}_{X^{\ast}}(g_1,g_2)\pi_{\psi}(g_1g_2)f(w)\\
 &= \widetilde{c}_{X^{\ast}}(g_1,g_2)\epsilon_{g_1g_2}f(wg_1g_2).
 \end{align*}
 Hence
 $$\widetilde{c}_{X^{\ast}}(g_1,g_2)=\epsilon_{g_1} \epsilon_{g_2}\epsilon_{g_1g_2}^{-1}.$$
 According to Lemma \ref{trii},   $\widetilde{\beta}$ differs from $\epsilon_{g}^{-1}$ by a character $\chiup$, where $\chiup(g)=\widetilde{\beta}(g)\epsilon_{g}$.\\
 1) If $g=-I$, $\chiup(g)=1$.\\
 2) If $g=n_2$, $\chiup(g)=1$.\\
 3) If $g=n_2^-$, $\chiup(g)=\widetilde{\beta}(g)\epsilon_{g} =\beta(1,2)e^{\tfrac{\pi i}{4} \sgn(2)}=1$.
 Hence $\widetilde{\beta}(g)=\epsilon_{g}^{-1}$.

\labelwidth=4em
\addtolength\leftskip{25pt}
\setlength\labelsep{0pt}
\addtolength\parskip{\smallskipamount}


\begin{thebibliography}{99}


\bibitem[As]{As} T. Asai,
 {\it The reciprocity of Dedekind sums and the factor set
for the universal covering group of $\SL(2,\R)$}, Nagoya J.
of Math., 37 (1970).

\bibitem[Ba]{Ba} L.Barthel,
 {\it Local Howe correspondence for groups of similitudes},
  J. Reine Angew. Math., 414 (1991), 207-220.

\bibitem[BeHa]{BeHa} B. Bekka, P. de la Harpe,
 {\it Unitary representations of groups, duals, and characters}, Mathematical Surveys and Monographs, 250. American Mathematical Society, Providence, RI, 2020. arXiv:1912.07262.

\bibitem[BeEvWi]{BeEvWi}  B.Berndt, R. Evans, and K. S. Williams.
 {\it Gauss and Jacobi sums}, Canadian Mathematical Society, 1998.


\bibitem[Co1]{Co1} M. Cognet,
 {\it  Repr\'esentation de Weil et changement de base quadratique}, Bull. Soc. Math. France 113, no.4, (1985) 403-457.

\bibitem[Co2]{Co2} M. Cognet,
 {\it  Repr\'esentation de Weil et changement de base quadratique dans le cas archim\'edien II}, Bull. Soc. Math. France 114 (1986), 325-354. 

\bibitem[De]{De} P. Deligne,
 {\it Formes modulaires et repr\'esentations de $\GL(2)$}, Modular functions of one variable, II (Proc. Internat. Summer School, Univ. Antwerp, Antwerp, 1972), Lecture Notes in Math, vol. 349, Springer, Berlin, 1973, pp. 55-105.

\bibitem[DeSe]{DeSe} P. Deligne,  J.-P.Serre,
 {\it Formes modulaires de poids 1},  Ann. Sci. l'ecole Norm. Sup. (4) 7 (1975), 507-530.


\bibitem[GaIc]{GaIc}  W.T.Gan, A.Ichino, 
 {\it  The Shimura-Waldspurger correspondence for $\Mp_{2n}$}, Ann. Math. (2) 188(3), 965-1016 (2018). 
  
\bibitem[GaLi]{GaLi} W. T. Gan, W.-W. Li,
 {\it  The Shimura--Waldspurger correspondence for $\Mp(2n)$} , in: Geometric aspects of the trace formula, Simons Symp., Springer, Cham (2018), 183-210.

\bibitem[GaTa]{GaTa} W.T.Gan, W.Tantono,
 {\it The local Langlands conjecture for $\GSp(4)$ II}, The case of inner forms, Amer. J. Math. 136 (2014),
761-805.

\bibitem[Ge]{Ge} S.Gelbart,
 {\it Weil's Representation and the Spectrum of the Metaplectic Group}, Lecture Notes in 
 Mathematics, No. 530, Springer-Verlag, Berlin, 1976.

\bibitem[GePi]{GePi} S.Gelbart, I.Piatetski-Shapiro,
{\it  Some remarks on metaplectic cusp forms and the correspondences of Shimura and Waldspurger}, Israel J. Math. 44(1983), 97-126.

\bibitem[Ge]{Ge} L.Gerstein,
{\it Basic Quadratic Forms}, Graduate Studies in Mathematics, Vol. 90, AMS, Providence, 2008.


%

\bibitem[HaMeTi]{HaMeTi} Hahn Alexander J., O'Meara, O. Timothy,
{\it The classical groups and $K$-theory}, Grundlehren der Mathematischen Wissenschaften, 291. Springer-Verlag, Berlin, 1989.

\bibitem[HaKuSw]{HaKuSw}  M. Harris, S. Kudla, J. Sweet, 
{\it Theta dichotomy for unitary groups}, J. Amer. Math. Soc. 9 (1996), 941-1004.


\bibitem[Ho1]{Ho1} R. Howe,
{\it Remarks on classical invariant theory}, Trans. Amer. Math. Soc. 313 (1989) 539-570.

\bibitem[Ho2]{Ho2} R. Howe,
{\it Transcending classical invariant theory}, J. Amer. Math. Soc. 2 (1989), no.3, 535-552.

\bibitem[JiSo]{JiSo} D.Jiang,  D.Soudry, 
 {\it  The local converse theorem for $\SO(2n+1)$ and applications},  Ann. of Math. (2), 157, 743-806 (2003). 
 
\bibitem[KaTa]{KaTa}  E. Kaniuth, K.F. Taylor,
 {\it  Induced Representations of Locally Compact Groups}, Cambridge Tracts in Math., vol.197, Cambridge University Press, Cambridge, 2013.

\bibitem[KiMe]{KiMe}  R.Kirby, P.Melvin,
 {\it  Dedekind sums, $\mu$-invariants, and the signature cocycle},  Math. Ann. 29 (2) (1994), 231-267.


\bibitem[Kn]{Kn} A. W. Knapp,
 {\it  Representation Theory of Semisimple Groups. An Overview Based on Examples}, Princeton Math. Ser., vol. 36, Princeton University Press, Princeton, NJ, 1986.

\bibitem[Ku1]{Ku1} S.S.Kudla,
 {\it  Splitting metaplectic covers of dual reductive pairs},
 Isr. J. Math. 87 (1-3) (1994) 361-401.

\bibitem[Ku2]{Ku} S.S.Kudla,
 {\it  Notes on the local theta correspondence},
 preprint, available at http://www.math.utotonto.ca/~skudla/castle.pdf, 1996.


 \bibitem[Li1]{Li1} J.-S. Li,
 {\it  On the classification of irreducible low rank unitary representations of classical groups}, Compos. Math. 71
(1989), no. 1, 29-48.

\bibitem[Li2]{Li2} J.-S. Li,
 {\it  Automorphic forms with degenerate Fourier coefficients},  Amer. J. Math. 119 (1997), no. 3, 523-578.

 \bibitem[LiVe]{LiVe} G.Lion, M.Vergne,
 {\it The Weil representation, Maslov index and Theta series},
 Progress in Math., vol.6, Birkhauser, Boston, 1980.

\bibitem[MVW]{MoViWa} C.M\oe glin,  M.-F.Vign\'eras,  J.-L.Waldspurger,
{\it Correspondances de Howe sur un corps $p$-adique},
Lect. Notes Math. 1291, Springer, 1987.


\bibitem[NeScWi]{NeScWi} J.Neukirch,  A.Schmidt, K.Wingberg,
{\it Cohomology of Number Fields}, Grundlehren Math. Wiss., 2nd edn., p. 323. Springer-Verlag, Berlin, 2008.(Reprint in China)

\bibitem[Pe]{Pe} P.Perrin,
{\it Repr\'esentations de Schr\" odinger. Indice de Maslov et groupe metaplectique},
in ``Non commutative Harmonic Analysis and Lie Groups'', Lect. Notes Math. 880 (1981), 370-407.

\bibitem[Po]{Po} J.E.Pommersheim,
{\it Toric varieties, lattice points and Dedekind sums}, Math. Ann. 295 (1993), 1-24.

\bibitem[RaWh]{RaWh} H. Rademacher, A. Whiteman,
{\it On Dedekind sums}, American Journal of Mathematics, vol. 63 (1941), pp. 377-407.

\bibitem[Rao]{Ra} R.R.Rao,
{\it On some explicit formulas in the theory of the Weil representation},
Pacific J. Math. 157 (1993), 335-371.

\bibitem[Ro1]{Ro1}  B. Roberts, 
 {\it The theta correspondence for similitudes}, Israel J. Math. 94 (1996), 285-317.
 
\bibitem[Ro2]{Ro2}  B. Roberts, 
 {\it Global L-packets for $\GSp(2)$ and theta lifts}, Doc. Math. 6 (2001) 247-314. 
 
 
\bibitem[SeSt]{SeSt} J.-P. Serre, H. M. Stark.,
{\it Modular forms of weight $1/2$}, In Modular functions of one variable,
VI (Proc. Second Internat. Conf., Univ. Bonn, Bonn, 1976), pages 27–67. Lecture Notes in Math.,
Vol. 627. Springer, Berlin, 1977.

\bibitem[Se]{Se} M. R. Sepanski,
{\it Compact Lie Groups}, Springer, 2006.(Reprint in China)



\bibitem[Th]{Th} T. Thomas,
{\it The character of the Weil representation},
Journal of the London Mathematical
Society, 77, 221-239, 2008.


\bibitem[Wal1]{Wa1} J.-L. Waldspurger,
{\it Correspondance de Shimura},
J. Math. Pures Appl. (9) 59 (1980), no. 1, 1-132.

\bibitem[Wal2]{Wa2} J.-L. Waldspurger,
{\it Correspondances de Shimura et quaternions},
Forum Math. 3 (1991), no. 3, 219-307.


\bibitem[Wan]{Wa3} C.-H. Wang,
{\it Extended Weil representations by some twisted actions: the non-dyadic local field cases},
arXiv:2212.07667.


\bibitem[We]{We} A.Weil,
{\it  Sur certains groupes d'op\'erateurs unitaires},
Acta Math. 111 (1964), 143-211.

\bibitem[Zh]{Zh} C.Zhang,
{\it  A note on the local theta correspondence for unitary similitude dual pairs}, J. Number Theory 133 (2013), no. 9, 3057-3064.
\end{thebibliography}
\end{document}